\newcommand{\lsection}[2][""]{%
    \ifthenelse{\equal{#1}{""}}{%
        \section{#2}
    }{%
        \renewcommand{\sectionmark}[1]{\markright{\thesection.\ \MakeUppercase{#1}}}
        \section{#2}
        \renewcommand{\sectionmark}[1]{\markright{\thesection.\ \MakeUppercase{##1}}}
    }
}
\newcommand{\lchapter}[2][""]{%
    \ifthenelse{\equal{#1}{""}}{%
        \chapter{#2}
    }{%
        \renewcommand{\chaptermark}[1]{\markboth{\MakeUppercase{\chaptername\ \thechapter.\ #1}}{}}
        \chapter{#2}
        \renewcommand{\chaptermark}[1]{\markboth{\MakeUppercase{\chaptername\ \thechapter.\ ##1}}{}}
    }
}
\def\Z{\mathbb Z}
\def\R{\mathbb R}
\def\C{\mathbb C}
\def\iff{if and only if}
\def\mfd{manifold}
\def\fcn{function}
\def\str{structure}
\def\h{holomorphic}
\def\r{respectively}
\def\st{such that}
\def\(#1_#2){(#1_1,#1_2,\dots,#1_#2)}
\def\p #1_#2{#1_1#1_2\dots#1_#2}
\def\s#1_#2{#1_1+#1_2+\dots+#1_#2}
\def\wrt{with respect to}
\def\iso{isomorphism}
\def\ra{\rightarrow}
\def\lra{\longrightarrow}
\def\hra{\hookrightarrow}
\def\op{\operatorname}
\def\vb{vector bundle}
\def\bp{\bar\partial}
\def\ssm{\hspace{-.5mm}\smallsetminus\hspace{-.5mm}}
\def\nbd{neighborhood}
\def\scB{\mathscr B}
\def\scC{\mathscr C}
\def\scF{\mathscr F}
\def\scG{\mathscr G}
\def\scA{\mathscr A}
\def\scH{\mathscr H}
\def\scV{\mathscr V}
\def\scW{\mathscr W}
\def\V{\mathcal V}
\def\W{\mathcal W}
\def\scO{\mathscr O}
\def\scS{\mathscr S}
\def\cS{\mathcal S}
\def\scE{\mathscr E}
\def\b{\beta }
\def\o{\omega}
\def\t{\theta}
\def\vP{\varPsi}
\def\vG{\varGamma}
\def\vt{\vartheta}
\def\vv {\vskip.2cm}
\newtheorem{theorem}{Theorem}[section]
\newtheorem{lemma}[theorem]{Lemma}
\newtheorem{corollary}[theorem]{Corollary}
\newtheorem{proposition}[theorem]{Proposition}
\newtheorem{exa}[theorem]{Example}
\newenvironment{example}{\begin{exa} \em}{\end{exa}}
\newtheorem{exas}[theorem]{Examples}
\newtheorem{prope}[theorem]{Property}
\newtheorem{defini}[theorem]{Definition}
\newenvironment{definition}{\begin{defini} \em}{\end{defini}}
\newtheorem{rema}[theorem]{Remark}
\newenvironment{remark}{\begin{rema} \em}{\end{rema}}
\newenvironment{equationth}{\stepcounter{theorem}\begin{equation}}{\end{equation}}
\newenvironment{proof}{{\noindent \sc Proof: } }{\mbox{ }\hfill$\Box$  
                        \vspace{1.5ex} \par}
\title{\bf  \Large {Sato hyperfunctions  via\\
relative Dolbeault cohomology}}
\author{Naofumi Honda\thanks{Supported by JSPS Grants 18K03316 and 21K03284}, Takeshi Izawa and Tatsuo Suwa\thanks{Supported by  JSPS 
Grants 16K05116 and 20K03572}}
\date{}
\newcommand{\SHmicro}{\mathscr{C}}
\newcommand{\SHCinfty}{\mathscr{E}}
\newcommand{\SHanalf}[1]{\mathscr{A}^{{#1}}}
\newcommand{\SHhyperf}[1]{\mathscr{B}^{{#1}}}
\newcommand{\hdccppk}[4]{{H^{{#1},{#2}}_{\bar{\vartheta}}({#3},{#4})}}
\newcommand{\dccdp}[3]{C^\bullet({#2},{#3};\SHCinfty^{({#1},\bullet)})}
\newcommand{\dccdpkk}[5]{C^{#2}({#4},{#5};\SHCinfty^{({#1},{#3})})}
\newcommand{\dccp}[2]{{\SHCinfty^{(0,\bullet)}({#1},{#2})}}
\newcommand{\hdccpk}[3]{{H^{0,{#1}}_{\bar{\vartheta}}({#2},{#3})}}
\newcommand{\hDCcpk}[3]{{H^{{#1}}_{D}({#2},{#3})}}
\newcommand{\DCcdkk}[4]{C^{#1}({#3},{#4};\SHCinfty^{({#2})})}
\newcommand{\ccp}[3]{C^\bullet({#1},{#2};{#3})}
\newcommand{\hccpk}[4]{H^{#1}({#2},{#3};{#4})}
\newcommand{\ccpk}[4]{C^{#1}({#2},{#3};{#4})}
\begin{document}

\pagestyle{plain}


\maketitle

\noindent
{\bf Abstract}

The relative Dolbeault cohomology which naturally comes up in the theory of 
\v{C}ech-Dolbeault cohomology  turns out to be canonically isomorphic with the local (relative) cohomology of A. Grothendieck and M. Sato so that  it 
provides 
a handy way of representing the latter. 
In this paper  we use this cohomology to give simple explicit expressions of  Sato hyperfunctions, some fundamental operations on them and related local duality theorems. This approach also yields a new insight into the
theory of hyper\fcn s and  leads to a number of  further results and applications. As one of such, we give
an explicit embedding morphism of Schwartz distributions into the space of hyperfunctions.
\bigskip

\noindent
{\it Mathematics Subject Classification} (2020)\,: Primary 32A45, 32C36, 35A27, 46F15,  58J15; Secondary 32C35, 46F05, 55N30.
\vv

\noindent
{\it Keywords and phrases}\,:  Sato hyperfunction,  relative Dolbeault cohomology, Thom class, boundary value
morphism, microlocal analyticity, integration morphism, Schwartz distribution.


\lsection{Introduction}
The theory of hyperfunctions was initiated by M.~Sato in \cite{Sa} and then the theory together with its philosophy and methodology
has been vastly developed to form a branch of mathematics called algebraic analysis
(see  M.~Sato, T.~Kawai and M.~Kashiwara \cite{Skk}, 
M.~Kashiwara and P.~Schapira \cite{KS}, P.~Schapira \cite{Sc} and references therein). The space $\mathscr{B}(M)$ of hyperfunctions on a real analytic manifold $M$ is considered to be large enough
in the sense that all the solutions to
a linear differential equation (with irregular singularities) are exhausted in
$\mathscr{B}(M)$,
while that of, for example, Schwartz distributions is too small 
as a solution space as shown in H.~Komatsu \cite{K1}. Thus the hyperfunctions may be thought of as natural generalizations of functions and play  important
roles in several areas of mathematics, in particular in the study of  linear differential equations.

A hyperfunction on a one-dimensional space is represented by a holomorphic function
on the complement of the real axis in the complex plane and this representative is zero as a hyperfunction if and only if
it extends across the real axis
as a holomorphic function.
Contrary to its relative simplicity and concreteness in the one-dimensional case, 
the theory 
in higher dimensions is rather abstract, as  hyper\fcn s
are defined in terms of  local cohomology,  with support in $M$ and 
coefficients in the sheaf $\scO$ of holomorphic functions on the complexification $X$ of $M$, and the  theory is described in the language of derived functors. Also the representation of  local cohomology is done via relative \v{C}ech cohomology with coefficients in $\scO$.
In order to understand  hyperfunctions without  heavy machineries, A.~Kaneko \cite{Ka} 
and M.~Morimoto \cite{Mo} introduced the so-called  ``intuitive representation'' of a hyperfunction, where 
it is represented by a formal sum of several holomorphic functions on infinitesimal wedges
with the edge along $M$. As a recent development in this direction,   D.~Komori and K.~Umeta \cite{KU} generalize
the intuitive representation to the case of local cohomology with coefficients in  an arbitrary sheaf under suitable conditions.

\

On the other hand,  the third named author of this paper observed  that the relative Dolbeault cohomology, which
naturally appears in the theory of \v{C}ech-Dolbeault cohomology in \cite{Su8} (see also \cite{ABST}, \cite{Su10} and \cite{Su11}), is canonically
isomorphic with the local cohomology with coefficients in the sheaf of \h\ forms and that, if we use this cohomology,
a hyperfunction $u \in \mathscr{B}(M)$
in an arbitrary dimension has a similar representation as that on the one-dimensional space.
That is, $u$ has a representative $\tau = (\tau_1, \tau_{01})$, where $\tau_1$
is a $C^\infty$ form of type $(0,n)$ defined on a \nbd\ of $M$ in $X$ 
and
$\tau_{01}$ is a $C^\infty$ form of type $(0,n-1)$ defined on the complement of $M$, with a natural
cocycle condition.
Such a representation gives a new approach to the hyperfunction theory and makes its treatment  
more manurable, for instance\,:
\begin{enumerate}
\item[(1)] We can employ tools available in the $C^\infty$ category
such as  partitions of unity.
In particular, we may take a representative with a compact support 
if $u$ has a compact support, which is not possible in 
the framework of
  intuitive representation, as representatives are holomorphic functions.

\item[(2)] The conventional representation of a hyperfunction is obtained via 
\v{C}ech cohomology with coefficients in $\scO$ and hence the existence of a Stein open covering 
is indispensable, which sometimes makes the theory rather complicated.  
In our framework, however, arguments can proceed without  Stein  coverings.

\item[(3)] The integration of a hyperfunction is easily performed as it is represented by a pair of $C^\infty$~forms.
Furthermore, we may concretely describe the morphism 
associated with the cohomological 
residue map, 
since all the related morphisms are constructed using
fine resolutions of sheaves.
\end{enumerate}

The purpose of this paper is to reestablish the theory of hyperfunctions in the framework of relative Dolbeault cohomology 
and to
further pursue
the theory along this line.
It should be noted, however, that although our approach simplifies various expressions substantially, it is not certain if it also
leads to the simplification of the proofs of such fundamental facts as the pure codimensionality of
$M$ in $X$ \wrt\ $\scO$ and the flabbiness of the sheaf $\scB$ of hyper\fcn s.
Aside from this, 
the  advantages as listed above 
provide a new insight and 
enable us to perform novel 
treatments of the theory.
Furthermore, this point of view leads to several results
that are hardly 
be obtained by the traditional way.
One of such 
is in Theorem {\ref{thm:formula_embedding}} below,
which 
gives 
an explicit representative
of the embedded image of  a Schwartz distribution in the space of hyperfunctions.

In the course of our study, we introduce several fundamental methods and ideas,
 which 
become important ingredients for further works. 
In fact
this kind of cohomology theory serves to establish
the foundation
of various topics in algebraic analysis such as  the theory of Laplace hyperfunctions
(\cite{K2}, \cite{HU1} and \cite{HU2}) and  the symbol 
theory of analytic pseudodifferential operators 
(\cite{A1},
 see also \cite{A3}). Recently,
K.~Umeta  is working on the former  in \cite{U1}
and
D.~Komori is  doing for the latter  in \cite{Kd1},
where our fundamental methods introduced in this paper play  essential roles in their
arguments.

\

The paper is organized as follows. In Section \ref{secS}, we recall orientation sheaves and the Thom class,
which are necessary for the orientation free expression of  hyperfunctions and the description of the topological aspect
of the theory. The definition and
fundamental properties of Sato hyper\fcn s are also   briefly recalled.  We review, in Sections \ref{secrdR}, \ref{secrD} and \ref{seclocald},
 the theory of relative de~Rham and  Dolbeault cohomologies.
In this paper we need the relative de~Rham cohomology in two ways. One is for
the integration theory and the other for the expression of sections of the complexified relative orientation sheaf.
Some canonical morphisms between
the relative de~Rham and the relative Dolbeault cohomologies and local duality morphism
are also discussed.

In Section {\ref{sec:hyperforms}},
we first express  hyperfunctions via relative Dolbeault cohomology and
describe some basic operations  such as differentiation and multiplication by real analytic functions. In fact we work  more generally on hyperforms, which have been traditionally referred to as forms
with hyperfunction coefficients. Then we study the integration of  hyperforms 
and 
establish 
the duality theorem of Martineau in our settings. We also give some
 examples related to Dirac's delta function.
We note that the  theory of integration on \v{C}ech-de~Rham cohomology
directly descends to the integration theory on relative Dolbeault cohomology and in turn to the
integration theory of hyperforms. The duality paring that appears in the theorem of Martineau
is explicitly expressed in this context.

Several important morphisms in the hyperfunction theory
are studied in Section~{\ref{sec:morphisms}}.
We first discuss the embedding of real analytic \fcn s into the space of hyper\fcn s.
Particularly noteworthy  
here 
is that the canonical morphism from the relative de~Rham cohomology to the
relative Dolbeault cohomology, which is induced from the projection of a $q$-form to its $(0,q)$-component, is
effectively used in the construction of the embedding morphism.
Also the Thom class in the relative 
de~Rham cohomology plays an essential role in this scene of interaction of topology and analysis, in particular,
combined with the above projection,
it is used to give an explicit expression of the embedding morphism 
(Corollary~\ref{propemb}).
We then generalize this to the 
construction of the 
boundary value morphism, by which we may regard
a holomorphic function on an open edge along $M$ as a hyperfunction. The essential idea here is that
we may ``confine'' the Thom class in the edge under a suitable condition.
We also give an interpretation of the microlocal analyticity in our language (Proposition~\ref{prop:equiv-ss}). This is a direct consequence of our expression of  the spectral map, which is simply a morphism of
relative Dolbeault cohomologies induced from the restriction of differential forms. Such operations as the external product of hyperforms,  the restriction of hyperfunctions and the fiber integration of hyperforms are also 
treated in our framework.
In Section \ref{secembdist}, we give an explicit embedding morphism of distributions into the space of hyper\fcn s 
in our context 
(Theorem \ref{thm:formula_embedding}), as mentioned above.

In Appendix \ref{appen:A},
we show the compatibility of 
the boundary value morphisms between 
our construction and the original functorial one 
in \cite{Skk} and \cite{KS}.

Some parts of this paper are summarized 
in \cite{Ho}, \cite{Su10},  \cite{Su12} and \cite{Su13}. 

We would like to thank the referee for valuable suggestions that improved the presentation of
the paper.

\lsection{Sato hyperfunctions}\label{secS}

\subsection{Relative cohomology}
In the sequel, by a sheaf we mean a sheaf with at least the \str\ of Abelian groups. As general references for the
sheaf cohomology theory, we list  \cite{B} and \cite{G}.
For a sheaf $\scS$   on a topological space $X$ and an open set $V$ in $X$, we denote by 
$\scS(V)$ the group of sections on $V$. For an open subset $V'\subset V$ we denote by 
$\scS(V,V')$ the group of sections 
on $V$ that vanish on $V'$. 

As reference cohomology theory we adopt the one via flabby resolution.
Thus for an open set $X'$ in $X$, $H^{q}(X,X';\scS)$ denotes the $q$-the cohomology of the complex $\scF^{\bullet}(X,X')$
with $0\ra\scS\ra\scF^{\bullet}$ a flabby resolution. It is uniquely determined modulo canonical \iso s, independently of the chosen
flabby resolution. Considering the closed set $S=X\ssm X'$, it will also be denoted by $H^{q}_{S}(X;\scS)$. 
This cohomology in the first expression is referred to as the {\em relative cohomology} of $\scS$ on $(X,X')$ 
(cf. \cite{Sa}) and in the
second expression the {\em local cohomology} of $\scS$ on $X$ with support in $S$
(cf. \cite{Ha}). 

We recall some of the fundamental properties of the cohomology\,:
\begin{proposition}\label{propfundloc} {\bf 1.} For a triple $(X,X',X'')$ of open sets with $X''\subset X'\subset X$,
there is an exact sequence
\[
\cdots\ra H^{q-1}(X',X'';\scS)\overset{\delta}\ra H^{q}(X,X';\scS)\overset{j^{*}}\ra H^{q}(X,X'';\scS)\overset{i^{*}}\ra H^{q}(X',X'';\scS)\ra\cdots.
\]
\vv

\noindent
{\bf 2. (Excision)} For any open set $V$ in $X$ containing $S$, there is a canonical
\iso
\[
H^{q}(X,X\ssm S;\scS)\simeq H^{q}(V,V\ssm S;\scS)\quad\text{or equivalently}\quad  H^{q}_{S}(X;\scS)\simeq H^{q}_{S}(V;\scS).
\]
\end{proposition}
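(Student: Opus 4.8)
The plan is to realize both statements at the level of a single fixed flabby resolution $0\ra\scS\ra\scF^{\bullet}$, so that each relative cohomology group is the cohomology of the complex $\scF^{\bullet}(X,X')$ of sections vanishing on the indicated open subset. The one structural input I would isolate at the outset is flabbiness: for any pair of open sets $W'\subset W$ the restriction $\scF^{q}(W)\ra\scF^{q}(W')$ is surjective, so that there are short exact sequences
\[
0\ra\scF^{q}(W,W')\ra\scF^{q}(W)\ra\scF^{q}(W')\ra 0 .
\]
Everything else will be formal diagram chasing, and the whole point is to locate precisely where flabbiness is needed.

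For Part 1, I would exhibit the short exact sequence of complexes
\[
0\ra\scF^{\bullet}(X,X')\overset{j^{*}}{\ra}\scF^{\bullet}(X,X'')\overset{i^{*}}{\ra}\scF^{\bullet}(X',X'')\ra 0 ,
\]
in which $j^{*}$ is the inclusion (a section on $X$ vanishing on $X'$ a fortiori vanishes on $X''\subset X'$) and $i^{*}$ is restriction from $X$ to $X'$. Exactness on the left and in the middle is elementary: the kernel of $i^{*}$ consists of sections on $X$ that vanish on $X''$ and restrict to $0$ on $X'$, which are exactly those vanishing on $X'$. The only step using flabbiness is surjectivity of $i^{*}$: given $s\in\scF^{q}(X',X'')$, flabbiness provides $\tilde s\in\scF^{q}(X)$ with $\tilde s|_{X'}=s$, and then $\tilde s|_{X''}=s|_{X''}=0$ since $X''\subset X'$, so $\tilde s\in\scF^{q}(X,X'')$ maps to $s$. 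The long exact cohomology sequence of this short exact sequence of complexes is then precisely the asserted one, with $\delta$ the connecting homomorphism.

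For Part 2, writing $S=X\ssm S$'s complement, i.e.\ $S=X\ssm X'$, I would show that restriction induces an isomorphism of complexes
\[
\scF^{\bullet}(X,X\ssm S)\;\overset{\sim}{\ra}\;\scF^{\bullet}(V,V\ssm S) .
\]
Here $\scF^{q}(X,X\ssm S)$ is the group $\Gamma_{S}(X,\scF^{q})$ of sections supported in $S$, and the claim is that such sections are determined by, and may be prescribed freely on, any open neighborhood $V$ of $S$. This uses only the sheaf axioms applied to the open cover $\{V,\,X\ssm S\}$ of $X$ (note $V\supset S$): injectivity holds because a section supported in $S$ and vanishing on $V$ then vanishes on both members of the cover, hence is zero; surjectivity holds because a section $t\in\scF^{q}(V,V\ssm S)$ and the zero section on $X\ssm S$ agree on the overlap $V\ssm S$ and so glue to a section on $X$ supported in $S$ that restricts to $t$. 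Since the restriction $\scF^{\bullet}|_{V}$ of a flabby resolution of $\scS$ is again a flabby resolution of $\scS|_{V}$ (both flabbiness and exactness being local conditions), this complex computes $H^{q}(V,V\ssm S;\scS)$, and passing to cohomology in the displayed isomorphism gives the excision isomorphism.

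I do not anticipate a genuine obstacle here; the arguments are almost entirely formal. The two places demanding care are the single appeals to flabbiness, namely surjectivity of $i^{*}$ in Part 1 and the identification of $\scF^{\bullet}|_{V}$ as a flabby resolution in Part 2, together with the observation that the gluing in Part 2 itself needs no flabbiness, only the sheaf property. The only mild subtlety worth stating explicitly is that the independence of the cohomology from the chosen flabby resolution (already recalled in the excerpt) is what licenses computing $H^{q}(V,V\ssm S;\scS)$ via $\scF^{\bullet}|_{V}$ rather than via a resolution chosen on $V$ from the start.
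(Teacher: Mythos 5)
Your proof is correct. Note that the paper offers no proof of this proposition at all: it merely recalls these facts as standard properties of relative/local cohomology, citing Bredon and Godement as references. Your argument is exactly the standard one those references supply, and it isolates the right points: for Part 1, the short exact sequence of complexes
\[
0\ra\scF^{\bullet}(X,X')\ra\scF^{\bullet}(X,X'')\ra\scF^{\bullet}(X',X'')\ra 0,
\]
with flabbiness invoked only for surjectivity of restriction to $X'$; for Part 2, the isomorphism of complexes $\scF^{\bullet}(X,X\ssm S)\simeq\scF^{\bullet}(V,V\ssm S)$ obtained purely from the sheaf axioms applied to the cover $\{V,\,X\ssm S\}$, together with the observations that $\scF^{\bullet}|_{V}$ is again a flabby resolution and that the resulting cohomology is independent of the resolution chosen. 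So your proposal correctly fills in the details the paper leaves to its references.
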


\subsection{Derived sheaves}

Let $X$ be a topological space, $S$ a 
closed set in $X$ and $\scS$   a sheaf on $X$.

\begin{definition}
The $q$-th derived sheaf $\scH_{S}^{q}(\scS)$ of $\scS$ with support in $S$ is the sheaf defined by the presheaf $V\mapsto H^{q}_{S\cap V}(V;\scS)$.
\end{definition}

By definition, for a point $x$ in $X$,
\begin{equationth}\label{stalk}
\scH_{S}^{q}(\scS)_{x}=\underset{V\ni x}{\underset{\lra}\lim} H^{q}_{S\cap V}(V;\scS).
\end{equationth}
Thus $\scH_{S}^{q}(\scS)$ is supported on $S$ and we may think of it as a sheaf on $S$.
As  such $\scH_{S}^{q}(\scS)$ is defined by the presheaf $U\mapsto H^{q}_{U}(V;\scS)$, where $U$
is an open set in $S$ and $V$ an open set in $X$ containing $U$ as a closed set. Note that by excision $H^{q}_{U}(V;\scS)$
does not depend on the choice of such a $V$. We quote (cf. \cite{KKK})\,:

\begin{proposition}\label{propsh} If $\scH_{S}^{i}(\scS)=0$ for $i<q$, then the above presheaf is a sheaf, i.e., for any
open set $U$ in $S$, there is a
canonical \iso
\[
\scH_{S}^{q}(\scS)(U)\simeq H^{q}_{U}(V;\scS),
\]
where $V$ is an open set in $X$ as above.
\end{proposition}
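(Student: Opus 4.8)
The plan is to produce, under the stated hypothesis, a canonical isomorphism $H^{q}_{U}(V;\scS)\simeq\scH^{q}_{S}(\scS)(U)$ natural in $U$. Since the target is already a sheaf, such an isomorphism shows simultaneously that the presheaf $U\mapsto H^{q}_{U}(V;\scS)$ is a sheaf and that its sections are computed by the derived sheaf. I would carry this out entirely inside the flabby-resolution description of local cohomology adopted in the text, so that no machinery beyond flabby sheaves is needed.

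First I would fix a flabby resolution $0\ra\scS\ra\scF^{\bullet}$ and apply the functor of sections supported in $S$ to obtain a complex $\vG_{S}(\scF^{\bullet})$ of sheaves on $S$. By the very definition of relative cohomology given above, $H^{q}_{U}(V;\scS)=H^{q}(\scF^{\bullet}(V,V\ssm U))$, and $\scF^{j}(V,V\ssm U)$ — the sections on $V$ vanishing on $V\ssm U$ — is exactly $\vG(U,\vG_{S}(\scF^{j}))$; hence $H^{q}_{U}(V;\scS)=H^{q}(\vG(U,\vG_{S}(\scF^{\bullet})))$. Two facts then make this complex tractable: $\vG_{S}$ sends flabby sheaves to flabby sheaves, so every term $\vG_{S}(\scF^{j})$ is flabby, and its cohomology sheaves are the derived sheaves, $\mathcal H^{r}(\vG_{S}(\scF^{\bullet}))\simeq\scH^{r}_{S}(\scS)$.

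It then remains to prove a purely homological lemma: for a bounded-below complex $C^{\bullet}$ of flabby sheaves on $S$ with $\mathcal H^{r}(C^{\bullet})=0$ for $r<q$, the natural map $H^{q}(\vG(U,C^{\bullet}))\ra\vG(U,\mathcal H^{q}(C^{\bullet}))$ is an isomorphism for every open $U$. I would split $C^{\bullet}$ into the short exact sequences $0\ra Z^{r}\ra C^{r}\ra B^{r+1}\ra0$ and $0\ra B^{r}\ra Z^{r}\ra\mathcal H^{r}\ra0$; the vanishing of $\mathcal H^{r}$ below $q$ gives $Z^{r}=B^{r}$ there, and starting from $Z^{0}=0$ one shows inductively, using that the cokernel of an inclusion of flabby sheaves is flabby, that $Z^{r}=B^{r}$ is flabby for $r\le q-1$ and that $B^{q}$ is flabby. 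Flabbiness of $B^{q}$ and of $Z^{q-1}$ makes $\vG(U,-)$ exact on the degree-$q$ pieces, whence $H^{q}(\vG(U,C^{\bullet}))=\vG(U,Z^{q})/\vG(U,B^{q})\simeq\vG(U,\mathcal H^{q})$. Applied to $C^{\bullet}=\vG_{S}(\scF^{\bullet})$ this yields $H^{q}_{U}(V;\scS)\simeq\scH^{q}_{S}(\scS)(U)$, and unwinding the construction shows the resulting map is the canonical comparison map, hence compatible with restrictions.

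The main obstacle is resisting the tempting one-step dimension shift $0\ra\scS\ra\scF\ra\scG\ra0$ with $\scF$ flabby: this does identify $H^{q}_{U}(V;\scS)$ with $H^{q-1}_{U}(V;\scG)$ and $\scH^{q}_{S}(\scS)$ with $\scH^{q-1}_{S}(\scG)$, but the hypothesis only forces $\scH^{i}_{S}(\scG)=0$ for $1\le i\le q-2$, leaving $\scH^{0}_{S}(\scG)$ possibly nonzero, so $\scG$ fails the inductive hypothesis in degree $0$ and the induction stalls. Treating the whole resolution at once is precisely what cancels these low-degree contributions; equivalently, in the local-to-global spectral sequence $E_{2}^{p,r}=H^{p}(U,\scH^{r}_{S}(\scS))\Rightarrow H^{p+r}_{U}(V;\scS)$ the rows $r<q$ vanish and the degree-$q$ edge homomorphism is the isomorphism sought. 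The two supporting inputs — that $\vG_{S}$ preserves flabbiness and that $\mathcal H^{r}(\vG_{S}(\scF^{\bullet}))\simeq\scH^{r}_{S}(\scS)$ — carry the real content, and would either be cited to the references listed in the text or verified directly.
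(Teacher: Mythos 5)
The paper contains no proof to compare against: Proposition \ref{propsh} is introduced by ``We quote (cf.\ \cite{KKK})'', so the statement is cited from Kashiwara--Kawai--Kimura rather than proved in the text. Your argument is correct and complete, and it is essentially the standard proof that the cited reference supplies. The three inputs all hold as you state them: $\Gamma_{S}$ of a flabby sheaf is flabby (glue a section supported in $S$ with $0$ on the complement of $S$, then extend by flabbiness); the cohomology sheaves of $\Gamma_{S}(\scF^{\bullet})$ are the derived sheaves, since both are sheafifications of the same presheaf $W\mapsto H^{r}(\Gamma(W,\Gamma_{S}\scF^{\bullet}))$; and your flabby-complex lemma is proved correctly --- the induction $Z^{0}=0$, $B^{r+1}\simeq C^{r}/Z^{r}$ flabby as a quotient of flabby by flabby, together with $\Gamma(U,-)$-acyclicity of flabby sheaves on arbitrary (not necessarily paracompact) spaces, yields $H^{q}(\Gamma(U,C^{\bullet}))=\Gamma(U,Z^{q})/\Gamma(U,B^{q})\simeq\Gamma(U,\mathcal{H}^{q})$. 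Naturality in $U$ is automatic because your isomorphism is a morphism of presheaves. Two small points deserve explicit mention: the identification $\scF^{j}(V,V\ssm U)=\Gamma(U,\Gamma_{S}(\scF^{j}))$ is literal only when $V\cap S=U$; for a general $V$ containing $U$ as a closed set one needs the extension-by-zero isomorphism between sections supported in $U$ over different such $V$'s --- the same excision the paper itself invokes --- so this should be stated rather than asserted as ``exactly''. Also the case $q=0$, where your base step $Z^{0}=0$ is unavailable, is trivially true since kernels commute with sections. Your closing diagnosis of why the one-step dimension shift stalls (the quotient $\scG$ may have $\scH^{0}_{S}(\scG)\neq 0$) is accurate, and is precisely the reason the argument must process the whole resolution, or equivalently use the spectral-sequence formulation, at once.
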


If the above is the case, we identify $\scH_{S}^{q}(\scS)(U)$ and $H^{q}_{U}(V;\scS)$  hereafter.

\begin{definition} We say that $S$ is {\em purely $q$-codimensional} in $X$ \wrt\ $\scS$ if
\[
\scH_{S}^{i}(\scS)=0\qquad\text{for}\ \ i\ne q.
\]

\end{definition}

Thus if this is the case, the statement of Proposition \ref{propsh} holds.

\subsection{Orientation sheaves and the Thom class}\label{subsOT}

We list \cite{B} and \cite{H} as references for this subsection. As to the Alexander duality, we refer to \cite{Br1}
and \cite{Su7}. Throughout the paper, the \mfd s 
are assumed to have a countable basis, thus 
they are
paracompact and have 
countably many connected components.
We discuss orientation sheaves and orientations in detail below. For the moment recall that 
orientations of vector spaces, \mfd s and simplices are defined as in \cite{ST}.

Let $X$ be a  $C^{\infty}$ \mfd\ of 
dimension $m$
and $\Z_{X}$  the constant sheaf on $X$ with stalk $\Z$. Then $H^{q}(X;\Z_{X})$ is canonically isomorphic
with the 
simplicial cohomology $H^{q}(X;\Z)$ of $X$ with $\Z$-coefficients on finite chains
and, for a closed set $S$,  $H^{q}_{S}(X;\Z_{X})$ is canonically isomorphic
with the relative cohomology $H^{q}(X,X\ssm S;\Z)$. We denote by $H_{q}(X;\Z)$ the Borel-Moore homology of $X$,
which in our case  is canonically isomorphic with the 
simplicial homology of $X$ of locally finite chains.

Suppose $X$ is orientable and is specified with an orientation, i.e., oriented. For a triangulation 
of $X$, 
we orient simplices and dual cells so that the orientation of the cell dual  to a simplex followed by that of the simplex gives the
orientation of $X$. For a subcomplex $S$ of $X$ \wrt\ some triangulation, we have the Alexander duality
\begin{equationth}\label{isoalex}
A:H^{q}(X,X\ssm S;\Z)\overset\sim\lra H_{m-q}(S;\Z),
\end{equationth}
which is given by the left cap product with the fundamental class $[X]$, the class of the sum of all the $m$-simplices
in $X$.

First note that we have
\[
H^{q}_{\{0\}}(\R^{l};\Z_{\R^{l}})\simeq\begin{cases}\Z&\qquad
q=l,\\
0&\qquad q\ne l,
\end{cases}
\]
which 
can be seen  from the long exact sequence. We may  
as well 
interpret it as the Alexander duality
\[
A:H^{q}(\R^{l},\R^{l}\ssm\{0\};\Z)\overset\sim\lra H_{l-q}(\{0\};\Z).
\]
In particular, since the \iso\ $H^{l}(\R^{l},\R^{l}\ssm\{0\};\Z)\simeq H_{0}(\{0\};\Z)=\Z$ is given by
the left cap product with $[\R^{l}]$,
specifying a generator of $H^{l}_{\{0\}}(\R^{l};\Z_{\R^{l}})$ is equivalent to
specifying an orientation of $\R^{l}$.

Let $\pi:E\ra M$ be a $C^{\infty}$ real  \vb\ of rank $l$ on a $C^{\infty}$ \mfd\ $M$. We identify $M$ with the 
image of the embedding $i:M\hra E$ by  zero section. By excision, the sheaf $\scH^{q}_{M}(\Z_{E})$ may be identified with the sheaf
$\scH^{q}_{M}(\pi;\Z_{E})$
defined by the presheaf $U\mapsto H^{q}_{U}(\pi^{-1}(U);\Z_{E})$. Setting $E_{x}=\pi^{-1}(x)$ for  $x\in M$, we have an \iso
\[
r^{*}_{x}:\scH^{q}_{M}(\pi;\Z_{E})_{x}\overset\sim\lra H^{q}_{\{0\}}(E_{x};\Z_{E_{x}})\simeq\begin{cases}\Z&\qquad
q=l,\\
0&\qquad q\ne l.
\end{cases}
\]
Thus $M$ is purely $l$-codimensional in $E$ \wrt\ $\Z_{E}$.

\begin{definition} We set $or_{M/E}=\scH^{l}_{M}(\Z_{E})$ and call it the {\em orientation sheaf} of the bundle $E\ra M$.
\end{definition}

\begin{remark}\label{remori} The sheaf $or_{M/E}$ is a sheaf on $M$. It is what is referred to as the relative orientation
sheaf of the embedding $i:M\hra E$ 
and describes the orientations of the fibers of 
$\pi:E\ra M$.
\end{remark}

The sheaf $or_{M/E}$
 is locally constant, i.e., locally isomorphic with a product sheaf with stalk $\Z$ and, by Proposition \ref{propsh}, we have 
\[
or_{M/E}(U)= H^{l}_{U}(\pi^{-1}(U);\Z_{E}).
\]
In particular $or_{M/E}(M)= H^{l}_{M}(E;\Z_{E})$. Note that there is a canonical \iso 
\[
or_{M/E}\otimes_{{}_{\Z_{M}}} or_{M/E}\simeq\Z_{M},\qquad \psi\otimes\psi\leftrightarrow 1,
\]
where $\psi$ is any generator  of  $or_{M/E,x}$, $x\in M$.

The bundle $E$ is said to be {\em orientable} if $or_{M/E}$ is a constant sheaf. This is equivalent to saying that
$or_{M/E}$ admits a global section $\psi\in or_{M/E}(M)= H^{l}_{M}(E;\Z_{E})$
\st\ $r^{*}_{x}(\psi(x))$ generates $H^{q}_{\{x\}}(E_{x};\Z_{E_{x}})$ over $\Z$ for
every $x\in M$, i.e., we may specify an orientation of each fiber in a ``coherent manner''. 
A section $\psi$ as above is called an {\em orientation} of $E$. 
Once we specify such a $\psi$ we say that $E$ is oriented.
From the definition, if the bundle $E$ is orientable, we have $or_{M/E}(M)\simeq\Z_{M}(M)$. The \iso\ is not uniquely determined,
however once we specify an orientation $\psi$, it is determined so that $\psi$ is sent to the constant \fcn\ $1$.
Note that if $M$ is connected, there are exactly two orientations.

Let $\pi:E\ra M$ be an oriented real \vb\ of rank $l$. A frame $(e_{1},\dots,e_{l})$ of $E$ on an open set
$U$ in $M$ is said to be {\em positive} if $(e_{1}(x),\dots,e_{l}(x))$ determines the prescribed orientation
of $E_{x}$ for every $x\in U$.

\begin{definition}\label{defthom} Let $E$ be an oriented real \vb\ of rank $l$. The {\em Thom class} $\varPsi_{E}$ of $E$
is the global section of $or_{M/E}$ that defines the orientation.
It may be thought of as being in $H^{l}_{M}(E;\Z_{E})\simeq H^{l}(E,E\ssm M;\Z)$. 
\end{definition}

Let $M$ be a $C^{\infty}$ \mfd\  of dimension $n$. The orientation sheaf $or_{M}$ of $M$ is defined to be  the orientation sheaf of the tangent bundle
$TM$. By excision and the exponential map, we have, for any $x\in M$ and an open \nbd\ $U$ 
of $x$,  a canonical \iso
\[
or_{M,x}\simeq H^{n}_{\{x\}}(U;\Z),
\]
which relates the orientation of $TM$ and that of $M$. The \mfd\ $M$ is  orientable \iff\ $TM$ is and, if this is the case,  we orient them so that if
 $(x_{1},\dots,x_{n})$ is a positive  coordinate system on $M$,   $(\frac \partial{\partial x_{1}},\dots, \frac \partial{\partial x_{n}})$ is a positive frame of $TM$.

Letting $\scE^{(n)}_{M}$ be the sheaf of $C^{\infty}$ $n$-forms on $M$,
the sheaf
$\scV_{M}=\scE^{(n)}_{M}\otimes_{{}_{\Z_{M}}} or_{M}$
is called the {\em sheaf of densities} on $M$. Denoting by $\vG_{c}(M;\scV_{M})$ the space of sections with
compact support, we have the integration
\[
\vG_{c}(M;\scV_{M})\xrightarrow{\,\,\,\int_{M}\,\,\,}\R.
\]
If $M$ is  orientable and if an orientation is specified, there is an
\iso\
$\scV_{M}\simeq\scE^{(n)}_{M}$.

Let $E\ra M$ be a real \vb\ and $i:M\hra E$ the embedding by  zero section. Noting that the normal bundle $T_{M}E$ of $M$ in $E$ is the bundle $E$ itself, we have the exact sequence of \vb s on $M$\,:
\[
0\lra TM\lra TE|_{M}\lra E\lra 0.
\]
From this we have \iso s
\begin{equationth}\label{orifib}
i^{-1}{or}_{E} \simeq{or}_{M/E}\otimes {or}_M,
\qquad
{or}_{M/E}\simeq i^{-1}{or}_{E}\otimes {or}_M,
\end{equationth}
where ${or}_{E}$ denotes the orientation sheaf of the total space $E$ and the tensor products are over $\Z_{M}$. Note that there are several choices for the above \iso s.
From \eqref{orifib}, we see that if $M$ is orientable and $E$ is orientable as a bundle,
the total space $E$ is also orientable. Also if $M$ and the total space $E$ are orientable, $E$ is orientable 
as a bundle.

Let $X$ be a $C^{\infty}$ \mfd\ of dimension $m$ and $M\subset X$ a closed sub\mfd\ of dimension $n$. Then we have the exact sequence
\begin{equationth}\label{exactemb}
0\lra TM\lra TX|_{M}\overset\varpi\lra T_{M}X\lra 0
\end{equationth}
with $T_{M}X$ the  normal bundle of $M$ in $X$. The {\em relative orientation
sheaf} $or_{M/X}$ is defined to be the orientation sheaf of the normal bundle $\pi:T_{M}X\ra M$. By the tubular \nbd\ theorem and excision,
we have, for an open set $U\subset M$,
\begin{equationth}\label{isoori}
or_{M/X}(U)= H^{l}_{U}(\pi^{-1}(U);\Z_{T_{M}X})\simeq H^{l}_{U}(V;\Z_{X}),
\end{equationth}
where $l=m-n$ and $V$ is an open set in $X$ containing $U$ as a closed set. Thus we have a canonical \iso
\[
or_{M/X}\simeq\scH_{M}^{l}(\Z_{X}).
\]
Denoting by $i:M\hra X$ the embedding, we have 
\iso s (cf. \eqref{orifib})
\begin{equationth}\label{orisub}
i^{-1}or_{X}\simeq or_{M/X}\otimes or_{M},\qquad or_{M/X}\simeq i^{-1}or_{X} \otimes or_{M}.
\end{equationth}
Thus if $M$ and $X$ are orientable, so is $T_{M}X$.

From (\ref{isoori}), we have
\[
or_{M/X}(M)= H^{l}_{M}(T_{M}X;\Z_{T_{M}X})\simeq H^{l}_{M}(X;\Z_{X}).
\]
Suppose the bundle
$T_{M}X$ is oriented.
The Thom class $\varPsi_{M}$ of $M$ in $X$ is then defined to be the class in $H^{l}_{M}(X;\Z_{X})
\simeq H^{l}(X,X\ssm M;\Z)$ that corresponds to the Thom class of $T_{M}X$ by the  \iso\ above.

\subsection{Hyper\fcn s and hyperforms}\label{sshyp}

Let $M$ be a real analytic \mfd\ of dimension $n$ and $X$ its complexification. Let $\scO_{X}$ denote
the sheaf of \h\ \fcn s on $X$.
We quote (cf. \cite{KKK}, \cite{KS}, \cite{Skk})\,:

\begin{theorem}\label{thfund} {\bf 1.} $M$ is purely $n$-codimensional in $X$ \wrt\ $\scO_{X}$, i.e., $\scH^{i}_{M}(\scO_{X})=0$
for $i\ne n$.
\vv

\noindent
{\bf 2.} The sheaf $\scH^{n}_{M}(\scO_{X})$ is flabby.
\end{theorem}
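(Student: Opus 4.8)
Since these are the two foundational theorems of Sato, quoted here from \cite{KKK,KS,Skk}, my plan is to recall the classical strategy for each; as the authors themselves note, I do not expect the relative Dolbeault formalism to shorten either argument. For Part~1: because $\scH^i_M(\scO_X)$ is defined by a presheaf of local cohomology groups, its stalks \eqref{stalk} are computed over arbitrarily small \nbd s, so I would first localize and use that $(X,M)$ is locally modelled on $(\C^n,\R^n)$, reducing the assertion to $\scH^i_{\R^n}(\scO_{\C^n})=0$ for $i\ne n$. On a polydisc $V$ centred at a point of $\R^n$—which is Stein—the long exact sequence of Proposition~\ref{propfundloc} together with Cartan's Theorem~B ($H^i(V;\scO)=0$ for $i>0$) collapses to canonical \iso s $H^i_{\R^n}(V;\scO)\simeq H^{i-1}(V\ssm\R^n;\scO)$ for $i\ge2$, while $\scH^0_{\R^n}(\scO_{\C^n})=0$ is immediate from the identity theorem. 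Everything then rests on computing $H^\bullet(V\ssm\R^n;\scO)$, which I would do by covering $V\ssm\R^n$ with the $2n$ Stein open sets $\{z\in V:\pm\Im z_k>0\}$: every nonempty multiple intersection is convex, hence Stein, so by Leray's theorem the \v{C}ech cohomology of this covering computes $H^\bullet(V\ssm\R^n;\scO)$. The nerve is the boundary of the $n$-dimensional cross-polytope, homotopy equivalent to $S^{n-1}$, and inspection of the \v{C}ech complex shows the cohomology is concentrated in degree $n-1$ (for $n=1$ the complement is two half-discs and one recovers the one-variable picture). Feeding this back gives pure $n$-codimensionality; alternatively one may induct on $n$ from the one-variable case via a Künneth formula for holomorphic local cohomology, at the cost of handling topological tensor products of the relevant nuclear Fr\'echet section spaces.

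For Part~2, granting Part~1, the Grothendieck spectral sequence $H^p_S(M;\scH^q_M(\scO_X))\Rightarrow H^{p+q}_S(X;\scO_X)$ for a closed set $S\subseteq M$ degenerates, yielding $H^{q+n}_S(X;\scO_X)\simeq H^q_S(M;\scB)$, where $\scB=\scH^n_M(\scO_X)$. By the elementary criterion that a sheaf is flabby \iff\ $H^1_S$ vanishes for every closed $S$, flabbiness of $\scB$ is therefore equivalent to $H^{n+1}_S(X;\scO_X)=0$ for all closed $S\subseteq M$ (indeed I would aim for the vanishing in all degrees $>n$). This is the genuinely hard analytic input, and it is \emph{not} accessible through the soft Dolbeault resolution, since soft sheaves fail to be acyclic for the functor $\Gamma_S$.

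To establish the vanishing I would invoke Martineau's theorem—for a compact convex $K\subset\R^n$ one has $H^q_K(\C^n;\scO)=0$ for $q\ne n$, itself resting on Grauert's theorem and the duality theory of analytic functionals—then pass to an arbitrary compact $K\subset\R^n$ by covering it with small convex compacta and running the \v{C}ech/Mayer--Vietoris spectral sequence for supports (using that intersections of convex sets are convex, so each term stays concentrated in degree $n$), and finally to an arbitrary closed $S$ by a compact exhaustion together with a Mittag--Leffler argument. The main obstacle, and precisely the reason the authors disclaim any simplification here, is this analytic vanishing: both the convex base case and its propagation to general support sets are delicate and have no shortcut in the relative Dolbeault language.
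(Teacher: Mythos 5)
The first thing to say is that the paper has no proof of this theorem for you to be compared against: Theorem~\ref{thfund} is quoted outright from \cite{KKK}, \cite{KS}, \cite{Skk}, and the introduction explicitly disclaims that the relative Dolbeault framework simplifies the proofs of pure codimensionality and flabbiness. So your reconstruction of the classical argument must stand on its own, and as it stands it has two genuine gaps.

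In Part~1 the decisive step is a non sequitur. Leray's theorem identifies $H^{q}(V\ssm\R^{n};\scO)$ with the \v{C}ech cohomology of your covering \emph{with coefficients in} $\scO$, i.e., with the cohomology of the complex built from the spaces $\scO(U_{\alpha_{0}}\cap\cdots\cap U_{\alpha_{q}})$; the homotopy type of the nerve controls only constant-coefficient cohomology. The only thing the cross-polytope combinatorics gives for free is $H^{q}=0$ for $q\ge n$, because the \v{C}ech complex has length $n$. The vanishing in the middle range $1\le q\le n-2$ (together with the identification of $H^{0}$, i.e., extension of holomorphic functions across $\R^{n}$ when $n\ge 2$) is, via your long exact sequence, exactly the pure codimensionality statement; it cannot be obtained by ``inspection'' of the complex, and every classical proof pays for it with genuine analytic input --- Martineau--Harvey duality resting on Grauert's Theorem~\ref{thGrauert}, Serre duality and Malgrange's vanishing, or the dimension induction you mention as an alternative but do not carry out.

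In Part~2 your reduction is sound: flabbiness of $\scB$ is equivalent to $H^{1}_{S}(M;\scB)=0$ for all closed $S$, and the degenerate spectral sequence converts this into $H^{n+1}_{S}(X;\scO)=0$. But the middle step of your vanishing argument fails: a general compact subset of $\R^{n}$ is \emph{not} a finite union of convex compacta (a circle in $\R^{2}$ is not, since its only convex subsets are points), and if you chop $K$ by a grid of cubes the pieces $K\cap Q_{i}$ are no longer convex, so the Mayer--Vietoris induction never gets started. What does work classically is either to note that Martineau's theorem (Theorem~\ref{thMH} in the paper) already holds for an \emph{arbitrary} compact $K\subset\R^{n}$ --- Grauert gives $H^{p,q}_{\bp}[K]=0$ for $q\ge 1$ with no convexity hypothesis --- or to approximate $K$ from the outside by finite unions of grid cubes and use continuity of local cohomology along decreasing compact sets; either way an idea is needed that your sketch omits. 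Finally, the passage from compact to arbitrary closed supports is not a routine ``exhaustion plus Mittag--Leffler'': killing the $\varprojlim^{1}$ term requires surjectivity of the restriction maps in the inverse system, which is classically established through the dual (analytic-functional) description and constitutes a substantial part of the flabbiness proof rather than an afterthought.
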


We recall that the sheaf of Sato hyper\fcn s on $M$ is defined by
\[
\scB_{M}=\scH^{n}_{M}(\scO_{X})\otimes_{{}_{\Z_{M}}} or_{M/X}.
\]

Denoting by $\scO^{(p)}_{X}$ the sheaf of \h\ $p$-forms on $X$, we introduce the following 
\begin{definition} The sheaf of {\em $p$-hyperforms} is defined by
\[
\scB^{(p)}_{M}=\scH^{n}_{M}(\scO^{(p)}_{X})\otimes_{{}_{\Z_{M}}} or_{M/X}.
\]
\end{definition}

It is what is referred to as the sheaf of $p$-forms with coefficients in hyper\fcn s. Note that $M$ is purely $n$-codimensional \wrt\ $\scO^{(p)}_{X}$
and that $\scB^{(p)}_{M}$ is flabby.

Since $X$ is a complex \mfd, it is always orientable.
Thus, by \eqref{orisub}, the bundle $T_{M}X$ is orientable \iff\ $M$ is. If this is the case, for any open set $U\subset M$, we have (cf. (\ref{isoori}))
\begin{equationth}\label{isosec}
\scB^{(p)}_{M}(U)=H^{n}_{U}(V;\scO^{(p)}_{X})\otimes_{{}_{\Z_{M}(U)}} H^{n}_{U}(V;\Z_{X}),
\end{equationth}
where $V$ is an open set in $X$ containing $U$ as a close set. We refer to such  $V$ a {\em complex \nbd} of $U$
in $X$. Moreover, if we specify 
an orientation 
of $T_{M}X$,
we have a canonical
\iso\ $or_{M/X}\simeq\Z_{M}$ so that we have a canonical \iso
\[
\scB^{(p)}_{M}\simeq\scH^{n}_{M}(\scO^{(p)}_{X})
\]
and, for any open set $U\subset M$,
$\scB^{(p)}_{M}(U)\simeq H^{n}_{U}(V;\scO^{(p)}_{X})$
with $V$  a complex \nbd\ of $U$.

In the sequel, at some point the cohomology $H^{n}_{U}(V;\Z_{X})$ is embedded in $H^{n}_{U}(V;\C_{X})$, which is represented
by the relative de~Rham cohomology, while $H^{n}_{U}(V;\scO^{(p)}_{X})$ will  be represented by the relative
Dolbeault cohomology.
\lsection{Relative de~Rham cohomology}\label{secrdR}

Let $X$ be a $C^{\infty}$ \mfd\ of dimension $m$. We denote by $\scE^{(q)}_{X}$  the sheaf of $C^{\infty}$ $q$-forms  on $X$. We omit the suffix $X$ if there is no fear of confusion.

\subsection{\v{C}ech-de~Rham cohomology}

We refer to \cite{BT} and  \cite{Su2} for  details on  the \v{C}ech-de~Rham cohomology. For relative de~Rham
cohomology and the Thom class in this context, see \cite{Su2}. 

\paragraph{de~Rham cohomology\,:}
The $q$-th de~Rham cohomology $H^{q}_{d}(X)$ of $X$  is the $q$-th cohomology
of the complex $(\scE^{(\bullet)}(X),d)$, $d:\scE^{(q)}(X)\ra \scE^{(q+1)}(X)$. The de~Rham  theorem says that there
is an \iso
\[
H^{q}_{d}(X)\simeq H^{q}(X;\C).
\]
Among the \iso s, there is a canonical one, i.e., the one that regards a  $q$-form $\o$ as a cochain
that assignes to each oriented $q$-simplex  the integration of $\o$ on the simplex.

\paragraph{\v{C}ech-de~Rham cohomology\,:}
The \v{C}ech-de~Rham cohomology may be defined for an arbitrary covering of $X$.
Here we recall the case
of coverings consisting of two open sets.

Let $\V=\{V_{0},V_{1}\}$ be an open covering of $X$ and set $V_{01}=V_{0}\cap V_{1}$.
We set
\[
\scE^{(q)}(\V)=\scE^{(q)}(V_{0})\oplus\scE^{(q)}(V_{1})\oplus\scE^{(q-1)}(V_{01}).
\]
Thus an element in $\scE^{(q)}(\V)$ is expressed by a triple 
$\sigma=(\sigma_{0},\sigma_{1},\sigma_{01})$. 
We define 
the differential
\[
D:\scE^{(q)}(\V)\lra \scE^{(q+1)}(\V)\qquad\text{by}\ \ 
D (\sigma_{0},\sigma_{1},\sigma_{01})=(d\sigma_{0},d\sigma_{1},\sigma_{1}-\sigma_{0}-d\sigma_{01}).
\]
Then we see that $D\circ D=0$.

\begin{definition} The $q$-th {\em \v{C}ech-de~Rham cohomology} $H_{D}^{q}(\V)$ of $\V$ is the 
$q$-th cohomology of the complex $(\scE^{(\bullet)}(\V),D)$.
\end{definition}

\begin{theorem}\label{thdCd} The inclusion $\scE^{(q)}(X)\hra \scE^{(q)}(\V)$ given by $\o\mapsto (\o|_{V_{0}},\o|_{V_{1}},0)$
induces an \iso
\[
H^{q}_{d}(X)\overset\sim\lra H^{q}_{D}(\V).
\]
\end{theorem}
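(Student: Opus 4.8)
The plan is to prove the isomorphism by the standard Mayer--Vietoris / partition-of-unity argument, producing an explicit homotopy inverse on cohomology. First I would check that the given inclusion is genuinely a cochain map: for a global $q$-form $\o$ one computes $D(\o|_{V_{0}},\o|_{V_{1}},0)=(d\o|_{V_{0}},d\o|_{V_{1}},\o|_{V_{1}}-\o|_{V_{0}})$, and since $\o$ is globally defined the third entry $\o|_{V_{1}}-\o|_{V_{0}}$ vanishes on $V_{01}$, so this equals the image of $d\o$. Thus the inclusion commutes with $d$ and $D$ and descends to a map $H^{q}_{d}(X)\to H^{q}_{D}(\V)$; it remains to show this map is bijective. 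For this I would fix once and for all a $C^{\infty}$ partition of unity $\{\rho_{0},\rho_{1}\}$ with $\op{supp}\rho_{i}\subset V_{i}$ and $\rho_{0}+\rho_{1}=1$, which exists since $X$ is paracompact.

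For surjectivity I would start from a $D$-cocycle $\sigma=(\sigma_{0},\sigma_{1},\sigma_{01})$, so that $d\sigma_{0}=d\sigma_{1}=0$ and $\sigma_{1}-\sigma_{0}=d\sigma_{01}$ on $V_{01}$, and kill the third component by subtracting the $D$-coboundary $D(\eta_{0},\eta_{1},0)=(d\eta_{0},d\eta_{1},\eta_{1}-\eta_{0})$ with $\eta_{0}=-\rho_{1}\sigma_{01}$ and $\eta_{1}=\rho_{0}\sigma_{01}$, each extended by zero off $V_{01}$; then $\eta_{1}-\eta_{0}=(\rho_{0}+\rho_{1})\sigma_{01}=\sigma_{01}$. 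The cohomologous cocycle $(\sigma_{0}-d\eta_{0},\sigma_{1}-d\eta_{1},0)$ now has vanishing third entry, and the cocycle condition forces its first two entries to agree on $V_{01}$; hence they glue to a global form $\o$, which is closed because each piece is. By construction $(\o|_{V_{0}},\o|_{V_{1}},0)$ represents the class of $\sigma$, giving surjectivity on cohomology.

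For injectivity, suppose a global closed $q$-form $\o$ has image a $D$-coboundary, say $(\o|_{V_{0}},\o|_{V_{1}},0)=D(\tau_{0},\tau_{1},\tau_{01})$, that is $d\tau_{0}=\o|_{V_{0}}$, $d\tau_{1}=\o|_{V_{1}}$ and $\tau_{1}-\tau_{0}=d\tau_{01}$ on $V_{01}$. I would exhibit a global primitive by the collating formula $\xi=\rho_{0}\tau_{0}+\rho_{1}\tau_{1}-d\rho_{0}\wedge\tau_{01}$, again with the relevant products extended by zero. A direct computation using $d\rho_{0}+d\rho_{1}=0$, the relations above, and $d(d\rho_{0}\wedge\tau_{01})=-\,d\rho_{0}\wedge d\tau_{01}$ collapses everything to $d\xi=\o$, whence $[\o]=0$ in $H^{q}_{d}(X)$. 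Together the two steps give the desired isomorphism.

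The step I expect to require the most care is not the cohomological bookkeeping but the repeated claim that products such as $\rho_{1}\sigma_{01}$ and $d\rho_{0}\wedge\tau_{01}$, a priori defined only on $V_{01}$, extend to genuine $C^{\infty}$ forms on all of $X$ by setting them equal to zero outside $V_{01}$. This rests on the support properties of the partition of unity: near any point of $V_{0}\ssm V_{1}$ one has $\rho_{0}\equiv1$, hence $d\rho_{0}\equiv0$, while $\rho_{1}\equiv0$, so each such product vanishes on a whole neighborhood of the locus where $V_{01}$ fails to be closed, making the extension by zero smooth. Once these extensions are justified, all the identities above are local computations on $V_{0}$ and $V_{1}$ and present no further difficulty.
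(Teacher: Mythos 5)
Your proof is correct and follows exactly the approach the paper indicates: the paper states the homotopy inverse $[\sigma]\mapsto[\rho_{0}\sigma_{0}+\rho_{1}\sigma_{1}-d\rho_{0}\wedge\sigma_{01}]$ via a partition of unity subordinate to $\V$ and refers to \cite{BT}, \cite{Su2} for details, and your surjectivity/injectivity argument is precisely the fleshed-out version of that construction, including the correct attention to extending $\rho_{i}\sigma_{01}$ and $d\rho_{0}\wedge\tau_{01}$ by zero (the same support argument handles the symmetric case of points in $V_{1}\ssm V_{0}$, where $\rho_{0}\equiv 0$).
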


Note that the inverse is given by assigning to the class of $ (\sigma_{0},\sigma_{1},\sigma_{01})$ the class of 
$\rho_{0}\sigma_{0}+\rho_{1}\sigma_{1}-d\rho_{0}\wedge\sigma_{01}$, where $\{\rho_{0},\rho_{1}\}$ is a $C^{\infty}$
partition of unity subordinate to $\V$.

\subsection{Relative de~Rham cohomology}

Let $S$ be a closed set in $X$. Letting $V_{0}=X\ssm S$ and  $V_{1}$  a \nbd\ of $S$ in $X$, we  consider the 
covering $\V=\{V_{0},V_{1}\}$ of $X$. We also set $\V'=\{V_{0}\}$ and
\[
\scE^{(q)}(\V,\V')=\{\,\sigma\in\scE^{(q)}(\V)\mid \sigma_{0}=0\,\}=\scE^{(q)}(V_{1})\oplus\scE^{(q-1)}(V_{01}).
\]
Then we see that $(\scE^{(\bullet)}(\V,\V'),D)$ is a subcomplex of $(\scE^{(\bullet)}(\V),D)$.

\begin{definition}\label{defreldR} The $q$-th {\em  relative de~Rham cohomology} $H_{D}^{q}(\V,\V')$ of $(\V,\V')$  is the 
$q$-th cohomology of the complex $(\scE^{(\bullet)}(\V,\V'),D)$.
\end{definition}

From the exact sequence of  complexes
\[
0\lra \scE^{(\bullet)}(\V,\V')\overset{j^{*}}\lra\scE^{(\bullet)}(\V)\overset{i^{*}}\lra\scE^{(\bullet)}(V_{0})\lra 0,
\]
where $j^{*}(\sigma_{1},\sigma_{01})=(0,\sigma_{1},\sigma_{01})$ and $i^{*}(\sigma_{0},\sigma_{1},\sigma_{01})=\sigma_{0}$,
we have the  exact sequence
\begin{equationth}\label{lexactreldR}
\cdots\lra H^{q-1}_{d}V_{0})\overset{\delta}\lra H^{q}_{D}(\V,\V')\overset{j^{*}}\lra H^{q}_{D}(\V)\overset{i^{*}}\lra H^{q}_{d}(V_{0})\lra\cdots,
\end{equationth}
where $\delta$ assigns to the class of $\t$ the class of $(0,-\t)$. From 
the above sequence 
and Theorem \ref{thdCd}, we have\,:

\begin{proposition}
The cohomology $H^{q}_{D}(\V,\V')$ is determined uniquely, modulo canonical \iso s, independently of the choice of $V_{1}$.
\end{proposition}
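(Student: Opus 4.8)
The idea is that in the long exact sequence \eqref{lexactreldR} the only term genuinely depending on $V_1$ is the relative cohomology $H^q_D(\V,\V')$ itself: the set $V_0=X\setminus S$ is fixed by $S$, so $H^{q-1}_d(V_0)$ and $H^q_d(V_0)$ do not involve $V_1$, while $H^q_D(\V)$, although built from the covering $\{V_0,V_1\}$, is canonically \iso\ to $H^q_d(X)$ by Theorem \ref{thdCd} and hence is also $V_1$-independent up to canonical \iso. The plan is therefore to produce, for any two neighborhoods, a canonical map of the relative cohomologies fitting into a ladder between the two long exact sequences, and to deduce it is an \iso\ by the five lemma. First I reduce to a shrinking \nbd: for neighborhoods $V_1$ and $V_1'$ of $S$ the intersection $V_1\cap V_1'$ is again a \nbd, so it suffices to compare $V_1$ with an arbitrary $V_1'\subset V_1$ and then to check coherence of the resulting \iso s.

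So take $V_1'\subset V_1$ and put $\W=\{V_0,V_1'\}$, $\W'=\{V_0\}$. Restriction of $C^{\infty}$ forms, from $V_1$ to $V_1'$ and from $V_{01}$ to $V_0\cap V_1'$, is a morphism of complexes $\scE^{(\bullet)}(\V,\V')\to\scE^{(\bullet)}(\W,\W')$ (it commutes with $D$ because $d$ and the operations defining $D$ commute with restriction). In fact it extends to a morphism of the short exact sequence $0\to\scE^{(\bullet)}(\V,\V')\to\scE^{(\bullet)}(\V)\to\scE^{(\bullet)}(V_0)\to 0$ into the corresponding one for $\W$, where the map on $\scE^{(\bullet)}(V_0)$ is the identity and the map on $\scE^{(\bullet)}(\V)$ is restriction keeping $\sigma_0$ unchanged. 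Passing to cohomology yields a ladder between the two long exact sequences \eqref{lexactreldR}, with the identity on the terms $H^{q-1}_d(V_0)$ and $H^q_d(V_0)$, and with the arrow $H^q_D(\V)\to H^q_D(\W)$ an \iso: under the canonical \iso s of Theorem \ref{thdCd} both identify with $H^q_d(X)$, and since the inclusion $\o\mapsto(\o|_{V_0},\o|_{V_1},0)$ is visibly compatible with restriction, that arrow becomes the identity of $H^q_d(X)$. The square adjacent to $\delta$ also commutes on the nose, since $\delta$ sends the class of a closed $\theta$ on $V_0$ to the class of $(0,-\theta|_{V_{01}})$ and restriction merely restricts $\theta$ further to $V_0\cap V_1'$.

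The outer four vertical arrows in the window $H^{q-1}_D(\V)\to H^{q-1}_d(V_0)\to H^q_D(\V,\V')\to H^q_D(\V)\to H^q_d(V_0)$ are thus \iso s or identities, so the five lemma forces the middle arrow $H^q_D(\V,\V')\to H^q_D(\W,\W')$ to be an \iso. For arbitrary $V_1,V_1'$ I then define the comparison \iso\ as the composite of the restriction \iso\ to $V_1\cap V_1'$ with the inverse of the one for $V_1'$; transitivity of restriction of forms shows this is independent of intermediate choices and satisfies the obvious cocycle relation, giving the asserted canonical \iso s.

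I expect the only real work to be bookkeeping rather than any conceptual difficulty: one must confirm that the canonical \iso\ of Theorem \ref{thdCd} genuinely commutes with restriction (so that the fourth vertical arrow is the identity after the identifications with $H^q_d(X)$) and that $\delta$ and the projection $i^{*}$ are functorial under shrinking $V_1$. Each reduces to a one-line check on the explicit cochain formulas recalled above, so the ladder assembles without obstruction.
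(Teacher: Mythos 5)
Your proof is correct and is essentially the paper's argument: the paper deduces the proposition precisely from the long exact sequence \eqref{lexactreldR} together with the canonical \iso\ $H^{q}_{D}(\V)\simeq H^{q}_{d}(X)$ of Theorem \ref{thdCd}, the five lemma being left implicit. You have simply written out the details (the restriction ladder, the five lemma window, and the coherence of the resulting \iso s) that the paper omits.
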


In view of the above we denote $H^{q}_{D}(\V,\V')$ also by $H^{q}_{D}(X,X\ssm S)$ and call it the relative de~Rham cohomology of the pair $(X,X\ssm S)$. It is not difficult to see the following\,:

\begin{proposition}[Excision] For any open set $V$ containing $S$, there is a canonical \iso
\[
H^{q}_{D}(X,X\ssm S)\simeq H^{q}_{D}(V,V\ssm S).
\]\label{propexcisiondR}
\end{proposition}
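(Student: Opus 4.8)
The plan is to exploit the freedom, already recorded in the Proposition preceding this statement, that $H^{q}_{D}(X,X\ssm S)$ does not depend on the auxiliary neighborhood $V_{1}$ of $S$ entering the covering $\V=\{X\ssm S,V_{1}\}$, and likewise that $H^{q}_{D}(V,V\ssm S)$ does not depend on the neighborhood of $S$ chosen inside $V$. The observation driving the proof is that the relative de~Rham complex is \emph{intrinsic to a neighborhood of $S$}: by definition $\scE^{(q)}(\V,\V')=\scE^{(q)}(V_{1})\oplus\scE^{(q-1)}(V_{01})$ involves only $V_{1}$ and the punctured piece $V_{01}=V_{1}\ssm S$, making no further reference to the ambient manifold.

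Concretely, I would first choose a neighborhood $V_{1}$ of $S$ with $V_{1}\subset V$; this is possible since $V$ is open and contains $S$. Because $V$ is open in $X$, such a $V_{1}$ is simultaneously a neighborhood of $S$ in $X$, so it may be used both to form the covering $\V=\{X\ssm S,V_{1}\}$ of $X$ computing $H^{q}_{D}(X,X\ssm S)$ and to form the covering $\W=\{V\ssm S,V_{1}\}$ of $V$ computing $H^{q}_{D}(V,V\ssm S)$, with $\W'=\{V\ssm S\}$. One checks at once that $\W$ genuinely covers $V$: since $V_{1}\supset S$ we have $(V\ssm S)\cup V_{1}\supset(V\ssm S)\cup S=V$, while both pieces lie in $V$.

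Next I would compare the two relative complexes. The double intersection in the first case is $(X\ssm S)\cap V_{1}=V_{1}\ssm S$, and in the second case it is $(V\ssm S)\cap V_{1}=V_{1}\ssm S$ as well (using $V_{1}\subset V$); these coincide. Hence, for every $q$,
\[
\scE^{(q)}(\V,\V')=\scE^{(q)}(V_{1})\oplus\scE^{(q-1)}(V_{1}\ssm S)=\scE^{(q)}(\W,\W'),
\]
and the differential $D$ on the two sides is given by the identical formula. The two subcomplexes therefore agree term by term, so their cohomologies are equal, and this equality is the sought isomorphism.

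Finally, I would address canonicity. Since $H^{q}_{D}(X,X\ssm S)$ and $H^{q}_{D}(V,V\ssm S)$ are each well defined only modulo the canonical isomorphisms supplied by the independence proposition, the equality of complexes above descends to a canonical isomorphism between them, independent of the particular $V_{1}\subset V$ chosen. I do not anticipate a genuine obstacle: the entire content is the recognition that the relative complex sees only a neighborhood of $S$, together with the bookkeeping that the two coverings share the same $V_{1}$ and the same double intersection $V_{1}\ssm S$. The only points requiring a moment's care are the verification that $\W$ covers $V$ and that the two double intersections coincide, both immediate from $S\subset V_{1}\subset V$.
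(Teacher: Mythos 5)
Your proof is correct and is exactly the argument the paper has in mind (the paper states this excision as immediate, since its relative de~Rham complex $\scE^{(q)}(V_{1})\oplus\scE^{(q-1)}(V_{01})$ by construction only sees a neighborhood $V_{1}$ of $S$ and $V_{01}=V_{1}\ssm S$). Choosing $V_{1}\subset V$ makes the two relative complexes literally identical, and the preceding independence-of-$V_{1}$ proposition gives canonicity, just as you argue.
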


The relative cohomology share other fundamental properties with the relative cohomology of $X$ with coefficients
in $\C$. In fact we have (cf. \cite{Su7}, \cite{Su11})\,:

\begin{theorem}[Relative de~Rham theorem]
There is a canonical \iso
\[
H^{q}_{D}(X,X\ssm S)\simeq H^{q}(X,X\ssm S;\C).
\]\label{thdRrel}
\end{theorem}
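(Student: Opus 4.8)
The plan is to deduce the statement from the ordinary de~Rham theorem by a five lemma argument comparing two long exact sequences. First I would assemble the relative de~Rham side: combining the exact sequence \eqref{lexactreldR} with the isomorphism $H^q_D(\V)\simeq H^q_d(X)$ of Theorem~\ref{thdCd}, and recalling that $V_0=X\ssm S$, a window around the relative term reads
\[
H^{q-1}_d(X)\to H^{q-1}_d(X\ssm S)\overset{\delta}{\to} H^q_D(X,X\ssm S)\overset{j^*}{\to} H^q_d(X)\to H^q_d(X\ssm S).
\]
On the topological side, Proposition~\ref{propfundloc}.1 applied to the triple $(X,X\ssm S,\emptyset)$ gives
\[
H^{q-1}(X;\C)\to H^{q-1}(X\ssm S;\C)\to H^q(X,X\ssm S;\C)\to H^q(X;\C)\to H^q(X\ssm S;\C).
\]
The four absolute terms in each window are matched by the de~Rham theorem applied to the manifold $X$ and to its open submanifold $X\ssm S$; these are the canonical isomorphisms realized by integrating forms over simplices.

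To run the five lemma I must still supply a vertical comparison map between the relative terms, compatible with both sequences, and I would build it from the same integration pairing. A relative de~Rham cocycle is a pair $(\sigma_1,\sigma_{01})$ with $d\sigma_1=0$ on $V_1$ and $\sigma_1=d\sigma_{01}$ on $V_{01}$. To it I attach the linear functional on the relative homology $H_q(X,X\ssm S;\C)$ whose value on a class represented by a chain $c$ with $\partial c\subset X\ssm S$ is
\[
\int_c\sigma_1-\int_{\partial c}\sigma_{01},
\]
where a barycentric subdivision first places $c$ in $V_1$ and $\partial c$ in $V_{01}$ so that both integrals make sense. Stokes' theorem shows the value is unchanged under $D$-coboundaries in $(\sigma_1,\sigma_{01})$ and under relative boundaries in $c$, so via the identification of $H^q(X,X\ssm S;\C)$ with the dual of relative homology we obtain a homomorphism $H^q_D(X,X\ssm S)\to H^q(X,X\ssm S;\C)$. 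On classes coming from $H^q_d(X)$ this pairing reduces to ordinary integration over cycles, i.e.\ to the absolute de~Rham isomorphism, which is what makes the square involving $j^*$ commute.

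With the comparison map in hand and the ladder shown to commute, the five lemma forces the middle arrow $H^q_D(X,X\ssm S)\to H^q(X,X\ssm S;\C)$ to be an isomorphism, since the four neighboring vertical maps are the de~Rham isomorphisms. The squares built from the restriction maps commute by the naturality of the de~Rham isomorphism under the open inclusion $X\ssm S\hra X$, which is standard. I expect the genuine obstacle to be the commutativity of the square containing the two connecting homomorphisms: the relative $\delta$ is the explicit map of \eqref{lexactreldR} sending $[\t]$ to $[(0,-\t)]$, whereas the topological $\delta$ of Proposition~\ref{propfundloc}.1 is defined through a flabby resolution, so matching them means tracing a single class through both descriptions and applying Stokes' theorem on chains that straddle the boundary of a tubular neighborhood of $S$. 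Throughout one may, by excision (Proposition~\ref{propexcisiondR} together with Proposition~\ref{propfundloc}.2), shrink $V_1$ to any convenient neighborhood of $S$, which simplifies these boundary computations.
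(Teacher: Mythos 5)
The paper itself offers no proof of Theorem \ref{thdRrel} to compare against: the statement is quoted from \cite{Su7} and \cite{Su11}, and the Remark following Theorem \ref{thDrel} indicates the intended mechanism, namely that $\scE^{(\bullet)}(\V,\V')$ is (up to sign conventions) the mapping cocone of the restriction morphism $\scE^{(\bullet)}(V_{1})\ra\scE^{(\bullet)}(V_{01})$, so that the theorem follows from derived-functor formalism. Your route is therefore genuinely different, but it is a correct classical one, and its skeleton is sound: the window you extract from \eqref{lexactreldR} together with Theorem \ref{thdCd} is the right one, the pairing $(\sigma_{1},\sigma_{01})\mapsto\int_{c}\sigma_{1}-\int_{\partial c}\sigma_{01}$ (after the small-chain/excision argument replacing $c$, modulo chains in $X\ssm S$, by its part in $V_{1}$) is exactly the standard comparison map and is even consistent in sign with the paper's integration $\int_{R_{1}}\sigma_{1}+\int_{R_{01}}\sigma_{01}$, $R_{01}=-\partial R_{1}$, and Stokes' theorem does annihilate $D$-coboundaries and relative boundaries. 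What each approach buys is worth recording. Yours produces the isomorphism explicitly as an integration pairing, in the same spirit as the duality pairings of Section \ref{seclocald}; the price is three verifications of the same nature, which you partly but not fully acknowledge: (i) identifying the paper's flabby-resolution cohomology $H^{q}(X,X\ssm S;\C)$ with the dual of relative singular homology, naturally in the pair and compatibly with both long exact sequences; (ii) the $j^{*}$-square, a Stokes computation against the explicit inverse $\rho_{1}\sigma_{1}-d\rho_{0}\wedge\sigma_{01}$ of Theorem \ref{thdCd}; and (iii) the $\delta$-square, which you rightly single out as the crux and only sketch. The sheaf-theoretic route disposes of all three at once: flabbiness makes restriction of sections surjective, so the complex of flabby sections vanishing on $X\ssm S$ is quasi-isomorphic to the flabby cocone; fineness of the $\scE^{(q)}$ makes the de~Rham cocone compute the cohomologies of $V_{1}$ and $V_{01}$; a morphism of resolutions compares the two cocones, and excision (Proposition \ref{propfundloc}.2) finishes. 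No chains, no integration, no connecting-map check are needed, and -- the decisive advantage -- the identical argument with $\scE^{(p,\bullet)}$ and $\scO^{(p)}$ yields Theorem \ref{thDrel}, whereas your integration-over-chains comparison has no Dolbeault analogue.
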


The excision in Proposition \ref{propexcisiondR} is compatible with the excision 
in Proposition \ref{propfundloc}.\,2 for $\scS=\C$ via the above \iso.

\paragraph{Complexification of the relative orientation sheaf\,:} Let $X$ be a $C^{\infty}$ \mfd\ of dimension $m$ and $M\subset X$ a closed sub\mfd\ of dimension $n$. Set $l=m-n$. We define the complexification of the relative orientation sheaf
by $or^{c}_{M/X}=or_{M/X}\otimes_{{}_{\Z_{M}}}\C_{M}$. Then by (\ref{isoori}) and Theorem \ref{thdRrel}, we have,
for any open set $U$ in $M$,
\begin{equationth}\label{cori}
or^{c}_{M/X}(U)\simeq H^{l}_{D}(V,V\ssm U),
\end{equationth}
where $V$ is an open set in $X$ containing $U$ as a closed set.

\subsection{Integration}\label{ssint}

The integration on the \v{C}ech-de~Rham cohomology in general is defined by considering honeycomb systems. Here
we recall the relevant case.

Let $X$ be an oriented $C^{\infty}$ \mfd\ of dimension $m$. First we assume that $X$ is compact. Then the integration of $m$-forms induces the integration
\begin{equationth}\label{intdR}
H^{m}_{d}(X)\xrightarrow{\,\,\,\int_{X}\,\,\,}\C.
\end{equationth}

Now let $K$ be a compact set in $X$ ($X$ may not be compact). Letting $V_{0}=X\ssm K$ and $V_{1}$ a \nbd\
of $K$, we  consider the coverings $\V_{K}=\{V_{0},V_{1}\}$ and $\V'_{K}=\{V_{0}\}$ of $X$ and $X\ssm K$. Let $R_{1}$ be an $m$-dimensional compact 
\mfd\ with $C^{\infty}$ boundary in $X$ containing $K$ in its interior. We set $R_{01}=-\partial R_{1}$ and 
define
\[
\scE^{(m)}(\V_{K},\V'_{K})\xrightarrow{\,\,\,\int_{X}\,\,\,}\C\qquad\text{by}\ \ \int_{X}\sigma=\int_{R_{1}}\sigma_{1}+\int_{R_{01}}\sigma_{01}.
\]
Then it induces the integration
\begin{equationth}\label{intreldR}
H^{m}_{D}(X,X\ssm K)\xrightarrow{\,\,\,\int_{X}\,\,\,}\C.
\end{equationth}

\subsection{Thom class in differential forms}\label{ssThomdiff} Let $\pi:E\ra M$ be a real oriented \vb\ of rank $l$ on a $C^{\infty}$ \mfd\ $M$. 
We have the Thom class $\vP_{E}$ in $H^{l}(E,E\ssm M;\Z)\simeq 
\scH^{l}_{M}(\Z_{E})(M)$
(cf. Definition~\ref{defthom}). Recall that $\vP_{E}$ is characterized as the class 
whose restriction $r_{x}^{*}(\vP_{E}(x))\in H^{l}(E_{x},E_{x}\ssm \{0\};\Z)\simeq\Z$ to each fiber $E_{x}$ 
 is the prescribed generator of $H^{l}(E_{x},E_{x}\ssm \{0\};\Z)$, i.e., the Thom class of the bundle $E_{x}\ra\{x\}$.
We denote by $\vP_{E}^{c}$ the image of $\vP_{E}$ by the canonical morphism
$H^{l}(E,E\ssm M;\Z)\ra H^{l}(E,E\ssm M;\C)\simeq H^{l}_{D}(E,E\ssm M)$.

In view of Theorem \ref{thdRrel},
we may describe the Alexander duality \eqref{isoalex} with $\C$-coefficients  in terms of relative de~Rham cohomology.
We recall this in the case of   duality for the pair $(\R^{l},0)$.
Thus let $V_{0}=\R^{l}\ssm\{0\}$ and $V_{1}$ a \nbd\ of $0$ in $\R^{l}$ and consider the coverings
$\V=\{V_{0},V_{1}\}$ and $\V'=\{V_{0}\}$ of $\R^{l}$ and $\R^{l}\ssm\{0\}$. Let $R_{1}$ be a closed $n$-ball around $0$ in $V_{1}$ and set $R_{01}=-\partial R_{1}$. Then we have (cf. \cite[Ch.II, Example 3.13]{Su2})
\,:
\begin{proposition}\label{propalexinreldr} Suppose $\R^{l}$ is oriented some way. Then the Alexnader \iso
\[
H^{l}_{D}(\V,\V')\simeq H^{l}(\R^{l},\R^{l}\ssm\{0\};\C)\overset\sim\lra H_{0}(\{0\};\C)=\C.
\]
is induced from the morphism 
\[
\scE^{(l)}(\V,\V')\lra\C\qquad\text{given by}\ \ (\sigma_{1},\sigma_{01})\mapsto\int_{R_{1}}\sigma_{1}
+\int_{R_{01}}\sigma_{01}.
\]
\end{proposition}

Note  that, since $H^{l}(\R^{l};\C)=0$, $\delta$  in \eqref{lexactreldR} is surjective and we may always take a cocycle of the form $(0,-\t)$ with $\t$ a closed $(l-1)$-form on $\R^{l}\ssm\{0\}$.

\begin{corollary}\label{corcharThom} The Thom class $\vP_{\R^{l}}^{c}$ of the bundle $\R^{l}\ra\{\rm pt.\}$ is characterized as a class
 in $H^{l}_{D}(\R^{l},\R^{l}\ssm\{0\})$ that is represented by a cocycle $(0,-\t)$ with $\int_{\partial R_{1}}\t=1$.
\end{corollary}

As a particular choice for $\t$ as above, we have the angular form $\psi_{l}$ on 
$\R^{l}=\{(t_{1},\dots,t_{l})\}$. It is given by
\begin{equationth}\label{ang}
\psi_{l}=C_{l}\frac{\sum_{i=1}^{l}\varPhi_{i}(t)}{\Vert t\Vert^{l}},\qquad\varPhi_{i}(t)=(-1)^{i-1}t_{i}\,dt_{1}\wedge\cdots\wedge\widehat{dt_{i}}\wedge\cdots\wedge dt_{l},
\end{equationth}
where\ \  $\widehat{}$\ \  means the form under it is to be omitted. The constant $C_{l}$ above is given by $\frac {(k-1)!}{2\pi^{k}}$ if $l=2k$ and by $\frac{(2k)!}{2^{l}\pi^{k}k!}$ if $l=2k+1$. In particular,
\[
\psi_{1}=\frac 1 2\frac t{|t|}.
\]
The important fact is that the form is defined and closed in $\R^{l}\ssm\{0\}$ and, if we orient $\R^{l}$ so that $(t_{1},\dots,t_{l})$ is a positive coordinate system, $\int_{S^{l-1}}\psi_{l}=1$ for an $(l-1)$-sphere $S^{l-1}$ 
around $0$ in $\R^{l}$, oriented 
as the boundary of an $l$-ball.

We may think of Corollary \ref{corcharThom} also as the characterization of the Thom class
$\vP_{E}^{c}$ of the product bundle $E=M\times\R^{l}$.
Letting $W_{0}=E\ssm M$ and $W_{1}=E$, we consider the coverings $\W=\{W_{0},W_{1}\}$ and $\W'=\{W_{0}\}$ of  $E$ and $E\ssm M$.
Then we have\,:

\begin{proposition}\label{thtrivial}
For the product bundle $E=M\times \R^{l}$, whose fiber $\R^{l}$ is oriented as above,  the cocycle
$(0,-\psi_{l})\in \scE^{(l)}(\W,\W')$
represents the Thom class $\varPsi_{E}^{c}$.
\end{proposition}

\lsection{Relative Dolbeault cohomology}\label{secrD}

\v{C}ech-Dolbeault cohomology and relative Dolbeault cohomology are defined the same way as in the de~Rham case,
replacing the de~Rham complex with the Dolbeault complex. In this section we recall the relevant part of the theory
and refer to \cite{Su8}, \cite{Su10}, \cite{Su11} for details.

Let $X$ be a complex \mfd\ of dimension $n$. We denote by $\scE^{(p,q)}_{X}$ and $\scO^{(p)}_{X}$ the sheaves of $C^{\infty}$ $(p,q)$-forms and \h\ $p$-forms on $X$, \r. 

\subsection{\v{C}ech-Dolbeault cohomology}

\paragraph{Dolbeault cohomology\,:}

The Dolbeault cohomology $H^{p,q}_{\bp}(X)$ of $X$ of type $(p,q)$ is the $q$-th cohomology
of the complex $(\scE^{(p,\bullet)}(X),\bp)$, $\bp:\scE^{(p,q)}(X)\ra \scE^{(p,q+1)}(X)$. The Dolbeault theorem says that there
is an \iso
\begin{equationth}\label{dth}
H^{p,q}_{\bp}(X)\simeq H^{q}(X,\scO^{(p)}).
\end{equationth}
Note that among the \iso s, there is a canonical one.

\paragraph{\v{C}ech-Dolbeault cohomology\,:}

The \v{C}ech-Dolbeault cohomology may be defined for an arbitrary covering of
a complex \mfd. 
Here we recall the case
of coverings consisting of two open sets.

Let $\V=\{V_{0},V_{1}\}$ be an open covering of $X$ and set $V_{01}=V_{0}\cap V_{1}$.
We set
\[
\scE^{(p,q)}(\V)=\scE^{(p,q)}(V_{0})\oplus\scE^{(p,q)}(V_{1})\oplus\scE^{(p,q-1)}(V_{01}).
\]
Thus an element in $\scE^{(p,q)}(\V)$ is expressed by a triple $\xi=(\xi_{0},\xi_{1},\xi_{01})$. 
We define the differential
\[
\bar\vt:\scE^{(p,q)}(\V)\lra \scE^{(p,q+1)}(\V)\qquad\text{by}\ \ 
\bar\vt (\xi_{0},\xi_{1},\xi_{01})=(\bp\xi_{0},\bp\xi_{1},\xi_{1}-\xi_{0}-\bp\xi_{01}).
\]
Then we see that $\bar\vt\circ\bar\vt=0$.

\begin{definition} The {\em \v{C}ech-Dolbeault cohomology} $H_{\bar\vt}^{p,q}(\V)$ of $\V$ of type $(p,q)$ is the 
$q$-th cohomology of the complex $(\scE^{(p,\bullet)}(\V),\bar\vt)$.
\end{definition}

\begin{theorem}\label{thDCD} The inclusion $\scE^{(p,q)}(X)\hra \scE^{(p,q)}(\V)$ given by $\o\mapsto (\o|_{V_{0}},\o|_{V_{1}},0)$
induces an \iso
\[
H^{p,q}_{\bp}(X)\overset\sim\lra H^{p,q}_{\bar\vt}(\V).
\]
\end{theorem}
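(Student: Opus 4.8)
The plan is to prove this exactly as its de~Rham counterpart, Theorem~\ref{thdCd}, is proved: the two complexes $(\scE^{(p,\bullet)}(X),\bp)$ and $(\scE^{(p,\bullet)}(\V),\bar\vt)$ differ only by the $\scE^{(p,q-1)}(V_{01})$ summand, and the whole argument transcribes from the de~Rham case upon replacing $d$ with $\bp$ and $q$-forms with $(p,\bullet)$-forms. Concretely, I would exhibit an explicit chain map $\mu$ in the opposite direction and show that $\mu$ and the inclusion $\iota\colon\omega\mapsto(\omega|_{V_0},\omega|_{V_1},0)$ are mutually inverse up to chain homotopy. First I would check that $\iota$ is indeed a chain map: since $\omega|_{V_0}$ and $\omega|_{V_1}$ agree on $V_{01}$, the third component of $\bar\vt\iota(\omega)$ vanishes and $\bar\vt\iota(\omega)=\iota(\bp\omega)$, so $\iota$ descends to $\iota_*$ on cohomology.

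Next, fixing a $C^\infty$ partition of unity $\{\rho_0,\rho_1\}$ subordinate to $\V$ (as in the note following Theorem~\ref{thdCd}), I would define
\[
\mu(\xi_0,\xi_1,\xi_{01})=\rho_0\xi_0+\rho_1\xi_1-\bp\rho_0\wedge\xi_{01},
\]
the evident analogue of the de~Rham inverse recorded there. Here $\rho_1\xi_1$ and $\bp\rho_0\wedge\xi_{01}$ are extended by zero, which is legitimate because $\bp\rho_0$ is supported in $V_{01}$; thus $\mu$ lands in $\scE^{(p,q)}(X)$. That $\mu$ is a chain map, i.e. $\mu\circ\bar\vt=\bp\circ\mu$, is a direct computation: expanding $\bp\mu(\xi)$ by the Leibniz rule and using $\bp\rho_1=-\bp\rho_0$ together with $\bp\bp\rho_0=0$ reproduces $\mu(\bar\vt\xi)=\rho_0\bp\xi_0+\rho_1\bp\xi_1-\bp\rho_0\wedge(\xi_1-\xi_0-\bp\xi_{01})$. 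Moreover $\mu\circ\iota=\mathrm{id}$ is immediate, since $\mu(\omega|_{V_0},\omega|_{V_1},0)=(\rho_0+\rho_1)\omega=\omega$.

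It remains to see that $\iota\circ\mu$ is homotopic to the identity on $(\scE^{(p,\bullet)}(\V),\bar\vt)$, which is the only step carrying real content. I would produce the explicit homotopy $h\colon\scE^{(p,q)}(\V)\to\scE^{(p,q-1)}(\V)$ given by
\[
h(\xi_0,\xi_1,\xi_{01})=(\rho_1\xi_{01},\,-\rho_0\xi_{01},\,0),
\]
again with the first two entries extended by zero, and verify $\bar\vt\,h+h\,\bar\vt=\iota\circ\mu-\mathrm{id}$. Computing $\bar\vt h(\xi)$ and $h\bar\vt(\xi)$ separately and adding, the $\bp\xi_{01}$ terms cancel, the relation $\rho_0+\rho_1=1$ collapses the first two components to $\rho_1(\xi_1-\xi_0)-\bp\rho_0\wedge\xi_{01}$ and $-\rho_0(\xi_1-\xi_0)-\bp\rho_0\wedge\xi_{01}$, matching the $V_0$- and $V_1$-components of $\iota\mu(\xi)-\xi$, while the third component reduces to $-\xi_{01}$ as required. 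The sign bookkeeping in these two computations — in particular the Leibniz signs for $\bp(\bp\rho_0\wedge\xi_{01})$ and $\bp(\rho_i\xi_{01})$ — is the one place where care is needed; everything else is formal. Once these identities are in hand, $\iota_*$ and $\mu_*$ are mutually inverse isomorphisms, giving $H^{p,q}_{\bp}(X)\overset\sim\lra H^{p,q}_{\bar\vt}(\V)$; and since $\mu_*$ is forced to be $\iota_*^{-1}$, the isomorphism is canonical and independent of the chosen partition of unity.
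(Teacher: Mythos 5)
Your proposal is correct and is essentially the paper's own approach: the inverse $\mu$ you build from a partition of unity is precisely the map recorded immediately after Theorem~\ref{thDCD}, and the paper (deferring details to \cite{Su8}, \cite{Su10}, \cite{Su11}) establishes the theorem in exactly this way, transcribing the de~Rham argument of Theorem~\ref{thdCd}. Your explicit chain homotopy $h(\xi_0,\xi_1,\xi_{01})=(\rho_1\xi_{01},\,-\rho_0\xi_{01},\,0)$ and the identity $\bar\vt h+h\bar\vt=\iota\circ\mu-\mathrm{id}$ check out, signs included.
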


The inverse is induced from  the  assignment  $ (\xi_{0},\xi_{1},\xi_{01})\mapsto \rho_{0}\xi_{0}+\rho_{1}\xi_{1}-\bp\rho_{0}\wedge\xi_{01}$.

\subsection{Relative Dolbeault cohomology}

Let $S$ be a closed set in $X$. Letting $V_{0}=X\ssm S$ and  $V_{1}$  a \nbd\ of $S$ in $X$, we consider the 
coverings $\V=\{V_{0},V_{1}\}$ and $\V'=\{V_{0}\}$ of $X$ and $X\ssm S$. We  set  
\[
\scE^{(p,q)}(\V,\V')=\{\,\xi\in\scE^{(p,q)}(\V)\mid \xi_{0}=0\,\}=\scE^{(p,q)}(V_{1})\oplus\scE^{(p,q-1)}(V_{01}).
\]
Then we see that $(\scE^{(p,\bullet)}(\V,\V'),\bar\vt)$ is a subcomplex of $(\scE^{(p,\bullet)}(\V),\bar\vt)$.

\begin{definition} The {\em  relative Dolbeault cohomology} $H_{\bar\vt}^{p,q}(\V,\V')$ of $(\V,\V')$ of type $(p,q)$ is the 
$q$-th cohomology of the complex $(\scE^{(p,\bullet)}(\V,\V'),\bar\vt)$.
\end{definition}

The following exact sequence is obtained as (\ref{lexactreldR})\,:

\begin{equationth}\label{lexactrelD}
\cdots\lra H^{p,q-1}_{\bp}(V_{0})\overset{\delta}\lra H^{p,q}_{\bar\vt}(\V,\V')\overset{j^{*}}\lra H^{p,q}_{\bar\vt}(\V)\overset{i^{*}}\lra H^{p,q}_{\bp}(V_{0})\lra\cdots,
\end{equationth}
where $\delta$ assigns to the class of $\t$ the class of $(0,-\t)$. As in the de~Rham case,
we have\,:

\begin{proposition}
The cohomology $H^{p,q}_{\bar\vt}(\V,\V')$ is determined uniquely, modulo canonical \iso s, independently of the choice of $V_{1}$.
\end{proposition}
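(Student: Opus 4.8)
The plan is to transport verbatim the argument that settles the relative de~Rham case, using the long exact sequence \eqref{lexactrelD} together with Theorem \ref{thDCD} in place of their de~Rham counterparts. Fix the closed set $S$ and observe that $V_0=X\ssm S$ is determined by $S$ alone; only the \nbd\ $V_1$ of $S$ is at our disposal. I would compare two choices of $V_1$ through a morphism of the associated long exact sequences and then invoke the five lemma.

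Concretely, let $V_1$ and $\tilde V_1$ be two \nbd s of $S$, and assume first that $\tilde V_1\subset V_1$, the general case being reduced to this by passing to a common smaller \nbd. Write $\V=\{V_0,V_1\}$, $\tilde\V=\{V_0,\tilde V_1\}$ and $\V'=\tilde\V'=\{V_0\}$. Componentwise restriction of forms gives a morphism from the short exact sequence of complexes
\[
0\lra\scE^{(p,\bullet)}(\V,\V')\lra\scE^{(p,\bullet)}(\V)\lra\scE^{(p,\bullet)}(V_0)\lra 0
\]
to the analogous one for $\tilde\V$, in which the induced map on the quotient $\scE^{(p,\bullet)}(V_0)$ is the identity, since $V_0$ is unchanged. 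By the naturality of the connecting homomorphism, this produces a morphism of the two long exact sequences \eqref{lexactrelD}.

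The only point requiring attention is that on the middle terms the restriction $H^{p,q}_{\bar\vt}(\V)\to H^{p,q}_{\bar\vt}(\tilde\V)$ is an \iso, and this is where Theorem \ref{thDCD} enters. The \iso s of that theorem are induced by the canonical inclusions $\scE^{(p,q)}(X)\hra\scE^{(p,q)}(\V)$, $\o\mapsto(\o|_{V_0},\o|_{V_1},0)$, which are manifestly compatible with restriction: restricting $(\o|_{V_0},\o|_{V_1},0)$ to $\tilde\V$ gives $(\o|_{V_0},\o|_{\tilde V_1},0)$, precisely the inclusion of $\o$ into $\tilde\V$. Hence the restriction map factors as the composite of the two canonical \iso s with $H^{p,q}_{\bp}(X)$ and is itself a canonical \iso. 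With the identity on the outer terms $H^{p,q}_{\bp}(V_0)$ and \iso s on the middle terms, the five lemma yields that the induced restriction $H^{p,q}_{\bar\vt}(\V,\V')\to H^{p,q}_{\bar\vt}(\tilde\V,\tilde\V')$ is an \iso.

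Finally, for two arbitrary \nbd s $V_1,V_1'$ that are not nested, I would pick a \nbd\ $V_1''$ of $S$ contained in $V_1\cap V_1'$ and compose the \iso s just obtained; the coherence of these \iso s, which is exactly what ``canonical'' asserts here, follows because the inclusion maps of Theorem \ref{thDCD} compose correctly under further restriction. I do not anticipate a genuine obstacle: the one delicate step, the compatibility of the middle-term \iso s with restriction, is disposed of cleanly by using the inclusion description of the \iso\ in Theorem \ref{thDCD} rather than its partition-of-unity inverse, which would itself depend on the covering. Everything else is formal diagram-chasing.
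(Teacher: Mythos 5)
Your proposal is correct and follows essentially the same route as the paper: the paper derives the proposition precisely from the long exact sequence \eqref{lexactrelD} together with Theorem \ref{thDCD}, leaving implicit the comparison of sequences and five-lemma argument that you spell out. Your care in using the inclusion description of the \iso\ in Theorem \ref{thDCD} (rather than the partition-of-unity inverse) to verify compatibility with restriction is exactly the right way to fill in the details.
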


In view of the above we denote $H^{p,q}_{\bar\vt}(\V,\V')$ also by $H^{p,q}_{\bar\vt}(X,X\ssm S)$.
\begin{proposition}\label{proptriplerd} For a triple $(X,X',X'')$, there is a long exact sequence
\[
\cdots\lra H^{p,q-1}_{\bar\vt}(X',X'')\overset{\delta}\lra H^{p,q}_{\bar\vt}(X,X')\overset{j^{*}}\lra H^{p,q}_{\bar\vt}(X,X'')\overset{i^{*}}\lra H^{p,q}_{\bar\vt}(X',X'')\lra\cdots.
\]
\end{proposition}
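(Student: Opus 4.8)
The plan is to obtain the long exact sequence of the triple $(X,X',X'')$ for relative Dolbeault cohomology exactly as one does in any cohomology theory arising from a short exact sequence of complexes, namely by exhibiting a short exact sequence of the relevant relative Dolbeault complexes and invoking the associated long exact sequence (the zig-zag/snake lemma). Concretely, I would fix a type $p$ and work with the complexes $(\scE^{(p,\bullet)}(X,X'),\bar\vt)$, $(\scE^{(p,\bullet)}(X,X''),\bar\vt)$ and $(\scE^{(p,\bullet)}(X',X''),\bar\vt)$, where the first two are defined by choosing a \nbd\ $V_{1}$ of $X\ssm X'$ and $W_{1}$ of $X\ssm X''$ respectively, and the third by a \nbd\ of $X'\ssm X''$ inside $X'$. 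The key claim is that there is a short exact sequence of complexes
\[
0\lra \scE^{(p,\bullet)}(X,X')\overset{j^{*}}\lra \scE^{(p,\bullet)}(X,X'')\overset{i^{*}}\lra \scE^{(p,\bullet)}(X',X'')\lra 0,
\]
whose associated long exact sequence, together with the identification of cohomology of these complexes with $H^{p,\bullet}_{\bar\vt}$ of the respective pairs, yields the asserted sequence.

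First I would set up compatible coverings so that the three relative complexes are literally nested and the maps $j^{*}$ and $i^{*}$ are the obvious inclusion/restriction on triples. Since the cohomology of each relative complex is independent of the chosen \nbd\ (by the Proposition preceding this statement, which rests on \eqref{lexactrelD} and Theorem \ref{thDCD}), I am free to choose the neighborhoods conveniently: take $V_{1}$ a \nbd\ of $X\ssm X'$ in $X$ and let the \nbd\ of $X\ssm X''$ restrict compatibly, so that restriction of forms from $X$ to $X'$ carries $\scE^{(p,\bullet)}(X,X'')$ onto $\scE^{(p,\bullet)}(X',X'')$. The map $j^{*}$ is the inclusion of those cochains that vanish on the larger open set $X'$ into those vanishing only on $X''$, and $i^{*}$ is restriction to $X'$; one checks directly that these are chain maps for $\bar\vt$ (the differential is the same $\bar\vt$ in each complex), that $j^{*}$ is injective, that $i^{*}$ is surjective, and that $\operatorname{im} j^{*}=\ker i^{*}$.

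Surjectivity of $i^{*}$ is the step I expect to be the main obstacle, since it is the only place where genuine analysis rather than formal diagram-chasing enters: given a relative $(p,q)$-cochain on $(X',X'')$, I must extend its component forms from a \nbd\ in $X'$ to a \nbd\ in $X$ while preserving the vanishing condition that defines the $(X,X'')$-complex. This is precisely where the flabbiness of the $C^\infty$ sheaves $\scE^{(p,q)}_{X}$, i.e.\ the availability of $C^\infty$ extensions and partitions of unity, is used; one extends by cutting off with a partition of unity subordinate to the covering, exactly in the spirit of the inverse map displayed after Theorem \ref{thDCD}. The only care needed is that the extension respects the two-component structure $(\xi_{1},\xi_{01})$ and commutes suitably with $\bar\vt$ up to the coboundary that the snake lemma absorbs.

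Once the short exact sequence of complexes is established, the long exact sequence is automatic from homological algebra, and the connecting homomorphism $\delta$ is the usual one. I would then identify the connecting map explicitly to match the convention used elsewhere (assigning to the class of $\t$ the class of $(0,-\t)$, as in \eqref{lexactrelD} and \eqref{lexactreldR}), and note that the case $X''=\varnothing$ recovers \eqref{lexactrelD}, confirming consistency. This mirrors exactly the de~Rham triple sequence in Proposition \ref{propfundloc}.\,1, so the verification is formally parallel and the only substantive analytic input is the $C^\infty$ extension used for surjectivity of $i^{*}$.
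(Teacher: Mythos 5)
Your overall strategy---realize the three relative Dolbeault complexes so that they fit into a short exact sequence of complexes and then invoke the zig-zag lemma---is the right one in spirit (the paper itself gives no proof but defers to \cite{Su10}, \cite{Su11}, where the argument is of this type). However, the execution you describe breaks down exactly at the point you flag as the ``main obstacle,'' and the repair you propose there is not available. The sheaves $\scE^{(p,q)}$ are \emph{not} flabby: they are fine (hence soft), so sections extend from \emph{closed} sets, but a smooth form on an open set need not extend smoothly to any larger open set (it may blow up at the boundary). Consequently the map $i^{*}:\scE^{(p,\bullet)}(X,X'')\ra\scE^{(p,\bullet)}(X',X'')$, which restricts the component defined on a \nbd\ $W_{1}$ of $X\ssm X''$ in $X$ to a \nbd\ $U_{1}$ of $X'\ssm X''$ inside $X'$, is simply not surjective, and no partition-of-unity cutoff can fix this: you must hit \emph{every} form on $U_{1}$ on the nose, including those admitting no smooth extension across the boundary of $U_{1}$. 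There is a second, structural, failure: in the paper's two-set-covering framework the three complexes live on \emph{different} coverings ($\{X',V_{1}\}$, $\{X'',W_{1}\}$, $\{X'',U_{1}\}$), so they are never ``literally nested''; the natural maps between them are restrictions, $j^{*}$ is then not injective (a cochain can restrict to zero on the smaller sets without vanishing), and $\op{Ker}i^{*}\ne\op{im}j^{*}$. No choice of the neighborhoods makes the displayed sequence of complexes exact.

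The correct implementation, which is what the cited references do, is to realize all three complexes on a single covering adapted to the whole triple, e.g.\ $\W=\{W_{0},W_{1},W_{2}\}$ with $W_{0}=X''$, $W_{1}$ a \nbd\ of $X'\ssm X''$ in $X'$ and $W_{2}$ a \nbd\ of $X\ssm X'$ in $X$. The cochains vanishing on $\{W_{0},W_{1}\}$ form a genuine subcomplex of those vanishing on $\{W_{0}\}$, and the quotient is precisely the relative complex of the pair of coverings $(\{W_{0},W_{1}\},\{W_{0}\})$ of $(X',X'')$; here the surjection is a projection onto direct-sum components, so exactness of
\[
0\lra \scE^{(p,\bullet)}(\W,\{W_{0},W_{1}\})\lra \scE^{(p,\bullet)}(\W,\{W_{0}\})\lra \scE^{(p,\bullet)}(\{W_{0},W_{1}\},\{W_{0}\})\lra 0
\]
is trivial and no extension of forms is ever needed. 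The price is the ingredient missing from your write-up: one must know that relative \v{C}ech-Dolbeault cohomology computed on this three-set covering agrees with the paper's two-set definitions of $H^{p,q}_{\bar\vt}(X,X')$, $H^{p,q}_{\bar\vt}(X,X'')$ and $H^{p,q}_{\bar\vt}(X',X'')$---the invariance under change of coverings, proved by partition-of-unity homotopies in the style of Theorem \ref{thDCD}. (Alternatively, one can avoid coverings altogether: each relative complex is, up to shift, the mapping cone of a restriction morphism of Dolbeault complexes, and the standard cone/octahedron argument yields the long exact sequence.) In short: the substantive input is covering invariance or a mapping-cone lemma; partitions of unity enter in the homotopies there, not through any extension of smooth forms, which is false.
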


\begin{proposition}[Excision] For any open set $V$ containing $S$, there is  a canonical \iso
\[
H^{p,q}_{\bar\vt}(X,X\ssm S)\simeq H^{p,q}_{\bar\vt}(V,V\ssm S).
\]\label{propexcision}
\end{proposition}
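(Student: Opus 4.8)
The plan is to exploit the defining feature of the relative Dolbeault complex: by construction $\scE^{(p,\bullet)}(\V,\V')$ involves only the chosen \nbd\ $V_{1}$ of $S$ together with the overlap $V_{01}=(X\ssm S)\cap V_{1}$, and it never sees the ambient manifold $X$ away from $V_{1}$. Since the cohomology $H^{p,q}_{\bar\vt}(X,X\ssm S)$ has already been shown to be independent of the choice of $V_{1}$ (the proposition immediately preceding this one), I am free to compute it using any \nbd\ of $S$ in $X$; because $V$ is open and contains $S$, I may pick such a $V_{1}$ with $V_{1}\subseteq V$.

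First I would fix a \nbd\ $V_{1}$ of $S$ with $V_{1}\subseteq V$ and form $\V=\{X\ssm S,V_{1}\}$, $\V'=\{X\ssm S\}$, so that
\[
\scE^{(p,q)}(\V,\V')=\scE^{(p,q)}(V_{1})\oplus\scE^{(p,q-1)}(V_{1}\ssm S),
\]
with induced differential $\bar\vt(\xi_{1},\xi_{01})=(\bp\xi_{1},\,\xi_{1}-\bp\xi_{01})$, where $\xi_{1}$ is restricted to $V_{1}\ssm S$ in the second slot. Next, regarding this same $V_{1}$ as a \nbd\ of $S$ inside $V$ and setting $\W=\{V\ssm S,V_{1}\}$, $\W'=\{V\ssm S\}$, I would write out $\scE^{(p,\bullet)}(\W,\W')$ in exactly the same manner. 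The point is that $(V\ssm S)\cap V_{1}=V_{1}\ssm S=(X\ssm S)\cap V_{1}$, so the two relative complexes, together with their differentials, are literally identical; hence their cohomologies coincide.

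Finally I would verify that this identification is the asserted canonical \iso, namely that it is independent of the auxiliary choice of $V_{1}$. This is immediate from the independence-of-$V_{1}$ result invoked above, applied compatibly on each side; alternatively one reads it off from the long exact sequence \eqref{lexactrelD} combined with Theorem \ref{thDCD}. The hard part is thus only this naturality bookkeeping: there is no analytic obstacle whatsoever, since the relative complex ignores the ambient space beyond $V_{1}$, and the argument runs exactly parallel to the de~Rham case (Proposition \ref{propexcisiondR}).
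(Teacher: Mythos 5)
Your proof is correct and is essentially the intended argument (the paper states this proposition without proof, deferring to its references, where the same route is taken): since the relative Dolbeault complex $\scE^{(p,\bullet)}(\V,\V')=\scE^{(p,\bullet)}(V_{1})\oplus\scE^{(p,\bullet-1)}(V_{1}\ssm S)$ depends only on the \nbd\ $V_{1}$ of $S$ and the overlap $V_{1}\ssm S$, choosing $V_{1}\subseteq V$ makes the two relative complexes literally identical, and canonicity follows from the independence-of-$V_{1}$ proposition applied on both sides. Nothing is missing.
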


The relative cohomology share other fundamental properties with the local cohomology (relative cohomology) of $X$ with coefficients
in $\scO^{(p)}$. In fact we have (cf. \cite{Su10}, \cite{Su11})\,:

\begin{theorem}[Relative Dolbeault theorem]\label{thDrel}
There is a canonical \iso
\[
H^{p,q}_{\bar\vt}(X,X\ssm S)\simeq H^{q}_{S}(X;\scO^{(p)}).
\]
\end{theorem}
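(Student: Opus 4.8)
The plan is to deduce the isomorphism from the five lemma, by comparing the long exact sequence \eqref{lexactrelD}, which defines the relative Dolbeault cohomology, with the long exact sequence of local cohomology, the two sequences sharing the same ``absolute'' terms. First I would rewrite both sequences so that their outer terms coincide. On the relative Dolbeault side, Theorem \ref{thDCD} gives $H^{p,q}_{\bar\vt}(\V)\simeq H^{p,q}_{\bp}(X)$, and the absolute Dolbeault theorem \eqref{dth} identifies this with $H^{q}(X;\scO^{(p)})$; since $V_{0}=X\ssm S$, the term $H^{p,q}_{\bp}(V_{0})$ becomes $H^{q}(X\ssm S;\scO^{(p)})$ in the same way. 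Thus \eqref{lexactrelD} reads
\[
\cdots\to H^{q-1}(X\ssm S;\scO^{(p)})\overset{\delta}\to H^{p,q}_{\bar\vt}(X,X\ssm S)\to H^{q}(X;\scO^{(p)})\to H^{q}(X\ssm S;\scO^{(p)})\to\cdots.
\]
On the local cohomology side, Proposition \ref{propfundloc}.\,1 applied to the triple $(X,X\ssm S,\emptyset)$ gives
\[
\cdots\to H^{q-1}(X\ssm S;\scO^{(p)})\to H^{q}_{S}(X;\scO^{(p)})\to H^{q}(X;\scO^{(p)})\to H^{q}(X\ssm S;\scO^{(p)})\to\cdots,
\]
whose outer terms are literally the same.

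Next I would produce a morphism between these two sequences that is the canonical isomorphism on every absolute term. The source of such a map is that the Dolbeault complex $(\scE^{(p,\bullet)},\bp)$ is a fine, hence soft, and therefore $\vG(X;-)$- and $\vG(X\ssm S;-)$-acyclic resolution of $\scO^{(p)}$ on the paracompact \mfd\ $X$; this acyclicity is exactly what underlies the absolute Dolbeault theorem \eqref{dth} on $X$ and on $X\ssm S$. Using excision (Proposition \ref{propexcision}) I may take $V_{1}=X$, so that $V_{01}=X\ssm S$ and the relative Dolbeault complex $\scE^{(p,\bullet)}(X)\oplus\scE^{(p,\bullet-1)}(X\ssm S)$ becomes, up to the usual shift, the mapping cone of the restriction $\scE^{(p,\bullet)}(X)\to\scE^{(p,\bullet)}(X\ssm S)$. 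Comparing this fine resolution with the flabby resolution $\scO^{(p)}\to\scF^{\bullet}$ used to define local cohomology — the comparison existing and being unique up to homotopy because $\scF^{\bullet}$ is $\vG$-acyclic — yields a morphism of complexes over $X$ and over $X\ssm S$ compatible with restriction, and hence the required vertical arrows of the ladder. On the absolute terms these arrows are precisely the comparison isomorphisms furnished by Theorem \ref{thDCD} and \eqref{dth}, so the two columns flanking $H^{p,q}_{\bar\vt}(X,X\ssm S)$ and $H^{q}_{S}(X;\scO^{(p)})$ are isomorphisms.

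The main obstacle, and the step deserving real care, is the commutativity of the squares involving the two connecting homomorphisms $\delta$: on the relative side $\delta$ is the explicit chain-level map sending the class of $\t$ to the class of $(0,-\t)$, whereas on the local cohomology side it is the abstract connecting map of the derived functor of $\vG_{S}$. Matching these, signs included, requires tracing the partition-of-unity splitting that underlies the inverse isomorphism of Theorem \ref{thDCD} through the boundary map of the cone; once this single square is seen to commute, the remaining squares commute by naturality of restriction. With all squares commutative and the flanking columns isomorphisms, the five lemma yields the isomorphism $H^{p,q}_{\bar\vt}(X,X\ssm S)\simeq H^{q}_{S}(X;\scO^{(p)})$. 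Its canonicity is automatic, since every ingredient — Theorem \ref{thDCD}, the Dolbeault theorem \eqref{dth}, and the excision isomorphism of Proposition \ref{propexcision} — is itself canonical.
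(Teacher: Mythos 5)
Your overall strategy --- comparing the long exact sequence \eqref{lexactrelD} with the long exact sequence of Proposition \ref{propfundloc}.\,1 and invoking the five lemma --- is reasonable, and it is close in spirit to the route the paper itself points to (the paper gives no proof of Theorem \ref{thDrel}, citing \cite{Su10} and \cite{Su11}, and remarks that the result can be obtained from the mapping-cone formalism in the derived category). But your execution has a genuine gap at its central step: the construction of the vertical arrow on the relative terms. You assert that a comparison morphism between the fine resolution $\scE^{(p,\bullet)}$ and the flabby resolution $\scF^{\bullet}$ exists and is unique up to homotopy ``because $\scF^{\bullet}$ is $\vG$-acyclic''. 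That is not a valid principle: acyclicity of the terms of $\scF^{\bullet}$ is what makes $\scF^{\bullet}(X)$ compute the derived functor, but it confers no lifting property. To lift the identity of $\scO^{(p)}$ to a chain map of resolutions one needs the target complex to consist of \emph{injective} sheaves, and flabby does not imply injective. Without that arrow there is no ladder and hence no five lemma. The standard repair is to interpose a third resolution --- an injective one, or the Godement resolution organized into a double complex --- and map both $\scE^{(p,\bullet)}$ and $\scF^{\bullet}$ into it; your ladder must then be rebuilt through this intermediary. (A minor point: taking $V_{1}=X$ is licensed by the proposition on independence of the choice of $V_{1}$, not by excision.)

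The second gap is the one you flag yourself and then defer: the commutativity of the squares involving the two connecting homomorphisms. This cannot be left as ``requires tracing the partition-of-unity splitting through the cone''; it is the heart of the proof, and with comparison maps that exist only up to homotopy it is not even clear that your middle vertical arrow is well defined --- a square commuting up to homotopy induces a map of mapping cones only after a homotopy is chosen, and different choices can change the induced map on cohomology. The clean way to obtain all squares, connecting ones included, is to produce an honest map of short exact sequences of complexes, for instance from the sequence
\[
0 \ra \scF^{\bullet}(X,X\ssm S) \ra \scF^{\bullet}(X) \ra \scF^{\bullet}(X\ssm S) \ra 0
\]
into the corresponding sequence built from the mediating double complex; then commutativity of the whole ladder follows from naturality of the long exact sequence of a short exact sequence of complexes, with no sign chase at all. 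This also repairs your final claim about canonicity, which as written does not follow: the isomorphism the five lemma produces is the middle arrow you constructed, and its independence of the choices made (comparison morphism, homotopy) is precisely what has to be verified rather than something that is ``automatic''.
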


The excision in Proposition \ref{propexcision} is compatible with the excision 
in Proposition \ref{propfundloc}.\,2 for $\scS=\scO^{(p)}$ via the above \iso.

\paragraph{Differential\,:} 

First, the map $\partial:\scE^{(p,q)}(X)\ra \scE^{(p+1,q)}(X)$ given by $\o\mapsto (-1)^{q}\,\partial\o$ induces
$\partial:H_{\bp}^{p,q}(X)\ra H_{\bp}^{p+1,q}(X)$ and it is compatible with $d:H^{q}(X;\scO^{(p)})\ra H^{q}(X;\scO^{(p+1)})$ via the canonical \iso\ (\ref{dth}).

In the case  $\V=\{V_{0},V_{1}\}$, $\partial :H^{p,q}_{\bar\vt}(\V)\lra H^{p+1,q}_{\bar\vt}(\V)$ is induced by
\[
(\xi_{0},\xi_{1},\xi_{01})\mapsto (-1)^{q}\,(\partial\xi_{0},\partial\xi_{1},-\partial\xi_{01}).
\]

In the case $V_{0}=X'$
we have the differential
\begin{equationth}\label{differential}
\partial:H^{p,q}_{\bar\vt}(X,X')\lra H^{p+1,q}_{\bar\vt}(X,X')\quad\text{induced by}\ \ (\xi_{1},\xi_{01})\mapsto
(-1)^{q}\,(\partial\xi_{1},-\partial\xi_{01}). 
\end{equationth}
We have the following commutative diagram\,{\rm :}
\[
\SelectTips{cm}{}
\xymatrix
@C=.7cm
@R=.5cm
{H^{p,q}_{\bar\vt}(X,X')\ar[r]^-{\partial} \ar@{-}[d]^-{\wr}
&H^{p+1,q}_{\bar\vt}(X,X')\ar@{-}[d]^-{\wr}
\\
 H^{q}(X,X';\scO^{(p)})\ar[r]^-{d} & H^{q}(X,X';\scO^{(p+1)}),}
\]
where the vertical \iso s are the ones in Theorem \ref{thDrel}.

\begin{remark} A relative cohomology such as the relative de~Rham  or  relative Dolbeault cohomology  defined above may also be interpreted
as the cohomology of a complex dual to the mapping cone of a morphism of complexes in the theory of derived categories
and a theorem as Theorems~\ref{thdRrel}  or \ref{thDrel} may be proved
 from this viewpoint as well.  This way we also see that this kind of relative cohomology goes well with
 derived functors (cf. \cite{Su11}).
\end{remark}

\subsection{Relative de~Rham and relative Dolbeault cohomologies}\label{ssdRD}

Let $X$ be a complex \mfd\ of dimension $n$. We consider the following two cases where 
there is a natural relation between the two cohomology theories (cf. \cite{Su10}).
\vv

\noindent
{\bf (I)} Noting that, for any $(n,q)$-form $\o$, $\bp\o=d\o$,
there exist natural morphisms
\begin{equationth}\label{DdR}
H^{n,q}_{\bp}(X)\lra H^{n+q}_{d}(X)\quad\text{and}\quad H^{n,q}_{\bar\vt}(X,X\ssm S)\lra H^{n+q}_{D}(X,X\ssm S).
\end{equationth}

In particular, this is used to define the integration on the relative Dolbeault cohomology in the subsequent section.
\vv

\noindent
{\bf (II)} 
We define
$\rho^{q}:\scE^{(q)}\lra \scE^{(0,q)}$
by assigning to a $q$-form $\o$ its $(0,q)$-component $\o^{(0,q)}$. Then $\rho^{q+1}(d\o)=\bp(\rho^{q}\o)$
and we have a natural morphism of complexes
\[
\SelectTips{cm}{}
\xymatrix
@R=.7cm
{ 0\ar[r]& \C\ar[r] \ar[d]^-{\iota}&\scE^{(0)}\ar[r]^-{d}\ar[d]^-{\rho^{0}} &\scE^{(1)}\ar[d]^-{\rho^{1}}\ar[r]^-{d}&\cdots\ar[r]^-{d}&\scE^{(q)}\ar[d]^-{\rho^{q}}\ar[r]^-{d}&\cdots\\
0\ar[r] & \scO \ar[r] & \scE^{(0,0)}\ar[r]^-{\bp}&\scE^{(0,1)}\ar[r]^-{\bp}&\cdots\ar[r]^-{\bp}&\scE^{(0,q)}\ar[r]^-{\bp}&\cdots.}
\]

 Thus there is a  natural morphism  
$\rho^{q}:H^{q}_{D}(X,X')\lra H^{0,q}_{\bar\vt}(X,X')$, which makes
the following  diagram commutative\,{\rm :}
\begin{equationth}\label{dRtoD}
\SelectTips{cm}{}
\xymatrix
@C=.7cm
@R=.5cm
{ H^{q}_{D}(X,X')\ar[r]^-{\rho^{q}} \ar@{-}[d]^-{\wr}& H^{0,q}_{\bar\vt}(X,X')\ar@{-}[d]^-{\wr}\\
 H^{q}(X,X';\C)\ar[r]^-{\iota} &H^{q}(X,X';\scO) .}
\end{equationth}
Note that, if we take coverings $\V=\{V_{0},V_{1}\}$ and $\V'=\{V_{0}\}$ with $V_{0}=X'$ and $V_{1}$ a
\nbd\ of $X\ssm X'$, then $\rho^{q}:H^{q}_{D}(X,X')=H^{q}_{D}(\V,\V')\ra H^{0,q}_{\bar\vt}(X,X')=H^{0,q}_{\bar\vt}(\V,V')$ assigns to the class of $(\o_{0},\o_{01})$ the class of $(\o_{0}^{(0,q)},\o_{01}^{(0,q-1)})$.

Recalling that we have the analytic de~Rham complex
\[
0\lra\C\overset{\iota}\lra\scO\overset{d}\lra\scO^{(1)}\overset{d}\lra\cdots\overset{d}\lra\scO^{(n)}\lra 0,
\]
the above diagram is extended to an \iso\ of complexes
\[
\SelectTips{cm}{}
\xymatrix
@C=.5cm
@R=.6cm
{ 0\ar[r]& H^{q}_{D}(X,X')\ar[r]^-{\rho^{q}} \ar@{-}[d]^-{\wr}&H^{0,q}_{\bar\vt}(X,X')\ar[r]^-{\partial}\ar@{-}[d]^-{\wr} &H^{1,q}_{\bar\vt}(X,X')\ar@{-}[d]^-{\wr}\ar[r]^-{\partial}&\cdots\ar[r]^-{\partial}&H^{n,q}_{\bar\vt}(X,X')\ar[r]\ar@{-}[d]^-{\wr}&0\\
0\ar[r] & H^{q}(X,X';\C) \ar[r]^-{\iota} & H^{q}(X,X';\scO) \ar[r]^-{d}&H^{q}(X,X';\scO^{(1)})\ar[r]^-{d}&\cdots\ar[r]^-{d}&H^{q}(X,X';\scO^{(n)})\ar[r]&0.}
\]

In the above situation we have\,:

\begin{theorem}\label{thexactpcd} If $H^{q}(X,X';\C)=0$ and $H^{q}(X,X';\scO^{(p)})=0$ for $p\ge 0$ and $q\ne q_{0}$, then the following sequence is exact\,{\rm :}
\[
0\ra H^{q_{0}}(X,X';\C)\overset{\iota}\ra H^{q_{0}}(X,X';\scO)\overset{d}\ra H^{q_{0}}(X,X';\scO^{(1)})\overset{d}\ra\cdots\overset{d}\ra H^{q_{0}}(X,X';\scO^{(n)})\ra 0.
\]
\end{theorem}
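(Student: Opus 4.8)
The plan is to run the argument on the sheaf level first, using the holomorphic Poincaré lemma: the analytic de~Rham complex $0\to\C\to\scO\to\scO^{(1)}\to\cdots\to\scO^{(n)}\to0$ is an exact sequence of sheaves on $X$. Writing $Z^{(p)}=\ker(d\colon\scO^{(p)}\to\scO^{(p+1)})$ for the sheaf of closed \h\ $p$-forms, this exactness splits the complex into short exact sequences
\[
0\to Z^{(p)}\to\scO^{(p)}\overset{d}\to Z^{(p+1)}\to0,\qquad 0\le p\le n-1,
\]
with the two boundary identifications $Z^{(0)}=\C$ and $Z^{(n)}=\scO^{(n)}$. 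Abbreviating $H^{q}(\scS):=H^{q}(X,X';\scS)$ and recalling that the reference relative cohomology is the derived functor attached to a flabby resolution, each such short exact sequence of sheaves induces a long exact sequence in $H^{q}(-)$.

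The core step is the vanishing claim that $H^{q}(Z^{(p)})=0$ for all $p$ and all $q\ne q_{0}$. I would establish it by two inductions run from opposite ends of the complex, since the hypothesis controls $H^{q}(\scO^{(p)})$ only off the row $q=q_{0}$. First an upward induction on $p$ from $Z^{(0)}=\C$: plugging $H^{q}(\scO^{(p)})=0$ (valid for $q\ne q_{0}$) into the long exact sequence gives an injection $H^{q}(Z^{(p+1)})\hookrightarrow H^{q+1}(Z^{(p)})$ for $q\ne q_{0}$, which propagates the vanishing $H^{q}(Z^{(p)})=0$ throughout the range $q\ge q_{0}+1$. Dually, a downward induction on $p$ from $Z^{(n)}=\scO^{(n)}$ produces a surjection $H^{q-1}(Z^{(p+1)})\twoheadrightarrow H^{q}(Z^{(p)})$ for $q\ne q_{0}$, propagating $H^{q}(Z^{(p)})=0$ throughout $q\le q_{0}-1$. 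Together the two ranges cover all $q\ne q_{0}$, giving the claim.

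With the claim in hand, the long exact sequence of $0\to Z^{(p)}\to\scO^{(p)}\to Z^{(p+1)}\to0$ taken in degree $q_{0}$ has its two flanking terms $H^{q_{0}\mp1}(Z^{(\bullet)})$ vanishing, so it collapses to the short exact sequences
\[
0\to H^{q_{0}}(Z^{(p)})\to H^{q_{0}}(\scO^{(p)})\overset{\phi_{p}}\to H^{q_{0}}(Z^{(p+1)})\to0,\qquad 0\le p\le n-1.
\]
The remaining work is assembly. The differential $d\colon H^{q_{0}}(\scO^{(p)})\to H^{q_{0}}(\scO^{(p+1)})$ of the target complex factors as the surjection $\phi_{p}$ followed by the injection $H^{q_{0}}(Z^{(p+1)})\hookrightarrow H^{q_{0}}(\scO^{(p+1)})$; a one-line diagram chase then identifies $\ker d$ with $\operatorname{im}(H^{q_{0}}(Z^{(p)})\to H^{q_{0}}(\scO^{(p)}))$ and $\operatorname{im}d$ with $\operatorname{im}(H^{q_{0}}(Z^{(p+1)})\to H^{q_{0}}(\scO^{(p+1)}))$, which match and yield exactness in the interior. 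At the left end $Z^{(0)}=\C$, so the inclusion $H^{q_{0}}(\C)\hookrightarrow H^{q_{0}}(\scO)$ of the $p=0$ sequence is exactly $\iota$, giving injectivity of $\iota$ and $\ker d=\operatorname{im}\iota$; at the right end $Z^{(n)}=\scO^{(n)}$ forces the final $d$ to be onto. This is the asserted exactness.

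I expect the main obstacle to be the vanishing claim, and in particular the recognition that no single induction can succeed: the hypothesis says nothing at $q=q_{0}$, so vanishing cannot be propagated across that row, and one is forced to use both boundary conditions $Z^{(0)}=\C$ and $Z^{(n)}=\scO^{(n)}$, inducting upward for $q>q_{0}$ and downward for $q<q_{0}$. Conceptually this is exactly the statement that the first hypercohomology spectral sequence $E_{1}^{p,q}=H^{q}(X,X';\scO^{(p)})\Rightarrow H^{p+q}(X,X';\C)$ of the resolution $\C\to\scO^{(\bullet)}$ is concentrated in the single row $q=q_{0}$ and hence degenerates at $E_{2}$; the two-sided induction above is simply the concrete reconciliation of the two boundary conditions that this degeneration packages, and one could instead invoke the spectral sequence directly.
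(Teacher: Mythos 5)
Your proof is correct. The paper itself states Theorem \ref{thexactpcd} without proof --- it appears immediately after the isomorphism-of-complexes diagram, with the surrounding material referred to \cite{Su10} and \cite{Su11} --- so there is no in-paper argument to compare against; what you wrote is the natural argument that the paper's setup (the analytic de~Rham resolution $0\to\C\to\scO^{(\bullet)}\to 0$ together with the long exact sequences of relative cohomology) is implicitly invoking. Your key steps all check out: the splitting into short exact sequences $0\to Z^{(p)}\to\scO^{(p)}\to Z^{(p+1)}\to 0$ is legitimate by the holomorphic Poincar\'e lemma; the relative cohomology of the paper, being a derived functor computed by flabby resolutions, does produce long exact sequences from these; and your central observation --- that the hypothesis gives no information in the row $q=q_{0}$, so vanishing of $H^{q}(Z^{(p)})$ must be propagated upward from $Z^{(0)}=\C$ for $q>q_{0}$ and downward from $Z^{(n)}=\scO^{(n)}$ for $q<q_{0}$, with neither induction alone sufficing --- is exactly the crux. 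The assembly step (factoring $d$ through the surjection onto $H^{q_{0}}(Z^{(p+1)})$ followed by the injection into $H^{q_{0}}(\scO^{(p+1)})$, with the end cases handled by $Z^{(0)}=\C$ and $Z^{(n)}=\scO^{(n)}$) is also complete, including the injectivity of $\iota$ and the surjectivity of the last $d$. Your closing remark that this is equivalent to degeneration of the hypercohomology spectral sequence of $\C\to\scO^{(\bullet)}$ at $E_{2}$ is accurate, with the only caveat that invoking it directly would still require identifying the edge morphism at $p=0$ with $\iota$, which your explicit argument handles for free.
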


\lsection{Local duality morphism}\label{seclocald}

We recall the cup product and integration theory on  \v{C}ech-Dolbeault cohomology in the relevant case.
 Then we recall the local duality morphism.

Let $X$ be a complex \mfd\ of dimension $n$ and $\V=\{V_{0},V_{1}\}$ an open covering of $X$.

\paragraph{Cup product\,:} We define the \emph{cup product}
\begin{equationth}\label{cup}
\scE^{(p,q)}(\V)\times \scE^{(p',q')}(\V)\lra \scE^{(p+p',q+q')}(\V),\qquad (\xi,\eta)\mapsto \xi\smallsmile\eta
\end{equationth}
by
\[
\begin{aligned}
&(\xi\smallsmile\eta)_0=\xi_0\wedge\eta_0,\quad(\xi\smallsmile\eta)_1=\xi_1\wedge\eta_1\quad\text{and}\\
&(\xi\smallsmile\eta)_{01}=(-1)^{p+q}\xi_0\wedge\eta_{01}+\xi_{01}\wedge\eta_1.
\end{aligned}
\]
Then $\xi\smallsmile\eta$ is linear in $\xi$ and $\eta$ and we have
\[
\bar\vt(\xi\smallsmile\eta)=\bar\vt\xi\smallsmile\eta+(-1)^{p+q}\xi\smallsmile\bar\vt\eta.
\]
Thus it induces the cup product
\[
H^{p,q}_{\bar\vt}(\V)\times H^{p',q'}_{\bar\vt}(\V)\lra
H^{p+p',q+q'}_{\bar\vt}(\V)
\]
compatible, via the \iso\ of Theorem \ref{thDCD}, with the  product in the
Dolbeault cohomology induced from the exterior product of forms.

Let $S$ be a closed set in $X$. Letting $V_{0}=X\ssm S$ and $V_{1}$ a \nbd\ of $S$, we consider the
coverings $\V=\{V_{0},V_{1}\}$ and $\V'=\{V_{0}\}$. Then  (\ref{cup}) induces a pairing
\[
\scE^{(p,q)}(\V,\V')\times \scE^{(p',q')}(V_{1})\lra \scE^{(p+p',q+q')}(\V,\V'),
\]
assigning to $\xi=(\xi_{1},\xi_{01})$  and $\eta_{1}$ the cochain
$(\xi_1\wedge\eta_1,\xi_{01}\wedge\eta_1)$.  It induces a pairing
\begin{equationth}\label{cuprel}
H^{p,q}_{\bar\vt}(X,X\ssm S)\times H^{p',q'}_{\bp}(V_{1})\lra H^{p+p',q+q'}_{\bar\vt}(X,X\ssm S).
\end{equationth}

\paragraph{Integration\,:} 
First we make the following
\begin{remark}\label{remoricomplex}
 Since $X$ is a complex \mfd, it is always orientable. However the orientation we consider is not necessarily the ``usual
one''. Here we say an orientation of $X$ is usual if $(x_{1},y_{1},\dots,x_{n},y_{n})$ is a positive coordinate
system when  $(z_{1},\dots,z_{n})$, $z_{i}=x_{i}+\sqrt{-1}y_{i}$, is a complex coordinate system on $X$.
\end{remark}

First we assume that $X$ is compact. If $X$ is  oriented, from (\ref{DdR}) and (\ref{intdR}), we have the integration
\[
H^{n,n}_{\bp}(X)\xrightarrow{\,\,\,\int_{X}\,\,\,}\C.
\]
In the case we do not specify the orientation, we
define
\[
H^{n,n}_{\bp}(X)\otimes_{{}_{\Z_{X}(X)}} or_{X}(X)\xrightarrow{\,\,\,\int_{X}\,\,\,}\C.
\]
as follows. For simplicity we assume that $X$ is connected.  It suffices to define it for a decomposable element $[\o]\otimes a$. Once we fix an orientation,
we have a canonical \iso\ $or_{X}(X)\simeq\Z$ so that $a$ determines an integer $n(a)$. On the other
hand we have a well-defined integral $\int_{X}[\o]$. We set
\[
\int_{X}[\o]\otimes a=n(a)\int_{X}[\o].
\]
If we take the opposite orientation for $X$, the above remains the same. 

Suppose $K$ is a
compact  set in $X$ ($X$ may not be compact). Letting $V_{0}=X\ssm K$ and $V_{1}$ a \nbd\ of $K$, we consider the covering 
$\V_{K}=\{V_{0},V_{1}\}$. Let $R_{1}$ and $R_{01}$ be as in Subsection \ref{ssint}.
If $X$ is oriented, we 
have the integration (cf. (\ref{DdR}) and (\ref{intreldR}))
\begin{equationth}\label{intDrel}
H^{n,n}_{\bar\vt}(X,X\ssm K)\xrightarrow{\,\,\,\int_{X}\,\,\,} \C\qquad\text{given by}\ \ \int_X[\xi]=\int_{R_1}\xi_1+\int_{R_{01}}\xi_{01}.
\end{equationth}

In the case we do not specify the orientation, we may define, as before, the integration
\begin{equationth}\label{intDrelgen}
H^{n,n}_{\bar\vt}(X,X\ssm K)\otimes or_{X}(X)\xrightarrow{\,\,\,\int_{X}\,\,\,}\C.
\end{equationth}

\paragraph{Duality morphisms\,:} We assume that $X$ is oriented for simplicity.
If $X$ is compact, 
then the bilinear pairing
\[
 H^{p,q}_{\bp}(X)\times H^{n-p,n-q}_{\bp}(X)\overset{\wedge}\lra H^{n,n}_{\bp}(X)
 \overset{\int_{X}}\lra \C
\]
induces the Kodaira-Serre duality
\[
KS_{X}:H^{p,q}_{\bp}(X)\overset{\sim}{\lra}
 H^{n-p,n-q}_{\bp}(X)^*.
 \]
 
Let $K$ be a  compact set in $X$ ($X$ may not be compact). The cup product (\ref{cuprel})
followed by the integration (\ref{intDrel}) gives a bilinear pairing
\[
H^{p,q}_{\bar\vt}(X,X\ssm K)\times H^{n-p,n-q}_{\bp}(V_1)\overset{\smallsmile}\lra H^{p,q}_{\bar\vt}(X,X\ssm K)\overset{\int_{X}}\lra\C.
\]
This induces a morphism
\begin{equationth}\label{3.5}
\bar A_{X,K}:H^{p,q}_{\bar\vt}(X,X\ssm K)\lra H^{n-p,n-q}_{\bp}[K]^*
=\lim_{\underset{V_{1}\supset K}{\lra}}H^{n-p,n-q}_{\bp}(V_1)^*,
\end{equationth}
which we call the {\em $\bar\partial$-Alexander morphism}. 
Here we consider  algebraic duals, however
in order to have the duality, we need to take topological duals (cf. Theorem \ref{thMH} below).

If $X$ is compact, we have the following commutative diagram\,:
\[
\SelectTips{cm}{}
\xymatrix
{H^{p.q}_{\bar\vt}(X,X\ssm K)\ar[r]^-{j^*}\ar[d]^{\bar A_{X,K}} &H^{p.q}_{\bp}(X)\ar[d]^{KS_{X}}_-{\wr}\\
H^{n-p,n-q}_{\bp}[K]^{*} \ar[r]^-{j_*}&\ H^{n-p,n-q}_{\bp}(X)^{*}.}
\]

\lsection{Hyperforms via relative Dolbeault cohomology}{\label{sec:hyperforms}}

In this section we let $M$ denote a real analytic \mfd\ of dimension $n$ and $X$ its complexification. We assume  $M$ to be orientable
so that $or_{M}$ is trivial, i.e., a constant sheaf.  Thus $or_{M/X}$ is also trivial and, for any open set $U$ in $M$, the space of $p$-hyperforms is given by (cf. (\ref{isosec}))
\[
\scB^{(p)}_{M}(U)=H^{n}_{U}(V;\scO^{(p)}_{X})\otimes_{{}_{\Z_{M}(U)}} or_{M/X}(U),
\]
where $V$ is  a complex \nbd\ of $U$
in $X$. 

Note that, in the above situation, there is an \iso\ $or_{M/X}\simeq\Z_{M}$, however there are various choices 
of the \iso. Once we fix an orientation of $T_{M}X$, the \iso\ is determined uniquely.

\subsection{Expressions of hyperforms and some basic operations}\label{ssexp}

In the above situation
there is a canonical \iso\ $H^{n}_{U}(V,\scO^{(p)})\simeq H^{p,n}_{\bar\vt}(V,V\ssm U)$ 
(cf.~Theorem~\ref{thDrel})
so that there
is a canonical \iso
\[
\scB^{(p)}(U)\simeq H^{p,n}_{\bar\vt}(V,V\ssm U)\otimes_{{}_{\Z_{M}(U)}} or_{M/X}(U).
\]

In the sequel we give explicit expressions of the classes in 
$H^{p,n}_{\bar\vt}(V,V\ssm U)$ and some of the basic
operations on them. In fact, in the case the orientation of $T_{M}X$ is specified,
there is a canonical \iso
\[
\scB^{(p)}(U)\simeq H^{p,n}_{\bar\vt}(V,V\ssm U)
\]
and these may be thought of as giving descriptions for the hyperforms themselves.

\paragraph{One-dimensional case\,:} Before we proceed further, we review the original expression of hyperfunctions in one-dimensional case by Sato, with some fundamental examples.
In fact we will see below that our expression is in a sense a natural generalization of this.
In particular, the integration we discuss in Subsection~\ref{ssinttopics} may be thought of as a direct generalization of that in one-dimensional case.

Let $U$ be an open set in $\R=\{(x)\}$ and 
$V$ a complex \nbd\ of $U$ in $\C=\{(z)\}$, $z=x+\sqrt{-1}\,y$. Here we fix the orientation of $\C$ so that $(y,x)$ is a positive coordinate system. The space of hyperfunctions on $U$ is originally defined by
\[
\scB(U)=\scO(V\ssm U)/\scO(V).
\]
Thus a hyperfunction is represented by a \h\ \fcn\ $F$ on $V\ssm U$. In our framework, it is 
represented by the pair $(0,-F)$ (cf.~\eqref{hyperfone} below).

For example, the constant funtion $1$, as a hyperfunction, is represented by such functions as
\[
\psi=\begin{cases} \ \ \frac 1 2\\ -\frac 1 2,
\end{cases}                                       \ \psi_{+}=\begin{cases} 1\\ 0,
\end{cases}                                        \ \psi_{-}=\begin{cases} \ 0\\ -1
\end{cases}\ \text{and so forth},
\]
where the value in the upper column is the value on $V_{+}=\{\,z\in V\mid y>0\,\}$ and that 
in the lower column the value on $V_{-}=\{\,z\in V\mid y<0\,\}$. We will see later that $\psi$ 
is 
a natural representation, while $\psi_{\pm}$ is a representation that has support in $V_{\pm}$. 
They correspond to the representations 
$(0,-\psi)$ and $(0,-\psi_{\pm})$, respectively,
in our framework (cf.~Example~\ref{exone} and Remark~\ref{remdimone} below). 

Also the $\delta$-function is represented by the \fcn\ 
\[
-\frac 1 {2\pi\sqrt{-1}}\frac 1 z=-\frac 1 {2\pi\sqrt{-1}}\frac {\psi_{+}-\psi_{-}} z,
\]
which is expressed as 
\[
-\frac 1 {2\pi\sqrt{-1}}\Big (\frac 1 {x+\sqrt{-1}\, 0}-\frac 1 {x-\sqrt{-1}\, 0}\Big)
\]
to emphasize that it is the difference of the boudary values of \h\ functions.
In our framework, the $\delta$-function is represented by the pair (cf.~Definition~\ref{defdeltafcn} and Example~\ref{exDirac} below)
\[
\Big(0,\,\frac 1 {2\pi\sqrt{-1}}\frac 1 z\Big).
\]

Note that a direct proof of the fact that $\scB(U)$ is independent of the choice of
$V$ requires Runge's theorem. 
In our framework, it follows from the excision property of the relative Dolbeault cohomology (cf.~Proposition~\ref{propexcision}).

\paragraph{Expression of hyperforms\,:} Coming back to the general situation, let $U$ be an open set in $M$ and $V$ a complex \nbd\ of $U$ in $X$, as above.
Letting $V_{0}=V\ssm U$ and $V_{1}$ a \nbd\ of $U$ in $V$ (it could be $V$ itself), we consider the  coverings $\V=\{V_{0},V_{1}\}$ 
and $\V'=\{V_{0}\}$ of $V$ and $V\ssm U$. 
Then $H^{p,n}_{\bar\vt}(V,V\ssm U)=H^{p,n}_{\bar\vt}(\V,\V')$ and a class in $H^{p,n}_{\bar\vt}(V,V\ssm U)$
 is represented by a cocycle $(\tau_{1},\tau_{01})$ with $\tau_{1}$ a 
$(p,n)$-form on $V_{1}$, which is automatically $\bp$-closed, and  $\tau_{01}$ a $(p,n-1)$-form on $V_{01}$ \st\ $\tau_{1}=\bar\partial\tau_{01}$ on $V_{01}$. 
We have the exact sequence (cf. \eqref{lexactrelD})
\[
H_{\bp}^{p,n-1}(V)\lra H^{p,n-1}_{\bp}(V\ssm U)\overset{\delta}\lra H^{p,n}_{\bar\vt}(V,V\ssm U)\overset{j^{*}}\lra H^{p,n}_{\bp}(V),
\]
where $\delta$ assigns to the class of $\t$ the class of $(0,-\t)$. 

Here we quote the following (cf. \cite{Gr})\,:

\begin{theorem}[Grauert] Every  real analytic \mfd\ admits a fundamental system of  \nbd s consisting of Stein open sets in its complexification.\label{thGrauert}
\end{theorem}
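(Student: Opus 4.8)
The plan is to reduce the statement to the solution of the Levi problem in several complex variables, by realizing $M$ as a totally real sub\mfd\ of its complexification. The content is classical (this is Grauert's theorem), so I would lean on two deep external inputs---the real analytic embedding theorem and the characterization of Stein \mfd s---and supply the elementary glue in between.

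First I would embed $M$. By the Grauert--Morrey embedding theorem, a (second countable) real analytic \mfd\ admits a proper real analytic embedding $M\hra\R^{N}$ for some $N$. Writing $x_{1},\dots,x_{N}$ for the restrictions to $M$ of the Euclidean coordinates, these are real analytic on $M$, hence extend to \h\ functions $z_{1},\dots,z_{N}$ on a complexification $X$ of $M$. Shrinking $X$ if necessary, the map $z=(z_{1},\dots,z_{N})\colon X\hra\C^{N}$ is a proper \h\ embedding realizing $X$ as a locally closed complex sub\mfd\ of $\C^{N}$ whose real points $X\cap\R^{N}$ are exactly $M$. In this picture $M$ is a totally real sub\mfd\ of $X$ of maximal dimension, the geometric fact driving everything.

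Next I would produce the relevant plurisubharmonic functions. Set
\[
\psi=\sum_{j=1}^{N}(\operatorname{Im}z_{j})^{2}.
\]
Then $\psi\ge 0$ and $\psi$ vanishes exactly on $M$. Since each $\operatorname{Im}z_{j}$ is pluriharmonic, the complex Hessian of $\psi$ is the sum of the rank-one nonnegative Hermitian forms $2\,\partial(\operatorname{Im}z_{j})\otimes\overline{\partial(\operatorname{Im}z_{j})}$; because $z\colon X\hra\C^{N}$ is an immersion, the differentials $dz_{j}$ span the \h\ cotangent space at every point, so this Hessian is positive definite and $\psi$ is strictly plurisubharmonic on all of $X$. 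The sublevel sets $\Omega_{\varepsilon}=\{\psi<\varepsilon\}$ are then open \nbd s of $M$ shrinking to $M$ as $\varepsilon\downarrow 0$, so they already form a fundamental system of \nbd s; it remains only to check that each one is Stein. On $\Omega_{\varepsilon}$ consider
\[
\Phi=\sum_{j=1}^{N}|z_{j}|^{2}-\log(\varepsilon-\psi).
\]
The first term is a strictly plurisubharmonic proper exhaustion of $X$ (properness of the embedding), while $-\log(\varepsilon-\psi)$ is plurisubharmonic (composition of the convex increasing $t\mapsto-\log(\varepsilon-t)$ with the plurisubharmonic $\psi$) and tends to $+\infty$ along $\partial\Omega_{\varepsilon}$. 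Hence $\Phi$ is a strictly plurisubharmonic exhaustion function of $\Omega_{\varepsilon}$, and by Grauert's characterization of Stein \mfd s (equivalently, by the solution of the Levi problem) $\Omega_{\varepsilon}$ is Stein, completing the plan.

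The main obstacle is not in the glue above, which is elementary, but in the two quoted pillars: the real analytic embedding theorem and---above all---the analytic existence theorem upgrading ``carries a strictly plurisubharmonic exhaustion'' to ``is Stein''. That solution of the Levi problem is the genuinely hard ingredient; once it is granted, the totally-real computation of the Levi form of $\psi$ is the only step requiring care, and it is a short calculation.
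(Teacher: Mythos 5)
The paper itself contains no proof of this statement---it is quoted as a classical theorem with a citation to Grauert's 1958 paper [Gr]---so your proposal has to stand on its own; as written it works (with minor patching) for \emph{compact} $M$, but it breaks at two concrete points in the non-compact case, which is the real content of the theorem. The first gap is the properness claim. You assert that, after shrinking, $z\colon X\to\C^N$ is a \emph{proper} holomorphic embedding, and you use exactly this when you say $\sum_j|z_j|^2$ is a proper exhaustion of $X$, hence that $\Phi$ is an exhaustion of $\Omega_\varepsilon$. But a holomorphic embedding is proper precisely when its image is closed in $\C^N$, and shrinking works \emph{against} this, not for it: no proper shrinking of a complexification is closed (already no open neighborhood of $\R$ in $\C$ other than $\C$ itself is closed in $\C$). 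Properness of $z|_M=x$, which is what the embedding theorem gives, does not transfer to $X$. Moreover one cannot always repair this by a cleverer choice: take $M\subset\R^2$ to be the graph of $f(x)=\sum_{n\ge 1}2^{-n}\bigl(e^{1/(x-c_n)}+e^{1/(x-\bar c_n)}\bigr)$ with $c_n=n+\sqrt{-1}\,2^{-n}$; this is a properly embedded real analytic curve, but the holomorphic extension of $f$ has essential singularities at the points $c_n,\bar c_n$ accumulating on $\R$ at infinity, so \emph{no} complexification produced by your construction is closed in $\C^2$. For such an $X$ a sequence can leave every compact subset of $\Omega_\varepsilon$ through the ``edge'' of $X$ with $|z|$ bounded and $\psi$ bounded away from $\varepsilon$, so the sets $\{\Phi\le c\}$ are not compact and the Levi-problem criterion cannot be invoked. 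Note that producing a complexification which \emph{is} closed in $\C^N$ (hence Stein) is essentially the statement you are trying to prove, so it cannot be smuggled in at this step.

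The second gap is the fundamental-system claim. Even when $X$ can be taken closed in $\C^N$ (e.g.\ $M=\R$, $X=\C$), the sublevel sets $\Omega_\varepsilon=\{\psi<\varepsilon\}$ with \emph{constant} $\varepsilon$ do not form a fundamental system of neighborhoods of a non-compact $M$: for $M=\R\subset\C$ they are the strips $\{|y|<\sqrt\varepsilon\}$, and the neighborhood $\{|y|<e^{-x^2}\}$ of $\R$ contains no strip. That $\bigcap_\varepsilon\Omega_\varepsilon=M$ is strictly weaker than the required property that every neighborhood of $M$ contain some $\Omega_\varepsilon$. To fix this one must let the tube radius vary, i.e.\ consider $\{\psi<\varepsilon(z)\}$, and then $-\log(\varepsilon(z)-\psi)$ is no longer plurisubharmonic for free; supplying the pseudoconvexity of such variable tubes is exactly where the hard work in Grauert's argument lies. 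Finally, a logical caveat: for non-compact $M$ the proper real analytic embedding theorem you quote is classically \emph{deduced from} the Stein neighborhood theorem (Stein complexification plus Remmert's embedding), so your reduction inverts the classical order and is circular unless you supply an independent proof of the non-compact embedding theorem; only the compact case (Morrey) is safe in this respect, and indeed for compact $M$ your construction can be completed by an elementary shrinking argument.
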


By the above theorem, we may further simplify the expression. Namely,
if we take as $V$ a Stein \nbd, we have $H^{p,n}_{\bp}(V)\simeq H^{n}(V;\scO^{(p)})=0$. Thus $\delta$ is
surjective and every element in $H^{p,n}_{\bar\vt}(V,V\ssm U)$ is represented by a cocycle of the form $(0,-\t)$ with $\t$ a $\bar\partial$-closed
$(p,n-1)$-form on $V\ssm U$.

In the case $n>1$, $H^{p,n-1}_{\bp}(V)\simeq H^{n-1}(V;\scO^{(p)})=0$ and $\delta$ is an \iso\,:
\[
H^{p,n-1}_{\bp}(V\ssm U)\simeq H^{p,n}_{\bar\vt}(V,V\ssm U),\qquad [\t]\leftrightarrow [(0,-\t)].
\]

In the case $n=1$,
we have the exact sequence
\[
H_{\bp}^{p,0}(V)\lra H^{p,0}_{\bp}(V\ssm U)\overset{\delta}\lra H^{p,1}_{\bar\vt}(V,V\ssm U)\lra 0,
\]
where $H^{p,0}_{\bp}(V\ssm U)= H^{0}(V\ssm U;\scO^{(p)})$ and 
$H^{p,0}_{\bp}(V)= H^{0}(V;\scO^{(p)})$. In particular, for $p=0$,  we have the \iso
\begin{equationth}\label{hyperfone}
H^{0}(V\ssm U;\scO)/H^{0}(V;\scO)\simeq H^{0,1}_{\bar\vt}(V,V\ssm U), \qquad [F]\leftrightarrow [(0,-F)],
\end{equationth}
where $F$ is a \h\ \fcn\ on $V\ssm U$. 
The left hand side is
the original expression discussed above, while
the right hand side is the expression in terms of relative Dolbeault cohomology.

\begin{remark} Although a hyperform may be represented by a single differential form in most of the cases, it is important to keep in mind
that it is represented by a pair $(\tau_{1},\tau_{01})$ of forms in general.
\end{remark}

\paragraph{Multiplication by real analytic \fcn s\,:} Let $\scA_{M}$ denote the sheaf of real analytic \fcn s on $M$,
which is given by $\scA_{M}=i^{-1}\scO_{X}$ with $i:M\hra X$ the inclusion. We define
the multiplication
\[
\scA(U)\times H^{p,n}_{\bar\vt}(V,V\ssm U)\lra H^{p,n}_{\bar\vt}(V,V\ssm U)
\]
by assigning to $(f,[\tau])$ the class of $(\tilde f\tau_{1},\tilde f\tau_{01})$ with $\tilde f$ a \h\ extension
of $f$. Then the following diagram is commutative\,:
\[
\SelectTips{cm}{}
\xymatrix
@C=.7cm
@R=.5cm
{\scA(U)\times H^{p,n}_{\bar\vt}(V,V\ssm U)\ar[r]^-{} \ar@{-}[d]^-{\wr} &H^{p,n}_{\bar\vt}(V,V\ssm U) \ar@{-}[d]^-{\wr}\\
\scA(U)\times H^{n}_{U}(V;\scO^{(p)}) \ar[r]^-{}& H^{n}_{U}(V;\scO^{(p)}).}
\]

\paragraph{Partial derivatives\,:} Suppose that $U$ is a coordinate \nbd\ with coordinates $(x_{1},\dots,x_{n})$. We  define the partial derivative
\[
\frac \partial {\partial x_{i}}:H^{0,n}_{\bar\vt}(V,V\ssm U)\lra H^{0,n}_{\bar\vt}(V,V\ssm U)
\]
as follows. Let $(\tau_{1},\tau_{01})$ represent a hyper\fcn\ on $U$. We write $\tau_{1}=f\,d\bar z_{1}\wedge\cdots
\wedge d\bar z_{n}$
and 
$\tau_{01}=\sum_{j=1}^{n}g_{j}\,d\bar z_{1}\wedge\cdots\wedge\widehat{d\bar z_{j}}\wedge\cdots
\wedge d\bar z_{n}$.
Then $\frac \partial {\partial x_{i}}[\tau]$
is represented by the cocycle
\[
\Bigl(\frac{\partial f}{\partial z_{i}}\,d\bar z_{1}\wedge\cdots
\wedge d\bar z_{n},\,\sum_{j=1}^{n}\frac{\partial g_{j}}{\partial z_{i}}\,d\bar z_{1}\wedge\cdots\wedge\widehat{d\bar z_{j}}\wedge\cdots
\wedge d\bar z_{n}\Bigr).
\]
With this the following diagram is commutative\,:
\[
\SelectTips{cm}{}
\xymatrix
@C=.7cm
@R=.5cm
{H^{0,n}_{\bar\vt}(V,V\ssm U)\ar[r]^-{\frac \partial {\partial x_{i}}} \ar@{-}[d]^-{\wr} &H^{0,n}_{\bar\vt}(V,V\ssm U) \ar@{-}[d]^-{\wr}\\
 H^{n}_{U}(V;\scO) \ar[r]^-{\frac \partial {\partial z_{i}}}& H^{n}_{U}(V;\scO).}
\]

Thus for a differential operator $P(x,D)$, $P(x,D):H^{0,n}_{\bar\vt}(V,V\ssm U)\ra H^{0,n}_{\bar\vt}(V,V\ssm U)$ is well-defined.

\paragraph{Differential\,:} We  define the differential (cf. \eqref{differential}, here we denote $\partial$ by $d$)
\begin{equationth}\label{defdiff}
d:H^{p,n}_{\bar\vt}(V,V\ssm U)\lra H^{p+1,n}_{\bar\vt}(V,V\ssm U).
\end{equationth}
by assigning to the class of $(\tau_{1},\tau_{01})$ the class of $(-1)^{n}(\partial\tau_{1},-\partial\tau_{01})$.  Then the following diagram is commutative\,:
\[
\SelectTips{cm}{}
\xymatrix
@C=.7cm
@R=.5cm
{H^{p,n}_{\bar\vt}(V,V\ssm U)\ar[r]^-{d} \ar@{-}[d]^-{\wr} &H^{p+1,n}_{\bar\vt}(V,V\ssm U) \ar@{-}[d]^-{\wr}\\
 H^{n}_{U}(V;\scO^{(p)}) \ar[r]^-{d}& H^{n}_{U}(V;\scO^{(p+1)}).}
\]

The above operations are readily carried over to those for the hyperforms $\scB^{(p)}(U)=H^{p,n}_{\bar\vt}(V,V\ssm U)\otimes or_{M/X}(U)$.

In particular, since we have a canonical \iso\ (cf. (\ref{cori}))
\[
H^{n}_{D}(V,V\ssm U)\otimes or_{M/X}(U)\simeq\C_{M}(U),
\]
we have, from Theorem \ref{thexactpcd}, an exact sequence of sheaves on $M$\,:
\[
0\lra\C\lra \scB\overset{d}\lra\scB^{(1)}\overset{d}\lra\cdots\overset{d}\lra\scB^{(n)}\lra 0.
\]
We come back to this topic in Subsection \ref{ssembra} below.

\subsection{Integration and related topics}\label{ssinttopics}

\paragraph{Support of a hyperform\,:} 

Let $U$ be an open set in $M$ and $K$ a compact set in $U$.
We define the space $\scB^{(p)}_{K}(U)$ of $p$-hyperforms on $U$ with support in $K$ as the kernel
of the restriction $\scB^{(p)}(U)\ra \scB^{(p)}(U\ssm K)$.
\begin{proposition} 
For any open set $V$ in $X$ containing $K$, the cohomology $H^{p,n}_{\bar\vt}(V,V\ssm K)$ may be thought of
as a
$\Z_{M}(U)$-module and there is a canonical \iso\,{\rm :}
\[
\scB^{(p)}_{K}(U)\simeq H^{p,n}_{\bar\vt}(V,V\ssm K)\otimes_{{}_{\Z_{M}(U)}} or_{M/X}(U).
\]
\end{proposition}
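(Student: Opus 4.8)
The plan is to reduce everything to a statement about the derived sheaf $\scH^{n}_{M}(\scO^{(p)}_{X})$ on $M$, of which both $\scB^{(p)}_{M}$ and the cohomology in the proposition are incarnations. First I would record that, by the relative Dolbeault theorem (Theorem~\ref{thDrel}), there is a canonical isomorphism $H^{p,n}_{\bar\vt}(V,V\ssm K)\simeq H^{n}_{K}(V;\scO^{(p)}_{X})$, and that by excision (Proposition~\ref{propexcision}) the left-hand side is independent of the open set $V\supset K$; we are therefore free to take $V$ to be a complex \nbd\ of $U$. It then suffices to produce a canonical isomorphism $\scB^{(p)}_{K}(U)\simeq H^{n}_{K}(V;\scO^{(p)}_{X})\otimes_{\Z_{M}(U)}or_{M/X}(U)$ and, along the way, to exhibit the $\Z_{M}(U)$-module structure on the right.

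The key step is the analogue of Proposition~\ref{propsh} for supports in the compact set $K$, namely the identification
\[
H^{n}_{K}(V;\scO^{(p)}_{X})\simeq \Gamma_{K}\bigl(U;\scH^{n}_{M}(\scO^{(p)}_{X})\bigr),
\]
where $\Gamma_{K}(U;-)$ denotes the group of sections over $U$ supported in $K$. To obtain it I would apply the exact sequence of Proposition~\ref{propfundloc}.\,1 to the triple of open sets $(V,V\ssm K,V\ssm U)$; translating into local cohomology and using $(V\ssm K)\ssm(V\ssm U)=U\ssm K$, the relevant portion reads
\[
H^{n-1}_{U\ssm K}(V\ssm K;\scO^{(p)}_{X})\lra H^{n}_{K}(V;\scO^{(p)}_{X})\lra H^{n}_{U}(V;\scO^{(p)}_{X})\lra H^{n}_{U\ssm K}(V\ssm K;\scO^{(p)}_{X}).
\]
Since $M$ is purely $n$-codimensional in $X$ \wrt\ $\scO^{(p)}_{X}$ (Remark~\ref{remfund}), Proposition~\ref{propsh} gives $H^{n-1}_{U\ssm K}(V\ssm K;\scO^{(p)}_{X})=\scH^{n-1}_{M}(\scO^{(p)}_{X})(U\ssm K)=0$ and identifies the last two terms with $\scH^{n}_{M}(\scO^{(p)}_{X})(U)$ and $\scH^{n}_{M}(\scO^{(p)}_{X})(U\ssm K)$, the last arrow being the sheaf restriction. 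Hence $H^{n}_{K}(V;\scO^{(p)}_{X})$ is exactly the kernel of that restriction, i.e. the $K$-supported sections $\Gamma_{K}(U;\scH^{n}_{M}(\scO^{(p)}_{X}))$. As these form a module over $\Gamma(U;\Z_{M})=\Z_{M}(U)$, this is the promised module structure, and it is manifestly independent of $V$.

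Finally I would assemble the pieces on the level of the sheaf $\scB^{(p)}_{M}=\scH^{n}_{M}(\scO^{(p)}_{X})\otimes_{\Z_{M}}or_{M/X}$. By definition $\scB^{(p)}_{K}(U)=\Gamma_{K}(U;\scB^{(p)}_{M})$ is the kernel of $\scB^{(p)}(U)\to\scB^{(p)}(U\ssm K)$ (cf. \eqref{isosec}). Because $M$ is assumed orientable, $or_{M/X}$ is a \emph{trivial} locally constant sheaf, so $or_{M/X}(U)$ is free of rank one over $\Z_{M}(U)$; tensoring with it is exact and commutes with the formation of the kernel above, pulling the global factor $or_{M/X}(U)$ outside. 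Combining this with the identification of the previous paragraph and with $H^{n}_{K}(V;\scO^{(p)}_{X})\simeq H^{p,n}_{\bar\vt}(V,V\ssm K)$ yields the asserted isomorphism. The main obstacle is precisely the compact-support analogue of Proposition~\ref{propsh}: one must verify the vanishing $H^{n-1}_{U\ssm K}(V\ssm K;\scO^{(p)}_{X})=0$ and the realization of the local cohomology as supported sections of the derived sheaf; once pure codimensionality is invoked, the orientation-sheaf bookkeeping is routine.
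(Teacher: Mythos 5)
Your proposal is correct and follows essentially the same route as the paper: the paper's proof likewise applies the long exact sequence of the triple $(V,V\ssm K,V\ssm U)$ (in its relative Dolbeault form, Proposition~\ref{proptriplerd}), kills the term $H^{p,n-1}_{\bar\vt}(V\ssm K,V\ssm U)\simeq H^{n-1}_{U\ssm K}(V\ssm K;\scO^{(p)}_{X})=0$ by pure codimensionality, and then identifies $H^{p,n}_{\bar\vt}(V,V\ssm K)\otimes_{\Z_{M}(U)}or_{M/X}(U)$ with the kernel of $\scB^{(p)}(U)\to\scB^{(p)}(U\ssm K)$ after tensoring with the trivial orientation sheaf. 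The only differences are presentational: you do the bookkeeping on the local-cohomology side of Theorem~\ref{thDrel} and obtain the $\Z_{M}(U)$-module structure via Proposition~\ref{propsh}, while the paper stays with the Dolbeault groups (getting the module structure from a connectivity assumption on $V$) and makes your ``routine'' orientation step explicit as the injectivity of the restriction $r^{*}:H^{p,n}_{\bar\vt}(V\ssm K,V\ssm U)\otimes_{\Z_{M}(U)}or_{M/X}(U)\to H^{p,n}_{\bar\vt}(V\ssm K,V\ssm U)\otimes_{\Z_{M}(U\ssm K)}or_{M/X}(U\ssm K)$.
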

\begin{proof} 
By Proposition \ref{proptriplerd} for the triple $(V,V\ssm K,V\ssm U)$, we have the exact sequence
\[
 H^{p,n-1}_{\bar\vt}(V\ssm K,V\ssm U)\overset{\delta}\lra H^{p,n}_{\bar\vt}(V,V\ssm K)\overset{j^{*}}\lra H^{p,n}_{\bar\vt}(V,V\ssm U)\overset{i^{*}}\lra H^{p,n}_{\bar\vt}(V\ssm K,V\ssm U).
\]
By Proposition \ref{propexcision}, we may assume that $V$ is a complex \nbd\ of $U$ and that each
connected component of $V$ contains at most one connected component of $U$. This shows that each of the cohomologies in the sequence has a natural $\Z_{M}(U)$-module \str. 
Since $or_{M/X}(U)\simeq\Z_{M}(U)$, taking the tensor product with  $or_{M/X}(U)$ over $\Z_{M}(U)$
is an exact functor. By definition, $H^{p,n}_{\bar\vt}(V,V\ssm U)\otimes or_{M/X}(U)=\scB^{(p)}(U)$. Noting that $V\ssm K$ is a complex \nbd\ of $U\ssm K$
and that $V\ssm U=(V\ssm K)\ssm (U\ssm K)$, we have $H^{p,n}_{\bar\vt}(V\ssm K,V\ssm U)\otimes or_{M/X}(U\ssm K)=\scB^{(p)}(U\ssm K)$, where the tensor product is over $\Z_{M}(U\ssm K)$. Since the restriction
\[
r^{*}:H^{p,n}_{\bar\vt}(V\ssm K,V\ssm U)\otimes _{{}_{\Z_{M}(U)}}or_{M/X}(U)\lra H^{p,n}_{\bar\vt}(V\ssm K,V\ssm U)\otimes_{{}_{\Z_{M}(U\ssm K)}} or_{M/X}(U\ssm K)
\]
is injective, $\op{Ker}i^{*}=\op{Ker}(r^{*}\circ i^{*})$.
On the other hand, since $U\ssm K$ is pure $n$-codimensional in $V\ssm K$ \wrt\ $\scO^{(p)}_{X}$, we have  
$H^{p,n-1}_{\bar\vt}(V\ssm K,V\ssm U)\simeq H^{n-1}_{U\ssm K}(V\ssm K;\scO^{(p)}_{X})=0$.
\end{proof}

\begin{remark} By the flabbiness of $\scB^{(p)}$, we have 
the following exact sequence\,:
\[
0\lra \scB^{(p)}_{K}(U)\lra \scB^{(p)}(U)\lra \scB^{(p)}(U\ssm K)\lra 0.
\]
\end{remark}

\paragraph{Orientation convention\,:} 
Recall the sequence \eqref{exactemb} and the \iso s  \eqref{orisub} with $M$ and $X$ as above.
Note that, since we assumed that $M$ is orientable, all the orientation sheaves involved  are trivial.
Let $(z_{1},\dots,z_{n})$, $z_{i}=x_{i}+\sqrt{-1}y_{i}$,  be a complex coordinate system on $X$ \st\ $(x_{1},\dots,x_{n})$ is a
coordinate system on $M$. Thus $(\varpi(\frac \partial{\partial y_{1}}),\dots,\varpi(\frac \partial{\partial y_{n}}))$ is a frame of $T_{M}X$.
When we orient $X$, $M$ and $T_{M}X$, we adopt  the following\,:
\paragraph{Convention 1.} Let $\psi_{M}$ and $\psi_{M/X}$ be prescribed orientations of $M$ and $T_{M}X$,
\r.
Then we take the orientation $\psi_{X}$ of $X$ so that, if $(x_{1},\dots,x_{n})$ is a positive coordinate systems on  $M$ and if $(\varpi(\frac \partial{\partial y_{1}}),\dots,\varpi(\frac \partial{\partial y_{n}}))$ is a positive frame of $T_{M}X$, then $(y_{1},\dots,y_{n}, x_{1},\dots,x_{n})$
is a positive coordinate systems on  $X$.
\vv

The above amounts to saying that we make identification
\begin{equationth}\label{oriident}
i^{-1}or_{X}= or_{M/X}\otimes or_{M}\qquad\text{by}\qquad i^{-1}\psi_{X}\longleftrightarrow\psi_{M/X}\otimes
\psi_{M}
\end{equationth}
with  $\psi_{M/X}$, $\psi_{M}$ and  $\psi_{X}$ as in Convention 1.

In the case $M$ is an open set in $\R^{n}=\{(x_{1},\dots,x_{n})\}$, we may think of $X$ as an open set in
$\C^{n}=\{(z_{1},\dots,z_{n})\}$. In this case, $T_{M}X$ is trivial and we make identification 
$T_{M}X=M\times\R^{n}_{y}$, $\R^{n}_{y}=\{(y_{1},\dots,y_{n})\}$, by 
$\sum_{i=1}^{n}\eta_{i}\varpi(\frac\partial{\partial y_{i}})\leftrightarrow (x;\eta_{1}(x),\dots,\eta_{n}(x))$, $x\in M$.

\begin{remark}\label{remthom} 
{\bf 1.} Let $\psi_{M}$, $\psi_{M/X}$ and $\psi_{X}$ be as in Convention 1. Then the difference between $\psi_{X}$  and the usual orientation of $X$ (cf. Remark~\ref{remoricomplex}) is a sign of
$(-1)^{\frac{n(n+1)}2}$.
%
\smallskip

\noindent
{\bf 2.} The choice of $\psi_{X}$ as above is natural
from
the fiber bundle viewpoint in the following sense.
Note that $\psi_{M/X}$ above is the Thom class $\varPsi_{M}\in
H^{n}_{M}(X;\Z)$ of $M$ in $X$ (cf.~Subsection~\ref{subsOT}). Recall that the Alexander \iso
\[
A:H^{n}_{M}(X;\Z)=H^{n}(X,X\ssm M)\simeq H_{n}(M;\Z)
\]
is given by the left cap product with the fundamental class $[X]$. If we take $\psi_{X}$ as the orientation of $X$, 
then
$A(\varPsi_{M})=[M]$, the fundamental class of $[M]$ with orientation $\psi_{M}$. 
\end{remark}


\paragraph{Integration\,:} 
We consider the sheaf of real analytic densities on $M$\,:
\[
\scV_{M}=\scA^{(n)}_{M}\otimes_{{}_{\Z_{M}}} or_{M}.
\]
The sheaf of hyperdensities is defined by
\[
\scW_{M}=\scB_{M}\otimes_{{}_{\scA_{M}}}\scV_{M}=\scB^{(n)}_{M}\otimes_{{}_{\Z_{M}}} or_{M}.
\]
We define
\[
\vG_{c}(M;\scW_{M})\xrightarrow{\,\,\,\int_{M}\,\,\,}\C
\]
as follows. We  assume $M$ (and $X$) to be connected for simplicity. 


For any compact set $K$ in $M$, 
we have, by the identification \eqref{oriident},
\[
\vG_{K}(M;\scW_{M})=H^{n,n}_{\bar\vt}(X,X\ssm K)\otimes or_{M/X}(M)\otimes or_{M}(M)
=H^{n,n}_{\bar\vt}(X,X\ssm K)\otimes i^{-1}or_{X}(M),
\]
where the tensor products are over $\Z_{M}(M)=\Z$.
Thus we have the integration
\[
\vG_{K}(M;\scW_{M})\xrightarrow{\,\,\,\int_{M}\,\,\,}\C
\]
as defined in (\ref{intDrelgen}), which we recall for the sake of completeness.
Given 
\[
u \otimes a
\in 
H^{n,n}_{\bar\vt}(X,X\ssm K)\otimes i^{-1}{or}_{X}(M).
\]
Once we fix an orientation of $X$,
we have a well-defined integer $n(a)$ as we saw before (cf. Section \ref{seclocald}).
If we take the opposite orientation of $X$, the sign changes.
Letting $V_0=X\ssm K$ and $V_1= X$, consider the coverings $\V_{K}= \{V_0,V_1\}$ and 
$\V'_{K}= \{V_0\}$.  Then $u\in H^{n,n}_{\bar\vt}(X,X\ssm K)=H^{n,n}_{\bar\vt}(\V_{K},\V'_{K})$
is represented by
\[
\tau = (\tau_1,\tau_{01}) \in \scE^{(n,n)}(\V_{K},\V'_{K})=\scE^{(n,n)}(V_{1})\oplus\scE^{(n,n-1)}(V_{01}).
\]
Let $R_{1}$ be a $2n$-dimensional compact 
\mfd\ with $C^{\infty}$ boundary in $X$ containing $K$ in its interior and set $R_{01}=-\partial R_{1}$. 
Then, once we fix an orientation of $X$, we may define
\[
\int_{M}u=\int_{R_{1}}\tau_{1}+\int_{R_{01}}\tau_{01}.
\]
If we take the opposite orientation of $X$, the sign changes. Thus 
\[
\int_{M}u\otimes a=n(a)\int_{M}u
\]
 does
not depend on the choice of the orientation of $X$.

\begin{remark}\label{remintone} In the case $n=1$, the above definition is consistent with the original one
under the correspondence \eqref{hyperfone}.
\end{remark}

\paragraph{A theorem of Martineau\,:} The following theorem of A. Martineau \cite{M} (see also \cite{H1}, \cite{K}) may
be  interpreted in our framework as one of the  cases where the $\bp$-Alexander morphism 
(cf. \eqref{3.5})   is an \iso\ 
with topological duals so that the duality
pairing is given by the cup product followed by integration as described above.
See \cite{Su10} for a little more detailed discussions on this.
The essential point of the proof is that the Serre duality holds for $V\ssm K$, which is a consequence of a result of B. Malgrange \cite{Mal}. 

In the below we assume that $\C^{n}$ is oriented, however the orientation may not be the usual one.

\begin{theorem}\label{thMH} Let $K$ be a compact set in $\C^{n}$ \st\ $H^{p,q}_{\bp}[K]=0$ for $q\ge 1$.
Then for any open set $V\supset K$, $H^{p,q}_{\bar\vt}(V,V\ssm K)$ and $H^{n-p,n-q}_{\bp}[K]$ admits 
natural \str s of FS and DFS spaces, \r, and we have\,{\rm :}
\[
\bar A:H^{p,q}_{\bar\vt}(V,V\ssm K)\overset\sim\lra H^{n-p,n-q}_{\bp}[K]'=\begin{cases} 0\qquad & q\ne n\\
                                                                       \scO^{(n-p)}[K]'\qquad & q=n,
                                                                       \end{cases}
\]
where $'$ denotes the strong dual.
\end{theorem}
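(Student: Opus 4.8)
The plan is to deduce this relative (local) duality from the ordinary Serre duality on the open manifold $V\ssm K$, transporting it across the long exact sequence of the pair $(V,V\ssm K)$ on the source side and the long exact sequence for compactly supported cohomology attached to $K\subset V$ on the target side. First I would reduce to a convenient $V$: by the excision \iso\ (Proposition \ref{propexcision}) neither side of $\bar A$ changes when we shrink $V$, and since $K$ is compact we may take $V$ to be a Stein \nbd\ of $K$. This is the only place Grauert-type information is used, and it furnishes the vanishing $H^{p,q}_{\bp}(V)=H^{q}(V;\scO^{(p)})=0$ for all $q\ge 1$, together with the companion vanishing of the compactly supported cohomology $H^{s,t}_{c,\bp}(V)$ for $t\le n-1$ (by Serre duality on the Stein manifold $V$). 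All spaces carry their natural topologies: spaces of forms are Fréchet, kernels and quotients inherit the induced topology giving FS spaces, and the inductive limit $H^{n-p,n-q}_{\bp}[K]=\varinjlim_{V_1\supset K}H^{n-p,n-q}_{\bp}(V_1)$ receives the locally convex inductive limit topology, making it DFS; the duals in the statement are the corresponding strong duals.

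The core is the next step, and it is the main obstacle of the whole argument. I would invoke Malgrange's theorem to conclude that the Dolbeault cohomology groups $H^{s,t}_{\bp}(V\ssm K)$ are separated (Hausdorff). This is precisely what makes the Serre pairing
\[
H^{s,t}_{\bp}(V\ssm K)\times H^{n-s,n-t}_{c,\bp}(V\ssm K)\lra\C,\qquad (\alpha,\beta)\longmapsto\int_{V\ssm K}\alpha\wedge\beta,
\]
a \emph{perfect} pairing of an FS space with a DFS space (here $H_{c,\bp}$ denotes Dolbeault cohomology with compact supports). Without this separation the induced map into the strong dual would fail to be an isomorphism, so the quoted assertion that ``Serre duality holds for $V\ssm K$'' is doing all the analytic work.

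Next I would link both sides of $\bar A$ to this pairing. On the source, the long exact sequence of the pair (cf. \eqref{lexactrelD}) combined with the Stein vanishing for $V$ gives, for $q\ge 2$, a canonical \iso\ $H^{p,q}_{\bar\vt}(V,V\ssm K)\simeq H^{p,q-1}_{\bp}(V\ssm K)$; for $q\le 1$ the source vanishes (for $q=1$ this is a Hartogs argument valid when $n\ge 2$, the case $q=1=n$ being the main case). On the target, the long exact sequence for compactly supported cohomology associated with the closed set $K$ and its open complement,
\[
\cdots\lra H^{n-p,t}_{c,\bp}(V\ssm K)\lra H^{n-p,t}_{c,\bp}(V)\lra H^{n-p,t}_{\bp}[K]\lra H^{n-p,t+1}_{c,\bp}(V\ssm K)\lra\cdots,
\]
together with the Stein vanishing of $H^{n-p,t}_{c,\bp}(V)$ in the relevant range, yields $H^{n-p,n-q}_{\bp}[K]\simeq H^{n-p,n-q+1}_{c,\bp}(V\ssm K)$. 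Dualizing the latter by the Serre pairing above identifies $H^{n-p,n-q}_{\bp}[K]'$ with $H^{p,q-1}_{\bp}(V\ssm K)$, i.e.\ with the source. The remaining bookkeeping is to verify that the cup product \eqref{cuprel} followed by the integration \eqref{intDrel}, which defines $\bar A$, is carried to the integral $\int_{V\ssm K}\alpha\wedge\beta$ under these identifications: representing a relative class by $(\tau_{1},\tau_{01})$ and using the connecting map $\delta$, one recognizes $\int_{R_{1}}\tau_{1}\wedge\eta+\int_{R_{01}}\tau_{01}\wedge\eta$ as the Serre integral of $\tau_{01}$ against a compactly supported representative, up to the canonical signs. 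With the pairings matched, the five lemma shows $\bar A$ is a continuous \iso\ of an FS space onto the strong dual of a DFS space.

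Finally I would compute the target. The hypothesis $H^{p,q}_{\bp}[K]=0$ for $q\ge 1$ expresses that $K$ admits arbitrarily small cohomologically trivial neighborhoods, a property insensitive to the form degree, so that $H^{n-p,s}_{\bp}[K]=0$ for every $s\ge 1$; hence $H^{n-p,n-q}_{\bp}[K]=0$ unless $n-q=0$, while $H^{n-p,0}_{\bp}[K]=\scO^{(n-p)}[K]$. This produces the stated dichotomy: $\bar A$ is an \iso\ onto $0$ when $q\ne n$ and onto $\scO^{(n-p)}[K]'$ when $q=n$.
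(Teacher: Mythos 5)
Your proposal is correct and follows essentially the same route as the paper: the paper does not reprove this result but quotes Martineau's theorem, observing that the essential point is that Serre duality holds for $V\ssm K$ (established via Malgrange's result) and that the duality pairing is the cup product followed by integration --- precisely the two ingredients your sketch assembles by transporting Serre duality on the complement through the long exact sequences and matching it with the explicit integral $\int_{R_{1}}\tau_{1}\wedge\eta+\int_{R_{01}}\tau_{01}\wedge\eta$. The separation (Hausdorffness) of the Dolbeault cohomology of $V\ssm K$, which you correctly single out as carrying all the analytic weight, is black-boxed by the paper in exactly the same way (it is the content of the citation to Malgrange), so your argument sits at the same level of completeness as the paper's own treatment.
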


The theorem is originally stated  in terms of local cohomology for $p=0$.
In our framework the duality (in the case $q=n$) is described as follows.
Letting $V_{0}=V\ssm K$ and $V_{1}$ a \nbd\ of $K$ in $V$, we  consider the coverings $\V_{K}=\{V_{0},V_{1}\}$ and
$\V'_{K}=\{V_{0}\}$ of $V$ and $V_{0}$, as before.
The duality pairing is given, for a  cocycle $(\tau_{1},\tau_{01})$ in $\scE^{(p,n)}(\V_{K},\V'_{K})$ and a \h\ $(n-p)$-form $\eta$
near $K$, by
\begin{equationth}\label{locpair}
\int_{R_{1}}\tau_{1}\wedge\eta+\int_{R_{01}}\tau_{01}\wedge\eta,
\end{equationth}
where $R_{1}$ is a real $2n$-dimensional \mfd\ with $C^{\infty}$ boundary in $V_{1}$ containing $K$ in its interior and
$R_{01}=-\partial R_{1}$. We may let $\tau_{1}=0$ if $V$ is Stein.

Note that the hypothesis $H^{p,q}_{\bp}[K]=0$, $q\ge 1$, is satisfied
if $K$ is a subset of $\R^{n}$ by Theorem \ref{thGrauert}.

Suppose $K\subset\R^{n}$ and denote by $\scA^{(p)}$ the sheaf of real analytic $p$-forms on $\R^{n}$. Then
\[
\scO^{(p)}[K]= \lim_{\underset{V_{1}\supset K}{\lra}}\scO^{(p)}(V_{1})\simeq \lim_{\underset{U_{1}\supset K}{\lra}}\scA^{(p)}(U_{1})
=\scA^{(p)}[K],
\]
where $V_{1}$ runs through \nbd s of $K$ in $\C^{n}$ and $U_{1}=V_{1}\cap\R^{n}$.
In the sequel we set $X=\C^{n}$ and $M=\R^{n}$.
Then noting that $\scB^{(p)}_{K}(U)= H^{p,n}_{\bar\vt}(V,V\ssm K)\otimes or_{M/X}(U)$,
from Theorem~\ref{thMH} we have, without specifying the orientation of $\C^{n}$ or of $\R^{n}$,

\begin{corollary}\label{functional} For any open set $U\subset\R^{n}$ containing $K$, 
there is an \iso
\[
\scB^{(p)}_{K}(U)\simeq (\scA^{(n-p)}[K]\otimes or_{M}(U))'.
\]
\end{corollary}

Note that
the duality pairing above is given by (cf. \eqref{locpair})
\[
\scB^{(p)}_{K}(U)\times (\scA^{(n-p)}[K]\otimes or_{M}(U))\lra H^{n,n}_{\bar\vt}(V,V\ssm K)\otimes i^{-1}or_{X}(U)
\overset{\int}\lra\C.
\]

\paragraph{Delta \fcn\,:} We consider the case $K=\{0\}\subset\R^{n}$. We set
\[
\varPhi(z)= dz_{1}\wedge\cdots\wedge dz_{n}\quad\text{and}\quad \varPhi_i(z)=(-1)^{i-1}z_i\, dz_1\wedge\cdots\wedge\widehat{dz_i}\wedge\cdots\wedge dz_n.
\]
The $0$-Bochner-Martinelli form on $\C^{n}\ssm\{0\}$ is defined as
\[
\beta_n^{0}=C'_n\frac {\sum_{i=1}^n\overline{\varPhi_i(z)}}{\|z\|^{2n}},\qquad
C'_n=(-1)^{\frac {n(n-1)} 2}\,\frac{(n-1)!}{(2\pi\sqrt{-1})^n}
\]
so that 
\[
\b_{n}=\beta_n^{0}\wedge\varPhi(z)
\]
is the Bochner-Martinelli form on $\C^{n}\ssm\{0\}$. Note that 
\[
\b_{1}^{0}=\frac 1{2\pi\sqrt{-1}}\frac 1 z\qquad\text{and}\qquad
\b_{1}=\frac 1{2\pi\sqrt{-1}}\frac {dz} z.
\]

We denote by $\psi_{M/X}$  the section of $or_{M/X}$ that corresponds to $+1$ when we choose $(y_{1},\dots,y_{n})$ as
a positive coordinate system in the normal direction.

\begin{definition}\label{defdeltafcn} The {\em delta \fcn} $\delta(x)$ is the hyper\fcn\  in 
\[
\scB_{\{0\}}(\R^{n})=H^{0,n}_{\bar\vt}(\C^{n},\C^{n}\ssm\{0\})\otimes or_{M/X}(\R^{n})
\]
that is represented by
\[
(0,-(-1)^{\frac{n(n+1)}2}\b_{n}^{0})\otimes\psi_{M/X}.
\]
\end{definition}

Recall the \iso\ in Corollary \ref{functional} in this case\,:
\[
\scB_{\{0\}}(\R^{n})\simeq (\scA^{(n)}_{0}\otimes or_{M}(\R^{n}))'.
\]
For a  representative $h(x)\varPhi(x)$ of a germ in $\scA^{(n)}_{0}$, $h(z)\varPhi(z)$ is its complex representative. 

Let $\psi_{M}$ denote the section of $or_{M}$ that corresponds to $+1$ when we choose $(x_{1},\dots,x_{n})$
as a positive coordinate system on $\R^{n}$. Thus, by Convention 1, $\psi_{M/X}\otimes \psi_{M}$ is identified with $i^{-1}\psi_{X}$, where $\psi_{X}$ is the section of $or_{X}$
that corresponds to $+1$ when we choose $(y_{1},\dots,y_{n},x_{1},\dots,x_{n})$
as a positive coordinate system on $\C^{n}$.
Let $R_{1}$ be 
a closed ball 
around $0$ in $\C^{n}$ so that $R_{01}=
-\partial R_{1}=-S^{2n-1}$. Here $S^{2n-1}$ is a $(2n-1)$-sphere, whose
 orientation  may not be the usual one.

\begin{theorem}\label{deltadual} The delta \fcn\ $\delta(x)$ is the hyperfunction that assigns the value $h(0)$ to a representative $\o\otimes \psi_{M}$, $\o=h(x) \varPhi(x)$.
\end{theorem}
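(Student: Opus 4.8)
The plan is to unwind the duality pairing of Corollary~\ref{functional} in the case $p=0$, $K=\{0\}$, $U=\R^{n}$ and $V=\C^{n}$, evaluating it on the delta \fcn\ of Definition~\ref{defdeltafcn} and on the test form $\o\otimes\psi_{M}$, $\o=h(x)\varPhi(x)$, whose complex representative is $\eta=h(z)\varPhi(z)$. By construction this pairing is the cup product \eqref{cuprel} followed by the integration \eqref{intDrelgen}, so the whole evaluation reduces to a single integral of a Bochner--Martinelli form over a sphere, together with a careful bookkeeping of orientation factors.

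First I would compute the cup product. The delta \fcn\ is represented by the cocycle $(0,-(-1)^{\frac{n(n+1)}{2}}\b_{n}^{0})$ tensored with $\psi_{M/X}$, and pairing it with $\eta=h(z)\varPhi(z)$ via \eqref{cuprel} yields the $(n,n)$-cocycle
\[
\bigl(0,\,-(-1)^{\frac{n(n+1)}{2}}\,\b_{n}^{0}\wedge h(z)\varPhi(z)\bigr)=\bigl(0,\,-(-1)^{\frac{n(n+1)}{2}}\,h(z)\,\b_{n}\bigr),
\]
since $\b_{n}=\b_{n}^{0}\wedge\varPhi(z)$ and $h(z)$ is a $0$-form. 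Simultaneously the orientation sections combine, via \eqref{orisub}, into $a=\psi_{M/X}\otimes\psi_{M}$, a section of $i^{-1}or_{X}$ corresponding to the orientation in which $(y_{1},\dots,y_{n},x_{1},\dots,x_{n})$ is positive. Thus the value of the pairing is $\int u\otimes a$ for $u$ the class just obtained.

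Next I would carry out the integration \eqref{intDrelgen}, fixing the \emph{usual} orientation of $\C^{n}$ so that the Bochner--Martinelli formula applies in its standard form. With this choice $n(a)=(-1)^{\frac{n(n+1)}{2}}$, since the orientation defined by $a$ differs from the usual one by exactly this sign. Taking $R_{1}$ the closed ball and $R_{01}=-\partial R_{1}=-S^{2n-1}$ as in the statement, and using $\tau_{1}=0$, the integral is
\[
\int u=\int_{R_{01}}\bigl(-(-1)^{\frac{n(n+1)}{2}}h(z)\b_{n}\bigr)=(-1)^{\frac{n(n+1)}{2}}\int_{S^{2n-1}}h(z)\,\b_{n}.
\]
The key analytic input is the Bochner--Martinelli integral representation: with the normalization $C'_{n}$ fixed above and $S^{2n-1}$ oriented as the boundary of the ball, one has $\int_{S^{2n-1}}h(z)\b_{n}=h(0)$ for $h$ \h\ near $0$ (for $n=1$ this is just Cauchy's formula, read off directly from $\b_{1}=\frac{1}{2\pi\sqrt{-1}}\frac{dz}{z}$). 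Hence $\int u=(-1)^{\frac{n(n+1)}{2}}h(0)$, and
\[
\int u\otimes a=n(a)\int u=(-1)^{\frac{n(n+1)}{2}}\cdot(-1)^{\frac{n(n+1)}{2}}\,h(0)=h(0),
\]
as claimed.

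The main obstacle is not any single deep step but the consistent tracking of signs: the factor $(-1)^{\frac{n(n+1)}{2}}$ built into the definition of $\delta(x)$, the sign $n(a)=(-1)^{\frac{n(n+1)}{2}}$ coming from the non-usual orientation $a$, and the sign from $R_{01}=-S^{2n-1}$ must all be reconciled so that the two occurrences of $(-1)^{\frac{n(n+1)}{2}}$ cancel and the orientation independence of $\int u\otimes a$ is respected. The one genuinely analytic ingredient, the Bochner--Martinelli formula, is classical, and its normalization here is arranged precisely so that the spherical integral equals $h(0)$ with no spurious constant.
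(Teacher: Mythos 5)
Your proposal is correct and follows essentially the same route as the paper's proof: unwind the duality pairing via the cup product to get the cocycle $\bigl(0,-(-1)^{\frac{n(n+1)}{2}}h(z)\b_{n}\bigr)$, fix the usual orientation of $\C^{n}$ so that $n(\psi_{M/X}\otimes\psi_{M})=(-1)^{\frac{n(n+1)}{2}}$, convert $R_{01}=-S^{2n-1}$, and conclude by the Bochner--Martinelli formula, with the two sign factors cancelling. The only difference is presentational: the paper compresses the orientation bookkeeping into a single displayed expression, while you track $n(a)$ and the boundary orientation separately, but the computation is identical.
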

\begin{proof} By definition $\delta(x)$ assigns to  $\o\otimes \psi_{M}$ the value
\[
-(-1)^{\frac{n(n+1)}2}n(\psi_{M/X}\otimes \psi_{M})\int_{R_{01}}h(z)\b_{n}.
\]
If we take the usual orientation for $\C^{n}$, $n(\psi_{M/X}\otimes \psi_{M})=(-1)^{\frac{n(n+1)}2}$ and
the above is equal to
\[
-\int_{R_{01}}h(z)\b_{n}=\int_{S^{2n-1}}h(z)\b_{n}=h(0),
\]
where $S^{2n-1}$ is the sphere with the usual orientation.
\end{proof}

\paragraph{Delta form\,:} We again consider the case $K=\{0\}\subset\R^{n}$.

\begin{definition}\label{defdf} The {\em delta form} $\delta^{(n)}(x)$ is the $n$-hyperform in 
\[
\scB^{(n)}_{\{0\}}(\R^{n})=H^{n,n}_{\bar\vt}(\C^{n},\C^{n}\ssm
\{0\})\otimes or_{M/X}(\R^{n})
\]
that is
represented by
\[
(0,-(-1)^{\frac{n(n+1)}2}\b_{n})\otimes\psi_{M/X}.
\]
\end{definition}

Recall the \iso\ in Corollary \ref{functional} in this case\,:
\[
\scB^{(n)}_{\{0\}}(\R^{n})\simeq (\scA_{0}\otimes or_{M}(\R^{n}))'.
\]
For a  representative $h(x)$ of a germ in $\scA_{0}$, $h(z)$ is its complex representative.

\begin{theorem} The delta form $\delta^{(n)}(x)$ is the $n$-hyperform that assigns  the value $h(0)$ to a representative $h(x)\otimes \psi_{M}$.
\end{theorem}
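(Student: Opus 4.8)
The plan is to evaluate the duality pairing of Corollary~\ref{functional} in the case $p=n$ on the given representatives, in exact parallel with the computation just carried out for the delta \fcn. First I would recall that this pairing is the cup product (\ref{cuprel}) followed by the integration (\ref{intDrel}), so that by Theorem~\ref{thMH} it is computed, for a cocycle $(\tau_{1},\tau_{01})$ representing the hyperform and a \h\ $(n-p)$-form $\eta$ near $0$, as $\int_{R_{1}}\tau_{1}\wedge\eta+\int_{R_{01}}\tau_{01}\wedge\eta$. Here the delta form is represented by $(0,-(-1)^{\frac{n(n+1)}{2}}\b_{n})\otimes\psi_{M/X}$, so $\tau_{1}=0$ and $\tau_{01}=-(-1)^{\frac{n(n+1)}{2}}\b_{n}$, while the test germ $h(x)\in\scA_{0}$ supplies the \h\ $0$-form $\eta=h(z)$ (the $(n-p)$-form required by Theorem~\ref{thMH} when $p=n$).

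Next I would carry out the cup-product computation. Since $\eta=h(z)$ is a $0$-form, $\tau_{01}\wedge\eta=-(-1)^{\frac{n(n+1)}{2}}h(z)\,\b_{n}$, and the pairing reduces to $-(-1)^{\frac{n(n+1)}{2}}\int_{R_{01}}h(z)\,\b_{n}$. This is formally the same integral as in the delta \fcn\ case: there the factor $\varPhi(z)$ coming from the test form $\o=h(x)\varPhi(x)$ was needed to convert the $0$-Bochner-Martinelli form into $\b_{n}=\b_{n}^{0}\wedge\varPhi(z)$, whereas here $\b_{n}$ is present from the outset and no such conversion is required.

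Then I would account for the orientation data exactly as before. Pairing against $h(x)\otimes\psi_{M}$ introduces the factor $n(\psi_{M/X}\otimes\psi_{M})$; choosing the usual orientation of $\C^{n}$ (so that $(x_{1},y_{1},\dots,x_{n},y_{n})$ is positive) gives $n(\psi_{M/X}\otimes\psi_{M})=(-1)^{\frac{n(n+1)}{2}}$, which cancels the outer factor $(-1)^{\frac{n(n+1)}{2}}$ and leaves $-\int_{R_{01}}h(z)\,\b_{n}$. Since $R_{01}=-\partial R_{1}=-S^{2n-1}$, this equals $\int_{S^{2n-1}}h(z)\,\b_{n}$, which by the Bochner-Martinelli formula is $h(0)$; independence of the chosen orientation of $\C^{n}$ follows as in the preceding integration discussion.

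The computation is essentially a transcription of the delta \fcn\ proof, so no genuinely new difficulty arises. The only point requiring care---and hence the main obstacle---is the bookkeeping of the several orientation signs: the factor $(-1)^{\frac{n(n+1)}{2}}$ built into the representative, the sign $n(\psi_{M/X}\otimes\psi_{M})$ recording the discrepancy between the coordinate order $(y_{1},\dots,y_{n},x_{1},\dots,x_{n})$ and the usual order $(x_{1},y_{1},\dots,x_{n},y_{n})$, and the orientation of the boundary sphere $R_{01}=-S^{2n-1}$, which must combine to yield exactly $h(0)$.
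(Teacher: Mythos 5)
Your proposal is correct and follows exactly the route the paper intends: the paper states this theorem without proof precisely because it is the verbatim transcription of the preceding delta function computation, with $\eta=h(z)$ a $0$-form and $\b_{n}$ already present in the representative (rather than arising as $\b_{n}^{0}\wedge\varPhi(z)$). Your sign bookkeeping --- the built-in factor $-(-1)^{\frac{n(n+1)}{2}}$, the orientation factor $n(\psi_{M/X}\otimes\psi_{M})=(-1)^{\frac{n(n+1)}{2}}$, and $R_{01}=-S^{2n-1}$ --- matches the paper's delta function proof and yields $h(0)$ as required.
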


Let us compare the above description with the traditional way of expressing the delta \fcn. The difference becomes apparent in the case $n>1$ and we consider this case. We also choose, for simplicity,  the orientation of $\C^{n}$ so that the usual system is positive. Set $W_{i}=\{z_{i}\ne 0\}$, $i=1,\dots,n$, and consider the coverings $\W=\{\C^{n},W_{i}\}$ and $\W'=\{W_{i}\}$ of $\C^{n}$ and 
  $\C^{n}\ssm\{0\}$, which are Stein coverings. Then there is a canonical \iso\ $H^{n}_{\{0\}}(\C^{n};\scO)\simeq
H^{n}(\W,\W';\scO)$, the relative \v{C}ech cohomology. In our case, the long exact sequence for 
\v{C}ech
cohomology yields  the \iso\ $H^{n}(\W,\W';\scO) \simeq H^{n-1}(\W';\scO)$.
On the other hand, we have the canonical Dolbeault \iso
\[
H^{0,n-1}_{\bp}(\C^{n}\ssm\{0\})\simeq H^{n-1}(\W';\scO),
\]
under which the class of $\b_{n}^{0}$
corresponds to the class of 
\[
(-1)^{\frac {n(n-1)}2}\o_{n}^{0},\qquad\o_{n}^{0}=\Big(\frac 1 {2\pi\sqrt{-1}}\Big)^{n}\frac 1 {z_{1}\cdots z_{n}}
\]
(cf. \cite{Su10}, see also \cite{H2} where this correspondence is first studied in general, with a different sign convention).
The class corresponding to $[\o_{n}^{0}]$ in $H^{n}(\W,\W';\scO)$ is the traditional delta \fcn.
We set
\[
\o_{n}=\o_{n}^{0}\,\varPhi(z).
\]
Let 
$\vG$ be the $n$-cycle $\bigcap_{i=1}^{n}\{|z_{i}|=\varepsilon\}$ oriented
so that $\op{arg}(z_{1})\wedge\cdots\wedge\op{arg}(z_{n})$ is positive. Let $S^{2n-1}$ denote the sphere of radius $\sqrt{n}\,\varepsilon$ with usual orientation. Then, for a \h\ \fcn\
$h$ on a \nbd\ of $0$, we have
\[
\int_{S^{2n-1}}h(z)\b_{n}=\int_{\vG}h(z)\o_{n}
\]
and either way we have the value $h(0)$. Note that the right hand side is a special case of Grothendieck
residues.
See, e.g., \cite{TNN} for applications of the above residue pairing from the computational aspect.

\lsection{Further operations}{\label{sec:morphisms}}

In this section, we let $M$ be a real analytic manifold of dimension $n>0$ and $X$ its complexification.
We  assume, for simplicity, $M$ to be orientable.

\subsection{Embedding of real analytic \fcn s}\label{ssembra}

The embedding of the sheaf $\scA_{M}$ of real analytic \fcn s on $M$ into the sheaf $\scB_{M}$ of hyper\fcn s or
more generally the embedding of the sheaf of real analytic forms $\scA^{(p)}_{M}$ into the sheaf of hyperforms $\scB^{(p)}_{M}$ is
determined  by the canonical identification of the constant \fcn\ $1$
as a hyperfunction.

From the canonical identification $\Z_{M}=or_{M/X}\otimes or_{M/X}$ and the canonical morphism
 $or_{M/X}=\scH_{M}^{n}(\Z_{X})\ra \scH_{M}^{n}(\scO_{X})$, we have the canonical morphism\,:
\[
\Z_{M}=or_{M/X}\otimes or_{M/X}\lra\scB_{M}=\scH_{M}^{n}(\scO_{X})\otimes or_{M/X}.
\]
The image of  $1$ by this morphism is the corresponding hyper\fcn. We try to find it explicitly in our framework. For this we consider the complexification $or_{M/X}^{c}=\scH_{M}^{n}(\C_{X})$ of $or_{M/X}$. Then the above morphism is extended as  
\begin{equationth}\label{embC}
\C_{M}=or^{c}_{M/X}\otimes or_{M/X}\lra\scB_{M}.
\end{equationth}
Note that this is injective (cf. Theorems \ref{thfund}.\,1 and \ref{thexactpcd}). We  express the above morphism in terms of
relative de~Rham and Dolbeault cohomologies.

As $M$ is assumed to be orientable, the sheaf  $or_{M/X}$ admits a global section which generates each stalk 
and for any of such  sections
$\psi\in or_{M/X}(M)=H_{M}^{n}(X;\Z)$, we have 
\begin{equationth}\label{tensgen}
\psi\otimes\psi=1.
\end{equationth}
We fix such a section and denote it by $\mathds{1}$ hereafter. 
Note  that it is what is referred to as the Thom class $\varPsi_{M}$ of $M$ in $X$  in Subsection \ref{subsOT}.
The image of $\mathds{1}$ by the canonical morphism $or_{M/X}(M)=H_{M}^{n}(X;\Z)\ra or^{c}_{M/X}(M)=H_{M}^{n}(X;\C)$, which is
injective, is  denoted by $\mathds{1}^{c}$.

Let $U$ be an open set in $M$ and $V$ a complex \nbd\ of $U$ in $X$. Then from \eqref{embC}, we have the following
commutative diagram  (cf. \eqref{cori} and \eqref{dRtoD})\,:
 \[
\SelectTips{cm}{}
\xymatrix
@C=.7cm
@R=.5cm
{\C_{M}(U)= H^{n}_{U}(V;\C)\otimes or_{M/X}(U)\ar[r]^-{\iota\otimes 1}\ar@{-}[d]^-{\wr}& 
H^{n}_{U}(V;\scO)\otimes or_{M/X}(U)=\scB(U)\ar@{-}[d]^-{\wr}\\
H^{n}_{D}(V,V\ssm U)\otimes or_{M/X}(U)\ar[r]^-{\rho^{n}\otimes 1}& H^{0,n}_{\bar\vt}(V,V\ssm U)\otimes or_{M/X}(U).}
\]

Setting $V_{0}=V\ssm U$ and $V_{1}=V$, consider the  coverings $\V=\{V_{0},V_{1}\}$ and $\V'=\{V_{0}\}$. 
From the above considerations we have\,:

\begin{theorem}\label{oneashf}
If $(\nu_{1},\nu_{01})$ is a representative of $\mathds{1}^{c}$ in $H^{n}_{D}(V,V\ssm U)=H^{n}_{D}(\V,\V')$, 
then the constant \fcn\ $1$ is identified with the hyper\fcn\ represented by $(\nu_{1}^{(0,n)},\nu_{01}^{(0,n-1)})\otimes \mathds{1}$ in $\scB(U)\simeq H^{(0,n)}_{\bar\vt}(\V,\V')\otimes or_{M/X}(U)$.
\end{theorem}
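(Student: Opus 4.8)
The plan is to read the statement off, almost verbatim, from the commutative diagram that immediately precedes it, once the source class has been pinned down. The only genuine content is twofold: (a) recognizing which section of $\C_M(U)$ the constant function $1$ actually is, and (b) invoking the explicit cochain-level description of $\rho^n$ on the relative de~Rham complex. Everything else is a diagram chase through canonical isomorphisms.

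First I would identify $1 \in \C_M(U)$. By \eqref{embC} we have $\C_M = or^c_{M/X}\otimes or_{M/X}$, and by \eqref{tensgen} the fixed section $\mathds{1}$ satisfies $\mathds{1}\otimes\mathds{1}=1$ in $\Z_M$. Complexifying the first tensor factor, the constant function $1$ is the decomposable section $\mathds{1}^c\otimes\mathds{1}$ of $or^c_{M/X}(U)\otimes or_{M/X}(U)=\C_M(U)$. Under the identification \eqref{cori}, namely $\C_M(U)=H^n_U(V;\C)\otimes or_{M/X}(U)\simeq H^n_D(V,V\ssm U)\otimes or_{M/X}(U)$, the first factor $\mathds{1}^c$ is by hypothesis represented by the relative de~Rham cocycle $(\nu_1,\nu_{01})$.

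Next I would push this through the embedding. By the diagram preceding the statement, the embedding morphism $\C_M(U)\to\scB(U)$ (the top row $\iota\otimes 1$) corresponds, via the canonical vertical isomorphisms coming from \eqref{cori}, \eqref{dRtoD} and Theorem~\ref{thDrel}, to the map $\rho^n\otimes 1$ on the bottom row. Hence the hyperfunction attached to $1$ is the image of $(\nu_1,\nu_{01})\otimes\mathds{1}$ under $\rho^n\otimes 1$. It then remains only to evaluate $\rho^n$ at the cochain level: for the coverings $\V=\{V_0,V_1\}$, $\V'=\{V_0\}$ with $V_0=V\ssm U$ and $V_1=V$, the explicit formula for $\rho^q$ recorded just after \eqref{dRtoD} sends the class of $(\nu_1,\nu_{01})$ to the class of its $(0,\bullet)$-projection $(\nu_1^{(0,n)},\nu_{01}^{(0,n-1)})$. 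This gives exactly the asserted representative $(\nu_1^{(0,n)},\nu_{01}^{(0,n-1)})\otimes\mathds{1}$ in $H^{0,n}_{\bar\vt}(\V,\V')\otimes or_{M/X}(U)\simeq\scB(U)$.

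The steps are all formal and I do not anticipate a serious obstacle; the care required is purely in the bookkeeping. The one point worth verifying explicitly is that $(\nu_1^{(0,n)},\nu_{01}^{(0,n-1)})$ is a genuine $\bar\vt$-cocycle, which holds because $\rho$ is a morphism of complexes (so $\rho^{q+1}\circ d=\bp\circ\rho^q$), whence the $D$-closedness of $(\nu_1,\nu_{01})$ forces the $\bar\vt$-closedness of its projection. I would also make explicit that the vertical arrows in the diagram are the \emph{canonical} isomorphisms, so that tracing $1$ through them is unambiguous, and recall that the injectivity of \eqref{embC} (noted via Theorems~\ref{thfund}.\,1 and \ref{thexactpcd}) ensures the resulting class is well defined and independent of the representative chosen for $\mathds{1}^c$.
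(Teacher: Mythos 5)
Your proposal is correct and takes essentially the same route as the paper, which likewise obtains the theorem by identifying the constant function $1$ with $\mathds{1}^{c}\otimes\mathds{1}$ via \eqref{tensgen}, chasing it through the commutative diagram that identifies the embedding $\iota\otimes 1$ with $\rho^{n}\otimes 1$, and then applying the cochain-level description of $\rho^{n}$ as the $(0,\bullet)$-projection. One small correction: the independence of the choice of representative $(\nu_{1},\nu_{01})$ is guaranteed by $\rho^{n}$ being a well-defined morphism on cohomology classes (as the paper remarks immediately after the theorem), not by the injectivity of \eqref{embC}, which concerns faithfulness of the embedding rather than well-definedness of the image.
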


Note that the above identification of $1$ does not depend on the choice of
 $(\nu_{1},\nu_{01})$, as $\rho^{n}:H^{n}_{D}(\V,\V')\ra H^{0,n}_{\bar\vt}(\V,\V')$ is a well-defined morphism.
It does not depend on the choice of $\mathds{1}$ either by \eqref{tensgen}. 

Let $\nu=(\nu_{1},\nu_{01})$ be as in the above theorem.
 We define a morphism
\[
\scA^{(p)}(U)\lra\scB^{(p)}(U)=H^{p,n}_{\bar\vt}(V,V\ssm U)\otimes or_{M/X}(U)
\]
by assigning to $\o(x)\in \scA^{(p)}(U)$ the class $[(\nu_{1}^{(0,n)}\wedge\o(z),\nu_{01}^{(0,n-1)}\wedge\o(z))]\otimes\mathds{1}$, where $\o(z)$ denotes the complexification of $\o(x)$. 
Note that $(\nu_{1}^{(0,n)}\wedge\o,\nu_{01}^{(0,n-1)}\wedge\o)$ is a cocycle, as $\o$ is \h.
Then it can be readily shown  that it does not depend on the choice of the 
generator $\mathds{1}$ or its representative $\nu$.  Thus it induces a sheaf morphism $\iota^{(p)}:\scA^{(p)}\ra\scB^{(p)}$, which is injective.
In particular in the case $p=0$, we have\,:

\begin{corollary}\label{propemb}
The embedding $\scA\hra\scB$ is locally given by assigning to a  real analytic \fcn\ $f$ the  hyper\fcn\
$[(\tilde f\, \nu_{1}^{(0,n)},\tilde f\,\nu_{01}^{(0,n-1)})]\otimes\mathds{1}$, where $\tilde f$ is
a complexification of $f$.
\end{corollary}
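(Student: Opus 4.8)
The plan is to obtain the corollary as the case $p=0$ of the morphism $\iota^{(p)}\colon\scA^{(p)}\to\scB^{(p)}$ constructed just above the statement, feeding it the explicit representative of the constant hyperfunction $1$ furnished by Theorem \ref{oneashf}. The guiding observation is that, as recalled at the start of this subsection, the embedding $\scA\hra\scB$ is the $\scA$-linear map determined by the image of the constant function $1$; concretely it sends a germ $f$ to the product $f\cdot[1]$, where $[1]$ denotes the hyperfunction representing $1$ and the product is taken with respect to the $\scA$-module structure on $\scB$.

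First I would record, directly from Theorem \ref{oneashf}, that $[1]$ is represented by $(\nu_{1}^{(0,n)},\nu_{01}^{(0,n-1)})\otimes\mathds{1}$. Then I would invoke the description of multiplication by real analytic functions from Subsection \ref{ssexp}: multiplication by $f$ carries a cocycle $(\tau_{1},\tau_{01})$ to $(\tilde f\,\tau_{1},\tilde f\,\tau_{01})$, with $\tilde f$ a holomorphic extension of $f$, acting only on the $H^{0,n}_{\bar\vt}$-factor and leaving the $or_{M/X}$-factor untouched. Combining these gives $f\cdot[1]=[(\tilde f\,\nu_{1}^{(0,n)},\tilde f\,\nu_{01}^{(0,n-1)})]\otimes\mathds{1}$, which is precisely the asserted expression; the independence of the choice of $\mathds{1}$ and of the representative $\nu$ is already part of the general construction, so no new verification is needed there.

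The one point deserving attention — though it is mild — is confirming that this formula really computes the canonical embedding coming from \eqref{embC}, and not merely some well-defined $\scA$-linear map. I would settle this by the uniqueness of $\scA$-linear maps out of the free rank-one module $\scA$: both the canonical embedding and the map $f\mapsto[(\tilde f\,\nu_{1}^{(0,n)},\tilde f\,\nu_{01}^{(0,n-1)})]\otimes\mathds{1}$ are $\scA$-linear, and they agree on the generator $1$ by setting $\tilde f=1$ and invoking Theorem \ref{oneashf}. Hence they coincide, and the corollary follows at once.
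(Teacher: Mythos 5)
Your proposal is correct and takes essentially the same route as the paper: the paper frames the embedding (at the start of Subsection~\ref{ssembra}) as the $\scA$-linear map determined by the identification of the constant function $1$, computes that identification via Theorem~\ref{oneashf}, and then obtains the corollary as the $p=0$ case of the wedge-product construction of $\iota^{(p)}$, which for $p=0$ is exactly multiplication by the holomorphic extension $\tilde f$. Your closing uniqueness-of-$\scA$-linear-maps argument simply makes explicit the step the paper leaves implicit in that framing, so no genuinely new ingredient is involved.
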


The above embeddings are compatible with differentials\,:
\begin{proposition} The following diagram is commutative\,{\rm :}
\[
\SelectTips{cm}{}
\xymatrix
@R=.7cm
{\scA^{(p)}\ar[r]^-{\iota^{(p)}}\ar[d]^-{d}& \scB^{(p)}\ar[d]^-{d}\\
\scA^{(p+1)}\ar[r]^-{\iota^{(p+1)}}& \scB^{(p+1)}.}
\]
\end{proposition}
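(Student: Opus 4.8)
The plan is to verify the commutativity at the level of sections, since both $\iota^{(\bullet)}$ and $d$ are morphisms of sheaves; thus I fix an open set $U\subset M$, a complex \nbd\ $V$ of $U$ on which the chosen complexification is defined, and the coverings $\V=\{V\ssm U,V\}$, $\V'=\{V\ssm U\}$ as in Corollary~\ref{propemb}, and work in $\scB^{(p)}(U)=H^{p,n}_{\bar\vt}(V,V\ssm U)\otimes or_{M/X}(U)$. Write $\nu=(\nu_{1},\nu_{01})$ for the chosen representative of $\mathds{1}^{c}$ and abbreviate the class $\rho^{n}(\mathds{1}^{c})=[(\nu_{1}^{(0,n)},\nu_{01}^{(0,n-1)})]\in H^{0,n}_{\bar\vt}(V,V\ssm U)$ by $\varepsilon$. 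The first observation is that, for $\o\in\scA^{(p)}(U)$, the cochain formula for the cup product \eqref{cuprel} gives
\[
\iota^{(p)}(\o)=\bigl(\varepsilon\smallsmile[\o(z)]\bigr)\otimes\mathds{1},
\]
where $[\o(z)]\in H^{p,0}_{\bp}(V)=\scO^{(p)}(V)$ is the holomorphic complexification of $\o$. So $\iota^{(p)}$ is, up to the orientation twist, cup product with the fixed class $\varepsilon$.

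The main step is a Leibniz computation. On the $\scA$-side $d$ is the exterior derivative, and since $\o(z)$ is \h\ its complexification satisfies $(d\o)(z)=\partial\o(z)$; hence $\iota^{(p+1)}(d\o)=(\varepsilon\smallsmile[\partial\o(z)])\otimes\mathds{1}$. On the $\scB$-side $d$ is the operator \eqref{defdiff}, which on $H^{p,n}_{\bar\vt}$ sends the class of $(\tau_{1},\tau_{01})$ to that of $(-1)^{n}(\partial\tau_{1},-\partial\tau_{01})$. Applying the graded Leibniz rule for $\partial$ to the representative $(\nu_{1}^{(0,n)}\wedge\o(z),\nu_{01}^{(0,n-1)}\wedge\o(z))$ and collecting terms — the signs produced by the total degrees $n$ and $n-1$ of $\nu_{1}^{(0,n)}$ and $\nu_{01}^{(0,n-1)}$ cancelling against the $(-1)^{n}$ in the definition of $d$ — I expect to obtain
\[
d\,\iota^{(p)}(\o)=\bigl(\varepsilon\smallsmile[\partial\o(z)]\bigr)\otimes\mathds{1}+\bigl((\partial\varepsilon)\smallsmile[\o(z)]\bigr)\otimes\mathds{1},
\]
where $\partial\varepsilon$ is the class of $(-1)^{n}(\partial\nu_{1}^{(0,n)},-\partial\nu_{01}^{(0,n-1)})$ in $H^{1,n}_{\bar\vt}(V,V\ssm U)$. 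The first summand is precisely $\iota^{(p+1)}(d\o)$, so commutativity of the square reduces to the vanishing of the second summand.

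It therefore remains to show $\partial\varepsilon=0$. This is immediate from the structure already recorded in Subsection~\ref{ssdRD}: the class $\varepsilon=\rho^{n}(\mathds{1}^{c})$ lies in the image of $\rho^{n}$, and the top row of the isomorphism of complexes there,
\[
0\lra H^{n}_{D}(V,V\ssm U)\overset{\rho^{n}}\lra H^{0,n}_{\bar\vt}(V,V\ssm U)\overset{\partial}\lra H^{1,n}_{\bar\vt}(V,V\ssm U)\overset{\partial}\lra\cdots,
\]
is a complex, being isomorphic via the canonical \iso s of Theorem~\ref{thDrel} to the analytic de~Rham complex of the local cohomology groups $H^{n}_{U}(V;\scO^{(\bullet)})$; in particular $\partial\circ\rho^{n}=0$, so $\partial\varepsilon=0$ and the error term drops out. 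Since the construction is independent of the choices of $\mathds{1}$ and $\nu$, this proves the commutativity for every $U$ and hence as sheaf morphisms. The main obstacle is the sign bookkeeping in the Leibniz step and the correct identification of the error term as $(\partial\varepsilon)\smallsmile[\o(z)]$; once that is in place, the closedness $\partial\varepsilon=0$ — equivalently, the fact that the constant hyper\fcn\ $1$ is $d$-closed — is furnished for free by the complex already exhibited in Subsection~\ref{ssdRD}.
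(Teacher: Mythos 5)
Your proof is correct, and it shares its computational core with the paper's --- both ultimately rest on applying $\partial$ by the Leibniz rule to the representative $(\nu_{1}^{(0,n)}\wedge\o(z),\,\nu_{01}^{(0,n-1)}\wedge\o(z))$ and recognizing $(\nu_{1}^{(0,n)}\wedge\partial\o,\,\nu_{01}^{(0,n-1)}\wedge\partial\o)$ as a representative of $\iota^{(p+1)}(d\o)$ --- but the way you dispose of the error term is genuinely different. The paper stays entirely at the cochain level: it splits the de~Rham cocycle condition $D(\nu_{1},\nu_{01})=0$ by type, getting $\partial\nu_{1}^{(0,n)}+\bp\nu_{1}^{(1,n-1)}=0$ and $\partial\nu_{01}^{(0,n-1)}+\bp\nu_{01}^{(1,n-2)}=\nu_{1}^{(1,n-1)}$, and uses these to exhibit the error term explicitly as the coboundary $(-1)^{n-1}\bar\vt(\nu_{1}^{(1,n-1)}\wedge\o,\,\nu_{01}^{(1,n-2)}\wedge\o)$; note that this uses the $(1,n-1)$- and $(1,n-2)$-components of $\nu$, which never appear in your argument. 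You instead recast $\iota^{(p)}$ as the cup product \eqref{cuprel} with $\varepsilon=\rho^{n}(\mathds{1}^{c})$, establish the class-level identity $d(\varepsilon\smallsmile[\o])=\varepsilon\smallsmile[\partial\o]+(\partial\varepsilon)\smallsmile[\o]$ (your ``expected'' signs do check out exactly at the cochain level, with no residual coboundary), and then kill the second term by $\partial\circ\rho^{n}=0$, which is legitimately available from Subsection~\ref{ssdRD}: the bottom row of the diagram there is obtained by applying $H^{n}(V,V\ssm U;\bullet)$ to the analytic de~Rham complex of sheaves, hence is a complex, and the commutative squares transport this to the top row. What your route buys is modularity: the only input is that $\varepsilon$ lies in the image of $\rho^{n}$ and is therefore $\partial$-closed, and the same argument applies verbatim to any class of that form. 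What the paper's route buys is self-containedness and explicitness: it never invokes the compatibility diagrams of Subsection~\ref{ssdRD}, and its explicit coboundary is in fact precisely a cochain-level witness of your assertion $\partial\varepsilon=0$, wedged with $\o$ --- so the two proofs are the same computation organized at different levels of abstraction.
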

\begin{proof} Recall that $d:H^{p,n}_{\bar\vt}(V,V\ssm U)\ra H^{p+1,n}_{\bar\vt}(V,V\ssm U)$ 
assigns to the class of $(\nu_{1}^{(0,n)}\wedge\o,\nu_{01}^{(0,n-1)}\wedge\o)$ the class of $(-1)^{n}(\partial(\nu_{1}^{(0,n)}\wedge\o),-\partial(\nu_{01}^{(0,n-1)}\wedge\o))$ (cf. \eqref{defdiff}). 
From $D(\nu_{1},\nu_{01})=0$, we have 
$\partial\nu_{1}^{(0,n)}+\bp\nu_{1}^{(1,n-1)}=0$ and $\partial\nu_{01}^{(0,n-1)}+\bp\nu_{01}^{(1,n-2)}=\nu_{1}^{(1,n-1)}$. Then, using $\bp\o=0$, we compute
\[
\begin{aligned}
(-1)^{n}(\partial(\nu_{1}^{(0,n)}\wedge\o), -\partial(\nu_{01}^{(0,n-1)}\wedge\o))=(\nu_{1}^{(0,n)}&\wedge\partial\o, \nu_{01}^{(0,n-1)}\wedge\partial\o)\\
&+(-1)^{n-1}\bar\vt(\nu_{1}^{(1,n-1)}\wedge\o,\nu_{01}^{(1,n-2)}\wedge\o).
\end{aligned}
\]
Since $\partial\o(z)$ is the complexification of $d\o(x)$, we have the proposition.
\end{proof}

We finish this subsection by giving  particular representatives of $\mathds{1}^{c}$ and $\rho^{n}(\mathds{1}^{c})$. 

\begin{example}\label{exone} Let $U$ and $V$ coordinate \nbd s with
coordinates $(x_1,\dots,x_{n})$ and $(z_1,\dots,z_{n})$, $z_{i}=x_i + \sqrt{-1}y_i$.
We  set $V_0 = V \ssm U$ and $V_1 = V$.
We orient $T_{M}X$ so that
$(y_{1},\dots,y_{n})$ is a positive fiber
coordinate system
and this specifies the generator 
$\mathds{1}$ of $or_{M/X}(U)$.

Let 
\[
\psi_{n}= 
C_n \dfrac{
\sum_{i=1}^n 
(-1)^{i-1}y_i dy_1 \wedge \dots \wedge \widehat{dy_i} \wedge \dots \wedge
dy_n}{\Vert y \Vert^n}
\]
be the angular form on $\R^{n}_{y}$ (cf. \eqref{ang}).
Then  by Proposition \ref{thtrivial},
\[
\nu = (0,-\psi_{n}) 
\in \scE^{(n)}(V_1) \oplus \scE^{(n-1)}(V_{01})
=\scE^{(n)}(\V,\V') 
\]
represents $\mathds{1}^{c}$.
In this case, $\rho^{n}(\nu) \in \scE^{(0,n)}(\V,\V')$ is
given by $(0,-\psi_{n}^{(0,n-1)})$, where $\psi_{n}^{(0,n-1)}$ is the $(0,n-1)$-component of $\psi_{n}$.
We may compute it using $y_i = \frac 1{2\sqrt{-1}}(z_i - \bar{z}_i)$\,:
\[
\psi_{n}^{(0,n-1)}=(\sqrt{-1})^n C_n \dfrac{
	\sum_{i=1}^n 
	(-1)^i(z_i- \bar{z}_i) d\bar{z}_1 \wedge \dots \wedge 
	\widehat{d\bar{z}_i} \wedge \dots \wedge
	d\bar{z}_n}{\Vert z-\bar{z}\Vert^n}.
\]

In particular, in the case $n=1$,
\[
\psi_{1}^{(0,0)}=\psi_{1}=\frac 12 \frac y{|y|}.
\]
\end{example}

\begin{remark}\label{remdimone}
In the case $n=1$, 
 $(0,-\frac 1 2\frac y {|y|})$ represents both
$\mathds{1}^{c}\in or_{M/X}^{c}(U)$ and $\rho^{1}(\mathds{1}^{c})\in H^{0,1}_{\bar\vt}(V,V\ssm U)$.
They are also represented by
$(0,-\psi_{\pm})$, $\psi_{\pm}=\frac 1 2\frac y {|y|}\pm\frac 1 2$. Note that the support of $\psi_{\pm}$ in
$V\ssm U$ is $V_{\pm}=\{\pm y>0\}$ (cf. Lemma \ref{lem:unit_one_support} and Example \ref{exDirac} below,
$\psi_{\pm}=\pm\varphi_{\Omega_{\pm}}$ by the notation there). 
Any of those cocycles may be thought of as representing 
the generator $\mathds{1}$ of $or_{M/X}(U)$, as $or_{M/X}\ra or_{M/X}^{c}$ is injective.
\end{remark}

The contents of this subsection are generalized in the next subsection.

\subsection{Boundary value morphism}\label{ssbv}
The boundary value morphism is one of the most important tools in the theory of  hyperfunctions,
by which we can regard a holomorphic function on an open wedge along $M$
as a hyperfunction. In this subsection, we will define the boundary value morphism
in the framework of  relative de~Rham and Dolbeault cohomologies.


We consider a pair $(V,\Omega)$ of an open neighborhood $V$ of $M$ in $X$ and 
an open set $\Omega$  in  $X$ satisfying
the following two conditions\,:
\begin{enumerate}
\item[($\rm{B}_1$)] $\overline{\Omega} \supset M$.
\item[($\rm{B}_2$)] The inclusion 
$(V \ssm \Omega) \ssm M  \hra V \ssm \Omega$
is a homotopy equivalence.
\end{enumerate}
We give some examples of such pairs $(V,\Omega)$.

\begin{example}\label{exO=V} 
If we take 
$\Omega$  to be $V$,
the pair $(V,\Omega)$ satisfies the above the conditions, in particular,
the condition $(\rm{B}_2)$~is automatically satisfied as the both subsets are empty.
This is the situation we considered in the previous subsection, where
a real analytic function is regarded as a hyperfunction. This is a special case of boundary value
morphism.
\end{example}

\begin{example}{\label{exa:cone_case}}
Let $M$ be an open subset of $\R^n$ and 
$X = M \times \sqrt{-1}\R_y^n \subset \C^n$ 
with coordinates $(z_1, \dots, z_n)$, $z_{i}=x_i + \sqrt{-1}y_i$.
Let $V$ be an open neighborhood of $M$ such that
$V \cap (\{x\} \times \sqrt{-1}\R_y^n)$ is convex for any $x \in M$, and
let $\Gamma$ be a non-empty  open cone in $\R^n_y$ for which
$\R^n_y \ssm (\Gamma \cup \{0\})$ is contractible.
Then the pair of $V$ and $\Omega = (M \times \sqrt{-1}\Gamma) \cap V$ satisfies the conditions ($\rm{B}_1$) and ($\rm{B}_2$).
\end{example}

Let us now define the boundary value morphism\,:
\[
\begin{aligned}
b_\Omega: \scO(\Omega) \lra \scB(M) 
&= H^n_M(X;\scO) \otimes_{{}_{\Z_M(M)}}
or_{M/X}(M) \\
&
\simeq H^{0,n}_{\bar\vt}(\V,\V') \otimes_{{}_{\Z_M(M)}} 
or_{M/X}(M),
\end{aligned}
\]
where $\V = \{V_0, V_1\}$ and
$\V'= \{V_0\}$ with $V_0 = V \ssm M$ and $V_1 = V$. 
To give a concrete representative of $b_\Omega(f)$ ($f \in \scO(\Omega)$)
in the last cohomology  of the above diagram, we need some preparations.

Let $\mathds{1}\in or_{M/X}(M)=H^{n}_{M}(X;\Z_{X})$ and $\mathds{1}^{c}\in or_{M/X}^{c}(M)=H^{n}_{M}(X;\C_{X}) \simeq H^{n}_{D}(\V,\V')$ be as in the previous subsection.
The following lemma is crucial to our construction of $b_\Omega$\,:

\begin{lemma}{\label{lem:unit_one_support}}
The class $\mathds{1}^{c} \in H^{n}_{D}(\V,\V')$ has a representative
\[
(\nu_1, \nu_{01})\in \scE^{(n)}(V_1) \oplus \scE^{(n-1)}(V_{01})
=\scE^{(n)}(\V,\V') 
\]
which satisfies $\op{Supp}_{V_1}(\nu_1) \subset \Omega$ and 
$\op{Supp}_{V_{01}}(\nu_{01}) \subset \Omega$. 
\end{lemma}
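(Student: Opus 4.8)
The plan is to start from any representative $(\nu_{1},\nu_{01})$ of $\mathds{1}^{c}$ and to push the supports of both components into $\Omega$ by subtracting relative coboundaries, the condition $(\mathrm{B}_2)$ being used to solve the equations that arise on a neighbourhood of $P:=V\ssm\Omega$. Recall that a cocycle in $\scE^{(n)}(V_{1})\oplus\scE^{(n-1)}(V_{01})$ satisfies $d\nu_{1}=0$ on $V_{1}=V$ and $\nu_{1}=d\nu_{01}$ on $V_{01}=V\ssm M$, and that replacing it by $(\nu_{1},\nu_{01})+D(\lambda_{1},\lambda_{01})=(\nu_{1}+d\lambda_{1},\,\nu_{01}+\lambda_{1}-d\lambda_{01})$ does not change its class. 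First I would fix an open neighbourhood $N$ of $P$ in $V$ that deformation retracts onto $P$ compatibly with $M$; then, by $(\mathrm{B}_2)$, the restriction maps $H^{q}_{d}(N)\overset{\sim}{\lra}H^{q}_{d}(N\ssm M)$ are isomorphisms for all $q$.

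The first step is to make $\nu_{1}$ vanish near $P$. On $N\ssm M$ we have $\nu_{1}=d\nu_{01}$, so $[\nu_{1}|_{N\ssm M}]=0$; by the injectivity of $H^{n}_{d}(N)\to H^{n}_{d}(N\ssm M)$ the closed form $\nu_{1}$ is already exact on all of $N$, say $\nu_{1}=d\mu$ with $\mu\in\scE^{(n-1)}(N)$ defined \emph{across} $M$. Choosing a cutoff $\chi$ with $\chi\equiv1$ on a smaller neighbourhood $N'$ of $P$ and $\op{Supp}\chi\subset N$, the form $\chi\mu$ lies in $\scE^{(n-1)}(V_{1})$, and subtracting $D(\chi\mu,0)$ replaces $(\nu_{1},\nu_{01})$ by $(\nu_{1}-d(\chi\mu),\,\nu_{01}-\chi\mu)$, whose first component vanishes on $N'$ (there $d(\chi\mu)=d\mu=\nu_{1}$). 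The cocycle relation then forces the new $\nu_{01}$ to be $d$-closed on $N'\ssm M$.

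The second step is to make $\nu_{01}$ vanish near $P$ without disturbing $\nu_{1}$. Since $\nu_{01}$ is $d$-closed on $N'\ssm M$, the surjectivity of $H^{n-1}_{d}(N')\to H^{n-1}_{d}(N'\ssm M)$ yields a $d$-closed $\alpha\in\scE^{(n-1)}(N')$ \emph{defined across} $M$ together with $\beta\in\scE^{(n-2)}(N'\ssm M)$ such that $\nu_{01}=\alpha+d\beta$ on $N'\ssm M$. With a cutoff $\chi'\equiv1$ near $P$ and $\op{Supp}\chi'\subset N'$ I add $D(-\chi'\alpha,\chi'\beta)$. Because $\alpha$ is closed, its contribution $-d(\chi'\alpha)$ to the first component vanishes where $\chi'\equiv1$, so $\nu_{1}$ stays zero near $P$; the contribution $-\chi'\alpha-d(\chi'\beta)$ to the second component equals $-\alpha-d\beta$ there and cancels $\nu_{01}=\alpha+d\beta$. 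Hence the resulting cocycle vanishes on a neighbourhood $N''$ of $P$, so that $\op{Supp}_{V_{1}}(\nu_{1})\subset V\ssm N''\subset\Omega$ and $\op{Supp}_{V_{01}}(\nu_{01})\subset\Omega$, while the class is still $\mathds{1}^{c}$.

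The hard part will be the passage from the homotopy equivalence of the \emph{closed} sets $V\ssm\Omega$ and $(V\ssm\Omega)\ssm M$ furnished by $(\mathrm{B}_2)$ to honest de Rham isomorphisms on an open neighbourhood $N$, for the primitives $\mu,\alpha,\beta$ must be genuine smooth forms on open sets, and it is essential that $\mu$ and $\alpha$ extend \emph{across} $M$ (only then are $\chi\mu$ and $\chi'\alpha$ admissible $V_{1}$-components). This is exactly where $(\mathrm{B}_2)$ is used, and it causes no trouble in the cases of interest (Examples \ref{exO=V} and \ref{exa:cone_case}), where $V\ssm\Omega$ has a fundamental system of neighbourhoods onto which it is a deformation retract respecting $M$; once such an $N$ is chosen, the two steps above are routine.
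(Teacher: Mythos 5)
Your cocycle-modification scheme is algebraically sound: granted de~Rham isomorphisms $H^{q}_{d}(N)\simeq H^{q}_{d}(N\ssm M)$ on suitable open neighbourhoods $N\supset P=V\ssm\Omega$, subtracting $D(\chi\mu,0)$ and then adding $D(-\chi'\alpha,\chi'\beta)$ does produce a cohomologous cocycle vanishing on a neighbourhood of $P$, hence with both supports in $\Omega$. But the step you yourself flag as ``the hard part'' is a genuine gap, not a routine verification. Condition $(\rm{B}_2)$ is a statement about the closed sets $V\ssm\Omega$ and $(V\ssm\Omega)\ssm M$ only; it provides no open neighbourhood of $V\ssm\Omega$ that deformation retracts onto it, let alone compatibly with $M$. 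A closed subset of a manifold need not be a neighbourhood retract at all --- nothing in $(\rm{B}_1)$, $(\rm{B}_2)$ prevents $V\ssm\Omega$ from being locally wild --- so the isomorphisms $H^{q}_{d}(N)\overset{\sim}{\ra}H^{q}_{d}(N\ssm M)$ you need (for two nested good neighbourhoods $N$ and $N'$) are simply not available under the stated hypotheses. Your argument therefore proves the lemma only under an additional ANR-type assumption, which indeed holds in Examples \ref{exO=V} and \ref{exa:cone_case}, whereas the lemma is stated, and later used, for arbitrary pairs satisfying $(\rm{B}_1)$ and $(\rm{B}_2)$.

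The paper's proof avoids neighbourhoods of $V\ssm\Omega$ altogether, and this is exactly how it closes the gap. With $j:\Omega\hra V$ open and $i:V\ssm\Omega\hra V$ closed, it uses the exact sequence $0\to j_{!}j^{-1}\C_{V}\to\C_{V}\to i_{*}i^{-1}\C_{V}\to 0$. The groups $H^{q}(V;i_{*}i^{-1}\C)=H^{q}(V\ssm\Omega;\C)$ and $H^{q}(V\ssm M;i_{*}i^{-1}\C)=H^{q}((V\ssm\Omega)\ssm M;\C)$ are sheaf (\v{C}ech) cohomologies of the closed sets themselves, to which the homotopy equivalence of $(\rm{B}_2)$ applies directly; this yields $H^{q}_{M}(V;i_{*}i^{-1}\C)=0$ and hence $H^{n}_{M}(V;j_{!}j^{-1}\C)\simeq H^{n}_{M}(V;\C)$. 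The left-hand side is then computed via the fine resolution $j_{!}j^{-1}\scE^{(\bullet)}_{V}$, whose sections are precisely forms supported in $\Omega$, so the class $\mathds{1}^{c}$ automatically acquires a representative $(\nu_{1},\nu_{01})$ with $\op{Supp}_{V_1}(\nu_1)\subset\Omega$ and $\op{Supp}_{V_{01}}(\nu_{01})\subset\Omega$. If you wish to keep your explicit two-step construction, you must replace the de~Rham cohomology of open neighbourhoods by this sheaf-theoretic device (or strengthen the hypotheses of the lemma by the retraction assumption); as written, your proof does not establish the statement in the generality claimed.
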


\begin{proof}
Replacing $\Omega$ with $\Omega \cap V$ if necessary, we may assume $\Omega \subset V$.
Let $j: \Omega \hra V$ (resp. $i: V \ssm \Omega \hra V$)
be an open (resp. a closed) embedding. Then we have the exact sequence of sheaves on $V$\,:
\[
0 \lra j_! j^{-1} \C_V \lra  \C_V\lra  i_*i^{-1} \C_V\lra 0,
\]
where $j_{!}$ denotes the direct image with proper supports.
This yields the exact sequence
\[
\cdots\lra H^{q-1}_{M}(V;i_{*} i^{-1} \C)     \lra H^{q}_{M}(V;j_! j^{-1} \C)\overset\iota\lra H^{q}_{M}(V;\C)\lra H^{q}_{M}(V;i_{*}i^{-1} \C)\lra\cdots.
\]

We claim that 
\begin{equationth}\label{van}
H^{q}_{M}(V;i_{*}i^{-1} \C)=0\qquad\text{for}\ \  q\ge 0
\end{equationth}
 so that $\iota$ is an \iso. For this we set $\scF=i_{*}i^{-1} \C$ and  consider the exact sequence
\[
H^{q-1}(V;\scF) \ra H^{q-1}(V\ssm M;\scF)     \ra H^{q}_{M}(V;\scF)\ra H^{q}(V;\scF)\ra H^{q}(V\ssm M;\scF).
\]
In the above, $H^{q}(V;\scF)=H^{q}(V\ssm\Omega;\C)$ and $H^{q}(V\ssm M;\scF)=H^{q}((V\ssm\Omega)\ssm M;\C)$.
Thus by the condition $(\rm{B}_2)$ above, we have \eqref{van} and $\iota$ is an \iso.

Let $\scE^{(\bullet)}_{V}$ denote the de~Rham complex on $V$, which gives
a fine resolution of $\C_V$. Since any of the sections of $j_{!}j^{-1}\scE^{(q)}_{V}$ may be thought of as a $q$-form with support
in the intersection of its domain of definition and $\Omega$, the sheaf $j_{!}j^{-1}\scE^{(q)}$ admits a natural action of the sheaf $\scE_{V}$ of $C^{\infty}$ \fcn s on $V$ and thus it is fine.
Therefore the complex  $j_{!}j^{-1}\scE^{(\bullet)}_{V}$ gives a fine resolution of
$j_{!}j^{-1}\C_{V}$. We denote by $d'$  its differential $j_{!}j^{-1}d$, which is in fact the usual exterior
derivative $d$ on forms with support in $\Omega$. We set $D'=\delta+(-1)^{\bullet}d'$ and consider the cohomology $H^{q}_{D'}(\V,\V')$ of  the complex $(j_{!}j^{-1}\scE^{(\bullet)}_{V}(\V,\V'),D')$,
which is defined similarly as the relative de~Rham cohomology, replacing $\scE^{(\bullet)}$ and $D$ by
$j_{!}j^{-1}\scE^{(\bullet)}_{V}$ and $D'$ in Definition \ref{defreldR}. Then there is a canonical morphism
$H^{q}_{D'}(\V,\V')\ra H^{q}_{D}(\V,\V')$. Moreover $H^{q}_{D'}(\V,\V')$ is canonically isomorphic with
$H^{q}_{M}(V;j_{!}j^{-1}\C)$ and
we have the
following commutative diagram (cf. \cite{Su11})\,:
\[
\SelectTips{cm}{}
\xymatrix
@C=.6cm
@R=.5cm
{H^{q}_{M}(V;j_{!}j^{-1}\C)\ar[r]^-{\sim}_-{\iota} \ar@{-}[d]^-{\wr}
&H^{q}_{M}(V;\C)\ar@{-}[d]^-{\wr}\\
 H^{q}_{D'}(\V,\V')\ar[r] & H^{q}_{D}(\V,\V').}
\]
In particular we have
\begin{equationth}\label{eq:iso-support-identity}
H^{n}_{D'}(\V,\V')\simeq H^{n}_{D}(\V,\V'),
\end{equationth}
which assures the existence of a desired representative.
\end{proof}

\begin{remark}
In the above lemma, since $\nu_{01}$ is a section defined only on $V_{01} = V \ssm M$,
its support is a closed set in $V \ssm M$,
however, it is not necessarily closed in $V$.
\end{remark}

Now we give some examples of representatives of $\mathds{1}^{c}$ described in 
Lemma \ref{lem:unit_one_support}. In the situation of Example \ref{exO=V}, we already gave a particular example 
in Example \ref{exone}.

\begin{example}{\label{exa:support_cone_fundamental}}
Let us consider the situation described in Example \ref{exa:cone_case}.
Here we may assume $M=\R^n$, $X = V = \C^n$ and
$\Omega = M \times \sqrt{-1} \Gamma$,
as the other cases are  obtained by restriction of this case.
We set $V_0 = X \ssm M$ and $V_1 = X$ as usual.

We first take $n$ linearly independent unit vectors $\eta_1,\dots, \eta_n$ in $\R^n_y$ so that
\[
\underset{1 \le k \le n}{\bigcap} H_{k} \subset \Gamma
\]
holds, where we set $H_{k} = \{\,y \in \R^n_y\mid \langle y,\, \eta_k\rangle > 0\,\}$.
We also set
$\eta_{n+1} = - (\eta_1 + \dots + \eta_n)$.
Then, let $\varphi_k$, $k=1,\dots,n+1$, be $C^\infty$ functions on $X \ssm M$ which satisfy
\begin{enumerate}
	\item[(1)] $\op{Supp}_{X \ssm M} (\varphi_k) \subset M \times \sqrt{-1}H_{k}$ for any $k=1,\dots, n+1$.
\item[(2)] $\displaystyle\sum_{k=1}^{n+1} \varphi_k = 1$ on $X \ssm M$.
\end{enumerate}
Set
\[
	\nu_{01} = 
	(-1)^n
	(n-1)!\, \check{\chi}_{H_{{n+1}}}\,
	d\varphi_1 \wedge \dots \wedge d\varphi_{n-1},
\]
where $\check{\chi}_{H_{{n+1}}}$ is the anti-characteristic function of the set $H_{{n+1}}$,
that is, 
\[
\check{\chi}_{H_{{n+1}}}(z) = 
\left\{
	\begin{matrix}
0 \qquad &z \in H_{{n+1}}, \\
1 \qquad &\text{otherwise}.
\end{matrix}
\right.
\]
Then we can easily confirm that
$\nu_{01} \in \scE^{(n-1)}(X \ssm M)$ and
\[
\op{Supp}_{X \ssm M}(\nu_{01}) \subset 
M \times \sqrt{-1}\underset{1 \le k \le n}{\bigcap} H_{k} 
\subset \Omega.
\]
Furthermore, as  will be shown in Lemma \ref{lem:tau_is_generator} in Appendix,  
\[
\nu = (0,\nu_{01}) 
\in \scE^{(n)}(V_1) \oplus
\scE^{(n-1)}(V_{01})
= \scE^{(n)}(\V,\V')
\]
gives the image of a positively oriented generator of 
$or_{M/X}(M)$ with the orientations on $M$ and $X$ as  in Example \ref{exone}.
By  definition of $\rho^{n}:\scE^{(n)}(\V,\V')\ra \scE^{(0,n)}(\V,\V')$, we have
\[
\rho^{n}(\nu) =(0,\nu_{01}^{(0,n-1)})= (0,\,
(-1)^n(n-1)!\, \check{\chi}_{H_{{n+1}}}\,
	 \bar{\partial}\varphi_1 \wedge \dots \wedge \bar{\partial}\varphi_{n-1})
	 \in \scE^{(0,n)}(\V,\V'),
\]
which satisfies $\bar{\vartheta} \rho^{n}(\nu) = \rho^{n}(D(\nu)) = 0$.
\end{example}

Now we are ready to define the boundary value morphism
\[
	b_\Omega: \scO(\Omega) \to H^{0,n}_{\bar\vt}(\V,\V')\otimes_{{}_{\Z_M(M)}}
	or_{M/X}(M) = \scB(M).
\]
Let $\nu = (\nu_1,\nu_{01}) \in \scE^{(n)}(\V,\V')$
be a representative of $\mathds{1}^{c}$
with $\op{Supp}_{V_1}(\nu_1) \subset \Omega$ and
$\op{Supp}_{V_{01}}(\nu_{01}) \subset \Omega$
(cf. Lemma {\ref{lem:unit_one_support}}).
Take $f \in \scO(\Omega)$. 
Since $\operatorname{Supp}_{V_1}(\rho^n(\nu_1))\subset \Omega$
 and $\operatorname{Supp}_{V_{01}}(\rho^{n-1}(\nu_{01}))\subset \Omega$,
we may regard 
$f \rho^n(\nu_1)$ as a $(0,n)$-form on $V_1$ 
and $f \rho^{n-1}(\nu_{01})$ 
as a $(0,n-1)$-form on $V_{01}$. Hence, we have
\[
f\rho(\nu) = (f \rho^n(\nu_1),\, f\rho^{n-1}(\nu_{01})) 
\in \scE^{(0,n)}(V_1) \oplus \scE^{(0,n-1)}(V_{01})
= \scE^{(0,n)}(\V,\V').
\]
Moreover it is a $\bar{\vartheta}$-cocycle, as $f$ is holomorphic,
and defines a class
$[f\rho(\nu)] \in H^{0,n}_{\bar\vt}(\V,\V')$. Then we define
the boundary value morphism by
\[
	b_\Omega(f) = [f\rho(\nu)] \otimes \mathds{1}
	\in H^{0,n}_{\bar\vt}(\V,\V') \otimes_{{}_{\Z_M(M)}} or_{M/X}(M).
\]
\begin{lemma}{\label{lem:boundary_map_ok}}
The above $b_\Omega$ is well-defined.
\end{lemma}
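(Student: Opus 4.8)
The plan is to show that the class $b_\Omega(f)=[f\rho(\nu)]\otimes\mathds{1}$ depends neither on the chosen support representative $\nu$ of $\mathds{1}^{c}$ nor on the generator $\mathds{1}$ of $or_{M/X}(M)$. Since the $\bar\vt$-closedness of $f\rho(\nu)$ has already been verified above, these two independences are precisely what ``well-defined'' demands. I would first fix $\mathds{1}$ and treat the dependence on $\nu$, which is the crux, and then dispose of the dependence on $\mathds{1}$ by a short sign argument.

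For the main point, let $\nu$ and $\nu'$ be two representatives of $\mathds{1}^{c}$ in $H^{n}_{D}(\V,\V')$ whose components are each supported in $\Omega$. The key observation is that it does not suffice to write $\nu'-\nu=D\mu$ for some relative cochain $\mu$; I must produce such a $\mu$ whose components are \emph{themselves} supported in $\Omega$, for only then can the function $f$, defined merely on $\Omega$, be multiplied into it. This is exactly what the identity \eqref{eq:iso-support-identity}, namely $H^{n}_{D'}(\V,\V')\simeq H^{n}_{D}(\V,\V')$ established inside the proof of Lemma \ref{lem:unit_one_support}, provides: both $\nu$ and $\nu'$ are $D'$-cocycles mapping to the single class $\mathds{1}^{c}$ under this isomorphism, so by its injectivity they already coincide in $H^{n}_{D'}(\V,\V')$, whence $\nu'-\nu=D'\mu$ with $\mu=(\mu_{1},\mu_{01})$ supported in $\Omega$.

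Next I would carry out the chain-level computation. The projection $\rho$ intertwines the relative differential with $\bar\vt$, i.e. $\rho(D\sigma)=\bar\vt(\rho\sigma)$, which follows componentwise from $\rho^{q+1}d=\bp\rho^{q}$ of the morphism of complexes preceding \eqref{dRtoD}; since $D'$ is just $D$ on $\Omega$-supported forms, $\rho(D'\mu)=\bar\vt(\rho\mu)$. Moreover, because $f$ is holomorphic one has $\bp(f\cdot{-})=f\cdot\bp(-)$, so multiplication by $f$ commutes with $\bar\vt$ on $\Omega$-supported cochains. Combining these,
\[
f\rho(\nu')-f\rho(\nu)=f\rho(D'\mu)=f\,\bar\vt(\rho\mu)=\bar\vt\bigl(f\,\rho\mu\bigr),
\]
where $f\rho\mu$ is a legitimate relative cochain since $\mu$, hence $f\rho\mu$, is supported in $\Omega$ and extends by zero. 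Therefore $[f\rho(\nu')]=[f\rho(\nu)]$ in $H^{0,n}_{\bar\vt}(\V,\V')$, giving independence of the representative $\nu$.

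Finally I would handle the dependence on $\mathds{1}$ by the same sign-matching used for $\iota^{(p)}$ in the previous subsection: replacing $\mathds{1}$ by another generator multiplies $\mathds{1}^{c}$, every admissible $\nu$, and hence the class $[f\rho(\nu)]$, by a sign on each connected component, while the tensor factor $\mathds{1}$ is multiplied by the same sign; the product $[f\rho(\nu)]\otimes\mathds{1}$ is thus unchanged. The one genuine obstacle is the step in the second paragraph: without the refined identity \eqref{eq:iso-support-identity} one would only obtain a coboundary $D\mu$ with $\mu$ not supported in $\Omega$, which could not be multiplied by the merely-on-$\Omega$-defined $f$. Everything else is formal once $\rho$ is recognized as a chain map and $f$ as commuting with $\bar\vt$.
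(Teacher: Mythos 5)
Your proposal is correct and follows essentially the same route as the paper's proof: both reduce the issue to finding a coboundary $\nu-\nu'=D\omega$ with $\omega$ itself supported in $\Omega$ (which is exactly what the isomorphism \eqref{eq:iso-support-identity} supplies), then conclude $f\rho(\nu)-f\rho(\nu')=\bar\vt(f\rho(\omega))$, and dispose of the choice of $\mathds{1}$ via the relation $\psi\otimes\psi=1$ of \eqref{tensgen}, which is the content of your sign-matching step. Your write-up is in fact more explicit than the paper's about why injectivity of $H^{n}_{D'}(\V,\V')\to H^{n}_{D}(\V,\V')$ yields the support-respecting primitive, but the underlying argument is identical.
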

\begin{proof}
Let $\nu' = (\nu'_1,\nu'_{01}) \in \scE^{(n)}(\V,\V')$
be 
another representative of $\mathds{1}^{c}$ 
with $\op{Supp}_{V_1}(\nu'_1) \subset \Omega$ and
$\op{Supp}_{V_{01}}(\nu'_{01}) \subset \Omega$. 
By  \eqref{eq:iso-support-identity}, we can find
$\omega = (\omega_1,\omega_{01}) \in \scE^{(n-1)}(\V,\V')$
with 
$\op{Supp}_{V_1}(\omega_1) \subset \Omega$ and
$\op{Supp}_{V_{01}}(\omega_{01}) \subset \Omega$
satisfying
$
\nu - \nu' =D\o.
$
Then we have $f\rho(\omega) \in \scE^{(0,n-1)}(\V,\V')$
and
$f\rho(\nu) - f\rho(\nu') = \bar{\vartheta}(f\rho(\omega))$,
which shows
\[
[f\rho(\nu)] \otimes \mathds{1} = [f\rho(\nu')] \otimes \mathds{1}
	\quad\text{in}\quad H^{0,n}_{\bar\vt}(\V,\V').
\]
Hence $b_\Omega$ does not depend on the choice
of the representative of $\mathds{1}^{c}$. It does not depend on the choice of $\mathds{1}$ either by \eqref{tensgen}.
\end{proof}

As a corollary, we have the following\,:
\begin{corollary}{\label{cor:restriction_boundary_map}}
Let $V' \subset V$ be an open neighborhood of $M$, and
let $\Omega' \subset \Omega$ be an open subset in $X$.
Assume that the pair $(V',\Omega')$ also satisfies the conditions $(\rm{B}_1)$~and 
$(\rm{B}_2)$. Then we have
\[
b_{\Omega'}(f|_{\Omega'}) = b_{\Omega}(f), \qquad f \in \scO(\Omega).
\]
\end{corollary}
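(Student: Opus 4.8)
The plan is to compare the two hyperfunctions inside the single group $H^{0,n}_{\bar\vt}(V',V'\ssm M)$, using that the excision isomorphism of Proposition~\ref{propexcision} identifying the $V$- and $V'$-presentations of $\scB(M)$ is induced by restriction of cochains. Concretely, I would fix a representative $\nu=(\nu_1,\nu_{01})$ of $\mathds{1}^{c}$ on $V$ with $\op{Supp}(\nu_1),\op{Supp}(\nu_{01})\subset\Omega$ (Lemma~\ref{lem:unit_one_support}) and a representative $\nu'=(\nu'_1,\nu'_{01})$ of $\mathds{1}^{c}$ on $V'$ with supports in $\Omega'$ (where we may assume $\Omega'\subset V'$). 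Since restriction sends $b_\Omega(f)$ to $[\,f\rho(\nu)|_{V'}\,]\otimes\mathds{1}=[\,f\rho(\nu|_{V'})\,]\otimes\mathds{1}$, while by definition $b_{\Omega'}(f|_{\Omega'})=[\,f\rho(\nu')\,]\otimes\mathds{1}$ (here $f=f|_{\Omega'}$ on $\op{Supp}(\nu')\subset\Omega'$), the statement reduces to the cochain identity $[\,f\rho(\nu|_{V'})\,]=[\,f\rho(\nu')\,]$ in $H^{0,n}_{\bar\vt}(V',V'\ssm M)$.

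The second step is to produce a primitive with controlled support. Both $\nu|_{V'}$ and $\nu'$ represent $\mathds{1}^{c}$ in $H^{n}_{D}(V',V'\ssm M)$, and both are supported in $\Omega\cap V'\subset\Omega$ (using $\Omega'\subset\Omega$). The key is to write $\nu|_{V'}-\nu'=D\omega$ with $\op{Supp}(\omega)\subset\Omega$ as well; granting this, and using that $f$ is holomorphic so that $\bar\vt f=0$ on $\Omega$ where $f$ is defined, one computes $f\rho(\nu|_{V'})-f\rho(\nu')=f\rho(D\omega)=\bar\vt\bigl(f\rho(\omega)\bigr)$, which exhibits the two cocycles as cohomologous and finishes the reduction; this is exactly the mechanism already used in Lemma~\ref{lem:boundary_map_ok}. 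The multiplication-by-$f$ and $\rho$ bookkeeping here is routine and adds nothing new.

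The existence of such a support-controlled primitive is precisely the support isomorphism \eqref{eq:iso-support-identity}: for the open inclusion $j\colon\Omega\cap V'\hra V'$, the natural map $H^{\bullet}_{D'}(V',V'\ssm M)\to H^{\bullet}_{D}(V',V'\ssm M)$ built from the fine resolution $j_!j^{-1}\scE^{(\bullet)}$ is an isomorphism, so the $D$-exact difference $\nu|_{V'}-\nu'$ is automatically $D'$-exact, i.e.\ admits a primitive $\omega$ supported in $\Omega\cap V'$. This is where I expect the main obstacle to lie: the isomorphism \eqref{eq:iso-support-identity} rests on the homotopy hypothesis, and for the region $\Omega\cap V'$ one must know that $(V'\ssm\Omega)\ssm M\hra V'\ssm\Omega$ is a homotopy equivalence, i.e.\ condition $(\mathrm{B}_2)$ for the pair $(V',\Omega\cap V')$. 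In the principal case $\Omega'=\Omega\cap V'$ --- which is what the word ``restriction'' suggests, and to which one reduces by first shrinking the wedge inside the fixed neighborhood $V'$ --- this is exactly the assumed condition $(\mathrm{B}_2)$ for $(V',\Omega')$, and the vanishing $H^{q}_{M}(V';i_*i^{-1}\C)=0$ of Lemma~\ref{lem:unit_one_support} applies verbatim (condition $(\mathrm{B}_1)$ for $(V',\Omega\cap V')$ being automatic since $V'$ is a neighborhood of $M$ and $\overline{\Omega}\supset M$). Once \eqref{eq:iso-support-identity} is in force for the relevant support region, the remaining points --- that restriction realizes the excision isomorphism and that $b_{\Omega'}$ is insensitive to the choice of representative (Lemma~\ref{lem:boundary_map_ok}) --- are immediate.
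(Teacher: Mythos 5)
Your mechanism is exactly the one the paper intends: the corollary appears there with no separate proof, as an immediate consequence of Lemma~\ref{lem:boundary_map_ok}, and your reduction via excision-as-restriction to the identity $[f\rho(\nu|_{V'})]=[f\rho(\nu')]$ in $H^{0,n}_{\bar\vt}(V',V'\ssm M)$, settled by a support-controlled primitive obtained from \eqref{eq:iso-support-identity}, is precisely the fleshed-out version of that argument. You have also put your finger on the right crux: since the difference $\nu|_{V'}-\nu'$ is supported only in $\Omega\cap V'$ (not in $\Omega'$), the isomorphism \eqref{eq:iso-support-identity} must be invoked for the pair $(V',\Omega\cap V')$, whose condition $(\mathrm{B}_2)$ reads ``$(V'\ssm\Omega)\ssm M\hra V'\ssm\Omega$ is a homotopy equivalence''; this coincides with a stated hypothesis only when $\Omega'=\Omega\cap V'$.

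The gap is in your final step, the claim that the general case ``reduces'' to the principal case $\Omega'=\Omega\cap V'$ by first shrinking the wedge inside the fixed neighborhood $V'$. That shrinking step is itself an instance of the corollary (with $V=V'$ and wedges $\Omega'\subset\Omega\cap V'$), and to run it one needs $b_{\Omega\cap V'}$ to be defined and well-defined, i.e.\ Lemmas~\ref{lem:unit_one_support} and \ref{lem:boundary_map_ok} for the pair $(V',\Omega\cap V')$ --- which is exactly the condition $(\mathrm{B}_2)$ whose absence you are trying to circumvent; the reduction is therefore circular, and for general $\Omega'\subsetneq\Omega\cap V'$ your proof is incomplete. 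To be fair, the paper is no better off: it supplies no argument, and its stated hypotheses meet the same obstruction, since $(\mathrm{B}_2)$ for $(V,\Omega)$ together with $(\mathrm{B}_2)$ for $(V',\Omega')$ does not formally imply $(\mathrm{B}_2)$ for $(V',\Omega\cap V')$ (the homotopy type of $V'\ssm\Omega$ is simply not constrained by either hypothesis). What your argument does prove cleanly --- and what covers every use made of the corollary in the paper --- is the statement with the added hypothesis that $(V',\Omega\cap V')$ also satisfies $(\mathrm{B}_2)$: then both $\nu|_{V'}$ and $\nu'$ are representatives of $\mathds{1}^{c}$ supported in $\Omega\cap V'$, so Lemma~\ref{lem:boundary_map_ok} applied to the pair $(V',\Omega\cap V')$ gives $[f\rho(\nu|_{V'})]=[f\rho(\nu')]$ directly, and $b_\Omega(f)=b_{\Omega'}(f|_{\Omega'})$ follows. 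It would strengthen your write-up to state this extra hypothesis explicitly rather than to appeal to an unjustified reduction.
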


\begin{example}\label{exDirac}
Let $M=\R$ and $X=\C$ with  coordinate $z = x+\sqrt{-1}y$ and set 
$V_0 = \C\ssm \R$ and $V_1 = \C$. Note that
$V_0$ is a disjoint union of $\Omega_{\pm} = \{\pm y > 0\}$.
Define coverings $\V = \{V_0, V_{1}\}$ and $\V' = \{V_{0}\}$ as usual.

It is well-known that Dirac's delta function is the difference of boundary 
values on $\Omega_+$ and $\Omega_-$
of 
$\frac{-1}{2\pi\sqrt{-1}}\frac 1 z$, that is,
\[
\delta(x) = 
b_{\Omega_+} \left(\frac{-1}{2\pi\sqrt{-1}}\frac 1 z\right)
-
b_{\Omega_-}\Big(\frac{-1}{2\pi\sqrt{-1}}\frac 1 z\Big).
\]
Define the functions
\[
\varphi_{\Omega_+}(z) = 
\begin{cases}
1 &  z \in \Omega_+, \\
0 &  z \in \Omega_- \\
\end{cases},
\qquad
\varphi_{\Omega_-}(z) = 
\begin{cases}
0 &  z \in \Omega_+, \\
1 &  z \in \Omega_- \\
\end{cases},
\]
and set
\[
\nu_{\Omega_\pm} = (0,\,-\varphi_{\Omega_\pm}) \in 
\scE^{(1)}(V_{1}) \oplus \scE^{(0)}(V_{01})
	 =\scE^{(1)}(\V,\V').
\]
Note that $\nu_{\Omega_\pm}$ may be thought of as representing  both a generator $\mathds{1}$ of $or_{M/X}(M)$ and $\rho^{1}(\mathds{1}^{c})\in H^{0,1}_{\bar\vt}(\V,\V')$ (cf. Remark \ref{remdimone}).
 Thus from
the definition of $b_{\Omega_\pm}: \scO(\Omega_{\pm}) \to \scB(M)$, 
 we have, for $F \in \scO(\Omega_{\pm})$,
\[
b_{\Omega_\pm}(F) = \left[\left(0,\, -F(z) \varphi_{\Omega_\pm}\right)\right]
\otimes [\nu_{\Omega_\pm}]
\,\,\in\, H^{0,1}_{\bar\vt}(\V,\V')
\otimes_{{}_{\Z_M(M)}}
or_{M/X}(M),
\]
which is often written  $F(x\pm\sqrt{-1}\,0)$ in Sato's context.
Hence we have
\[
\begin{aligned}
\delta(x) &= 
\left[\left(0,\, \dfrac 1{2\pi\sqrt{-1}}\frac{\varphi_{\Omega_+}} z\right)\right]
\otimes [\nu_{\Omega_+}] -
\left[\left(0,\, \dfrac 1{2\pi\sqrt{-1}}\frac{\varphi_{\Omega_-}} z\right)\right]
\otimes [\nu_{\Omega_-}] \\
&\\
&=
\dfrac{-1}{2\pi\sqrt{-1}} 
\left(\dfrac{1}{x + \sqrt{-1}\,0} - \dfrac{1}{x - \sqrt{-1}\,0}\right).
\end{aligned}
\]
We may also express it as
\[
\begin{aligned}
\delta(x) &=
\left[\left(0,\, 
\dfrac{1}{2\pi\sqrt{-1}}\frac {\varphi_{\Omega_+} + \varphi_{\Omega_-}} z\right)\right]
\otimes [\nu_{\Omega_+}] 
=
\left[\left(0,\, \frac{1}{2\pi\sqrt{-1}}\frac 1 z\right)\right]
\otimes [\nu_{\Omega_+}] 
\end{aligned}
\]
in $H^{0,1}_{\bar\vt}(\V,\V')\otimes_{{}_{\Z}}or_{M/X}(M)$.
Recall that (cf. Remark \ref{remdimone})
 if  we orient the normal direction
so that $y$ is a positive coordinate, 
$[\nu_{\Omega_+}]$ is the canonical generator of $or_{M/X}(M)$
and thus the above delta
\fcn\ coincides with the one in Definition \ref{defdeltafcn} for $n=1$.

If we fix the orientation as above, we have a canonical \iso
\[
	\Z=\Z_M(M) \simeq or_{M/X}(M)=H^{1}_M(X;\Z_X)
\]
which sends $1 \in \Z$ to $[\nu_{\Omega_+}]\in or_{M/X}(M)$.
If we identify $\scB(M)$ with
$H^1_M(X;\scO)=H^{0,1}_{\bar\vt}(\V,\V')$ via the above \iso, 
we have the expression
\begin{equationth}{\label{eq:dirac_without_orientation}}
\delta(x) = \left[\left(0,\, \frac{1}{2\pi\sqrt{-1}}\frac 1 z\right)\right]
\in H^{0,1}_{\bar\vt}(\V,\V').
\end{equationth}
\end{example}


\begin{remark}
Since we always assume $M$ to be orientable,
as we see in the above example, we may omit the relative orientation 
$or_{M/X}(M)$ in the definition of hyperfunctions via the
isomorphism $\Z_M(M) \simeq or_{M/X}(M)$.
Here we fix the isomorphism so that $1 \in \Z_M(M)$ is sent to the positively oriented
generator of $or_{M/X}(M)$.
This omission does not have much impact 
on usual treatment of  hyperfunctions, however, some particular 
operations such as  coordinate transformations or  integration of  hyperfunctions
require special attention: For example,
let us consider the coordinate transformation $x\mapsto  -x$ in the above example.
It follows from the definition that 
$\delta(x)$ remains unchanged by this.
The element defined by \eqref{eq:dirac_without_orientation}, however, 
changes its sign under this transformation. 
Thus, if we omit the orientation sheaf, 
we are required to change the sign of a hyperfunction manually 
under a coordinate transformation reversing the orientation.
For the integration of  hyperfunction densities, a similar consideration is needed,
see  Subsection~{\ref{subsection:integration}} 
for details.
\end{remark}

\subsection{Microlocal analyticity}{\label{subsec:micro-anal}}

We first recall the notion of microlocal analyticity of a hyperfunction
(\cite{Skk}, \cite{KS}). Then we will give its interpretation in our framework.
In this subsection, we assume that $X$ and $M$ are oriented so that we  omit the orientation sheaves.

Let $T^*_MX$ denote the conormal bundle of $M$ in $X$, which is isomorphic
to $\sqrt{-1}T^*M$, and $\pi: T^*_MX \to M$  the  projection. We describe the spectral map
$\op{sp}: \pi^{-1}\scB \to \SHmicro$ in our frame work, where $\scC$ denotes the sheaf of micro\fcn s on
$T^*_MX$.
Let $p_0 = (x_0;\,\sqrt{-1}\xi_0)$ be a point in $T^*_MX = \sqrt{-1}T^*M$. 
Then it is known that the stalk $\SHmicro_{p_0}$ 
at $p_0$ is given by the following formula (see Theorem 4.3.2 and Definition 11.5.1 in
\cite{KS})\,:
Under a ($C^1$-class) local trivialization near $x_0$, i.e., 
$(M,X) \simeq (\R_x^n,\C^n = \R_x^n \times \sqrt{-1}\R_y^n)$ near $x_0$,
we have
\[
\SHmicro_{p_0} = \underset{\underset{V,\,G}{\lra}}{\lim}\,
H^n_{\R^n_x \times {\sqrt{-1}G}}(V;\scO),
\]
where $V$ runs through open neighborhoods of $x_0$ and
$G$ 
ranges 
 through closed cones in $\R_y^n$ satisfying
\begin{equationth}{\label{eq:microlocal-cone-condition}}
G \ssm \{0\} \subset 
\{\,y \in \R^n \mid\, \op{Re}\,\langle \sqrt{-1}y,\,\sqrt{-1}\xi_0\rangle > 0\,\}
=
\{\,y \in \R^n \mid\, \langle y,\,\xi_0\rangle < 0\,\}.
\end{equationth}
Therefore, setting $\W_{V,G} = \{V \ssm (\R^n_x \times \sqrt{-1}G),\, V\}$
and
$\W_{V,G}' = \{V \ssm (\R^n_x\times \sqrt{-1} G)\}$, we get the expression  in our framework\,:
\[
\SHmicro_{p_0} \simeq \underset{\underset{V,\,G}{\lra}}{\lim}\, 
H^{0,n}_{\bar\vt}(\W_{V,G},\W'_{V,G}).
\]
We also set
$\W_V = \{V_0=V\ssm M,\,V_1= V\}$ and
$\W'_V = \{V_0\}$.
Then the spectral map at $p_0$
\[
\scB_{x_0} \simeq
\underset{\underset{V}{\lra}}{\lim}\, 
H^{0,n}_{\bar\vt}(\W_{V},\W'_{V})\overset{\op{sp}}{\lra}
\SHmicro_{p_0} \simeq \underset{\underset{V,\,G}{\lra}}{\lim}\,
H^{0,n}_{\bar\vt}(\W_{V,G},\W'_{V,G})
\]
  is simply the one  canonically induced from the restriction map $V \ssm M \hra V \ssm (\R^n_x \times \sqrt{-1}G)$.

\begin{definition}
For a hyperfunction $u$ at $x_0$, we say that $u$ is {\em microlocally analytic}
at $p_0$ if $\op{sp}(u)$ becomes zero at $p_0$ as a microfunction.
\end{definition}

We have the following equivalent characterization\,:

\begin{proposition}{\label{prop:equiv-ss}}
Let $u$ be a hyperfunction at $x_0$.
Then $u$ is microlocally analytic at $p_0$ if and only if
there exist a closed cone $G$ satisfying
\eqref{eq:microlocal-cone-condition}, an open neighborhood $V$ of $x_0$
and a representative 
\[
(\tau_1,\tau_{01}) \in \scE^{(0,n)}(V_1) \oplus
\scE^{(0,n-1)}(V_{01})= \scE^{(0,n)}(\W_V,\W'_V)
\]
of $u$ near $x_0$ which satisfies $\tau_1 = 0$ and
$\op{Supp}_{V\ssm M}(\tau_{01}) \subset \R^n_x \times \sqrt{-1}G$.
\end{proposition}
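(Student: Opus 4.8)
The plan is to exploit the explicit description of the spectral map given just above. Writing $Z_G=\R^n_x\times\sqrt{-1}G$, one has $M\subset Z_G$ because $G$ is a cone, so $V\ssm Z_G\subset V\ssm M$ and $\op{sp}$ sends the class of $(\tau_1,\tau_{01})$ in $H^{0,n}_{\bar\vt}(\W_V,\W'_V)$ to the class of $(\tau_1,\tau_{01}|_{V\ssm Z_G})$ in $H^{0,n}_{\bar\vt}(\W_{V,G},\W'_{V,G})$. Since $\SHmicro_{p_0}$ is a filtered inductive limit, $\op{sp}(u)=0$ if and only if this image already becomes a $\bar\vt$-coboundary at some finite stage $(V,G)$. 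The ``if'' direction is then immediate: given a representative $(\tau_1,\tau_{01})=(0,\tau_{01})$ with $\op{Supp}_{V\ssm M}(\tau_{01})\subset Z_G$ for a cone $G$ obeying \eqref{eq:microlocal-cone-condition}, the restriction $\tau_{01}|_{V\ssm Z_G}$ vanishes identically, so the image of $u$ is the class of $(0,0)$, hence zero, and $u$ is microlocally analytic at $p_0$.

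For the ``only if'' direction I would start from microlocal analyticity, which supplies an open neighborhood $V$ of $x_0$, a closed cone $G$ satisfying \eqref{eq:microlocal-cone-condition}, and forms $\lambda_1\in\scE^{(0,n-1)}(V)$, $\lambda_{01}\in\scE^{(0,n-2)}(V\ssm Z_G)$ such that, for a representative $(\tau_1,\tau_{01})$ of $u$,
\[
\tau_1=\bp\lambda_1 \ \text{ on } V,\qquad \tau_{01}=\lambda_1-\bp\lambda_{01}\ \text{ on } V\ssm Z_G.
\]
Subtracting the coboundary $\bar\vt(\lambda_1,0)=(\bp\lambda_1,\lambda_1|_{V\ssm M})$ produces a cohomologous representative $(0,\tau'_{01})$ of $u$ with $\tau'_{01}=\tau_{01}-\lambda_1$; it already has vanishing $(0,n)$-part, and on $V\ssm Z_G$ it equals $-\bp\lambda_{01}$.

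It remains to force the support of the $(0,n-1)$-component into a cone, and this is the only delicate step. Using that $G\cap S^{n-1}$ is compact and contained in the open half-space of \eqref{eq:microlocal-cone-condition}, I would choose a slightly larger closed cone $G'$ with $G\ssm\{0\}\subset\op{int}G'$ that still satisfies \eqref{eq:microlocal-cone-condition}, together with a $C^\infty$ function $\chi$ on $V\ssm M$ depending only on $y/\Vert y\Vert$, with $\chi\equiv1$ on $V\ssm Z_{G'}$ and $\op{Supp}_{V\ssm M}(\chi)\cap Z_G=\emptyset$. Because $\chi$ is supported away from $Z_G$, the form $\mu_{01}:=\chi\lambda_{01}$ extends by zero to a genuinely smooth form on $V\ssm M$, and subtracting the coboundary $\bar\vt(0,\mu_{01})=(0,-\bp\mu_{01})$ gives a cohomologous representative $(0,\tau''_{01})$ with $\tau''_{01}=\tau'_{01}+\bp\mu_{01}$. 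On $V\ssm Z_{G'}$ one has $\chi\equiv1$, hence $\mu_{01}=\lambda_{01}$ and $\tau''_{01}=-\bp\lambda_{01}+\bp\lambda_{01}=0$ there; therefore $\op{Supp}_{V\ssm M}(\tau''_{01})\subset Z_{G'}=\R^n_x\times\sqrt{-1}G'$, and $(0,\tau''_{01})$ is the required representative with cone $G'$. The point needing the most care is exactly this cutoff construction: one must check that $\chi\lambda_{01}$ is smooth across $\partial Z_G\ssm M$ (guaranteed since $\op{Supp}\chi$ avoids $Z_G$, where $\lambda_{01}$ may fail to extend) and that the enlargement $G'$ preserves \eqref{eq:microlocal-cone-condition}, so that the resulting class sits at an admissible stage of the inductive limit.
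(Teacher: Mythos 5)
Your proof is correct and takes essentially the same route as the paper's: after extracting a $\bar\vartheta$-primitive $(\lambda_1,\lambda_{01})$ at a finite stage $(V,G)$ of the filtered limit, you subtract the coboundary $\bar\vartheta(\lambda_1,\chi\lambda_{01})$ with a radial cutoff $\chi$ equal to $1$ outside an enlarged cone $G'$ and supported away from $\R^n_x\times\sqrt{-1}G$, which is exactly the paper's construction $\tau_{01}=\xi_{01}-\eta_1+\bar\partial(\psi\eta_{01})$ (your insistence that $\op{Supp}_{V\ssm M}(\chi)$ be disjoint from $\R^n_x\times\sqrt{-1}G$ even makes the extension-by-zero of $\chi\lambda_{01}$ slightly more explicit than the paper's statement of the cutoff). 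The only other difference is that you also write out the trivial ``if'' direction, which the paper leaves implicit.
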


\begin{proof} Let $u$ be represented by $(\xi_{1},\xi_{01})\in  \scE^{(0,n)}(\W_V,\W'_V)$.
 If $u$ is microlocally analytic at
$p_{0}$, there exist  $G'$ and a cochain $(\eta_{1},\eta_{01})\in  \scE^{(0,n-1)}(\W_{V,G'},\W'_{V,G'})$ \st
\[
(\xi_{1},\xi_{01})=\bar\vt(\eta_{1},\eta_{01})=(\bp\eta_{1},\eta_{1}-\bp\eta_{01})\qquad\text{in}\ \ \scE^{(0,n)}(\W_{V,G'},\W'_{V,G'}),
\]
where $\xi_{01}$ is to be restricted to $V \ssm (\R^n_x \times \sqrt{-1}G')$. Let $G$ be a closed cone with \eqref{eq:microlocal-cone-condition} containing $G'\ssm\{0\}$ in its interior. Let $\psi$ be a $C^{\infty}$ \fcn\ on $V\ssm M$
\st\ $\psi\equiv 1$ on the complement of $\R^n_x \times \sqrt{-1}\op{Int}G$ in $V\ssm M$ and $\psi\equiv 0$ on $\R^n_x \times \sqrt{-1}(G'\ssm\{0\})$. Note that the both sets are closed in $V\ssm M$ and that such a $\psi$ may be 
constructed making it ``radially constant''. Then $\psi\eta_{01}$ is a $(0,n-2)$-form on $V\ssm M$. Set $\tau_{1}=0$ and $\tau_{01}=\xi_{01}-\eta_{1}+\bp(\psi\eta_{01})$. Then $\op{Supp}_{V\ssm M}(\tau_{01}) \subset \R^n_x \times \sqrt{-1}G$ and 
\[
(\xi_{1},\xi_{01})-(\tau_{1},\tau_{01})=\bar\vt(\eta_{1},\psi\eta_{01})\qquad\text{in}\ \ \scE^{(0,n)}(\W_{V},\W'_{V})
\]
so that $u$ is represented by $(\tau_{1},\tau_{01})$.
\end{proof}

We denote by $\op{SS}(u)$ the set of points in $T^*_MX$ 
at which $u$ is not microlocally analytic.
By the construction of the boundary value morphism in the previous subsection and the definition of microlocal analyticity,
we have\,:

\begin{proposition}
Let $M$ be an open subset of $\R_x^n$ and $X = M \times \sqrt{-1}\R_y^n$.
Also let $V$ be an open neighborhood of $M$ in $X$ and  $\Omega$
 an open set in $X$. 
Assume that $\Omega \cap (\{x_0\} \times \sqrt{-1}\R_y^n)$
{\rm (}resp. $V \cap (\{x_0\} \times \sqrt{-1}\R_y^n)${\rm )}
is a non-empty convex cone {\rm (}resp. a convex set{\rm )} for any $x_0 \in M$.
Then
we have
\[
\op{SS}(b_{\Omega\cap V}(f)) \subset \Omega^\circ,
\qquad f \in \scO(\Omega \cap V),
\]
where $\Omega^\circ$ is the polar set of $\Omega$ defined by
\[
\underset{x \in M}{\bigsqcup}\,
\{\,\sqrt{-1}\xi \in (T^*_MX)_x\mid\, \langle \xi, y\rangle \ge 0\,\,
\text{for any $y$ with $x + \sqrt{-1}y \in \Omega$}\,\} \subset T^*_MX.
\]
\end{proposition}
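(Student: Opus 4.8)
The plan is to check microlocal analyticity at each point $p_0\notin\Omega^\circ$ directly, by exhibiting a representative of $b_{\Omega\cap V}(f)$ of the very shape demanded by Proposition~\ref{prop:equiv-ss}. So I would fix $p_0=(x_0;\sqrt{-1}\xi_0)\in T^*_MX$ with $p_0\notin\Omega^\circ$. Unwinding the definition of the polar set, this means precisely that there is a $y_*\in\R^n_y$ with $x_0+\sqrt{-1}y_*\in\Omega$ (that is, $y_*$ lies in the fibre cone $\Omega_{x_0}$) and $\langle\xi_0,y_*\rangle<0$. The goal is then to produce, for a suitable closed cone $G$ satisfying \eqref{eq:microlocal-cone-condition}, a representative $(\tau_1,\tau_{01})$ of the germ of $b_{\Omega\cap V}(f)$ at $x_0$ with $\tau_1=0$ and $\op{Supp}(\tau_{01})\subset\R^n_x\times\sqrt{-1}G$.

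The key geometric step is to thin the wedge $\Omega\cap V$ down to a small sub-cone pointing in the direction $y_*$. Since $\Omega_{x_0}$ is an open convex cone containing $y_*$ and $y\mapsto\langle y,\xi_0\rangle$ is continuous, I would choose a round open convex cone $\Gamma\ni y_*$ with $\overline\Gamma\ssm\{0\}\subset\Omega_{x_0}$ and $\langle y,\xi_0\rangle<0$ for all $y\in\overline\Gamma\ssm\{0\}$; then $G=\overline\Gamma$ satisfies \eqref{eq:microlocal-cone-condition} and $\R^n_y\ssm(\Gamma\cup\{0\})$ is contractible. Working with the germ at $x_0$, I would pass to a neighborhood $W$ of $x_0$ in $M$: from the openness of $\Omega$ and the compactness of $\overline\Gamma\cap S^{n-1}$ one gets $W\times\sqrt{-1}(\Gamma\cap S^{n-1})\subset\Omega$, and since each fibre $\Omega_x$ is itself a cone this upgrades to $W\times\sqrt{-1}\Gamma\subset\Omega$. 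Taking a convex-fibred neighborhood $V'\subset V$ of $W$ and setting $\Omega'=(W\times\sqrt{-1}\Gamma)\cap V'$, the pair $(V',\Omega')$ satisfies $(\rm{B}_1)$ and $(\rm{B}_2)$ by Example~\ref{exa:cone_case}, while $\Omega'\subset\Omega\cap V$.

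With this in hand the conclusion should follow quickly. By Corollary~\ref{cor:restriction_boundary_map} the boundary value is unchanged by this thinning, $b_{\Omega'}(f|_{\Omega'})=b_{\Omega\cap V}(f)$ as germs at $x_0$. I would then represent $\mathds{1}^c$ by the explicit cocycle $\nu=(0,\nu_{01})$ of Example~\ref{exa:support_cone_fundamental} built from the cone $\Gamma$, so that $\op{Supp}(\nu_{01})\subset W\times\sqrt{-1}\Gamma$; by the very definition of the boundary value morphism, $b_{\Omega'}(f|_{\Omega'})$ is then represented by $f\rho(\nu)=(0,f\,\nu_{01}^{(0,n-1)})$. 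This representative has $\tau_1=0$ and $\op{Supp}_{V'\ssm M}(\tau_{01})\subset W\times\sqrt{-1}\Gamma\subset\R^n_x\times\sqrt{-1}G$, so Proposition~\ref{prop:equiv-ss} yields microlocal analyticity of $b_{\Omega\cap V}(f)$ at $p_0$. As $p_0\notin\Omega^\circ$ was arbitrary, this gives $\op{SS}(b_{\Omega\cap V}(f))\subset\Omega^\circ$.

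I expect the main obstacle to be the geometric bookkeeping of the second paragraph rather than any cohomological subtlety: namely, promoting the pointwise-in-$x_0$ direction $y_*$ to a single cone $\Gamma$ valid over a whole neighborhood $W$, and verifying $(\rm{B}_1)$ and $(\rm{B}_2)$ for the thinned pair $(V',\Omega')$. The decisive leverage is that the fibres of $\Omega$ are cones, which converts a statement about unit directions (where compactness applies) into one about full cones, together with Corollary~\ref{cor:restriction_boundary_map}, which lets me replace the given wedge by an arbitrarily thin sub-cone without affecting the boundary value.
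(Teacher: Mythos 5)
Your proposal is correct and follows exactly the route the paper intends: the paper gives no detailed proof here (it simply asserts the inclusion follows ``by the construction of the boundary value morphism \ldots and the definition of microlocal analyticity''), and your argument --- thinning the wedge to a round cone $\Gamma$ around the direction $y_*$ witnessing $p_0\notin\Omega^\circ$, invoking Example~\ref{exa:cone_case} and Corollary~\ref{cor:restriction_boundary_map} to replace $b_{\Omega\cap V}(f)$ by $b_{\Omega'}(f|_{\Omega'})$ near $x_0$, and then applying the support criterion of Proposition~\ref{prop:equiv-ss} to the representative supplied by Lemma~\ref{lem:unit_one_support}/Example~\ref{exa:support_cone_fundamental} --- is precisely the completion of that one-line indication. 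The only points you gloss over (that restriction of the base from $M$ to $W$ commutes with the boundary value construction, and that the hypotheses on the fibers of $\Omega$ and $V$ guarantee $(\rm{B}_1)$ and $(\rm{B}_2)$ for the original pair so that $b_{\Omega\cap V}$ is defined) are at the same level of detail the paper itself leaves implicit.
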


\subsection{External product of hyperforms}

For each $k=1,2,\dots,\ell$, let $M_k$  be a real analytic manifold 
of dimension $n_k$,  $X_k$ its complexification and 
 $p_k$ 
a non-negative integer.  All the manifolds are 
assumed to be oriented, and thus, we omit the relative orientations $or_{M_k/X_{k}}(M_k)$ in 
the definition of hyperforms $\mathscr{B}^{(p_k)}(M_k)$ throughout this subsection. Set
\[
M = M_1 \times \dots \times M_\ell,\qquad
X = X_1 \times \dots \times X_\ell
\]
and denote
by $\pi_k: X \to X_k$ the canonical projection. We also set $n = n_1 + \dots + n_\ell$ and $p=p_{1}+\cdots+p_{\ell}$. For each $k$, we consider the coverings $\V_k = \{V_{k,\,0},\,V_{k,\,1}\}$
and $\V'_k = \{V_{k,\,0}\}$ of $X_{k}$ and $X_{k}\ssm M_{k}$ given by $V_{k,\,0} = X_k \ssm M_k$ and
$V_{k,\,1} = X_k$. We  set
$V_{k,\,01} = V_{k,\,0} \cap V_{k,\,1}$. We also consider coverings $\V = \{V_0,\,V_1\}$
and $\V' = \{V_0\}$ of $X$ and $X\ssm M$ with 
$V_0 = X \ssm M$, 
$V_1 = X$ and 
$V_{01} = V_0 \cap V_1$.

Given $\ell$ hyperforms $u_k \in \SHhyperf{(p_k)}(M_k)$, $k=1,\dots,\ell$, and
their representatives
\[
\tau_k = (\tau_{k,\,1},\,\tau_{k,\,01})
\in \scE^{(p_k,n_k)}(V_{k,\,1}) \oplus
\scE^{(p_k,n_k-1)}(V_{k,\,01})
=\scE^{(p_k,n_k)}(\V_{k},\V'_{k}),
\]
we compute a concrete representative of the external product 
\[
u = u_1 \times u_2 \times \cdots 
\times u_\ell \in \SHhyperf{(p)}(M)
\] 
from the representatives $\tau_k$.
Let $\varphi_k$, $k=1,\dots,\ell$, be $C^\infty$-functions on
$V_0$ which satisfy
\begin{enumerate}
	\item[(1)] $\op{Supp}_{V_0}(\varphi_k) \subset \pi_k^{-1}(V_{k,\,0})$,  
		$k=1,\dots,\ell$,
\item[(2)] $\varphi_1 + \cdots +\varphi_\ell = 1$ on $V_0$.
\end{enumerate}

First we introduce two families of forms $(\bar{\partial}\varphi)_\beta$'s and
$\tau_\alpha$'s.
Set $\Lambda = \{1,2,\dots,\ell\}$.
For $\beta = (\beta_1,\,\dots,\,\beta_k) \in \Lambda^k$, we define
\[
	(\bar{\partial} \varphi)_\beta
	= \bar{\partial} \varphi_{\beta_1} \wedge \dots \wedge
	   \bar{\partial} \varphi_{\beta_k}.
\]
Note that $(\bar{\partial} \varphi)_\beta$ is a $(0,k)$-form 
on $V_0$
whose support is 
in $\pi^{-1}_{\beta_1}(V_{\beta_{1},\,0}) \cap \cdots \cap 
\pi^{-1}_{\beta_k}(V_{\beta_{k},\,0})$.
Furthermore, for $\alpha = (\alpha_1,\,\dots,\,\alpha_k) \in \Lambda^k$ with
$\alpha_1 < \alpha_2 < \dots < \alpha_k$, we define $\tau_\alpha$ by
\[
\tau_\alpha = (-1)^{\sigma(\alpha)} 
\omega_1\, {\wedge} \,\omega_2\, {\wedge}\, \cdots\, {\wedge}\, \omega_\ell,
\]
where, for $j=1,2,\dots,\ell$,
\[
\omega_j = 
\left\{
\begin{array}{ll}
	\tau_{j,\,{01}}	\qquad&\text{if $\alpha$ contains the index $j$}, \\
	\tau_{j,\,{1}}	\qquad&\text{otherwise},
\end{array}
\right.
\]
and
\[
\sigma(\alpha) = 
\dfrac{k(k - 1)}{2}
+
\sum_{j=1}^k \sum_{i=1}^{\alpha_j - 1} (n_i + p_i).
\]
W extend $\tau_\alpha$ 
to general $\alpha = (\alpha_1,\,\dots,\,\alpha_k) \in \Lambda^k$ 
in the usual way, that is, 
$\tau_\alpha = \op{sgn}(\mu)\,\tau_{\mu(\alpha)}$
for any permutation $\mu$ on $\alpha$.
It is easy to see that $\tau_\alpha$ is a $(p,\,n-k)$-form defined on
\[
\pi^{-1}_{\alpha_1}(V_{{\alpha_1},\,0}) \cap \dots \cap 
\pi^{-1}_{\alpha_k}(V_{{\alpha_k},\,0}).
\]

Then, for $i = 1,2,\dots,\ell$, we set
\[
\kappa_i = 
\underset{\beta \in \Lambda^{\ell-1}}{\sum} 
(\bar{\partial}\varphi)_\beta \wedge \tau_{\beta\, i}
+
\displaystyle\sum_{
0 \le k \le \ell-2,\,
\beta \in \Lambda^{k}}
(\bar{\partial}\varphi)_\beta \wedge 
\left(\displaystyle\sum_{\lambda \in \Lambda,\, 1 \le j \le k+1}
(-1)^{j+1} \varphi_\lambda\,\, \tau_{\overset{\overset{j}{\vee}}{\lambda\, \beta\, i}}
\right),
\]
where $\overset{\overset{j}{\vee}}{\lambda\, \beta\, i}$ is the sequence
``$\lambda\, \beta\, i$'' with the $j$-th component  removed. 
Here the first $\lambda$ in 
$\lambda\, \beta\, i$ is considered
to be the $0$-th component and the last $i$  the $(k+1)$-st component. 
\begin{lemma}{\label{lem:product_gluing}}
Each $\kappa_i$ is a $(p,n-1)$-form defined on $\pi^{-1}_i(V_{i,\,0})$.
Furthermore, for $i$ and $j$ in $\Lambda$, we have
$
\kappa_i = \kappa_j
$
on
$\pi^{-1}_i(V_{i,\,0}) \cap \pi^{-1}_j(V_{j,\,0})$.
\end{lemma}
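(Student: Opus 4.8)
The plan is to check the three assertions of the lemma in turn: that each $\kappa_i$ is of type $(p,n-1)$, that it extends to a \emph{smooth} form on all of $\pi_i^{-1}(V_{i,0})$, and finally that the $\kappa_i$ agree on overlaps. The type count is immediate: every summand wedges a $(0,s)$-form $(\bar\partial\varphi)_\beta$ (with $s=|\beta|$) against a $\tau$-factor of type $(p,n-1-s)$ — namely $\tau_{\beta i}$ in the first sum and $\varphi_\lambda\,\tau_{(\ldots)}$ (the index sequence $\lambda\beta i$ with its $j$-th entry deleted) in the second — so the total is $(p,n-1)$ throughout.

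For the domain claim, the point is that the only factors failing to be defined across $\{w_m\in M_m\}$ are the $\tau_{m,01}$ with $m\neq i$, and each such factor is always accompanied in its summand by either $\varphi_m$ or $\bar\partial\varphi_m$. Indeed, in the first sum a nonvanishing $\tau_{\beta i}$ forces $\{\beta\}=\Lambda\ssm\{i\}$, so every $m\neq i$ occurs in $\beta$ and $\bar\partial\varphi_m$ is a factor of $(\bar\partial\varphi)_\beta$; in the second sum any $m\neq i$ in the index set is either $\lambda$ (supplying $\varphi_m$) or a surviving entry of $\beta$ (supplying $\bar\partial\varphi_m$). Since $\op{Supp}_{V_0}(\varphi_m)$ and $\op{Supp}_{V_0}(\bar\partial\varphi_m)$ lie in $\pi_m^{-1}(V_{m,0})$, each summand vanishes on a neighborhood of $\pi_i^{-1}(V_{i,0})\cap\{w_m\in M_m\}$ and hence extends by zero; the remaining factor $\tau_{i,01}$ (or $\tau_{i,1}$) is already defined on all of $\pi_i^{-1}(V_{i,0})$. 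This yields a smooth $(p,n-1)$-form there.

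The agreement $\kappa_i=\kappa_j$ on $\pi_i^{-1}(V_{i,0})\cap\pi_j^{-1}(V_{j,0})$ is the heart of the matter, and I would prove it by a direct computation of $\kappa_i-\kappa_j$ on the overlap, where both $\tau_{i,01}$ and $\tau_{j,01}$ are defined. The only tools needed are $\sum_{\lambda}\varphi_\lambda=1$, its consequence $\sum_{\lambda}\bar\partial\varphi_\lambda=0$, the antisymmetry $\tau_\alpha=\op{sgn}(\mu)\,\tau_{\mu(\alpha)}$ (so any $\tau$ with a repeated index vanishes), and the built-in sign $\sigma(\alpha)$; the cocycle relations $\bar\partial\tau_{m,01}=\tau_{m,1}$ are \emph{not} used here, entering only afterwards when one checks that the glued form is a relative $\bar\vartheta$-cocycle representing $u_1\times\cdots\times u_\ell$. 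The case $\ell=2$ already displays the mechanism: there the second sums of $\kappa_1$ and $\kappa_2$ are identical and cancel, while the difference of the top terms, $-\bar\partial\varphi_2\wedge\tau_{12}-\bar\partial\varphi_1\wedge\tau_{12}=-\bar\partial(\varphi_1+\varphi_2)\wedge\tau_{12}$, vanishes because $\bar\partial(\varphi_1+\varphi_2)=0$.

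For general $\ell$ I would organize each $\kappa_i$ by the \v{C}ech degree $s=|\beta|$ and match the degree-$s$ piece of $\kappa_i$ against that of $\kappa_j$. Inserting $1=\sum_\lambda\varphi_\lambda$ and using $\sum_\lambda\bar\partial\varphi_\lambda=0$ turns $\kappa_i-\kappa_j$ into a telescoping sum in $s$, in which the alternating signs $(-1)^{j+1}$ of the inner sum together with $\sigma$ — calibrated precisely so that deleting an index and reordering to increasing order gives consistent signs — make consecutive degrees cancel, the two extreme terms ($s=\ell-1$ and $s=0$) dying by antisymmetry and by $\sum_\lambda\bar\partial\varphi_\lambda=0$. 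The main obstacle is purely the sign bookkeeping: confirming that the change in $\sigma(\alpha)$ under index insertion/removal and the Koszul signs produced by commuting the $(0,1)$-forms $\bar\partial\varphi_\lambda$ past the $(p,\ast)$-forms line up so that the telescope closes. A cleaner but less self-contained alternative would be to recognize the formula for $\kappa_i$ as the standard partition-of-unity collating map from \v{C}ech--Dolbeault data to a global form (in the spirit of \cite{Su2}, \cite{Su11}), for which the overlap identity is exactly the well-definedness of the gluing.
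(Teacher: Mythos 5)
Your proposal is correct, but your primary route is genuinely different from the paper's. The paper never verifies the gluing by hand: it obtains Lemma \ref{lem:product_gluing} as an automatic by-product of the construction of $\kappa$, namely the cup product of the $\tau_k$ in the \v{C}ech--Dolbeault complex of the covering $\{V_1,\pi^{-1}_1(V_{1,\,0}),\dots,\pi^{-1}_\ell(V_{\ell,\,0})\}$, collapsed down to the two-set covering by repeated application of the partition-of-unity operator $\omega'_\beta=\sum_\lambda\varphi_\lambda\,\omega_{\lambda\beta}$ from the remark after Lemma \ref{lemma:fundamental_equiv}; the \v{C}ech-degree-zero output of that staircase is automatically $\delta$-closed, which is exactly the statement $\kappa_i=\kappa_j$ on overlaps. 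That is precisely your ``cleaner but less self-contained alternative.'' Your main route, the direct verification, is the one the paper avoids, and it buys two things the functorial argument hides: an explicit proof of the type and domain assertions (your extension-by-zero argument via the support conditions on the $\varphi_k$ is exactly right, and the paper leaves it implicit), and the correct observation that the gluing is a purely algebraic identity needing only $\sum_\lambda\varphi_\lambda=1$, hence $\sum_\lambda\bar\partial\varphi_\lambda=0$, together with the antisymmetry convention for $\tau_\alpha$ --- the cocycle relations $\bar\partial\tau_{k,01}=\tau_{k,1}$ play no role. The one caveat is that for general $\ell$ you only sketch the cancellation and flag the signs as unresolved; it does close, and in fact more simply than your cross-degree telescope, since $\kappa_i-\kappa_j$ vanishes separately in each \v{C}ech degree $s=|\beta|$. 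For $s\le\ell-2$ the inner-sum terms in which the final index ($i$ resp.\ $j$) is the removed one are common to $\kappa_i$ and $\kappa_j$ and cancel, while for each fixed position of a removed $\beta$-entry the sum over that entry factors out as $\bigl(\sum_b\bar\partial\varphi_b\bigr)\wedge(\cdots)=0$; for $s=\ell-1$ every surviving term is a multiple of $\tau_{12\cdots\ell}$, and the two coefficient forms agree because wedging $\sum_m\bar\partial\varphi_m=0$ against the product of the $\bar\partial\varphi$'s over $\Lambda\ssm\{i,j\}$ relates the product omitting $i$ to the product omitting $j$ with exactly the sign $(-1)^{\ell-i}$ versus $(-1)^{\ell-j}$ produced by the convention $\tau_\alpha=\op{sgn}(\mu)\,\tau_{\mu(\alpha)}$. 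So your plan is sound as stated, once the telescope is replaced by this degree-by-degree cancellation.
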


It follows from the above lemma 
and the fact $V_0 = \underset{1 \le i \le \ell}{\cup} \pi^{-1}_i(V_{i,\,0})$
that the family $\{\kappa_j\}_{j=1}^\ell$ determines
a $(p,n-1)$-form on $V_{01}$, which is denoted by $\kappa_{01}$.
We also define a $(p,n)$-form $\kappa_1$ on $V_1$ by
\[
	\kappa_1 = \tau_{1,\,1} \,{\wedge}\, \tau_{2,\,1} \,{\wedge}\, \cdots
	\,{\wedge}\, \tau_{\ell,\,1}.
\]
\begin{proposition}
Thus constructed
\[
\kappa = (\kappa_1,\kappa_{01}) \in
\scE^{(p,n)}(V_1) \oplus
\scE^{(p,n-1)}(V_{01})
= \scE^{(p,n)}(\V,\V')
\]
is a representative of the external product 
$u = u_1 \times u_2 \times \cdots 
\times u_\ell \in \SHhyperf{(p)}(M)$.
\end{proposition}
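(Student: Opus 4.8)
The plan is to view the external product, under the relative Dolbeault isomorphism of Theorem~\ref{thDrel}, as the iterated relative cup product of the pulled-back classes $\pi_k^*u_k$. Writing $S_k=\pi_k^{-1}(M_k)$, each $\pi_k^*u_k$ lives in $H^{p_k,n_k}_{\bar\vt}(X,X\ssm S_k)$ and is supported on $S_k$; since $\bigcap_k S_k=M$ and $V_0=X\ssm M=\bigcup_k\pi_k^{-1}(V_{k,0})$, their cup product naturally lands in $H^{p,n}_{\bar\vt}(X,X\ssm M)=H^{p,n}_{\bar\vt}(\V,\V')$. The open sets $W_k=\pi_k^{-1}(V_{k,0})$ cover $V_0$ and $\{\varphi_k\}$ is a partition of unity subordinate to this cover; the whole point of the explicit formula for $\kappa$ is to realize this cup product on the two-set covering $\V=\{V_0,V_1\}$ by collapsing the finer cover $\{W_k\}$ via the $\varphi_k$'s, exactly as the partition-of-unity inverses in Theorems~\ref{thdCd} and \ref{thDCD} pass between coverings. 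So I would prove the statement in two stages: first that $\kappa$ is a $\bar\vt$-cocycle, and then that its class is this cup product.

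For the cocycle condition, note that $\kappa_1=\tau_{1,1}\wedge\dots\wedge\tau_{\ell,1}$ is a $(p,n)$-form, hence automatically $\bp$-closed, so $\bar\vt\kappa=(\bp\kappa_1,\kappa_1-\bp\kappa_{01})=(0,\kappa_1-\bp\kappa_{01})$ and everything reduces to the identity $\bp\kappa_{01}=\kappa_1$ on $V_0$. By Lemma~\ref{lem:product_gluing} the form $\kappa_{01}$ is glued from the $\kappa_i$ on the $W_i$, so it suffices to check $\bp\kappa_i=\kappa_1$ on each $W_i$. Here I would compute $\bp\kappa_i$ term by term, using the per-factor cocycle relations $\bp\tau_{k,1}=0$ and $\bp\tau_{k,01}=\tau_{k,1}$ (valid on $V_{k,01}$), together with $\bp(\bp\varphi)_\beta=0$ and $\sum_{\lambda\in\Lambda}\bp\varphi_\lambda=0$ coming from $\sum_\lambda\varphi_\lambda=1$. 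The operator $\bp$ acting on $\tau_\alpha$ converts one factor $\tau_{k,01}$ into $\tau_{k,1}$, i.e.\ it removes an index from $\alpha$, while acting on a $\varphi_\lambda$ it lengthens $\beta$; the alternating sums with the deletion operator $\overset{\overset{j}{\vee}}{\lambda\,\beta\,i}$ are arranged precisely so that these two kinds of terms cancel telescopically, leaving only $\kappa_1$.

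Having a cocycle, I would then check that its class is independent of the auxiliary choices and equals the external product. Replacing a representative $\tau_k$ by $\tau_k+\bar\vt(\cdot)$, or replacing $\{\varphi_k\}$ by another admissible partition, changes $\kappa$ by a $\bar\vt$-coboundary in $\scE^{(p,n)}(\V,\V')$, by a homotopy argument of the same telescoping type as above. Finally, because $\kappa$ is by construction the \v{C}ech--Dolbeault cochain obtained from the iterated cup product of the $\pi_k^*u_k$ after passing from the cover $\{W_k\}$ of $V_0$ to the two-set cover through the $\varphi_k$'s, its class is $\pi_1^*u_1\smallsmile\dots\smallsmile\pi_\ell^*u_\ell$, which is exactly $u_1\times\dots\times u_\ell$; compatibility with the isomorphism of Theorem~\ref{thDrel} to local cohomology, where this external product is the standard one, completes the identification.

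The main obstacle is the sign bookkeeping in the cocycle computation. The exponent $\sigma(\alpha)=\frac{k(k-1)}2+\sum_{j=1}^{k}\sum_{i=1}^{\alpha_j-1}(n_i+p_i)$ encodes both the Koszul signs produced when $\bp$ is commuted past the forms $\omega_1,\dots,\omega_\ell$ of degrees $n_i+p_i$ and the shift coming from the length of $\beta$; making the telescoping cancellations in $\bp\kappa_i$ agree sign-by-sign so that exactly $\kappa_1$ survives is the delicate computational heart of the proof. Reducing to the case $\ell=2$ by induction and invoking associativity of the cup product would shorten this bookkeeping but not eliminate it.
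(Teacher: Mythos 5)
Your proposal is correct and follows essentially the same route as the paper: the paper also obtains $\kappa$ by forming the iterated cup product of the pulled-back classes and then collapsing the finer covering $\{\pi_k^{-1}(V_{k,0})\}\cup\{V_1\}$ down to the two-set covering $(\V,\V')$ via the partition of unity $\{\varphi_k\}$, formalized as repeated applications of the contraction remark after Lemma~\ref{lemma:fundamental_equiv}. The only difference is organizational: in the paper the cocycle property, the gluing Lemma~\ref{lem:product_gluing}, and the identification of the class all fall out automatically from this constructive procedure, whereas you verify the cocycle condition and the class identification as separate steps.
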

\begin{proof}
This formula is obtained by the cup product formula, and then, by
repeated applications of the remark after Lemma {\ref{lemma:fundamental_equiv}} 
in Appendix.
Note that Lemma {\ref{lem:product_gluing}} is an immediate consequence of
this procedure.
\end{proof}

The above expression appears to be rather complicated, however, 
it becomes much simpler in
some particular but important cases\,:
\begin{example}
Assume all the $X_k$'s are Stein. Then
we may take, for each $k$, a representative $(\tau_{k,\,1},\,\tau_{k,\,{01}})$ of $u_k$ 
so that 
$\tau_{k,\,1} = 0$.
A representative of $u$ is then
given by
\[
(0,\,\, (-1)^e\,(\ell-1)!\,
\bar{\partial}\varphi_1 \wedge \dots \wedge
\bar{\partial}\varphi_{\ell-1} \wedge
\tau_{1,\,{01}}\,{\wedge} \,\cdots \,{\wedge}\, \tau_{\ell,\,01})
\in \scE^{(p,n)}(\V,\V').
\]
Here the constant $e$ is 
$\frac{\ell(\ell -1)}{2} +
\sum_{k=1}^{\ell-1} (\ell - k)(n_k+p_k)$.

For example, the $n$-dimensional Dirac's delta function $\delta(x)$ 
is just the external product
$\delta(x_1) \times \delta(x_2) \times \cdots \times \delta(x_n)$
of the ones on $\R$. Hence, its representative is given by the
above formula using a representative of the one-dimensional Dirac's delta function, namely,
\[
\left(0,\, \dfrac{1}{(2\pi\sqrt{-1})^n}
\dfrac{(n-1)!\,\bar{\partial}\varphi_1\wedge \dots \wedge
\bar{\partial}\varphi_{n-1}}{z_1 \cdots z_n}\right)
\in \scE^{(0,n)}(\V,\V').
\]
\end{example}
\begin{example}
In the case $\ell = 2$,
a representative of 
$u = u_1 \times u_2 \in \SHhyperf{(p)}(M)$ is
given by
\[
\begin{aligned}
\big(\tau_{1,\,1} \,{\wedge}\, \tau_{2,\,1},\,\,
\varphi_1\, \tau_{1,\,{01}} \,{\wedge}\, \tau_{2,\,1} 
& + 
(-1)^{n_1+p_1} \varphi_2 \,\tau_{1,\,{1}}\, {\wedge}\, \tau_{2,\,{01}} \\
&\qquad -(-1)^{n_1+p_1}
\bar{\partial}\varphi_1 \,{\wedge}\, \tau_{1,\,{01}}\, \wedge\, \tau_{2,\,{01}}\big)
\end{aligned}
\in \scE^{(p,n)}(\V,\V').
\]
\end{example}

We can easily show the following two propositions\,:
\begin{proposition}
We have, for $u_k \in \scB(M_k)$, $k=1,\dots,\ell$,
\[
\op{SS}(u_1 \times \cdots \times u_\ell) = 
\op{SS}(u_1) \times \cdots \times \op{SS}(u_\ell).
\]
\end{proposition}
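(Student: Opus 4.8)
The plan is to reduce everything to a concrete microlocal statement and then combine the explicit external‑product representative with the microlocal characterization in Proposition~\ref{prop:equiv-ss}. Since both $\op{SS}(\,\cdot\,)$ and microlocal analyticity are local and are defined through a local trivialization (Subsection~\ref{subsec:micro-anal}), I would first pass to the local models $M_k=\R^{n_k}$, $X_k=\C^{n_k}$ and identify $T^*_MX=\prod_k T^*_{M_k}X_k$ with $\xi_0=(\xi_{0,1},\dots,\xi_{0,\ell})$. Because the external product is associative and the Cartesian product of the $\op{SS}(u_k)$ is associative, I would then induct on $\ell$ and treat only $\ell=2$. It is also worth isolating the zero-section part: a point $(x_0,0)$ lies in $\op{SS}(u)$ exactly when $x_0\in\op{supp}(u)$ (by Proposition~\ref{prop:equiv-ss} with $G=\{0\}$, which forces $u=0$ near $x_0$), so over the zero section the statement is just $\op{supp}(u_1\times u_2)=\op{supp}(u_1)\times\op{supp}(u_2)$, which is immediate. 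Hence I may assume each relevant $\xi_{0,k}\neq0$.

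The backbone of the argument is that the explicit representative $\kappa=(\kappa_1,\kappa_{01})$ of $u_1\times u_2$ constructed above is compatible with the spectral map. Restricting $\kappa$ along $V\ssm M\hookrightarrow V\ssm(\R^n_x\times\sqrt{-1}G)$ into the microfunction coverings $\W_{V,G}$ of Subsection~\ref{subsec:micro-anal}, one checks that $\op{sp}_{p_0}(u_1\times u_2)$ is exactly the external product $\op{sp}_{p_{0,1}}(u_1)\boxtimes\op{sp}_{p_{0,2}}(u_2)$ of the two microfunction germs. Granting this, the proposition becomes the single assertion that this external product of microfunction germs vanishes if and only if one of the factors vanishes.

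For the inclusion $\op{SS}(u_1\times u_2)\subset\op{SS}(u_1)\times\op{SS}(u_2)$ (propagation of microlocal analyticity), suppose, say, $u_1$ is microlocally analytic at $p_{0,1}=(x_{0,1},\sqrt{-1}\xi_{0,1})$. By Proposition~\ref{prop:equiv-ss} choose a representative $(0,\tau_{1,01})$ of $u_1$ with $\op{Supp}(\tau_{1,01})\subset\R^{n_1}_x\times\sqrt{-1}G_1$ and $\langle y_1,\xi_{0,1}\rangle\le -c\Vert y_1\Vert$ on $G_1$ for some $c>0$. The key observation is that for $p_0=(x_0,\sqrt{-1}(\xi_{0,1},\xi_{0,2}))$ I may take a closed cone $G\subset\R^n_y$ thin transversally to the $y_1$-directions, namely $G=\{(y_1,y_2): y_1\in G_1,\ \Vert y_2\Vert\le\varepsilon\Vert y_1\Vert\}$ with $\varepsilon$ small; such a $G$ still satisfies \eqref{eq:microlocal-cone-condition} for $\xi_0$, since $\langle y,\xi_0\rangle\le -c\Vert y_1\Vert+\varepsilon\Vert y_1\Vert\,\Vert\xi_{0,2}\Vert<0$ on $G\ssm\{0\}$. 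On $V\ssm(\R^n_x\times\sqrt{-1}G)$ the microlocal analyticity of $u_1$ lets me write $\tau_{1,01}$ as a $\bar\partial$-coboundary in the $z_1$-variables, and substituting this into the external-product cocycle shows $\op{sp}_{p_0}(u_1\times u_2)$ is a $\bar\vt$-coboundary there, i.e.\ vanishes. Equivalently, and perhaps more cleanly, one may represent $u_2$ near $x_{0,2}$ as a finite sum of boundary values $b_{\Omega_{2,j}}(f_{2,j})$, use that the external product of boundary values is the boundary value of the product from the product wedge, and combine $\op{SS}(b_{\Omega}(f))\subset\Omega^\circ$ with $(\Gamma_1\times\Gamma_2)^\circ=\Gamma_1^\circ\times\Gamma_2^\circ$ to conclude $p_0\notin\op{SS}(u_1\times u_2)$.

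The reverse inclusion $\op{SS}(u_1)\times\op{SS}(u_2)\subset\op{SS}(u_1\times u_2)$ is the main obstacle, since it requires a genuine faithfulness statement for the external product of microfunction germs rather than a construction. I would establish it by duality: by Theorem~\ref{thMH} and Corollary~\ref{functional} the relevant relative Dolbeault (microfunction) groups are strong duals of spaces of germs of real analytic forms, and the integration pairing is multiplicative under external product, since the integral over a product cycle factors. Thus if $p_{0,k}\in\op{SS}(u_k)$, i.e.\ $\op{sp}_{p_{0,k}}(u_k)\neq0$, one selects test germs $\eta_k$ pairing nontrivially against each factor; the external product $\eta_1\wedge\eta_2$ then pairs nontrivially with $\op{sp}_{p_0}(u_1\times u_2)=\op{sp}_{p_{0,1}}(u_1)\boxtimes\op{sp}_{p_{0,2}}(u_2)$, forcing it to be nonzero and hence $p_0\in\op{SS}(u_1\times u_2)$. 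Making the microlocal duality pairing and its multiplicativity precise — in particular checking that the localized cone pairing remains nondegenerate — is the delicate step; once it is in place, the two inclusions combine to give the stated equality.
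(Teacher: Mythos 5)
The central gap is in the reverse inclusion $\op{SS}(u_1)\times\cdots\times\op{SS}(u_\ell)\subset\op{SS}(u_1\times\cdots\times u_\ell)$, which you yourself flag as ``the delicate step'' and then do not supply. The duality you propose to use does not apply to this situation: Theorem~\ref{thMH} and Corollary~\ref{functional} concern $H^{p,q}_{\bar\vt}(V,V\ssm K)$ for a \emph{compact} set $K$, paired with germs of holomorphic (or real analytic) forms on $K$, whereas microlocal analyticity at $p_0$ is phrased through the groups $H^{0,n}_{\bar\vt}(\W_{V,G},\W'_{V,G})$ whose support sets $\R^n_x\times\sqrt{-1}G$ are non-compact closed cones. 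A nondegenerate pairing on these conic-support cohomologies, stable under the direct limit in $(V,G)$, together with its multiplicativity under the external product (where the cross terms involving $\bar\partial\varphi_1\wedge\tau_{1,\,01}\wedge\tau_{2,\,01}$ must be controlled), is a substantive piece of Fourier-hyperfunction-type duality theory that appears nowhere in the paper; invoking it is a restatement of what must be shown, not a proof. Moreover the object $\op{sp}_{p_{0,1}}(u_1)\boxtimes\op{sp}_{p_{0,2}}(u_2)$, through which your whole argument is routed, is never defined: an external product of microfunction germs would itself have to be constructed before one can ask whether it is ``faithful.'' Since the statement is an equality, this half being absent means the proposition is not proved.

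There are also two repairable defects in the easier half. First, your reduction to ``all $\xi_{0,k}\neq 0$'' is not legitimate: removing the zero section of the product still leaves the mixed points $((x_1,x_2),\sqrt{-1}(\xi_1,0))$ with $\xi_1\neq 0$, which lie in $\op{SS}(u_1)\times\op{SS}(u_2)$ exactly when $(x_1,\sqrt{-1}\xi_1)\in\op{SS}(u_1)$ and $x_2\in\op{supp}(u_2)$; these need their own two-sided argument. Second, in the propagation step, Proposition~\ref{prop:equiv-ss} gives a representative of $u_1$ that is \emph{supported} in $\R^{n_1}_x\times\sqrt{-1}G_1$ (zero, not merely a $\bar\partial$-coboundary, outside that cone), and the support of the product cocycle $\bigl(0,\ \varphi_1\,\tau_{1,\,01}\wedge\tau_{2,\,1}-(-1)^{n_1}\bar\partial\varphi_1\wedge\tau_{1,\,01}\wedge\tau_{2,\,01}\bigr)$ is only controlled by $\op{Supp}(\varphi_1)\cap\{y_1\in G_1\}$; this lies in your thin cone $G=\{\,y_1\in G_1,\ \Vert y_2\Vert\le\varepsilon\Vert y_1\Vert\,\}$ only if the partition of unity is itself chosen conically adapted, e.g. $\op{Supp}(\varphi_1)\subset\{\Vert y_2\Vert\le\varepsilon\Vert y_1\Vert\}$, which the construction permits but you never impose --- with a generic $\varphi_1$ the product representative is not supported in any cone that is good for $\xi_0$, and the step fails as written. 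For calibration: the paper gives no proof at all (the proposition is announced with ``We can easily show''), so there is no argument of the authors to compare against; judged on its own, your proposal yields at best the forward inclusion after these repairs, not the stated equality.
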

\begin{proposition} For each $k=1,\dots,\ell$,
let $\Omega_k$  be an open subset of $X_k$ satisfying
the conditions $(\rm{B}_1)$~and $(\rm{B}_2)$~in $X_k$, and let $f_k \in \scO(\Omega_k)$.
Then we have
\[
b_{\Omega_1 \times \dots \times \Omega_\ell}(f_1 f_2 \cdots f_\ell)
=
b_{\Omega_1}(f_1) \times
b_{\Omega_2}(f_2) \times
\cdots  \times 
b_{\Omega_\ell}(f_\ell).
\]
\end{proposition}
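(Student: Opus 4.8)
The plan is to reduce both sides to explicit Čech--Dolbeault cocycles and to identify them. Since each $u_k = b_{\Omega_k}(f_k)$ lies in $\scB(M_k)=\scB^{(0)}(M_k)$, all the indices $p_k$ in the external product construction are $0$. Recall from Subsection~\ref{ssbv} that, choosing for each $k$ a representative $\nu^{(k)}=(\nu^{(k)}_1,\nu^{(k)}_{01})$ of the Thom class $\mathds{1}^{c}_k\in H^{n_k}_D(\V_k,\V'_k)$ whose support lies in $\Omega_k$ (which exists by Lemma~\ref{lem:unit_one_support}), one has $b_{\Omega_k}(f_k)=[\tau_k]\otimes\mathds{1}^{(k)}$ with
\[
\tau_k=(f_k\,\rho^{n_k}(\nu^{(k)}_1),\,f_k\,\rho^{n_k-1}(\nu^{(k)}_{01}))\in\scE^{(0,n_k)}(\V_k,\V'_k).
\]
I would feed precisely these $\tau_k$ into the external product formula established above, so that the right-hand side $b_{\Omega_1}(f_1)\times\cdots\times b_{\Omega_\ell}(f_\ell)$ is realized, by construction, by the cocycle $\kappa=(\kappa_1,\kappa_{01})$ built from the $\tau_k$.

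The central point is the \emph{multiplicativity of the Thom class}. The normal bundle of $M$ in $X$ splits as $T_M X\simeq\bigoplus_k\pi_k^{*}T_{M_k}X_k$, and with the orientation convention of Subsection~\ref{subsOT} the Thom class of a direct sum of oriented bundles is the external product of the individual Thom classes (cf. \cite{Su2}). Hence $\mathds{1}^{c}=\mathds{1}^{c}_1\times\cdots\times\mathds{1}^{c}_\ell$ in $H^{n}_M(X;\C)\simeq H^{n}_D(\V,\V')$. I would then take as a representative of $\mathds{1}^{c}$ the Čech--de~Rham external product $\nu:=\nu^{(1)}\times\cdots\times\nu^{(\ell)}$, assembled exactly as in the external product formula by means of the cup product of Section~\ref{seclocald}; its support is contained in $\Omega_1\times\cdots\times\Omega_\ell=\Omega$, so it is an admissible representative in the sense of Lemma~\ref{lem:unit_one_support}. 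Since $b_\Omega$ is independent of the chosen representative by Lemma~\ref{lem:boundary_map_ok}, using this particular $\nu$ is harmless.

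It then remains to compare cocycles. The projection $\rho$ is multiplicative, $\rho(\alpha\wedge\beta)=\rho(\alpha)\wedge\rho(\beta)$, because only the $(0,\bullet)$-parts of $\alpha$ and $\beta$ can contribute to the $(0,\bullet)$-part of $\alpha\wedge\beta$; consequently $\rho$ sends the de~Rham external product $\nu$ to the Dolbeault external product of the $\rho(\nu^{(k)})$. As $f_1\cdots f_\ell=\pi_1^{*}f_1\cdots\pi_\ell^{*}f_\ell$ is holomorphic of form-degree $0$, multiplication by it distributes over each factor, so that $f_1\cdots f_\ell\cdot\rho(\nu)$ coincides with the external product cocycle $\kappa$ assembled from the $\tau_k$. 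By the defining formula $b_\Omega(f_1\cdots f_\ell)=[f_1\cdots f_\ell\,\rho(\nu)]\otimes\mathds{1}$, and since $\mathds{1}=\mathds{1}^{(1)}\otimes\cdots\otimes\mathds{1}^{(\ell)}$ under \eqref{tensgen} and the convention of Subsection~\ref{subsOT}, the two sides agree.

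The main obstacle is the sign and orientation bookkeeping. One must verify that the external product of the de~Rham representatives reproduces, term by term, the signs $\sigma(\alpha)$ and the overall constant $e$ occurring in the external product formula, and that the multiplicativity of the Thom class is compatible with the interleaving of the fiber orientations prescribed by the convention of Subsection~\ref{subsOT} (where the orientation of the fiber followed by that of the base yields the orientation of the total space). Both checks are purely combinatorial once the multiplicativity of the Thom class and of $\rho$ are in hand, but they require care; the remaining steps are routine given the external product formula and Lemma~\ref{lem:boundary_map_ok}.
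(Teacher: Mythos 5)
The paper offers no proof of this proposition at all---it is one of the two statements introduced by ``We can easily show the following two propositions''---so there is nothing of the authors' to compare your argument against line by line. What you wrote is the natural argument that the section's machinery suggests, and its structure is correct: represent each $b_{\Omega_k}(f_k)$ by $\tau_k=(f_k\,\rho^{n_k}(\nu^{(k)}_1),\,f_k\,\rho^{n_k-1}(\nu^{(k)}_{01}))$ with $\nu^{(k)}$ supported in $\Omega_k$; assemble the $\tau_k$ by the external product formula into $\kappa$; compute $b_{\Omega_1\times\cdots\times\Omega_\ell}(f_1\cdots f_\ell)$ from the \v{C}ech--de~Rham cross product $N=\nu^{(1)}\times\cdots\times\nu^{(\ell)}$, which is admissible because every term of it contains one factor from each $k$ and hence is supported in $\Omega_1\times\cdots\times\Omega_\ell$. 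Your two key verifications are sound: the multiplicativity $(\alpha\wedge\beta)^{(0,q+q')}=\alpha^{(0,q)}\wedge\beta^{(0,q')}$ of $\rho$ holds by the type count you give, and since all $p_k=0$ the signs $\sigma(\alpha)$ in the de~Rham and Dolbeault assemblies literally coincide, so $(f_1\cdots f_\ell)\rho(N)=\kappa$ term by term.

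Two refinements. First, the sign and orientation bookkeeping you defer at the end can be bypassed rather than checked: by Lemma~\ref{lem:boundary_map_ok}, $b_\Omega$ does not depend on the choice of the generator $\mathds{1}$, so you may take $\mathds{1}$ to be the cross product $\mathds{1}^{(1)}\times\cdots\times\mathds{1}^{(\ell)}$ itself (the cross product map $or_{M_1/X_1}\boxtimes\cdots\boxtimes\, or_{M_\ell/X_\ell}\to or_{M/X}$ is an isomorphism, so this is a generating section), with $N$ as a de~Rham representative of its complexification; then no comparison with a preferred product orientation of $X$, and hence no combinatorial sign check, is needed. Incidentally, your citation of \eqref{tensgen} for ``$\mathds{1}=\mathds{1}^{(1)}\otimes\cdots\otimes\mathds{1}^{(\ell)}$'' is imprecise: \eqref{tensgen} concerns $or_{M/X}\otimes or_{M/X}\simeq\Z_M$, not the product decomposition. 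Second, a gap you share with the paper: invoking Lemma~\ref{lem:boundary_map_ok} for the pair $(X,\Omega_1\times\cdots\times\Omega_\ell)$ requires the vanishing \eqref{van} for that pair, and condition $(\rm{B}_2)$ for the factors does not formally imply the corresponding condition for the product; this well-definedness of the left-hand side is an implicit hypothesis of the statement. Your construction does supply the existence half of Lemma~\ref{lem:unit_one_support} for the product for free, since $N$ is an explicitly supported representative.
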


\subsection{Restriction of hyperfunctions}{\label{ssec:restrict}}
Let $N$ be a closed real analytic hypersurface in $M$ and $Y$ its complexification
in $X$. It is known that the restriction to $N$ of a hyperfunction $u$ on $M$ cannot be defined in general.
However, if $\op{SS}(u)$ is non-characteristic to $N$,
i.e., $\op{SS}(u) \cap \sqrt{-1}T^*_NM \subset T^*_XX$ holds, then
we can consider its restriction to $N$. In this subsection, we will define
the restriction of a hyperfunction from the viewpoint of  relative Dolbeault representation.
We assume that $M$ and $N$ are oriented. Then we can take
a non-vanishing continuous section 
\[
	s: N \to T_N^*M \ssm T^*_XX.
\]
Note that, when $N$ is connected, 
there are essentially two choices of $s$, i.e., either $s$ or $-s$.
For such a choice, by noticing the morphisms of vector bundles
\[
0 \lra T^*_NM \lra N \times_{M}T^*M \lra T^*N \lra 0,
\]
we determine it so that, for any $x_0 \in N$, 
the vector $s(x_0)$ and a positively oriented frame of $(T^*N)_{x_0}$ 
form that of $(T^*M)_{x_0}$, where the frame of $(T^*N)_{x_0}$ follows $s(x_0)$.

Let $t: N \to T_N^*M$ be a continuous section on $N$ and $G$ a closed set in $X$.
\begin{definition}
We say that $G$ is conically contained in the half space spanned by $\sqrt{-1}t$
if, for any 
$x_0 \in N$, there exist an open neighborhood $W$ of $x_0$ 
with a ($C^1$-class) local trivialization 
$\iota: (M\cap W,\,W) \simeq (\R_x^n,\,\C^n = \R^n_x \times \sqrt{-1}\R^n_y)$ 
and a closed cone $\Gamma \subset \R_y^n$ with
\[
\Gamma \ssm\{0\} \subset 
\{\,y \in \R_y^n\mid\, \op{Re}\,\langle \sqrt{-1}y,\, \sqrt{-1}t(x_0) \rangle > 0\,\}
= \{\,y \in \R_y^n\mid\, \langle y,\, t(x_0) \rangle < 0\,\}
\]
for which the following holds\,:
\[
\iota(G \cap W) \subset \R_x^n \times \sqrt{-1}\Gamma.
\]
\end{definition}

Recall that, for a closed set $G$, we set
$\W_{V,G} = \{V \ssm (\R^n_x \times \sqrt{-1}G),\, V\}$ and
$\W_{V,G}' = \{V \ssm (\R^n_x\times \sqrt{-1} G)\}$.
We also set
$\W_V = \{V_0=V\ssm M,\,V_1= V\}$ and
$\W'_V = \{V_0\}$.
Then we have a global version of Proposition {\ref{prop:equiv-ss}}\,:
\begin{lemma}
Let $u$ be a hyperfunction on $M$. 
Assume $\op{SS}(u) \cap \sqrt{-1}s = \emptyset$. Then there exist an open neighborhood
$V$ of $N$ in X, a closed set $G$ 
which is conically contained in the half space spanned
by $\sqrt{-1}s$ and an element
\[
\tau = (0,\tau_{01}) 
\in \scE^{(0,n)}(V_1) \oplus
\scE^{(0,n-1)}(V_{01})
=\scE^{(0,n)}(\W_{V},\W'_{V}) 
\]
for which $\tau$ is a representative of $u$ near $N$ and 
$\op{Supp}_{V\ssm M}(\tau_{01}) \subset G$ holds.
\end{lemma}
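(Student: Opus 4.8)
The plan is to globalize Proposition \ref{prop:equiv-ss} along $N$ by a partition-of-unity argument. First I would exploit the hypothesis: since $\op{SS}(u)$ is closed in $T^*_MX$ and disjoint from the image $\sqrt{-1}s(N)$, the hyperfunction $u$ is microlocally analytic at every point $(x_0;\sqrt{-1}s(x_0))$, $x_0\in N$. Applying Proposition \ref{prop:equiv-ss} at each such point (in a $C^1$ local trivialization $\iota$ near $x_0$) produces a complex \nbd, a closed cone $\Gamma_{x_0}\subset\R^n_y$ with $\Gamma_{x_0}\ssm\{0\}\subset\{\langle y,s(x_0)\rangle<0\}$ (i.e.\ satisfying \eqref{eq:microlocal-cone-condition}), and a local representative $(0,\tau_{01}^{x_0})$ of $u$ whose support lies in $\iota^{-1}(\R^n_x\times\sqrt{-1}\Gamma_{x_0})$. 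Using the continuity of $s$, paracompactness of $N$ and shrinking, I would then assemble these local cones into a single closed subset $G\subset X$ that is conically contained in the half space spanned by $\sqrt{-1}s$, taking $G$ slightly larger than the union of the local cones so as to leave room for cutoffs. By Grauert's theorem \ref{thGrauert} the \nbd\ $V$ of $N$ may be taken Stein.

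Next I would fix one global representative. Since $V$ is Stein we have $H^{0,n}_{\bp}(V)=0$, so (as in Subsection \ref{ssexp}) $u$ admits a representative of the form $\tau^{0}=(0,\tau^{0}_{01})$ with $\tau^{0}_{01}$ a $\bp$-closed $(0,n-1)$-form on $V\ssm M$. The key point is that over the region $V\ssm(M\cup G)$ obtained by removing the cone, the form $\tau^{0}_{01}$ is $\bp$-exact: this is precisely microlocal analyticity translated through the computation in the proof of Proposition \ref{prop:equiv-ss} (one solves $\bp\eta_{1}=0$ on the Stein set $V$ and writes $\tau^{0}_{01}=\eta_{1}-\bp\eta_{01}$ locally). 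Granting a global primitive $\eta$ with $\bp\eta=\tau^{0}_{01}$ on $V\ssm(M\cup G)$, I would choose a radially constant cutoff $\chi$, equal to $1$ off a \nbd\ of $G$ and to $0$ near $G$, exactly as in the local proof, and set $\tau_{01}=\tau^{0}_{01}-\bp(\chi\eta)$, which extends by zero across $G$ to a form on $V\ssm M$. Then $\tau=(0,\tau_{01})$ differs from $\tau^{0}$ by $\bar\vt(0,\chi\eta)$, so $\tau_1$ stays $0$ and $\tau$ still represents $u$, while off a \nbd\ of $G$ one has $\chi\equiv1$ and hence $\tau_{01}=\tau^{0}_{01}-\bp\eta=0$; thus $\op{Supp}_{V\ssm M}(\tau_{01})\subset G$, as required.

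The main obstacle is the construction of the global primitive $\eta$, i.e.\ the support-controlled solution of the $\bp$-equation on $V\ssm(M\cup G)$. Local exactness holds at every point of $N$ by the previous step, but $V\ssm(M\cup G)$ is not Stein (in the tube model $\R^n_x\times\sqrt{-1}(B\ssm\Gamma)$ the fibre $B\ssm\Gamma$ fails to be convex), so global exactness is not automatic. I would resolve this by the standard \v{C}ech--Dolbeault patching with respect to a locally finite cover $\{W_j\}$ of $N$ and a subordinate partition of unity $\{\rho_j\}$: setting $\eta=\sum_j\rho_j\eta^{j}$ from the local primitives $\eta^{j}$ gives $\bp\eta-\tau^{0}_{01}=\sum_j\bp\rho_j\wedge\eta^{j}$, a $\bp$-closed error term that I would kill by descent in the double complex, each correction being supported in an overlap and the process terminating by local finiteness, using $H^{0,q}_{\bp}=0$ on the Stein pieces at each stage. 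Controlling the supports of these successive corrections so that the final $\tau_{01}$ remains supported in $G$ is the delicate bookkeeping, and this is where the bulk of the work lies; the remaining verifications (that $\tau$ represents $u$ near $N$ and that $G$ is conically contained in the half space spanned by $\sqrt{-1}s$) are then routine.
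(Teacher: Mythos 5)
You correctly reduce the hypothesis to microlocal analyticity at every point of $\sqrt{-1}s(N)$, and your final cutoff step is the right one, but the heart of your argument --- the passage from the pointwise conclusions of Proposition \ref{prop:equiv-ss} to a single representative supported in one closed set $G$ --- is left as ``delicate bookkeeping'', and that is exactly the step the lemma is about; as proposed it would fail. Each correction in your \v{C}ech--Dolbeault descent requires a solution of a $\bp$-equation \emph{with support control inside the cones}, on sets of the form (tube around $N$) $\ssm$ (cone). Stein vanishing on the pieces $W_j$ of your cover gives primitives with no control on supports, and the cone-removed sets are not Stein (as you note yourself), so neither tool you invoke produces the corrections you need. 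The vanishing that would make your local-to-global argument work is of the form
\[
\underset{\underset{V,\,G}{\lra}}{\lim}\,
H^{0,q}_{\bar\vt}(\W_{V,G},\W'_{V,G})=0\qquad (q<n),
\]
and in the paper this is obtained (proof of Lemma \ref{lem:restriction-independent-of-choice}) from the edge-of-the-wedge theorem together with \cite[Theorem 4.3.2]{KS} --- precisely the machinery your plan tries to re-derive by hand. So at the decisive point your proposal assumes what has to be proved.

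The paper's proof avoids gluing altogether. By \cite[Theorem 4.3.2]{KS} one has the \emph{global} formula
\[
\SHmicro(\sqrt{-1}T_N^*M^+) = \underset{\underset{V,\,G}{\lra}}{\lim}\,
H^n_{G}(V;\scO),
\]
where $\sqrt{-1}T_N^*M^+=\sqrt{-1}\,\R^{+}s$, $V$ runs over open neighborhoods of $N$ in $X$ and $G$ over closed sets conically contained in the half space spanned by $\sqrt{-1}s$. Since $\op{SS}(u)$ is conic and disjoint from $\sqrt{-1}s(N)$, the section $\op{sp}(u)$ vanishes on all of $\sqrt{-1}T_N^*M^+$; because the right-hand side is a direct limit, the image of $u$ in $H^{n}_{G}(V;\scO)\simeq H^{0,n}_{\bar\vt}(\W_{V,G},\W'_{V,G})$ is already zero at some finite stage $(V,G)$. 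From there the argument is word for word the cutoff computation in the proof of Proposition \ref{prop:equiv-ss}, now performed globally along $N$; no partition of unity and no patching ever enter. (A side remark: Theorem \ref{thGrauert} as stated concerns Stein neighborhoods of $M$ in $X$, not of the lower-dimensional $N$; such neighborhoods do exist for totally real submanifolds, but the paper's argument needs no Steinness of $V$ at all.)
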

\begin{proof}
Set $\sqrt{-1}T_N^*M^+ = \sqrt{-1}\,\R^+ s$.
Then it follows from  \cite[Theorem 4.3.2]{KS} that 
\[
\SHmicro(\sqrt{-1}T_N^*M^+) = \underset{\underset{V,\,G}{\lra}}{\lim}\,
H^n_{G}(V;\scO),
\]
where $V$ runs through open neighborhoods of $N$ and
$G$ ranges through closed sets conically contained in the half space spanned by $\sqrt{-1}s$.
Therefore the argument goes  the same way as that in
Subsection {\ref{subsec:micro-anal}}.
\end{proof}

We first give the cohomological definition of restriction to $N$ of 
a hyperfunction on $M$.
Set $T^*_NM^\pm = \pm\R^+{s} \subset T^*_NM$.
Let us consider the map
\[
	\scB(M) \xrightarrow{\op{sp}|_{T^*_NM^+} \oplus\,
\op{sp}|_{T^*_NM^-}}
\SHmicro(\sqrt{-1}T^*_NM^+) \oplus
\SHmicro(\sqrt{-1}T^*_NM^-).
\]
By  \cite[Theorem 4.3.2]{KS}, the above diagram is equivalent to
\[
H^n_M(X;\scO)
\lra
\underset{\underset{V,\,G^+}{\lra}}{\lim}\,
H^n_{G^+}(V;\scO)
\oplus
\underset{\underset{V,\,G^-}{\lra}}{\lim}\,
H^n_{G^-}(V;\scO),
\]
where $V$ runs through open neighborhoods of $N$ in $X$ and
$G^+$ (resp. $G^-$) ranges through closed sets
conically contained in the half space spanned by $\sqrt{-1}s$ (resp. $-\sqrt{-1}s$), and
the morphism is just the canonical restriction. 
Furthermore, we may assume that $(G^+ \cap G^{-}) \cap V = M \cap V$ holds.
Then we have
(see the proof of the Lemma {\ref{lem:restriction-independent-of-choice}} below)
\[
\underset{\underset{V,\,G^\pm}{\lra}}{\lim}\,
H^{k}_{G^\pm}(V;\, \scO) = 0\qquad\text{for}\ \  k < n.
\]
From the Mayer-Vietoris sequence 
for the pair $(G^+,G^-)$, 
we have the exact sequence
\[
0\lra 
\underset{\underset{V,\,G^\pm}{\lra}}{\lim}\,
H^{n-1}_{G^+ \cup G^-}(V;\scO)
\lra H^n_{M}(V;\scO)
\lra
\underset{\underset{V,\,G^+}{\lra}}{\lim}\,
H^n_{G^+}(V;\scO)
\oplus
\underset{\underset{V,\,G^-}{\lra}}{\lim}\,
H^n_{G^-}(V;\scO).
\]
Note that the morphisms of the above Mayer-Vietoris sequence 
\[
H^k_{G^+ \cap G^-}(V;\scO)
\lra
H^k_{G^+}(V;\scO)
\oplus
H^k_{G^-}(V;\scO)
\lra
H^k_{G^+ \cup G^-}(V;\scO)
\]
are defined by sending $u$ to $u \oplus u$ and $u \oplus v$ to $u - v$, respectively,
for which the choices of sign, i.e., either $u-v$ or $v - u$, is determined by 
taking our choice of the orientation of fibers of $T^*_NM$ into account.
Let $i:Y \hra X$ be the closed embedding. Then we have the canonical sheaf morphism
$\scO_X \to i_*\scO_Y$,
which induces the morphism
\[
H^{n-1}_{G^+ \cup G^-}(V;\scO_X)
\lra
H^{n-1}_{G^+ \cup G^-}(V;i_*\scO_Y)
=
H^{n-1}_{N}(V \cap Y;\scO_Y)
= \scB(N)
\]
because of $i^{-1}(G^+ \cup G^-) = N$.
Summing up, we have the diagram with the exact row

\[
\SelectTips{cm}{}
\xymatrix
{0\ar[r]&\underset{\underset{V,\,G^\pm}{\lra}}{\lim}\,
H^{n-1}_{G^+ \cup G^-}(V;\scO)\ar[r]\ar[d]& \scB(M)\ar[r]^-{\op{sp} \oplus \op{sp}}&\SHmicro(\sqrt{-1}T^*_NM^+) \oplus
\SHmicro(\sqrt{-1}T^*_NM^-)\\
{}&\scB(N).}
\]
Therefore, if a hyperfunction $u \in \scB(M)$ satisfies
$\op{SS}(u) \cap \sqrt{-1}T^*_NM \subset T^*_XX$, which
is equivalent to saying that the image of $u$ is zero by the morphism
$\op{sp}|_{T^*_NM^+}\oplus\op{sp}|_{T^*_NM^-}$,
then we have the unique hyperfunction $u|_N$ in $\scB(N)$
by tracing the above diagram.

\

Now we compute a concrete representative of $u|_N$ in our framework.
Assume that $\op{SS}(u) \cap \sqrt{-1}T^*_NM \subset T^*_XX$.
Let $\tau = (\tau^n_1,\, \tau^{n-1}_{01})
\in \scE^{(0,n)}(\W,\W')$
be a representative of $u$ near $N$.
Then, by the assumption and the above lemma,
there exist an open neighborhood $V$ of $N$, a closed set $G^+$ (resp. $G^-$)
conically contained in the half space spanned by $\sqrt{-1}s$ (resp. $-\sqrt{-1}s$) and
representatives
\[
\tau^{n-1,\pm} = (\tau_1^{n-1,\pm},\, \tau_{01}^{n-2,\pm})
\in \scE^{(0,n-1)}(\W_{V,G^\pm},\W'_{V,G^\pm})
\]
such that
$\tau = \bar{\vartheta} \tau^{n-1,\pm}$ in 
$\scE^{(0,n)}(\W_{V,G^\pm},\W'_{V,G^\pm})$.
Define
\[
	\tau_Y = (\tau^{n-1,-} - \tau^{n-1,+})|_Y  \\
=
\left( (\tau_1^{n-1,-} - \tau_1^{n-1,+})|_Y,\, (\tau_{01}^{n-2,-} - \tau_{01}^{n-2,+})|_Y
\right).
\]
Here $\bullet|_Y$ denotes the restriction of a differential form to $Y$.
Note that the choices of sign, i.e., 
either $(\tau^{n-1,-} - \tau^{n-1,+})|_Y$ or
$(\tau^{n-1,+} - \tau^{n-1,-})|_Y$, is a consequence of that in
the Mayer-Vietoris sequence.  
If $V$ is a sufficiently small neighborhood of $N$, 
then we have
\[
(V \ssm G^+) \cap Y = (V \ssm N) \cap Y \quad\text{and}\quad
(V \ssm G^-) \cap Y = (V \ssm N) \cap Y.
\]
Hence $\tau_Y$ belongs to $\scE^{(0,n-1)}(\W_{V\cap Y},\W'_{V\cap Y})$ with coverings
\[
\W_{V \cap Y} = \{(V\cap Y) \ssm N,\, V \cap Y\},\qquad
\W'_{V \cap Y} = \{(V\cap Y) \ssm N\}.
\]
Furthermore we have
\[
\bar{\vartheta} \tau_Y =
\bar{\vartheta} ((\tau^{n-1,-} - \tau^{n-1,+})|_Y) =
(\bar{\vartheta} \tau^{n-1,-} - \bar{\vartheta}\tau^{n-1,+})|_Y =
(\tau - \tau)|_Y = 0,
\]
which implies that the representative $\tau_Y$ defines a hyperfunction on $N$.
\begin{definition}
	The hyperfunction on $N$ defined by $\tau_Y
\in \scE^{(0,n-1)}(\W_{V\cap Y},\W'_{V\cap Y})$ 
is denoted by $u|_N$ and is called the restriction of $u$ to $N$.
\end{definition}
\begin{lemma}{\label{lem:restriction-independent-of-choice}}
The restriction $u|_N$ is well-defined, that is, $u|_N$ does not depend on
the choice of $\tau$, $\tau^{n-1,+}$ or $\tau^{n-1,-}$ 
in the above construction.
\end{lemma}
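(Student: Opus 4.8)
The plan is to reduce the well-definedness to two independent statements: that the class of $\tau_Y$ in $\scB(N)=H^{0,n-1}_{\bar\vt}(\W_{V\cap Y},\W'_{V\cap Y})$ is unchanged (a) when the fillings $\tau^{n-1,\pm}$ of a fixed $\tau$ are replaced by other fillings, and (b) when $\tau$ is replaced by another representative of $u$ near $N$. Statement (b) follows from (a): if $\tau$ and $\tilde\tau$ both represent $u$ near $N$, then $\tau-\tilde\tau=\bar\vt\rho$ for some $\rho\in\scE^{(0,n-1)}(\W_V,\W'_V)$, and viewing $\rho$ in $\scE^{(0,n-1)}(\W_{V,G^\pm},\W'_{V,G^\pm})$ by restricting its $V_{01}$-component, the cochains $\tilde\tau^{n-1,\pm}:=\tau^{n-1,\pm}-\rho$ are fillings of $\tilde\tau$. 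Since $(V\ssm G^+)\cap Y=(V\ssm N)\cap Y=(V\ssm G^-)\cap Y$, the two restrictions of $\rho$ to $Y$ coincide, so these fillings produce exactly the same $\tau_Y$. Hence it suffices to treat (a), and throughout we may pass to common $V$ and $G^\pm$ in the directed limits, so all cochains live over the same data.

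The key input is the vanishing
\[
\underset{\underset{V,\,G^\pm}{\lra}}{\lim}\, H^{k}_{G^\pm}(V;\scO)=0\qquad (k<n),
\]
where $V$ runs over neighborhoods of $N$ and $G^\pm$ over closed sets conically contained in the half space spanned by $\pm\sqrt{-1}s$; this is also what lets the Mayer--Vietoris sequence in the construction begin in degree $n-1$. It is the microlocal counterpart of the pure $n$-codimensionality of $M$ in $X$: each $G^\pm$ is locally contained in $\R^n_x\times\sqrt{-1}\Gamma$ for a proper closed convex cone $\Gamma$, and the associated local cohomology is concentrated in degree $n$ (see \cite[Theorem 4.3.2]{KS}). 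Concretely, by Theorem \ref{thGrauert} one may take $V$ Stein, so that $H^j(V;\scO)=0$ for $j\ge 1$ and $H^{k}_{G^\pm}(V;\scO)\simeq H^{k-1}(V\ssm G^\pm;\scO)$ for $k\ge 2$, reducing the claim to the vanishing of low-degree coherent cohomology of the complement of a proper cone.

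For (a), let $\tau^{n-1,\pm}$ and $\tilde\tau^{n-1,\pm}$ be two fillings of the same $\tau$, so that $\sigma^\pm:=\tau^{n-1,\pm}-\tilde\tau^{n-1,\pm}$ satisfies $\bar\vt\sigma^\pm=0$ and defines a class in $H^{0,n-1}_{\bar\vt}(\W_{V,G^\pm},\W'_{V,G^\pm})\simeq H^{n-1}_{G^\pm}(V;\scO)$ by Theorem \ref{thDrel}. By the vanishing above, after shrinking $V$ and enlarging $G^\pm$ this class is zero, so $\sigma^\pm=\bar\vt\lambda^\pm$ with $\lambda^\pm\in\scE^{(0,n-2)}(\W_{V,G^\pm},\W'_{V,G^\pm})$. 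Restriction of forms to the complex submanifold $Y$ commutes with $\bar\vt$, and since $(V\ssm G^\pm)\cap Y=(V\ssm N)\cap Y$ for $V$ small, $\lambda^\pm|_Y$ lies in $\scE^{(0,n-2)}(\W_{V\cap Y},\W'_{V\cap Y})$ with $\sigma^\pm|_Y=\bar\vt(\lambda^\pm|_Y)$. Therefore the difference of the two candidate restrictions,
\[
(\tau^{n-1,-}-\tau^{n-1,+})|_Y-(\tilde\tau^{n-1,-}-\tilde\tau^{n-1,+})|_Y=(\sigma^--\sigma^+)|_Y=\bar\vt\big((\lambda^--\lambda^+)|_Y\big),
\]
is $\bar\vt$-exact, so both choices define the same class in $\scB(N)$. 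Combined with (b), this shows $u|_N$ is well-defined; one checks along the way that $\tau_Y$ realizes the canonical cohomological restriction built from the Mayer--Vietoris sequence and the morphism $\scO_X\to i_*\scO_Y$.

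The main obstacle is the vanishing of the second paragraph: everything else is formal, amounting to exactness of restriction to $Y$ and the fact that differences of fillings are governed by low-degree local cohomology. Establishing that vanishing is precisely where the microlocal concentration of $\scO$ along $M$, equivalently the structure of the sheaf of microfunctions, must be invoked.
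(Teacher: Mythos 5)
Your proposal is correct and takes essentially the same route as the paper: the formal part (differences of fillings are $\bar\vt$-coboundaries compatible with restriction to $Y$) is exactly the argument of Lemma \ref{lem:boundary_map_ok} that the paper invokes, and the key input is the same vanishing $\varinjlim H^{0,k}_{\bar\vt}(\W_{V,G^{\pm}},\W'_{V,G^{\pm}})=0$ for $k<n$, which the paper likewise obtains from \cite[Theorem 4.3.2]{KS} together with the edge of the wedge theorem, via $H^{k-n}(\sqrt{-1}T^{*}_{N}M^{+};\SHmicro)=0$ for $k<n$. Your explicit derivation of the independence of $\tau$ from the independence of the fillings (the $\rho$'s cancelling upon restriction to $Y$) is precisely what the paper compresses into ``clearly, by construction.''
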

\begin{proof}
Recall that we set $\sqrt{-1}T_N^*M^+ = \sqrt{-1}\,\R^+ s$.
Clearly, by construction, $u|_N$ is independent of the choice of $\tau$.
By the same argument as that in the proof for Lemma~\ref{lem:boundary_map_ok},
the independency of the choice of $t^{n-1,+}$ comes from the fact
\[
\underset{\underset{V,\,G^+}{\lra}}{\lim}\,
H^{0,n-1}_{\bar\vt}(\W_{V,G^+},\W'_{V,G^+})
= 0,
\]
which can be shown in the following way:
Thanks to the edge of the wedge theorem for $\scO$ and  \cite[Theorem 4.3.2]{KS},
we have the formula, for any $k \in \mathbb{Z}$,
\[
H^k(\sqrt{-1}T_N^*M^+;\SHmicro) = 
\underset{\underset{V,\,G^+}{\lra}}{\lim}\,
H^{n+k}_{G^+}(V;\scO),
\]
where $V$ runs through  open neighborhoods of $N$ in $X$ and $G^+$ ranges through
closed sets conically contained in the half space  spanned by $\sqrt{-1}s$.
From the above we have
\[
0=H^{k-n}(\sqrt{-1}T_N^*M^+;\SHmicro) = 
\underset{\underset{V,\,G^+}{\lra}}{\lim}\,
H^{k}_{G^+}(V;\scO) 
= 
\underset{\underset{V,\,G^+}{\lra}}{\lim}\,
H^{0,k}_{\bar\vt}(\W_{V,G^+},\W'_{V,G^+})
\qquad\text{for}\ \  k < n.
\]
Hence $u|_N$ is independent of the choice of $t^{n-1,+}$.

The independency of the choice of $t^{n-1,-}$ can be proved  the same way.
\end{proof}
The following theorem assures that our construction coincides with the original
one in \cite{Skk}.
Let $V$ be an open neighborhood of $N$ in $X$ and $\Omega$ an open subset of $X$.
Set
\[
\Omega_Y = \Omega \cap Y,\qquad
V_Y = V \cap Y.
\]
Before stating the theorem, we introduce two conditions $(\rm{B}_1^{\dag\dag})$~and 
$(\rm{B}_2^{\rm{loc}})$:
The condition $(\rm{B}_1^{\dag\dag})$~is 
the one 
$(\rm{B}_1^{\dag})$~given in Appendix
with a non-characteristic condition of $\Omega$ along $N$.

\begin{enumerate}
\setlength{\leftskip}{.6mm}
	\item[$(\rm{B}_1^{\dag\dag})$] For any $x_0 \in N$, there exist an open neighborhood  $W$ of $x_0$
	with a ($C^1$-class) local trivialization 
	$\iota: (N \cap W,\, M \cap W,\,W) \simeq 
	(\R_{x'}^{n-1},\, \R_x^n,\, \C^n)$ and a non-empty open convex cone $\Gamma \subset \R^n$ such that
\[
	\R_x^n \times \sqrt{-1} \Gamma \subset \iota(W \cap \Omega)
	\quad\text{and}\quad
	\R_{x'}^{n-1} \cap \Gamma \ne \emptyset.
\]
\end{enumerate}
The condition $(\rm{B}_2^{\rm{loc}})$~is a local version of $(\rm{B}_2)$~introduced in Subsection \ref{ssbv}.
\begin{enumerate}
\setlength{\leftskip}{1mm}
	\item[$(\rm{B}_2^{\rm{loc}})$] For any $x_0 \in M$, there exist a fundamental system
$\{V_\lambda\}_{\lambda \in \Lambda}$ of 
neighborhoods of $x_0$ in $V$
that satisfies the same condition as $(\rm{B}_2)$ with
$V$  replaced by $V_\lambda$, that is, the canonical inclusion
$(V_\lambda \ssm \Omega) \ssm M  \hra V_\lambda \ssm \Omega$
is a homotopy equivalence for any $\lambda \in \Lambda$.
\end{enumerate}

\begin{theorem}
Assume that the pair $V$ and $\Omega$ satisfies the conditions 
$(\rm{B}_1^{\dag\dag})$, $(\rm{B}_2)$~and $(\rm{B}_2^{\rm{loc}})$. 
Assume also that the pair $V_Y$ and $\Omega_Y$ satisfies the conditions
$(\rm{B}_2)$~and $(\rm{B}_2^{\rm{loc}})$~in $Y$. Let $f \in \scO(\Omega)$. Then we have
\[
\op{SS}(b_\Omega(f)) \cap \sqrt{-1}T^*_NM \subset T^*_XX
\]
and
\[
b_\Omega(f)|_N = b_{\Omega_Y}(f|_Y).
\]
\end{theorem}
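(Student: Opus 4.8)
The statement is local along $N$, so the plan is to fix $x_0\in N$ and work in the local trivialization furnished by $(\rm{B}_1^{\dag\dag})$, identifying $(N\cap W,\,M\cap W,\,W)$ with $(\R^{n-1}_{x'},\,\R^n_x,\,\C^n)$ and writing $X=Y\times\C_{z_n}$, $M=N\times\R_{x_n}$, $Y=\{z_n=0\}$, with the chosen conormal section $s$ corresponding to $\xi_0=dx_n$; thus a closed cone $G^+$ (resp. $G^-$) is conically contained in the half space spanned by $\sqrt{-1}s$ (resp. $-\sqrt{-1}s$) precisely when $G^+\ssm\{0\}\subset\{y_n<0\}$ (resp. $G^-\ssm\{0\}\subset\{y_n>0\}$). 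The conditions $(\rm{B}_2)$ and $(\rm{B}_2^{\rm{loc}})$ guarantee that $b_\Omega(f)$ and $b_{\Omega_Y}(f|_Y)$ are well defined (Lemma \ref{lem:unit_one_support}) and, together with Lemma \ref{lem:restriction-independent-of-choice}, that $\varinjlim H^{0,k}_{\bar\vartheta}(\W_{V,G^\pm},\W'_{V,G^\pm})=0$ for $k<n$. By Theorem \ref{thGrauert} I may shrink $V$ to a Stein neighborhood and represent $b_\Omega(f)$ by a cocycle $(0,\tau_{01})$. Two things must be shown: the non-characteristic estimate $\op{SS}(b_\Omega(f))\cap\sqrt{-1}T^*_NM\subset T^*_XX$, which makes $b_\Omega(f)|_N$ meaningful, and the identity $b_\Omega(f)|_N=b_{\Omega_Y}(f|_Y)$.

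For the first point I would combine the bound on the singular spectrum of a boundary value by a polar set with the geometry built into $(\rm{B}_1^{\dag\dag})$. That condition gives $\Omega\supset\R^n_x\times\sqrt{-1}\Gamma$ for an open convex cone $\Gamma$ meeting the tangential directions $\{y_n=0\}$; shrinking, I replace $\Gamma$ by a small convex open subcone $\Gamma'$ still containing a tangential vector and with $\R^n_y\ssm(\Gamma'\cup\{0\})$ contractible, so that $(V,\R^n_x\times\sqrt{-1}\Gamma')$ falls under Example \ref{exa:cone_case} and satisfies $(\rm{B}_1),(\rm{B}_2)$. Corollary \ref{cor:restriction_boundary_map} then identifies $b_\Omega(f)$ with the boundary value from this subcone, and the Proposition bounding the singular spectrum of such a boundary value by the polar set gives that $\op{SS}(b_\Omega(f))$ is contained in the polar set of $\R^n_x\times\sqrt{-1}\Gamma'$. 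Since $\Gamma'$ is open and contains a vector with $y_n=0$, it contains vectors with $y_n>0$ and with $y_n<0$; hence a conormal covector $\lambda\,dx_n$ lies in that polar set only for $\lambda=0$, which is exactly $\op{SS}(b_\Omega(f))\cap\sqrt{-1}T^*_NM\subset T^*_XX$. In particular $b_\Omega(f)$ is microlocally analytic along $\pm\sqrt{-1}s$, so the restriction is defined.

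For the identity, the idea is to compute the restriction with representatives tailored to the decomposition $X=Y\times\C_{z_n}$. By Lemma \ref{lem:unit_one_support} I choose a representative $\nu=(0,\nu_{01})$ of $\mathds{1}^c$ supported in $\Omega$, taken (via the external-product description of Thom forms) so that its factor in the $z_n$-variable is the one-dimensional angular form $\psi_1=\tfrac12\,y_n/|y_n|$ of Example \ref{exone} and its factor along $Y$ is a Thom form $\nu^Y$ of $N$ in $Y$ supported in $\Omega_Y$; this last support is available because the tangential slice $\Gamma\cap\{y_n=0\}$ is a nonempty open cone, so $\Omega_Y\supset\R^{n-1}_{x'}\times\sqrt{-1}(\Gamma\cap\{y_n=0\})$ and $(V_Y,\Omega_Y)$ satisfies $(\rm{B}_1)$. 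Then $b_\Omega(f)$ is represented by $\tau=(0,f\rho(\nu_{01}))$. Following the cutting-off construction in the proof of Proposition \ref{prop:equiv-ss} (its global form being the Lemma preceding the definition of the restriction) and using that $\psi_1$ admits the half-space-supported representatives $\psi_\pm=\tfrac12\,y_n/|y_n|\pm\tfrac12$, with $\op{Supp}(\psi_\pm)=\{\pm y_n>0\}$ (Example \ref{exone}), I would build primitives $\tau^{n-1,\pm}$ with $\bar\vartheta\,\tau^{n-1,\pm}=\tau$ and singular support in $G^\pm$; concretely $\tau^{n-1,+}$ carries the factor $\psi_-$ and $\tau^{n-1,-}$ the factor $\psi_+$, which differ from $\psi_1$ by a locally constant function and hence do not disturb the $\bar\partial$-computations.

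It remains to form $\tau_Y=(\tau^{n-1,-}-\tau^{n-1,+})|_Y$. In the difference the $z_n$-factor becomes $\psi_+-\psi_-=1$, so the construction collapses the $z_n$-direction: all $z_n$-dependence and every $d\bar z_n$ disappear, and $\tau_Y$ reduces to $(f|_Y)\,\rho(\nu^Y)$ on $V_Y\ssm N$. By the definition of the boundary value in Subsection \ref{ssbv}, $(f|_Y)\,\rho(\nu^Y)$ represents $b_{\Omega_Y}(f|_Y)$, and the sign produced by the ordering $\tau^{n-1,-}-\tau^{n-1,+}$ matches the sign fixed by the choice of $s$ in the Mayer--Vietoris morphism, so the generators correspond correctly. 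Since Lemma \ref{lem:restriction-independent-of-choice} shows $u|_N$ is independent of the choices of $\tau$ and of the primitives, this explicit computation does compute $b_\Omega(f)|_N$, giving the identity. I expect the crux to be the construction of the product Thom form and its two half-space primitives at the level of relative $(0,\bullet)$-cochains: one must verify, through the partition-of-unity gluing intrinsic to the external product and a careful tracking of signs and supports, that $\nu$ really represents $\mathds{1}^c$ inside $\Omega$ and that the $\psi_\mp$-substitution yields genuine $\bar\vartheta$-primitives in the cone-supported complexes $\scE^{(0,n)}(\W_{V,G^\pm},\W'_{V,G^\pm})$; once this bookkeeping is in place, the $z_n$-collapse and the restriction to $Y$ are routine.
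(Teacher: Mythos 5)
Your proposal follows the same overall scheme as the paper's proof---localize via $(\mathrm{B}_2^{\mathrm{loc}})$, produce an explicit cocycle for $b_\Omega(f)$ together with one-sided $\bar\vartheta$-primitives, and restrict to $Y$---but it realizes that scheme by genuinely different means. For the bound on $\op{SS}(b_\Omega(f))$ you argue softly: shrink to a convex cone $\Gamma'$ meeting $\{y_n=0\}$ so that Example~\ref{exa:cone_case} applies, identify the two boundary values by Corollary~\ref{cor:restriction_boundary_map}, and invoke the polar-set estimate at the end of Subsection~\ref{subsec:micro-anal}; this isolates exactly where non-characteristicity is used. The paper never argues this way: it gets both assertions from a single construction, namely a partition of unity $\{\varphi_\pm,\varphi_1,\dots,\varphi_n\}$ subordinate to slabs $H_j$ and cones $H_\pm$, producing a representative $\tau=(0,f\kappa^{n-1}_{01})$ supported in $\Omega$ \emph{and} on one side ($H_+$) of the normal direction, so that $\tau^{n-1,-}=0$ is admissible, while a combinatorial formula for $\kappa^{n-2}_{01}$ gives $\tau^{n-1,+}$; the restriction then uses $\varphi_\pm|_Y=0$ and $\sum_j\varphi_j|_Y=1$. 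Your product decomposition---$\nu=\nu^Y\times(0,-\psi_1)$ with $\nu^Y$ supported in $\Omega_Y$, primitives obtained from splitting $\psi_1$, collapse via $\psi_+-\psi_-=1$---is the same mechanism in disguise (the paper's $\varphi_\pm$ play the role of your $\psi_\pm$-cutoffs), and it is conceptually cleaner; the paper's version buys a completely explicit restricted representative and never has to check that an external product stays supported in $\Omega$.

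That last check is the crux you flag, and it is not a formality: the $01$-component of the product is proportional to $d\varphi_1\wedge\pi_1^*\nu^Y_{01}\cdot\pi_2^*\psi_1$ (with $\pi_1,\pi_2$ the projections to $Y$ and to the $z_n$-line), whose support lies in $\op{Supp}(d\varphi_1)\cap\pi_1^{-1}(\op{Supp}(\nu^Y_{01}))$, and this need not stay inside $\Omega$ unless the transition region of $\varphi_1$ hugs $\{y_n=0\}$. It can be arranged, and this is precisely where $(\mathrm{B}_1^{\dag\dag})$ enters your second half: take $\op{Supp}(\nu^Y_{01})\subset\R^{n-1}_{x'}\times\sqrt{-1}C_Y$ with $C_Y\ssm\{0\}$ a closed subcone of $\Gamma\cap\{y_n=0\}$, obtain $\epsilon>0$ from compactness of $C_Y\cap S^{n-2}$ and openness of $\Gamma$ so that $\{\,y'\in C_Y,\ |y_n|<\epsilon|y'|\,\}\ssm\{0\}\subset\Gamma$, and choose $\varphi_1=h(|y_n|/|y'|)$ with $h\equiv 1$ on $[0,c_1]$ and $h\equiv 0$ on $[c_2,\infty)$, $c_1<c_2<\epsilon$. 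A second point needs correction rather than mere completion: the primitives cannot literally ``carry the factor $\psi_\mp$'', because $\varphi_1\rho(\nu^Y_{01})\psi_\mp$ is discontinuous across $Y\ssm N$, which lies inside the domains $V\ssm(\R^n_x\times\sqrt{-1}G^\pm)$ where the primitives must be smooth. What works is the splitting $\psi_1=\psi_+-\tfrac12=\psi_-+\tfrac12$: the $\psi_\pm$-terms, which retain the factor $\bar\partial\varphi_1$ and hence are smooth and one-sided, vanish on the respective cone-complexes, while the constant terms yield the smooth primitives $\tau^{n-1,\mp}=(0,\pm\tfrac12\,f\varphi_1\rho(\nu^Y_{01}))$, up to the overall sign of the product convention. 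Their difference restricted to $Y$ is $(f|_Y)(\varphi_1|_Y)\rho(\nu^Y_{01})=(f|_Y)\rho(\nu^Y_{01})$, as you assert, since $\varphi_2|_Y=0$. With these two points written out, and the generator matching done as in Example~\ref{exa:support_cone_fundamental} and Lemma~\ref{lem:tau_is_generator}, your argument is complete.
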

\begin{proof}
By 
$(\rm{B}_2^{\rm{loc}})$, it suffices to show the claim locally.
We may assume
$M$ is a convex open neighborhood of 
$0\in\R_x^n$ with coordinates
$(x_1,\dots,x_n) = (x_1,x')$,
$X = M \times \sqrt{-1}\R_y^n$ with coordinates
$(z_1,\dots,z_n) = (z_1, z')$, 
$z_k = x_k + \sqrt{-1}y_k$, $k=1,\dots,n$,
$N=M \cap \{x_1 = 0\}$ and $Y$ = $X \cap \{z_1 = 0\}$.
We write $(y_{1},\dots,y_{n})=(y_{1},y')$.
Further, we assume
\[
V = \{\,z \in X\mid\, |x_1| < \epsilon,\, |y| < \epsilon\,\}
\quad\text{and}\quad \Omega = (M \times \sqrt{-1} \Gamma) \cap V
\]
for some $\epsilon > 0$ and an open proper convex cone $\Gamma \subset \R_y^n$ with 
$\Gamma \cap \{y_1 = 0\} \ne \emptyset$.

Let $\mathrm{e} = (1,0,\dots,0) \in \R^n$.
Then we may assume that our coordinate systems of $N$ and $M$ are
positively oriented and
the section $s$ is given by $s(x) = \mathrm{e} \in (T^*_NM)_x$ 
$(x \in N)$.
Now we determine,
in the similar way as those in Example {\ref{exa:support_cone_fundamental}},
convex subsets $H_k$'s in $\R^n_y$ and
$C^\infty$ functions $\varphi_k$'s on $X \ssm M$ where the index $k$ 
is either $k=\pm$ or $k=1,2,\dots,n$.
We first take linearly independent vectors 
$\tilde{\eta}_1$, $\dots$, $\tilde{\eta}_{n-1}$ in $\R_{y'}^{n-1}$
such that
\[
\bigcap_{1 \le j \le n-1}
\{\,y' \in \R_{y'}^{n-1}\mid \langle y',\, \tilde{\eta}_j\rangle > 0\,\}\,\,
\subset\,\, \Gamma \cap \{y_1 = 0\}.
\]
Set $\tilde{\eta}_n = - (\tilde{\eta}_1 + \dots + \tilde{\eta}_{n-1}) \in \R^{n-1}_{y'}$.
Let $c > 0$ and define convex subsets $H_j$, $j=1,2,\dots,n$, in $\R^n_y$ by
\[
H_j =
\{\,y \in \R^n_y\mid\,
-c\langle y', \tilde{\eta}_j \rangle < y_1 < c\langle y', \tilde{\eta}_j \rangle\,\}.
\]
We also define convex subsets $H_+$ and $H_-$ in $\R^n_y$ by
\[
H_\pm = \{\,y \in \R^n_y\mid\, \pm y_1 > c^2|y'| \,\}.
\]
Note that, for a sufficiently small $c > 0$, we have
\[
H_1 \cup \cdots \cup H_n \cup H_+ \cup H_- = \R^n_y \ssm \{0\}\quad\text{and}\quad  H_1 \cap \cdots \cap H_{n-1} \subset \Gamma.
\]
We fix such a $c > 0$ in what follows. Note also that 
the intersection of $(n+1)$-choices 
in $(n+2)$-subsets $H_1$, $\dots$, $H_n$, $H_+$ and $H_-$
is always empty.

Now let $\varphi_k$, $k=\pm, 1,\dots,n$, be $C^\infty$ functions on $V \ssm M$
such that
\begin{enumerate}
\item[(1)] $\op{Supp}_{V\ssm M}(\varphi_k) 
	\subset M \times \sqrt{-1}H_k$ for $k=1,\dots,n,\pm$,
\item[(2)] $\varphi_1 + \cdots + \varphi_n + \varphi_+ + \varphi_- = 1$.
\end{enumerate}
In particular, it follows from the definition of $H_k$ that
$\varphi_\pm|_Y = 0$ and
\begin{equationth}{\label{eq:identity_on_Y}}
\varphi_1|_Y + \cdots + \varphi_n|_Y = 1
\end{equationth}
holds. Set
\[
\kappa^n = (0,\,\kappa^{n-1}_{01})
 = (0,\,\,
(-1)^n(n-1)!\, \check{\chi}_{H_{n} \cup H_-}\,
	 \bar{\partial}\varphi_+ \wedge 
	 \bar{\partial}\varphi_1 \wedge 
	 \cdots \wedge \bar{\partial}\varphi_{n-2}),
\]
where $\check{\chi}_{H_n \cup H_-}$ is the anti-characteristic function
of the set $H_n \cup H_-$.
Then, by the same arguments as those in Example {\ref{exa:support_cone_fundamental}}
and Lemma \ref{lem:tau_is_generator}, we see that
\[
\op{Supp}_{V\ssm M}(\kappa^{n-1}_{01})
\subset M \times \sqrt{-1} \Gamma
\]
and that
$[\kappa^n] \in H^n_M(X;\,\scO)$
corresponds to the image of the positively oriented generator in $or_{M/X}(M)$ 
under the standard orientation of $\R^n_y$.
Hence, by the definition of the boundary value map, 
$\tau = f\,\kappa^n = (0,\, f\,\kappa^{n-1}_{01})$
is a representative of $b_\Omega(f)$.
Then, as
\[
\op{Supp}_{X\ssm M}(\kappa^{n-1}_{01}) \subset H^+
\]
holds, we can take $\tau^{n-1,-} = 0$ in the construction of $\tau|^{n-1}_N$. Let us compute $\tau^{n-1,+}$.
Set
\[
\kappa^{n-2}_{01} =
-(n-2)!\,
\check{\chi}_{H_{n} \cup H_-}\,
\displaystyle\sum_{j=1}^{n-1} (-1)^{j} \varphi_{j}\,
\bar{\partial} \varphi_1 \wedge \dots \wedge\widehat{\bar{\partial} \varphi_j}\wedge\cdots
\wedge\bar{\partial} \varphi_{n-1}.
\]
Then there exists a closed set $G^+$
conically contained in the half space spanned by $\sqrt{-1}s$ such that
$\kappa^{n-2}_{01}$ is a $C^{\infty}$ $(0,n-2)$-form on $V \ssm G^+$.
Define
\[
\tau^{n-1,+} = (0,\, f\,\kappa^{n-2}_{01}).
\]
Then, since $\kappa_{01}^{n-1} = - \bar{\partial} \kappa_{01}^{n-2}$ holds,
we see
\[
\tau = \bar{\vartheta} \tau^{n-1,+} \quad \text{in $\scE^{(0,n-1)}(\W_{V,G^+},\W'_{V,G^+})$}.
\]
Furthermore, it follows from \eqref{eq:identity_on_Y} that we have
\[
\kappa^{n-2}_{01}|_Y = (-1)^n(n-2)!\, \check{\chi}_{H_{n}\cap Y}\,
 \bar{\partial}(\varphi_1|_Y) \wedge \dots \wedge \bar{\partial}(\varphi_{n-2}|_Y),
\] 
for which, by Example \ref{exa:support_cone_fundamental}, the element
$[(0,\,-\kappa_{01}^{n-2}|_Y)] \in H^{n-1}_N(Y;\scO)$ corresponds to
the image of the positively oriented generator in $or_{N/Y}(N)$
under the standard orientation of $\R^{n-1}_{y'}$.
Therefore we have obtained
\[
\begin{aligned}
\tau_Y &= (\tau^{n-1,-} - \tau^{n-1,+})|_Y \\
&= (0, f|_Y\, (-1)^{n-1}(n-2)!\, \check{\chi}_{H_{n} \cap Y}\,
 \bar{\partial}(\varphi_1|_Y) \wedge \dots \wedge \bar{\partial}(\varphi_{n-2}|_Y)).
 \end{aligned}
\]
This implies that the representative $\tau_Y$ gives the
hyperfunction $b_{\Omega_Y}(f|_Y)$.
\end{proof}

\subsection{Fiber integration of  hyperfunction densities}{\label{subsection:integration}}

Let $M$ and $N$ be real analytic manifolds of dimensions $m$ and $n$, \r, $m\ge n$.
We denote by $X$ and $Y$  their complexifications.
Let $f:M \to N$ be a submersion whose complexification 
is 
denoted by $\tilde f: X \to Y$.
We assume that $M$ and $N$ are  orientable.  

In this subsection, 
we introduce an integration morphism $\int_f$ of  hyperfunction densities along 
fibers of $f:$
\[
f_!(\scB_M {\otimes}_{{}_{\scA_{M}}} \scV_M)(N) 
\xrightarrow{\quad\int_{f}\quad}
(\scB_N {\otimes}_{{}_{\scA_{N}}} \scV_N)(N),
\]
where $\scV_M$ is the sheaf of real analytic densities on $M$, i.e.,
$\scV_M = \SHanalf{(m)}_M {\otimes}_{{}_{\Z_{M}}} {or}_M$
with  $\SHanalf{(m)}_M$ the sheaf of real analytic $m$-forms on $M$
and  ${or}_M$ the orientation sheaf of $M$. The sheaf $\scV_N$ is defined similarly for $N$. Note that $\scB_M {\otimes}_{{}_{\scA_{M}}} \scV_M=\scB^{(m)}_M {\otimes}_{{}_{\Z_{M}}} or_M$. Note also that, as the integration we define is performed
on densities, the morphism $\int_f$ is independent of the orientation of fibers of $f$.

We may
assume
each fiber of $f$ to be connected.
Since otherwise, $M$ is a disjoint union of real analytic manifolds $M_\lambda$
such that 
each fiber of $f|_{M_\lambda}$
is connected. 
Then 
the integrations along  $f|_{M_\lambda}$
can be summed up, as  the support of the integrand is proper along $f$.

Set $d= m - n$ and let $p$ be a non-negative integer.
We will construct slightly extended version of
the integration morphism\,:
\[
	f_!(\SHhyperf{(p)}_M \otimes_{{}_{\Z_{M}}} {or}_M)(N)
\xrightarrow{\quad\int_{f}\quad}
(\SHhyperf{(p-d)}_N \otimes_{{}_{\Z_N}} or_N)(N),
\]
where $\SHhyperf{(p)}_M$ is the sheaf of $p$-hyperforms on $M$. Note that
$\SHhyperf{(p)}_M = \scB_M {\otimes}
_{{}_{\scA_{M}}} \SHanalf{(p)}_M$.

Since $M$ is assumed to be orientable, 
for any closed set $K$ in $M$,
\[
	\vG_K(M; \SHhyperf{(p)}_M)=H^m_K(X;\scO^{(p)}_X)\otimes_{{}_{Z_{M}(M)}}
{or}_{M/X}(M).
\]
Hence it suffices to construct the following integration morphism
for any closed set $K \subset M$ with $f|_K: K \to N$ being proper\,:
\[
	H^m_K(X;\scO^{(p)}_X)\otimes_{{}_{Z_{M}(M)}}
	({or}_{M/X}{\otimes}_{{}_{\Z_{M}}} {or}_{M})(M)
\xrightarrow{\,\,\int_{f}\,\,}
H^n_N(Y;\,\scO^{(p-d)}_Y)
\otimes_{{}_{Z_{N}(N)}}
({or}_{N/Y}
\otimes_{{}_{\Z_N}} {or}_N)(N).
\]

Denoting by $i:M\hra X$ and $j:N\hra Y$ the  inclusions, we have the 
identifications by Convention 1\,:
\begin{equationth}\label{id1}
{or}_{M/X}{\otimes}_{{}_{\Z_{M}}} {or}_{M}= i^{-1}{or}_{X},\qquad {or}_{N/Y}{\otimes}_{{}_{\Z_{N}}} {or}_{N}= j^{-1}{or}_{Y}.
\end{equationth}
With these, the above integration morphism is expressed as
\begin{equationth}\label{intmor}
	H^m_K(X;\scO^{(p)}_X)\otimes_{{}_{Z_{M}(M)}}i^{-1}{or}_{X}(M)
\xrightarrow{\,\,\,\,\int_{f}\,\,\,\,}
H^n_N(Y;\,\scO^{(p-d)}_Y)
\otimes_{{}_{Z_{N}(N)}}j^{-1}{or}_{Y}(N).
\end{equationth}

Before we further proceed, we recall the usual fiber integration of $C^{\infty}$ forms with
some more orientation convention.

In general, let $g:P\ra Q$ be a submersion of $C^{\infty}$ \mfd s with fiber dimension $r$. We denote by $Tg$
the bundle of tangent vectors on $P$ that are tangent to the fibers of $g$. We define the orientation sheaf  of $g$, denoted by $or_{g}$, to be the orientation sheaf of the bundle $Tg$ (cf. Subsection~\ref{subsOT}). It is a sheaf on $P$ and describes the orientation of fibers of $g$. From the 
exact sequence
\[
0\lra Tg\lra TP\lra g^{*}TQ\lra 0,
\]
we have  an \iso 
\[
{or}_P \simeq {or}_{g}{\otimes}_{{}_{\Z_{P}}}g^{-1}{or}_Q.
\]
Note that there are several possible choices for the above \iso s. 

Now we assume that $P$ and $Q$ are orientable. Thus $Tg$ is also orientable.

\paragraph{Convention 2.} Let $\psi_{P}$ and $\psi_{Q}$ be prescribed orientations of $P$ and $Q$.
Then we take the orientation $\psi_{g}$ of $T{g}$ so that a positive fiber coordinate system
followed by (the pull-back by $g$ of) a positive coordinate system on $Q$ gives a positive 
coordinate system on $P$.

\

Thus we make identification
\begin{equationth}\label{oriidsubm}
or_{P}= or_{g}\otimes g^{-1}or_{Q}\qquad\text{by}\qquad \psi_{P}\longleftrightarrow\psi_{g}\otimes
g^{-1}\psi_{Q}
\end{equationth}
with  $\psi_{P}$, $\psi_{Q}$ and  $\psi_{g}$ as in Convention 2.

Once we fix various orientations according to Convention 2, we have the fiber integration
\[
(g_{!}\scE_{P}^{(p)})(Q)\xrightarrow{\,\,\,\,\int_{g}\,\,\,\,} \scE_{Q}^{(p-r)}(Q),
\]
which has the property
$\int_{g}\circ \,d=(-1)^{r}d\circ\int_{g}$. Thus it induces a morphism of complexes 
$g_{!}\scE_{P}^{(\bullet)}[\dim P]\ra
\scE_{Q}^{(\bullet)}[\dim Q]$.

If $P$ and $Q$ are complex \mfd s and if $g$ is a \h\ submersion with complex fiber dimension $s$, the 
above integration induces
\[
(g_{!}\scE_{P}^{(p,q)})(Q)\xrightarrow{\,\,\,\,\int_{g}\,\,\,\,} \scE_{Q}^{(p-s,q-s)}(Q),
\]
which commutes with both $\partial$ and $\bar\partial$.

\

Coming back to our situation, we have two submersions, namely, $f:M\ra N$ and its complexification $\tilde f:X\ra Y$. We make the following identifications according to Convention 2\,:
\begin{equationth}\label{id2}
{or}_M={or}_{f}{\otimes}_{{}_{\Z_{M}}} f^{-1}{or}_N,\qquad {or}_X = {or}_{\tilde f}{\otimes}_{{}_{\Z_{X}}} \tilde f^{-1}{or}_Y.
\end{equationth}
We may
 make the identifications \eqref{id1} and \eqref{id2} compatible, by determining various coordinate
systems  as follows\,:
\begin{enumerate}
\item[(1)] Let $\psi_{M}$ and $\psi_{N}$ be prescribed orientations of $M$ and $N$.
We orient the fibers of $f$ so that if $x'=(x_{1},\dots,x_{d})$ is a positive fiber coordinate system, $d=m-n$,
and if
$x''=(x_{d+1},\dots,x_{m})$
 is a positive
coordinate system on $N$, then 
$x=(x',x'')$ is a positive coordinate system on $M$.

\item[(2)] The orientations $\psi_{X}$ and $\psi_{Y}$ of $X$ and $Y$ are determined by Convention 1
 (cf. \eqref{id1}). 
Let $(z_{1},\dots,z_{d})$ be a complex fiber coordinate system of $\tilde f$, $z_{i}=x_{i}+\sqrt{-1}y_{i}$.
We orient the fibers of $\tilde f$ so that if $(-1)^{dn}(y',x')$, $y'=(y_{1},\dots,y_{d})$, is a positive fiber coordinate system  and if
$(y'',x'')$, $y''=(y_{d+1},\dots,y_{m})$, is a positive coordinate system on $Y$, then $(y,x)$ is a positive coordinate system on $X$, where $y=(y',y'')$ and $x$ is as above.
\end{enumerate}

Now we define the integration morphism \eqref{intmor}. We do it separately for 
$H^m_K(X;\scO^{(p)}_X)$ and $i^{-1}{or}_{X}(M)$.
First we do this for the latter. 
Note that we  have the identifications
\[
i^{-1}{or}_{X}
= i^{-1}(or_{\tilde f} \otimes_{{}_{\Z_{X}}} \tilde f^{-1}or_Y)
= i^{-1}or_{\tilde f}\otimes_{{}_{\Z_{M}}} f^{-1}j^{-1}{or}_{Y}.
\]
Given 
$a_{X}\in i^{-1}{or}_{X}(M)$, we take $a_{\tilde f} \in i^{-1}{or}_{\tilde f}(M)$ and
$a_{Y}\in j^{-1}{or}_{Y}(N)$ so that
\[
a_{X}=a_{\tilde f} \otimes f^{-1}a_{Y}.
\]
For any point $x \in M$, we have the identification
\[
or_{\tilde f,x}= H^{2d}_{\{x\}}(\tilde f^{-1}(\tilde f(x));\, \Z_{\tilde f^{-1}(\tilde f(x))}).
\]
Hence, once we fix an orientation of the fiber, which is a complex \mfd\ of dimension $d$,
we can determine an  integer $n(a_{\tilde f})(x)$ as the cap product
\[
[\tilde f^{-1}(\tilde f(x))] \smallfrown  a_{\tilde f}(x) \in \Z
\]
with the fundamental class $[\tilde f^{-1}(\tilde f(x))] \in {H}_{2d}(\tilde f^{-1}(\tilde f(x));\Z)$ 
of 
the fiber $\tilde f^{-1}(\tilde f(x))$ (cf. Subsection \ref{subsOT}).
Note that $n(a_{\tilde f})(x)$ is a locally constant function on $M$. Furthermore,
as each fiber of $f$ is assumed to be connected, we can regard $n(a_{\tilde f})$
as a locally constant function on $N$, i.e., $n(a_{\tilde f}) \in \Z_N(N)$.
Then we define the integration of
$a_{X}$ by
\[
\int_f a_{X}
=n(a_{\tilde f})\, a_{Y}
\in j^{-1}or_{Y}(N).
\]
Clearly, the integration thus defined is independent of the choice of 
$a_{\tilde f}$ or $a_Y$.

\begin{remark}{\label{rem:integration-orientation}}
The point here is that we
choose an orientation of the fiber of $\tilde f$
and fix it throughout the integration procedure.
The final outcome, after multiplication by the integration of an element $u$ in $H^m_K(X;\scO^{(p)}_X)$, does not depend on such a choice.
\end{remark}

Now we define the integration morphism 
for 
$u \in H^m_K(X;\scO^{(p)}_X)
\simeq \hdccppk{p}{m}{\V_K}{\V'_K}$,
where
$\V_K = \{V_0=X\ssm K,\,V_1= X\}$ and 
$\V'_K = \{V_0\}$.
We also set
$\W = \{W_0=Y\ssm N,\,W_1= Y\}$ and
$\W' = \{W_0\}$.
Let 
\[
\tau = (\tau_1,\,\tau_{01}) \in 
\scE^{(p,m)}(V_1) \oplus
\scE^{(p,m-1)}(V_{01})
=
\scE^{(p,m)}(\V_K,\V'_K)
\]
be a representative of $u \in \hdccppk{p}{m}{\V_K}{\V'_K}$.
Let $\varphi$ be a $C^\infty$-function on $X$ satisfying
\begin{enumerate}
\item[(1)] $\tilde f|_{\op{Supp}(\varphi)}:\, \op{Supp}(\varphi) \to Y$
	is proper,
\item[(2)] $\varphi$ is identically $1$ on an open neighborhood of $K$.
\end{enumerate}
Set
\[
	\hat{\tau}
	= (\hat{\tau}_1,\, \hat{\tau}_{01})
	=
 (\varphi \tau_1 + \bar{\partial}\varphi \wedge \tau_{01},\, \varphi \tau_{01})
\in
\scE^{(p,m)}(V_1) \oplus \scE^{(p,m-1)}(V_{01}).
\]
Then, by noticing $\bar{\partial} \tau_{01} = \tau_1$ on $V_{01}$,
we have
\[
\tau - \hat{\tau}
= \bar{\vartheta} ((1-\varphi)\tau_{01},\,0),
\]
and hence,
$\hat{\tau}$ is also a representative of $u$. 
Furthermore, $\tilde f$ is proper on
$\op{Supp}(\hat{\tau}_1)$ and $\op{Supp}(\hat{\tau}_{01})$.
Hence we may apply the usual integration of  differential forms along fibers of $\tilde f$
to 
$\hat{\tau}_1$ and $\hat{\tau}_{01}$, 
and we  see that
\[
\left(\int_{\tilde f} \hat{\tau}_1,\, \int_{\tilde f}  \hat{\tau}_{01}\right)
\in \scE^{(p-d,n)}(W_1) \oplus
\scE^{(p-d,n-1)}(W_{01})
\]
gives an element of
$\hdccppk{p-d}{n}{\W}{\W'}$.
Here 
the fiber integration $\int_{\tilde f} $ is to be performed with the  orientation of 
 fibers 
 chosen 
 for the integration on $i^{-1}{or}_{X}(M)$
(cf. Remark~{\ref{rem:integration-orientation}}). 

Summing up, the integration along fibers of $f$ for  
$[(\tau_1,\tau_{01})] \otimes a_{X}$
 is given by
\[
\left[\left(\int_{\tilde f}  \varphi \tau_1 + \bar{\partial}\varphi \wedge \tau_{01},\,\,
\int_{\tilde f}  \varphi \tau_{01}\right)\right] \otimes
\int_f a_{X}\in \hdccppk{p-d}{n}{\W}{\W'}
\otimes_{{}_{Z_{N}(N)}}j^{-1}{or}_{Y}(N).
\]
It can  easily be confirmed that
the above definition does not depend on the choice of the  representative of the hyperform
or of $\varphi$. It does not depend on the choices of various orientations involved either.

\begin{remark} By definition of $d:\scB^{(p)}\ra \scB^{(p+1)}$ (cf. \eqref{defdiff}), we see that the above integration
$\int_{f}$ induces a morphism of complexes $f_!(\SHhyperf{(\bullet)}_M \otimes_{{}_{\Z_{M}}} {or}_M)[m]\ra
(\SHhyperf{(\bullet)}_N \otimes_{{}_{\Z_N}} or_N)[n]$.
Moreover, it is compatible with the definition in terms of derived functors as given in \cite{Skk}.
\end{remark}

\begin{example}{\label{rem:standard-integration}} 
Let $M=\R^m=\{(x_1,\dots,x_{m})\}$ and $N=\R^n=\{(x_{d+1},\dots, x_m)\}$, $d=m-n$. Also let
$X = \C^m=\{(z_1,\dots,z_{m})\}$ and $Y = \C^n=\{(z_{d+1},\dots,z_{m})\}$, $z_{i} = x_{i} +\sqrt{-1}y_{i}$.
For $\tilde f:X \to Y$, we take the  projection\,;
$\tilde f(z_1,\dots,z_m) = (z_{d+1},\dots,z_m)$.
We take, for 
$X$, $Y$ and  the fibers of $\tilde f$,  the  orientations for which
$(y_1,\dots,y_{m},x_1,\dots,x_m)$, $(y_{d+1},\dots,y_{m},x_{d+1},\dots,x_m)$ and
$(-1)^{dn}(y_{1},\dots,y_{d},x_{1},\dots,x_d)$ are positive systems, according to Conventions 1 and 2.

In this case, we usually omit the orientation sheaves appearing in the above construction
via the identifications $i^{-1}or_{X}(M)=\Z$,
$j^{-1}or_{Y}(N)=\Z$
 and
$i^{-1}or_{\tilde f}(M)=\Z$, where each orientation specified as above corresponds to
$1 \in \Z$.
If we denote by $a_{X}$, $a_{Y}$ and $a_{\tilde f}$ 
such orientations, \r, we have
\[
a_{X}=a_{\tilde f}\otimes f^{-1}a_{Y}\quad\text{so that}\quad \int_f a_{X} = 1.
\]
Thus the integration along fibers of $f$ for the hyperform 
$u=[(\tau_1,\,\tau_{01})] 
\in \hdccppk{p}{m}{\V_K}{\V'_K}$
with the  orientations as above is given by
\[
\left[\left(\int_{\tilde f} \varphi \tau_1 + \bar{\partial}\varphi \wedge \tau_{01},\,\,
\int_{\tilde f} \varphi \tau_{01}\right)\right] 
\in \hdccppk{p-d}{n}{\W}{\W'}.
\]
\end{example}

\
An auxiliary function $\varphi$
is involved in the definition of  integration along fibers, however
we can eliminate it
 in some special cases as we will see.
Let $N$ and $N_1$ be real analytic manifolds with
 $\dim N = n$ and $\dim N_1 = d$,
and let $Y$ and $Y_1$ be their complexifications.
Set $X = Y_1 \times Y$, $M = N_1 \times N$ and $m = d + n$. 
We take $\tilde f$ to be  the canonical projection $X = Y_1 \times Y\to Y$ and orient the various space involved
according to Conventions 1 and 2, regarding $Y_{1}$ as the fiber. 
Let $K_{1}$ be a compact set  in $Y_1$ and set
$K =K_1\times Y$.
We may choose a compact set $D_1$ in $Y_1$ with 
 $C^\infty$ boundary $\partial D_1$ containing $K_1$ in its interior.
Further, we may assume 
that $\varphi$ depends only on the variables in $Y_1$ and
\[
K = K_1\times Y \subset \op{Supp}(\varphi) 
\subset  \op{Int}(D_1) \times Y.
\]
For any differential form $\kappa$ on $X$, we denote by $\int_{D_1} \kappa$
the partial integration 
of $\kappa$ with respect to the variables in $Y_1$ on the domain $D_1$.

Now we have
\[
\begin{aligned}
\left(\int_{\tilde f}  \varphi \tau_1 + \bar{\partial}_X\varphi \wedge \tau_{01},\,\,
\int_{\tilde f}  \varphi \tau_{01}\right)
&=
\left(\int_{D_1} \varphi \tau_1 + \bar{\partial}_X\varphi \wedge \tau_{01},\,\,
\int_{D_1} \varphi \tau_{01}\right) \\
&=\left(\int_{D_1} \tau_1 - \bar{\partial}_X((1 - \varphi)\tau_{01}),\,\,
\int_{D_1} \varphi \tau_{01}\right).
\end{aligned}
\]
By applying Stokes' formula and noticing $\bar{\partial}_X = \bar{\partial}_{Y_1} + \bar{\partial}_Y$, the above is equal to
\[
\left(\int_{D_1} \tau_1 
-\int_{\partial D_1} \tau_{01}|_{\partial D_1 \times Y}
- \bar{\partial}_{Y} \int_{D_1} (1 - \varphi) \tau_{01},\,\,
\int_{D_1} \varphi \tau_{01}\right).
\]
Since $\int_{D_1} (1 - \varphi) \tau_{01}$ is a $C^\infty$ form on the total
space $Y$, by adding the coboundary $\bar{\vartheta}_Y \big(\int_{D_1} (1 - \varphi) \tau_{01},0\big)$ 
to the above expression,
we have the following proposition\,:
\begin{proposition}
In the situation  above, the integration 
of $[(\tau_1,\,\tau_{01})] \otimes a_{X}$
along fibers of $f$ is given by
\[
\left[\left(\int_{D_1} \tau_1 
-\int_{\partial D_1} \tau_{01}|_{\partial D_1 \times Y},\,\,
\int_{D_1} \tau_{01}\right)\right]
\otimes
\int_f a_{X}.
\]
\end{proposition}

We finish this subsection by clarifying the relation between the external product and the integration.
Let $M_1$ and $M_2$ be real analytic manifolds.

\begin{proposition}
	Let $u_k \otimes v_k \in (\scB_{M_k}\otimes_{{}_{\scA_{k}}}\scV_{M_k})(M_k)$, 
$k=1,2$, with $u_k$ having a compact support in $M_k$.
Then we have
\[
\int_{M_1 \times M_2} (u_1 \times u_2) \otimes (v_1 \wedge v_2)
=
\left(\int_{M_1} u_1 \otimes v_1\right) \,\,
\left(\int_{M_2} u_2 \otimes v_2\right).
\]
\end{proposition}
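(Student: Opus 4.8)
The plan is to reduce everything to an explicit computation with the representative of the external product obtained above, together with the concrete description of $\int_{M}$ on relative Dolbeault classes. First I would dispose of the orientation sheaves: since each $M_{k}$ is orientable I fix orientations, split off the integer factors $n(a)$ attached to the density parts exactly as in the definition of $\int_{M}$, and verify at the end that the product of these factors matches the factor for $M_{1}\times M_{2}$ under the convention of Example \ref{rem:standard-integration}. After this reduction it suffices to prove the identity for the underlying hyperform densities $u_{k}\cdot v_{k}$, which are classes in $\SHhyperf{(n_k)}$. Because each $u_{k}$ has compact support $K_{k}=\op{Supp}(u_{k})$, I may replace $X_{k}$ by a Stein complex \nbd\ $V_{k}$ of $K_{k}$ (Grauert's Theorem \ref{thGrauert}) together with excision; then each class is represented by a cocycle of the simple form $\tau_{k}=(0,\tau_{k,01})$ with $\tau_{k,01}$ a $\bp$-closed $(n_{k},n_{k}-1)$-form on $V_{k}\ssm M_{k}$ supported near $K_{k}$.

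Next I would insert the explicit representative of the external product. In the Stein case with $\ell=2$ this is $\kappa=(0,\kappa_{01})$ with $\kappa_{01}=(-1)^{e}\,\bp\varphi_{1}\wedge\tau_{1,01}\wedge\tau_{2,01}$, where $\varphi_{1}+\varphi_{2}=1$ on $V_{0}=X\ssm M$ and $\op{Supp}(\varphi_{k})\subset\pi_{k}^{-1}(X_{k}\ssm M_{k})$. For the integration I take product domains $R_{1}=R_{1}^{(1)}\times R_{1}^{(2)}$, where $R_{1}^{(k)}$ is a compact $2n_{k}$-real-dimensional domain with $C^{\infty}$ boundary in $V_{k}$ containing $K_{k}$ in its interior, and $R_{01}^{(k)}=-\partial R_{1}^{(k)}$. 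Since $\kappa_{1}=0$, the definition of the integral gives $\int_{M_{1}\times M_{2}}(u_{1}\times u_{2})=\int_{R_{01}}\kappa_{01}$ with $R_{01}=-\partial R_{1}$, so the whole content reduces to evaluating $\int_{-\partial R_{1}}\bp\varphi_{1}\wedge\tau_{1,01}\wedge\tau_{2,01}$.

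The evaluation I would carry out by a Stokes argument adapted to the product. Choosing $\varphi_{1}$ to depend only on a defining function of $M_{1}$ in the first factor (so that $\varphi_{2}$ is supported near $M_{2}$ in the second factor), the $\bp$-closedness of the $\tau_{k,01}$ lets me rewrite $\bp\varphi_{1}\wedge\tau_{1,01}\wedge\tau_{2,01}=\bp(\varphi_{1}\,\tau_{1,01}\wedge\tau_{2,01})$ on the region where the product is defined. Then splitting $\partial R_{1}=\partial R_{1}^{(1)}\times R_{1}^{(2)}\pm R_{1}^{(1)}\times\partial R_{1}^{(2)}$ and applying Stokes and Fubini, the term carrying $\bp\varphi_{1}$ localizes the first integration to $\partial R_{1}^{(1)}$ while $\tau_{2,01}$ integrates over $\partial R_{1}^{(2)}$, reproducing $\int_{R_{01}^{(1)}}\tau_{1,01}\cdot\int_{R_{01}^{(2)}}\tau_{2,01}$. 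By the Stein form of the integral for each factor this is exactly $(\int_{M_{1}}u_{1}\otimes v_{1})(\int_{M_{2}}u_{2}\otimes v_{2})$. As a consistency check the same conclusion follows conceptually by realizing $\int_{M_{1}\times M_{2}}$ as the fiber integration along $M_{1}\times M_{2}\to M_{2}$ of the preceding proposition followed by $\int_{M_{2}}$, or via Corollary \ref{functional} by interpreting each $\int_{M_{k}}$ as the Martineau pairing against the test density $1$ and using that the external product corresponds to the product of functionals.

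I expect the main obstacle to be the sign and orientation bookkeeping rather than the analysis. Three independent sources of signs must be reconciled: the combinatorial constant $(-1)^{e}$ in the external product representative, the signs produced by Stokes' theorem and by the product orientation of $R_{1}^{(1)}\times R_{1}^{(2)}$, and the integer factors $n(a)$ coming from the density and relative orientation tensor factors. Showing that these cancel to leave exactly the product with coefficient $+1$, consistently with the orientation conventions fixed in Example \ref{rem:standard-integration}, is the delicate point; the analytic steps — reduction to the Stein case, the choice of product domains, and the single Stokes integration — are routine once the representative above is in hand.
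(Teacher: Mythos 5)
The paper states this proposition without proof (it is left to the reader, right after the fiber‑integration machinery), so there is no official argument to compare with; judged on its own terms, your proposal has a genuine gap at its foundation.

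The gap is in your choice of representatives: you require $\tau_k=(0,\tau_{k,01})$ with $\tau_{k,01}$ $\bp$-closed on $V_k\ssm M_k$ \emph{and} supported near $K_k$, and these two requirements are incompatible in general. Already for the Dirac delta ($M_1=\R$, $K_1=\{0\}$): in a representative $(0,\sigma_{01})$ the cocycle condition forces $\sigma_{01}$ to be holomorphic on $\C\ssm\{0\}$, and if its support lay in $\{0<|z|\le 1\}$ it would vanish on the open set $\{|z|>1\}$, hence identically by the identity theorem, giving $\delta=0$. In higher dimension the same conclusion follows from the duality of Theorem \ref{thMH}: a class $(0,\sigma_{01})$ with $\sigma_{01}$ $\bp$-closed and supported in a compact \nbd\ of $K_k=\{0\}$ pairs to zero against every polynomial (choose $R_1$ containing the support in its interior), and polynomials are dense in the germs, so the class is zero. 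Steinness buys you a vanishing first component, and the cutoff trick buys you supports near $K_k$, but never both at once: localizing the support necessarily produces a nonzero first component $\bp\chi_k\wedge\tau_{k,01}$.

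This is fatal to the computation you actually carry out. Without the support control, your central reduction $\int_{M_1\times M_2}(u_1\times u_2)=\int_{R_{01}}\kappa_{01}$ with $R_{01}=-\partial(R_1^{(1)}\times R_1^{(2)})$ does not parse: $\kappa_{01}=(-1)^{e}\,\bp\varphi_1\wedge\tau_{1,01}\wedge\tau_{2,01}$ is defined only on $X\ssm M$, whereas $\partial R_1$ meets $M$ (for noncompact connected $M_1$ with $K_1\ne\emptyset$ the boundary $\partial R_1^{(1)}$ must cross $M_1$, so $(\partial R_1^{(1)}\cap M_1)\times(R_1^{(2)}\cap M_2)\ne\emptyset$). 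Since representatives of hyperfunctions can have arbitrary, non-tempered growth near $M$, this integral is in general divergent, not merely unjustified. The paper's definition of $\int_M$ requires first lifting the class to $H^{n,n}_{\bar\vt}(X,X\ssm K)$, i.e.\ a representative whose $01$-component is defined on all of $X\ssm K$; that lift is exactly the step your reduction skips, and supplying it (via the localization $\hat\tau=(\varphi\tau_1+\bp\varphi\wedge\tau_{01},\,\varphi\tau_{01})$ of Subsection \ref{subsection:integration}) reinstates nonzero first components, so the external product must be taken with all terms of the $\ell=2$ formula and the Stokes bookkeeping redone. The viable route is in fact the one you relegate to a ``consistency check'': project along $f:X_1\times X_2\to X_2$, apply the product-case fiber-integration proposition to get $\int_f\bigl((u_1\times u_2)\otimes(v_1\wedge v_2)\bigr)=\bigl(\int_{M_1}u_1\otimes v_1\bigr)\,u_2\otimes v_2$, and then integrate over $M_2$; given where the proposition sits in the paper, this is presumably the intended proof, while the route you develop in detail is the one that fails.
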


\lsection{Embedding of  distributions to the space of hyperfunctions}\label{secembdist}
In this section, we study embedding of  distributions to the space of
hyperfunctions. Let $M=\R^n$ and $X=\C^n$, and
let $u$ be a distribution on an open set $U$ in $M$. Then, in our theory,
a representative of $u$ as a hyperfunction is given by the following theorem\,:
\begin{theorem}{\label{thm:formula_embedding}}
The pair
\begin{equationth}{\label{eq:formula_embedding}}
\begin{aligned}
&\left(\int_{U} (\theta\tau_1 + \bar{\partial}_z\theta \wedge \tau_{01})(z-t,t)\,
u(t)\, dt,\,\,
\int_{U} (\theta \tau_{01})(z - t,t)\, u(t)\,dt \right) \\
&\qquad\qquad\qquad\qquad\qquad \in \scE^{(0,n)}(V_1) \oplus \scE^{(0,n-1)}(V_{01})
\end{aligned}
\end{equationth}
is well-defined and   represents the hyperfunction image of $u$.
Here $\theta(z,t)$ is a $C^\infty$ function on $X\times U$ such that
the conditions {\rm (1)}~and {\rm (2)}~before Lemma \ref{lem:lemma_inverse} hold
for $K=\{0\}$ and an open set $T$ satisfying {\eqref{eq:condition_for_T}},
and $(\tau_1(z), \tau_{01}(z))$ is a representative of Dirac's delta density,
for example, we may choose the one of Bochner-Martinelli type $($cf. Definition \ref{defdf}$)$.
\end{theorem}
Note that the precise definition of the above integrations in the theorem
is given in {\eqref{eq:def-integral-of-distribution}}.

Traditionally, embedding of a distribution $u$ was realized by taking
a convolution of $u$ with the Cauchy kernel $K(z)$. To make the integration converge,
we first decompose $u$ as a sum $\sum_\lambda u_\lambda$ of distributions $u_\lambda$'s
with compact support. Unfortunately, 
since the support of each $\langle u_\lambda(t), K(z-t)\rangle$ is not compact,
the sum $\sum_\lambda \langle u_\lambda(t), K(z-t)\rangle$ itself 
does not converge in general.
Therefore, to obtain an embedding image of $u$,
we should sum up these terms as an element of quotient classes, i.e., 
we consider the locally finite sum $\sum_\lambda [\langle u_\lambda(t), K(z-t)\rangle]$
in the space of hyperfunctions.
Hence, in the traditional approach, we cannot directly obtain a representative of an embedding image of $u$.
As we have already seen, our formula \eqref{eq:formula_embedding} gives globally
a representative of the embedding image of a distribution, which
is one of advantages in our framework. 

In the subsequent arguments, we establish the above theorem and
several properties of the embedding morphism, in particular, it is canonical.

\

Let $M = \R^n$ and $X = \C^n$. We denote by $\mathscr{O}\mathscr{E}^{(p;r)}$ the sheaf on $X \times M$ of  $C^\infty$ forms on $X \times M$ 
that are holomorphic $p$-forms in the complex variables $z$ of $X$ and
$r$-forms in the variables $t$ of $M$, i.e., they have the form
\[
\sum_{|I|=p,\,|J|=r} f^{I,J}(z,t) dz_I dt_J,
\]
where each $f^{I,J}(z,t)$ is a $C^\infty$ function that is holomorphic in $z$.
Let $U$ be an open set in $M$ and $K$ a compact set in $X$. Let also $\pi: X \times U \to X$ be the canonical projection.
Then the canonical sheaf morphism $\pi^{-1} \mathscr{O}^{(n)} \to \mathscr{O}\mathscr{E}^{(n;0)}$ induces the canonical one
\[
\iota: H^n_K(X; \mathscr{O}^{(n)}) \longrightarrow
H^n_{K\times U}(X \times U; \mathscr{O}\mathscr{E}^{(n;0)}).
\]

Let $\mathscr{E}_{X\times M}^{(p,q;r)}$ denote the
sheaf of $C^\infty$ forms on $X \times M$ that
are $(p,q)$-forms in the variables $z$ of $X$ and
$r$-forms in the variables $t$ of $M$.
Note that $\mathscr{E}_{X\times M}^{(p,q;r)}$ is fine. 
\begin{theorem}
Let $\Omega$ be a Stein open set in $X$ and $U$ an open set in $M$.
Then the following complex is exact\,{\rm :}
\[
0 \to \mathscr{O}\mathscr{E}^{(p;r)}(\Omega \times U)
\to 
\mathscr{E}_{X\times M}^{(p,0;r)}(\Omega \times U)
\overset{\bar{\partial}_z}{\to}
\mathscr{E}_{X\times M}^{(p,1;r)}(\Omega \times U)
\overset{\bar{\partial}_z}{\to}
\cdots
\overset{\bar{\partial}_z}{\to}
\mathscr{E}_{X\times M}^{(p,n;r)} 
(\Omega \times U)
\to 0.
\]
In particular, the $\bar{\partial}_z$-complex $\mathscr{E}_{X \times M}^{(p,\bullet;r)}$
is a fine resolution of $\mathscr{O}\mathscr{E}^{(p;r)}$.
\end{theorem}
\begin{proof}
	Since all the spaces $\mathscr{O}^{(p)}(\Omega)$, $\mathscr{E}^{(p,q)}(\Omega)$,
	$\mathscr{E}^{(r)}(U)$,
$\mathscr{O}\mathscr{E}^{(p;r)}(\Omega \times U)$ and
$\mathscr{E}_{X \times M}^{(p,q;r)}(\Omega \times U)$ are {\bf{FN}} 
(Fr\'echet-Nuclear topological vector spaces) and
since we have the following formulas for the topological tensor products on these spaces
\[
\mathscr{O}^{(p)}(\Omega) \widehat{\otimes} 
\mathscr{E}^{(r)}(U) =
\mathscr{O}\mathscr{E}^{(p;r)}(\Omega \times U),\quad
\mathscr{E}^{(p,q)}(\Omega) \widehat{\otimes} 
\mathscr{E}^{(r)}(U) =
\mathscr{E}_{X \times M}^{(p,q;r)}(\Omega \times U),
\]
by noticing that the topological tensor product is an exact functor
on {\bf{FN}},
the claim follows from the usual vanishing theorem of Oka. 
\end{proof}
Let us consider the coverings 
$\mathcal{S}_K = \{S_0 = X \ssm K,\, S_1 = X\}
$
and
$\mathcal{S}'_K = \{S_0\}$
of the pair $(X,\, X \ssm K)$.
Consider also the coverings
$\mathcal{S}_K \times U = \{S_0 \times U,\, S_1 \times U\}$ and
$\mathcal{S}'_K \times U = \{S_0 \times U\}$
of $(X \times U,\, (X \ssm K) \times U)$.
Hereafter, we sometime denote $(\mathcal{S}_K \times U, \mathcal{S}'_K \times U)$
by $(\mathcal{S}_K, \mathcal{S}'_K) \times U$.

For a complex $\scF^\bullet$ of fine sheaves on $X$, we define a complex
$C(\mathcal{S}_K,\mathcal{S}'_K)(\scF^\bullet)$ as in the case of  Dolbeault complex.
Namely its $q$-th degree term is given by
\[
C^q(\mathcal{S}_K,\mathcal{S}'_K)(\scF^\bullet) =
\scF^q(S_1) \oplus \scF^{q-1}(S_{01})
\]
and the differential is defined as before. For a complex $\scG^\bullet$ of fine shaves on $X \times U$,
the complex
$C((\mathcal{S}_K,\mathcal{S}'_K) \times U)(\scG^\bullet)$ 
is similarly defined.
Then, by the above theorem and the same reasoning as that for the relative Dolbeault cohomology,
we have the isomorphisms
\[
\begin{aligned}
&H^n_K(X;\mathscr{O}^{(n)}) \simeq 
H^n(C(\mathcal{S}_K,\mathcal{S}'_K)(\mathscr{E}^{(n,\bullet)})), \\
&H^n_{K \times U}(X\times U;\mathscr{O}\mathscr{E}^{(n;0)}))
\simeq H^n(C((\mathcal{S}_K,\mathcal{S}'_K) \times U)
(\mathscr{E}^{(n,\bullet;0)}_{X \times M})).
\end{aligned}
\]
Hence an element in $H^n_{K \times U}(X\times U;\mathscr{O}\mathscr{E}^{(n;0)}))$
is represented by 
\[
(\tau_1(z,t),\,\tau_{01}(z,t))
\in \mathscr{E}^{(n,n;0)}_{X \times M}(S_1 \times U) \oplus 
\mathscr{E}^{(n,n-1;0)}_{X \times M}(S_{01} \times U)
\]
satisfying $\tau_1 = \bar{\partial}_z \tau_{01}$ on $(X \ssm K) \times U$.
Under these identifications, the canonical morphism
\[
\iota: H^n_K(X; \mathscr{O}^{(n)}) \longrightarrow
H^n_{K\times U}(X \times U; \mathscr{O}\mathscr{E}^{(n;0)})
\]
is just regarding a representative
$(\tau_1(z), \tau_{01}(z))$ of an element in
$H^n_K(X; \mathscr{O}^{(n)})$ as that in
$H^n_{K\times U}(X \times U; \mathscr{O}\mathscr{E}^{(n;0)})$.
Furthermore, by fixing $t$ to some point $t_0 \in U$, 
we can easily see\,:
\begin{lemma}
The above morphism $\iota$ is injective.
\end{lemma}

Let $T$ be an open set in $X \times U$containing $K \times U$ with
$j_T: T \hookrightarrow X \times U$ 
the open embedding.
For any sheaf $\mathscr{F}$, let us consider
the subsheaf ${j_T}_!{j_T}^{-1} \mathscr{F} \subset \mathscr{F}$.
By  definition, 
\[
({j_T}_!{j_T}^{-1}\mathscr{F})(W)
= \{\,s \in \mathscr{F}(W)\mid\, \operatorname{Supp}_W(s) \subset T\,\}
\]
for any open set $W \subset X \times U$, in particular, we have
$({j_T}_!{j_T}^{-1}\mathscr{F})|_T = \mathscr{F}|_T$.

Since ${j_T}_!{j_T}^{-1} \mathscr{E}_{X\times M}^{(p,q;r)}$ is a $C^\infty$-module,
it is a fine sheaf. Furthermore, since ${j_T}_!{j_T}^{-1}$ is an exact functor,
the complex ${j_T}_!{j_T}^{-1}\mathscr{E}_{X\times M}^{(n,\bullet;0)}$ is
a fine resolution of ${j_T}_!{j_T}^{-1}\mathscr{O}\mathscr{E}^{(n;0)}$.
Hence, we have the isomorphism
\[
H^n_{K \times U}(X \times U; {j_T}_!{j_T}^{-1} \mathscr{O}\mathscr{E}^{(n;0)})
\simeq
H^n(C( (\mathcal{S}_K, \mathcal{S}'_K) \times U)
({j_T}_!{j_T}^{-1}\mathscr{E}_{X\times M}^{(n,\bullet;0)})).
\]
This implies that
an element in $H^n_{K \times U}(X\times U;{j_T}_!{j_T}^{-1}\mathscr{O}\mathscr{E}^{(n;0)}))$
is represented by 
\[
(\tau_1(z,t),\,\tau_{01}(z,t))
\in \mathscr{E}^{(n,n;0)}_{X \times M}(S_1 \times U) \oplus 
\mathscr{E}^{(n,n-1;0)}_{X \times M}(S_{01} \times U)
\]
which satisfies $\tau_1 = \bar{\partial}_z \tau_{01}$ on $(X \ssm K) \times U$
and the support condition
\begin{equationth}{\label{eq:support_condition_T}}
\operatorname{Supp}_{X \times U}(\tau_1) \subset T,\quad
\operatorname{Supp}_{(X\ssm K) \times U}(\tau_{01}) \subset T.
\end{equationth}
Further, the canonical morphism (induced from the one 
${j_T}_!{j_T}^{-1} \mathscr{O}\mathscr{E}^{(n;0)} \to \mathscr{O}\mathscr{E}^{(n;0)}$)
\[
\begin{matrix}
H^n_{K \times U}(X \times U; {j_T}_!{j_T}^{-1} \mathscr{O}\mathscr{E}^{(n;0)})
&\longrightarrow
&H^n_{K \times U}(X \times U; \mathscr{O}\mathscr{E}^{(n;0)}) \\
\text{\rotatebox{90}{$\in$}} & & \text{\rotatebox{90}{$\in$}} \\
[(\tau_1(z,t), \tau_{01}(z,t))]
&
\longmapsto
&
[(\tau_1(z,t), \tau_{01}(z,t))]
\end{matrix}
\]
is isomorphic. In fact, we have
\[
\begin{aligned}
&H^n_{K \times U}(X \times U; {j_T}_!{j_T}^{-1} \mathscr{O}\mathscr{E}^{(n;0)})
\simeq
H^n_{K \times U}(T; {j_T}_!{j_T}^{-1} \mathscr{O}\mathscr{E}^{(n;0)}) \\
&\qquad\simeq
H^n_{K \times U}(T; \mathscr{O}\mathscr{E}^{(n;0)})
\simeq
H^n_{K \times U}(X \times U; \mathscr{O}\mathscr{E}^{(n;0)}).
\end{aligned}
\]
Here the first and third isomorphisms are due to excision 
and the second morphism is isomorphic thanks to
$({j_T}_!{j_T}^{-1}\mathscr{O}\mathscr{E}^{(n;0)})|_T = 
\mathscr{O}\mathscr{E}^{(n;0)}|_T$.

Summing up, we have the commutative diagram whose arrows are all isomorphisms.
\[
\SelectTips{cm}{}
\xymatrix
@C=.7cm
@R=.7cm
{H^n_{K \times U}(X \times U; {j_T}_!{j_T}^{-1}\mathscr{O}\mathscr{E}^{(n;0)})\ar[r]^-{\sim}& H^n_{K \times U}(X \times U;\mathscr{O}\mathscr{E}^{(n;0)}) \\
H^n(C( (\mathcal{S}_K, \mathcal{S}'_K) \times U)
({j_T}_!{j_T}^{-1}\mathscr{E}_{X\times M}^{(n,\bullet;0)}))\ar[u]^-{\wr}\ar[r]^-{\sim}& H^n(C( (\mathcal{S}_K, \mathcal{S}'_K) \times U)
(\mathscr{E}_{X\times M}^{(n,\bullet;0)}))\ar[u]^-{\wr}.}
\]
This also implies\,:
\begin{lemma}{\label{lem:support_dirac}}
Let $T$ be an open set in $X \times U$ containing $K \times U$.
Then, any element in $H^n_{K \times U}(X \times U;\mathscr{O}\mathscr{E}^{(n;0)})$
has a representative 
\[
(\tau_1(z,t), \tau_{01}(z,t)) \in
C^n( (\mathcal{S}_K, \mathcal{S}'_K) \times U)(\mathscr{E}_{X\times M}^{(n,\bullet;0)})
\]
which satisfies the support condition {\eqref{eq:support_condition_T}}.
\end{lemma}

Now we give a concrete method to obtain such a representative appearing in the above lemma.
Let $\theta(z,t)$ be a $C^\infty$-function on $X \times U$ satisfying that
\begin{enumerate}
\item[(1)] $\theta(z,t)$ is identically $1$ in a neighborhood of $K \times U$.
\item[(2)] $\operatorname{Supp}(\theta) \subset T$.
\end{enumerate}
\begin{lemma}{\label{lem:lemma_inverse}}
The inverse of the above isomorphism
$H^n_{K \times U}(X \times U; {j_T}_!{j_T}^{-1} \mathscr{O}\mathscr{E}^{(n;0)})
\rightarrow
H^n_{K \times U}(X \times U; \mathscr{O}\mathscr{E}^{(n;0)})$
is given by
\[
[(\tau_1, \tau_{01})] \longmapsto
[(\theta \tau_1 + \bar{\partial}_z \theta \wedge \tau_{01},\, \theta \tau_{01})]
\in H^n_{K \times U}(X \times U; {j_T}_!{j_T}^{-1} \mathscr{O}\mathscr{E}^{(n;0)}).
\]
\end{lemma}
\begin{proof}
This comes from the fact
\[
(\tau_1, \tau_{01}) - 
(\theta \tau_1 + \bar{\partial}_z \theta \wedge \tau_{01},\, \theta \tau_{01})
= \bar{\vartheta} ( (1 - \theta) \tau_{01}, 0).
\]
\end{proof}


In what follows, we assume $K = \{0\}$. 
We first take an open set $T$ in $X \times U$ satisfying 
\begin{equationth}{\label{eq:condition_for_T}}
\{0\} \times U \,\subset\,
T \,\subset\,
\{\,(z,t) \in X \times U\mid\, |z| < 3^{-1} \min\{1,\,\operatorname{dist}(t, M \ssm U)\}\,\}.
\end{equationth}
Here we set $\operatorname{dist}(t, \emptyset) = +\infty$.
Let $i: M \hookrightarrow X$ be the canonical embedding.
Set 
\[
V_U = U \times \sqrt{-1} \R^n \subset X.
\]
Then the following lemma easily follows from 
{\eqref{eq:condition_for_T}}\,:
\begin{lemma}{\label{lem:compact_support}}
For any compact set $L$ in $V_U \ssm U \subset X \ssm M$, the following set is compact in $U$\,{\rm :}
\[
\overline{\underset{z \in L}{\bigcup} \{\,t \in U\mid \, (z - t,t) \in T \,\}}.
\]
\end{lemma}

Let $u \in \mathscr{D}b(U)$, that is, $u$ is an element of
the topological dual of the space 
\[
\vG_c(U;\mathscr{E}_M^{(n)}) \otimes_{{}_{\Z(U)}} or_M(U) =
\vG_c(U;\mathscr{E}_M) \otimes_{{}_{\mathscr{A}(U)}} \scV_M(U), 
\]
where $\scV_M$ is the sheaf of density on $M$.
Recall that $i$ denotes the canonical inclusion $M \hookrightarrow X$. 
In what follows, the canonical inclusion $M \times M \hookrightarrow X \times M$ 
is also denoted by $i$.
Let $\pi: X \times M \to X$ be the canonical projection.
For any $\varphi \in \mathscr{E}^{(n,q;0)}_{X \times M} \otimes_{ {}_{\Z_{X \times M}}} i_*i^{-1}\pi^{-1}or_X$,
we formally define  $u * \varphi = \langle u, \varphi(z-t,t) \rangle$ 
in the following way\,: Fix the coordinates $x$ and $z = x + \sqrt{-1}y$ of 
$M$ and $X$, respectively, and assume $\varphi$ has a form
\[
\varphi = \sum_{|J|=q} \varphi^J(z,t)\, d\bar{z}_J \wedge dz \otimes a_X
\in \mathscr{E}^{(n,q;0)}_{X\times M} \otimes_{ {}_{\Z_{X\times M}}} i_*i^{-1}\pi^{-1}or_X,
\]
where $dz = dz_1 \wedge \cdots \wedge dz_n$.
Then we define
\begin{equationth}{\label{eq:def-integral-of-distribution}}
u * \varphi = \langle u(t),\, \varphi(z-t,t) \rangle =
\sum_{|J|=q} \langle u(t),\, \varphi^J(z-t,t) dt \otimes e_M\rangle\, d\bar{z}_J
\otimes (a_X \otimes e_M^{-1}).
\end{equationth}
Here $e_M$ is a section in $or_M(U)$ which generates $or_{M,x}$ over $\Z$ 
at each  $x\in U$ and, through the identification
$i^{-1} or_X \simeq or_{M/X} \otimes_{ {}_{\Z_M}} or_M$, we regard 
$a_X \otimes e_M^{-1} \in (i^{-1}or_X \otimes or_M^{-1})(U)$ as a section in  $or_{M/X}(U)$.
Set the coverings
$\mathcal{V}_U = \{V_0 = V_U \ssm U,\, V_1 = V_U\}$,
$\mathcal{V}'_U = \{V_0\}$,
where $V_U = U \times \sqrt{-1}\R^n \subset X$.
Then, for 
\[
\mu \otimes a_X = (\mu_1 \otimes a_X, \mu_{01} \otimes a_X) 
\in C^q((\mathcal{S}_{\{0\}}, \mathcal{S}'_{\{0\}}) \times U) 
(j_{T!}{j_T}^{-1} \mathscr{E}^{(n,\bullet;0)}_{X\times M}) 
\otimes_{ {}_{\Z(X \times U)}} i^{-1}or_X(M),
\]
it follows from Lemma {\ref{lem:compact_support}} that
$u * (\mu_1 \otimes a_X)$ and
$u * (\mu_{01} \otimes a_X)$ are well-defined, and they belong to
$\mathscr{E}^{(0,q)}(V_1) \otimes or_{M/X}(U)$
and
$\mathscr{E}^{(0,q-1)}(V_{01}) \otimes or_{M/X}(U)$, respectively.
That is,
\[
u * (\mu \otimes a_X) = 
(u * (\mu_1 \otimes a_X),\, u * (\mu_{01}  a_X)) 
\in \mathscr{E}^{(0,q)}(\mathcal{V}_U,\,\mathcal{V}'_U) 
\otimes_{ {}_{\Z(U)}} or_{M/X}(U).
\]

Let
$h \otimes a_X
\in H^n_{\{0\} \times U}(X \times U;\mathscr{O}\mathscr{E}^{(n;0)}) 
\otimes_{ {}_{\Z(X \times U)}} i^{-1}or_X(M)$.
It follows from Lemma {\ref{lem:support_dirac}} that
we find a representative 
\[
\tau = (\tau_1(z,t), \tau_{01}(z,t))
\in \mathscr{E}^{(n,n;0)}_{X \times M}(X \times U) 
\oplus \mathscr{E}^{(n,n-1;0)}_{X \times M}((X \ssm \{0\}) \times U)
\] 
of $h$ which satisfies
\begin{equationth}{\label{eq:support_estimate}}
\operatorname{Supp}_{X \times U}(\tau_1) \subset T,\quad
\operatorname{Supp}_{(X\ssm \{0\}) \times U}(\tau_{01}) \subset T.
\end{equationth}
\begin{lemma}
The morphism 
\[
u\,*\,(\bullet) :
C^\bullet((\mathcal{S}_{\{0\}}, \mathcal{S}'_{\{0\}}) \times U) 
(j_{T!}{j_T}^{-1} \mathscr{E}^{(n,\bullet;0)}_{X\times M}) \otimes i^{-1}or_X(M)
\longrightarrow
\mathscr{E}^{(0,\bullet)}(\mathcal{V}_U,\,\mathcal{V}'_U) \otimes or_{M/X}(U)
\]
is that of complexes.
In particular, 
$[u * (\tau \otimes a_X)]$ defines a hyperfunction on $U$
and it does not depend on the choices of a representative 
$\tau = (\tau_1(z,t),\,\tau_{01}(z,t))$ of $h$.
\end{lemma}
\begin{proof}
For  fixed coordinates,
we set 
\[
	\varphi=\sum_{|J|=q}\varphi(z,t)^{J}d\bar{z}_J\wedge dz\in j_{T!}{j_T}^{-1} \mathscr{E}^{(n,q;0)}_{X \times M}.
\]
Then it is easy to check
\begin{align*}
u* \bar\partial_z(\varphi\otimes a_X) &=u* \Bigl( \sum_{|J|=q} \bar\partial_z\varphi^{J}\wedge d\bar{z}_J\wedge dz \otimes a_X \Bigr)\\ 
&= \sum_{|J|=q} \sum_{j=1}^n \langle u,\,  \frac{\partial \varphi^{J}}{\partial \bar{z}_j}(z-t,t)dt\otimes e_M \rangle\ d\bar{z_j}\wedge d\bar{z}_J
\otimes (a_X \otimes e_M^{-1})\\ 
&= \sum_{|J|=q} \left(\sum_{j=1}^n \frac{\partial}{\partial \bar{z}_j}\langle  u,\, \varphi^{J}(z-t,t)dt\otimes e_M\rangle d\bar{z}_j\right)\wedge d\bar{z}_J
\otimes (a_X \otimes e_M^{-1})\\ 
&= \bar\partial\left(u*(\varphi\otimes a_X )\right).
\end{align*}
Now if we set 
\[
 \tau\otimes a_X=(\tau_1\otimes a_X,\, \tau_{01}\otimes a_X)\in C^q((\mathcal{S}_{\{0\}}, \mathcal{S}'_{\{0\}}) \times U) 
 (j_{T!}{j_T}^{-1} \mathscr{E}^{(n,\bullet;0)}_{X \times M}) \otimes i^{-1}or_X(M),
\]
the lemma follows from the following commutativity;
\begin{align*}
u*\bar\vartheta_z (\tau\otimes a_X) &= u*(\bar\partial_z\tau_1\otimes a_X,\, (\tau_1-\bar\partial_z\tau_{01})\otimes a_X))\\
&=( \bar\partial (u*(\tau_1\otimes a_X)),\,  u*(\tau_1\otimes a_X) -\bar\partial (u*(\tau_{01}\otimes a_X))\\
&=\bar\vartheta(u*(\tau\otimes a_X)).
\end{align*}
\end{proof}

Let
$h \otimes a_X
\in H^n_{\{0\} \times U}(X \times U;\mathscr{O}\mathscr{E}^{(n;0)}) 
\otimes_{ {}_{\Z(X \times U)}} i^{-1}or_X(M)$ 
and $\tau$  a representable of $h$.
Then thanks to the lemma, hereafter, we write $[u * (h \otimes a_X)]$ 
instead of $[u * (\tau \otimes a_X)]$.

\begin{lemma}{\label{lem:local_emb_db}}
We have $\operatorname{Supp}([u * (h \otimes a_X)]) \subset \operatorname{Supp}(u)$.
\end{lemma}
\begin{proof}
We forget the orientation for simplicity.
For $\epsilon > 0$, take an open set $T_\epsilon$ with
\[
\{0\} \times U \,\subset\,
T_\epsilon \,\subset\,
\{\,(z,t) \in X \times U\mid\, |z| < 3^{-1} \min\{\epsilon,\,\operatorname{dist}(t, M \ssm U)\}\,\}.
\]
Choose a representative $\tau_\epsilon = (\tau_{1,\epsilon}, \tau_{01, \epsilon})$
of $h$ with the condition
\[
\operatorname{Supp}_{X \times U}(\tau_{1,\epsilon}) \subset T_\epsilon,\quad
\operatorname{Supp}_{(X\ssm \{0\}) \times U}(\tau_{01,\epsilon}) \subset T_\epsilon.
\]
Then we can easily see that
\[
\operatorname{Supp}(u*\tau_{1,\epsilon}) \cup
\operatorname{Supp}(u*\tau_{01,\epsilon})
\subset
\{\,z = x + \sqrt{-1}y \in \mathbb{C}^n\mid 
\operatorname{dist}(x, \operatorname{Supp}(u)) \le \epsilon\,\},
\]
from which we have
\[
\operatorname{Supp}([u * h]) = 
\operatorname{Supp}([u * \tau_\epsilon])
\subset
\{\,x \in U\mid\operatorname{dist}(x, \operatorname{Supp}(u)) \le \epsilon\,\}.
\]
Since $\epsilon > 0$ is arbitrary, this completes the proof.
\end{proof}


Now we take $h \otimes a_X$ in the above argument
to be the image of Dirac's density 
$\delta(x)dx \otimes a_X
\in H^n_{\{0\}}(X; \mathscr{O}^{(n)}) \otimes_{ {}_{\Z(X)}} i^{-1}or_X(M)$ by $\iota$.
Note that $\delta(x)dx \otimes a_X$ 
is defined by the image of $1 \in \C$ by the canonical morphism
\[
\C \longrightarrow H^n_{\{0\}}(X; \mathscr{O}^{(n)}) \otimes_{ {}_{\Z(X)}} i^{-1}or_X(M),
\]
which is the topological dual of the continuous morphism
\begin{equationth}
	\begin{matrix}
	\mathscr{A}(\{0\}) = \underset{\{0\} \subset V}{\underset{\longrightarrow}{\lim}}\, \mathscr{O}(V)
&\longrightarrow 
&\C, \\
\text{\rotatebox{90}{$\in$}} & & \text{\rotatebox{90}{$\in$}} \\
\varphi
&
\longmapsto
&\varphi(0)
\end{matrix}.
\end{equationth}
We denote by the same symbol $\delta(x)dx \otimes a_X$ 
its image through the morphism $\iota \otimes \operatorname{id}:
H^n_{\{0\}}(X; \mathscr{O}^{(n)}) \otimes_{ {}_{\Z(X)}} i^{-1}or_X(M)\rightarrow
H^n_{\{0\}\times U}(X \times U; \mathscr{O}\mathscr{E}^{(n;0)}) 
\otimes_{ {}_{\Z(X \times U)}} i^{-1}or_X(M)$.

\begin{definition}
We define the map $\iota_{\mathscr{D}b(U)}: \mathscr{D}b(U) \to \mathscr{B}(U)$ by
\begin{equationth}
	\iota_{\mathscr{D}b(U)}(u) =[u * (\delta(x)dx \otimes a_X)].
\end{equationth}
\end{definition}
\begin{remark}
The definition of $u*\varphi$ given in {\eqref{eq:def-integral-of-distribution}}
depends on the choices of coordinates. However,
$\iota_{\mathscr{D}b(U)}$ is independent of such choices, which
will be shown later.
\end{remark}
Note that, by Lemma \ref{lem:local_emb_db}, $\iota_{\mathscr{D}b(U)}$ 
is a local operator, 
that is, we have 
\begin{equationth}
	\operatorname{Supp}(\iota_{\mathscr{D}b(U)}(u)) \subset \operatorname{Supp}(u)
	\qquad\text{for}\ \ u \in \mathscr{D}b(U)).
\end{equationth}
Using this local property, we can show
\begin{lemma}
For any open sets $U_1 \subset U_2$ in $M$, we have the following commutative
diagram{\rm \,:}
\[
\SelectTips{cm}{}
\xymatrix
@C=1cm
@R=.5cm
{\mathscr{D}b(U_2)\ar[d]\ar[r]^{\iota_{\mathscr{D}b(U_2)}} & 
\mathscr{B}(U_2)\ar[d] \\
\mathscr{D}b(U_1) \ar[r]^{\iota_{\mathscr{D}b(U_1)}} & 
\mathscr{B}(U_1).}
\]
In particular, $\iota_{\mathscr{D}b} = \{\iota_{\mathscr{D}b}(U)\}_U$ induces the sheaf morphism $\mathscr{D}b \to \mathscr{B}$.
\end{lemma}
\begin{proof}
Let $\tau^{1}$ (resp. $\tau^2$) be representatives of $\delta(x)dx$ on 
$X \times U_1$ (resp. $X \times U_2$)
with the support condition, where the corresponding open set $T$ is denoted by
$T_1$ (resp. $T_2$). Let $u \in \mathscr{D}b(U_2)$.
Since $\mathscr{B}$ is a sheaf, it is enough to show,
for any $W$ being relatively compact in $U_1$, 
\[
\iota_{\mathscr{D}b(U_2)}(u)|_{W} = \iota_{\mathscr{D}b(U_1)}(u|_{U_1})|_W.
\]
Let $\theta$ be a $C^\infty$-function on $X$ whose support
is contained in $U_1$ and which is identically $1$ on
a neighborhood of $\overline{W}$. By the local property of $\iota_{\mathscr{D}b}$,
we have
\[
\iota_{\mathscr{D}b(U_2)}(u)|_{W} = 
\iota_{\mathscr{D}b(U_2)}(\theta u)|_{W}, \qquad
\iota_{\mathscr{D}b(U_1)}(u|_{U_1})|_W =
\iota_{\mathscr{D}b(U_1)}((\theta u)|_{U_1})|_W.
\]
Hence, from the beginning, we may assume that the support of $u$
is contained in $U_1$.
Let $W'$ be a relatively compact open set in $U_1$ which contains
the support of $u$ and $\overline{W}$.
Then, we can take $\tau^1$ and $\tau^2$ so that $\tau^1 = \tau^2$ holds
on a neighborhood of $X \times \overline{W'}$ and their supports are contained in
the set
\[
\{\,(z,t) \in X \times U\mid\, |z| < \epsilon\,\}
\]
for a sufficiently small $\epsilon > 0$. For such $\tau^1$ and $\tau^2$,
if $\epsilon > 0$ is sufficiently small, 
we have, on $W$,
\[
\iota_{\mathscr{D}b(U_1)}(u|_{U_1}) = 
[u * (\tau^1 \otimes a_X)] = 
[u * (\tau^2 \otimes a_X)] = \iota_{\mathscr{D}b(U_2)}(u).
\]
\end{proof}
\begin{lemma}
For $u \in \mathscr{D}b(U)$ with a compact support,  we have
\[
\langle u,\, \psi \rangle = \langle \iota_{\mathscr{D}b(U)}(u),\, \psi \rangle
\]
for any $\psi \in 
(\mathscr{A}^{(n)} \otimes or_M)(U)$, where the left-side is the inner product of
a distribution with a compact support and the right-side is that of a hyperfunction.
\end{lemma}
\begin{proof}
Let $h$ be a real analytic function on $U$.
We also denote by $h$ its complexification.
We set $\psi=h(t)dt\otimes e_M$ on $U$.
Then for the delta current $\delta =[(\tau_1,\tau_{01})]$,
the lemma follows from 
\begin{align*}
\langle\ u,\  h(t)dt\otimes e_M\ \rangle
&= \langle\ u,\, \left( \int_{R_1} h(z)(\tau_1\otimes a_X) + \int_{R_{01}} h(z)(\tau_{01}\otimes a_X)\right)\, dt\otimes e_M \ \rangle\\
&= \int_{R_1} (u* (\tau_1\otimes a_X))\wedge h(z)dz\otimes e_M \\
&\hspace{4cm}+ \int_{R_{01}} (u* (\tau_{01}\otimes a_X))\wedge h(z)dz\otimes e_M\\
&= \int_X \iota_{\mathscr{D}b(U)}(u)\smallsmile (h(z)dz\otimes e_M)
= \langle\ \iota_{\mathscr{D}b(U)}(u),\  h(t)dt\otimes e_M\ \rangle.
\end{align*}
\end{proof}

\begin{corollary}
$\iota_{\mathscr{D}_b(U)}$ is independent of the choices of coordinates
in {\eqref{eq:def-integral-of-distribution}}.
\end{corollary}
\begin{proof}
If $u$ has a compact support, 
the claim is a consequence of 
the duality theorem for hyperfunctions with  compact support
and the above lemma.
The general case 
follows from the local property of $\iota_{\mathscr{D}b(U)}$
by considering $\theta u$ instead of $u$ with $\theta \in \vG_c(U;\mathscr{E}_M)$.
\end{proof}
\begin{proposition}
	$\iota_{\mathscr{D}b(U)}$ is injective.
\end{proposition}
\begin{proof}
From the beginning, we may assume $U$ to be relatively compact.
Through the proof, we fix the orientation and forget $e_M$, $a_X$, etc.
Let $u \in \mathscr{D}b(U)$ with $\iota_{\mathscr{D}b(U)}(u) = 0$.
Since $\mathscr{D}b$ is a sheaf,
 it suffices to show $u|_W = 0$ for any   relatively compact open  set  $W$ in $U$.
This is equivalent to saying that 
\[
\langle u,\varphi\rangle = 0 \qquad\text{for}\
\varphi \in \vG_c(U; \mathscr{E}^{(n)}_M)\
\text{with $\operatorname{Supp}(\varphi) \subset W$}.
\]
Let $\varphi \in \vG_c(U; \mathscr{E}^{(n)}_M)$
with $\operatorname{Supp}(\varphi) \subset W$, and
let $W'$ and $W''$ be relatively compact open sets in $U$ with
$\overline{W} \subset W' \subset \overline{W'} \subset W''$.
Let $\theta$ a be $C^\infty$ function on $U$ 
satisfying $\theta = 1$ on a neighborhood of $\overline{W'}$
and $\operatorname{Supp}(\theta) \subset W''$. Then, by the local
property of $\iota_{\mathscr{D}b(U)}$, we have
\[
\iota_{\mathscr{D}b(U)}(\theta u)|_{W'} = 0.
\]
Set, for $\epsilon > 0$,
\[
\varphi_{\epsilon}(z) = \dfrac{1}{(\sqrt{\pi\epsilon})^n}
\int_{\R^n} \varphi(t) e^{-(z-t)^2/\epsilon} \,dt.
\]
It is well-known that
\begin{enumerate}
\item[(1)] $\varphi_\epsilon(z) \in \mathscr{O}^{(n)}(\C^n)$.
\item[(2)]  $\varphi_\epsilon|_{\R^n} \to \varphi$ uniformly convergent on
a compact subset in $\R^n$.
\item[(3)]  $\varphi_\epsilon \to 0$ uniformly convergent on a compact subset in
\[
L=\bigcap_{a \in W} \{\,z = x+\sqrt{y} \in \C^n\mid\, |y| < |x-a|\,\}.
\]
\end{enumerate}
One should be aware that we can say nothing about convergence of $\varphi_\epsilon$
outside of the region specified in the above (2)~and (3).

Since $\operatorname{Supp}(\iota_{\mathscr{D}b(U)}(\theta u)) \subset W'' \ssm W'$,
by the same reasoning as that for Lemma {\ref{lem:support_dirac}}, 
we can find a representative $\tau(z) = (\tau_1(z), \tau_{01}(z))$ of 
$\iota_{\mathscr{D}b(U)}(\theta u)$ which satisfies the condition
\[
\operatorname{Supp}(\tau_1) \subset  L,
\qquad \operatorname{Supp}(\tau_{01}) \subset L.
\]
Then we can take an open subset $R_1$ (with $C^{\infty}$ boundary $R_{01}$) that
contains the support of $\tau$ and that is contained in $L$, for which 
we get
\[
\langle \theta u, \varphi_\epsilon|_{U} \rangle
=
\langle \iota_{\mathscr{D}b(U)}(\theta u), \varphi_\epsilon \rangle
=
\int_{R_1} \varphi_\epsilon \tau_1 
+ \int_{R_{01}} \varphi_\epsilon \tau_{01}
=
\int_{R_1} \varphi_\epsilon \tau_1. 
\]
When $\epsilon \to 0$,
the rightmost term of the above equation 
tends to $0$ by the property (3)~of $\varphi_\epsilon$.
Furthermore, it follows from the property  (2)~that we have, when $\epsilon \to 0$,
\[
\langle \theta u, \varphi_\epsilon|_{U} \rangle \to 
\langle \theta u, \varphi \rangle =
\langle u, \varphi \rangle.
\]
Therefore we finally obtained $\langle u, \varphi \rangle = 0$, which completes
the proof.
\end{proof}

\appendix

\lsection{Compatibility of boundary value morphisms}{\label{appen:A}}
\newcommand{\rh}{\boldsymbol{R}\mathscr{H}\hspace{-.6mm}om}
\newcommand{\Rh}{\boldsymbol{R}\mathrm{Hom}}

The boundary value morphism was first constructed in \cite{Skk}, and
then, it was extended to more general cases by P.~Schapira from the viewpoint of boundary value problems (see Section 11.5  in \cite{KS}). 
In this appendix, we briefly recall its functorial
construction due to P.~Schapira, and then, we will show that the boundary value morphism 
given in Subsection \ref{ssbv} coincides with the current one.

Let $V$ be an open neighborhood of $M$ and $\Omega$ an open subset in $X$ which 
satisfies the conditions $(\rm{B}_1)$~and $(\rm{B}_2)$~given in Subsection \ref{ssbv}. For simplicity,
we also assume the following additional condition in functorial construction\,:
\begin{enumerate}
	\item[$(\rm{B}_3)$] $\Omega$ is cohomologically trivial, that is, $\Omega$ satisfies
\[
\rh_{\C_X}(\C_\Omega,\C_X) = \C_{\overline{\Omega}}
\quad\text{and}\quad
\rh_{\C_X}(\C_{\overline{\Omega}},\C_X) = \C_{\Omega}.
\]
\end{enumerate}
Note that the above condition is satisfied if the inclusion $j: \Omega \to X$
is locally homeomorphic to the inclusion of an open convex subset into $\C^n$.

\

From the assumption $\bar{\Omega} \supset M$,
we have a canonical morphism 
	$\C_{\overline{\Omega}} \overset{i}{\lra} \C_M$.
Applying the functor $\rh_{\C_X}(\bullet,\C_X)$ to this
morphism, 
thanks to the condition $(\rm{B}_3)$~and the fact
\[
\rh_{\C_X}(\C_M,\C_X)
\simeq \C_M \otimes_{{}_{\Z_M}} H^n_M(X;\Z_X)[-n],
\]
we have the morphism in ${\bf{D}}^b({\Z}_X)$
\begin{equationth}{\label{eq:boundary_base}}
\C_{\Omega} 
\overset{i^\star}{\lra}
\C_M \otimes_{{}_{\Z_M}} 
H^n_M(X;\Z_X)[-n],\qquad i^\star=\rh_{\C_X}(i,\, \C_X).
\end{equationth}

Note that, for any complex $\scF$, we have the formulas
\[
\Rh_{\C_X}(\C_\Omega,\scF)
=\boldsymbol{R}\vG(\Omega;\scF)\quad\text{and}\quad
\Rh_{\C_X}(\C_M,\scF)
=\boldsymbol{R}\vG_M(X;\scF).
\]
Then, applying $\Rh_{\C_X}(\bullet,\scO)$ to {\eqref{eq:boundary_base}}
and taking the $0$-th cohomology groups,
we have obtained the boundary value map $\hat{b}_\Omega$ in a functorial way\,:
\[
	\hat{b}_\Omega: \scO(\Omega) \overset{i^{\star\star}}{\lra}
H^n_M(X;\scO) \
\otimes_{{}_{\Z_M(M)}} H^n_M(X;\Z_X) = \scB(M),\qquad i^{\star\star} = \mathrm{Hom}(i^\star,\scO).
\]

Now we give the theorem which guarantees the coincidence of the boundary value morphism
in our framework and the functorial one constructed above.
First recall the condition $(\rm{B}_2)$ in Subsection~\ref{ssbv} 
and its local version 
$(\rm{B}_2^{\rm{loc}})$ in Subsection~\ref{ssec:restrict}.
We also introduce the condition $(\rm{B}_1^{\dag})$~which
is stronger version of $(\rm{B}_1)$ in Subsection~\ref{ssbv}.
\begin{enumerate}
\item[$(\rm{B}_1^{\dag})$] For any $x_0 \in M$, there exist an open neighborhood  $W$ of $x_0$
	with a ($C^1$-class) local trivialization 
	$\iota: (M \cap W,\,W) \simeq 
	(\R_x^n,\, \C^n = \R^n_x \times \sqrt{-1}\R^n_y)$ and a non-empty open cone $\Gamma \subset \R_y^n$ such that
\[
	\R_x^n \times \sqrt{-1} \Gamma \subset \iota(W \cap \Omega).
\]
\end{enumerate}
\begin{theorem}
Assume the pair $(V,\Omega)$ satisfies the conditions $(\rm{B}_1^{\dag})$, 
$(\rm{B}_2)$, $(\rm{B}_2^{\rm{loc}})$~and $(\rm{B}_3)$.
Then the morphisms $b_\Omega$ and $\hat{b}_\Omega$ coincide.
\end{theorem}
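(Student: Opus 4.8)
The plan is to reduce the equality $b_\Omega = \hat{b}_\Omega$ to a local comparison and then to realize Schapira's derived-category morphism \eqref{eq:boundary_base} by an explicit cochain representative that matches our construction of $b_\Omega$. Since $\hat{b}_\Omega$ is built from a morphism of sheaves while $b_\Omega$ is defined through global relative Dolbeault representatives, I would first invoke the condition $(\rm{B}_2^{\rm{loc}})$ to pass to a fundamental system of neighborhoods, so that it suffices to establish the coincidence at each stalk $x_0 \in M$. By $(\rm{B}_1^{\dag})$, in a suitable local trivialization the triple $(M,X,\Omega)$ becomes $(\R^n_x,\,\C^n,\,\R^n_x \times \sqrt{-1}\Gamma)$ with $\Gamma$ an open cone, which is exactly the model treated in Example~\ref{exa:support_cone_fundamental}; together with $(\rm{B}_3)$ this makes the relevant local cohomology groups computable and the homotopy equivalence of $(\rm{B}_2)$ available.

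The core step is to give a cochain model for $i^\star$. The key observation is that the support-refined relative de~Rham complex $(j_!j^{-1}\scE^{(\bullet)}_V(\V,\V'),D')$ introduced in the proof of Lemma~\ref{lem:unit_one_support} is precisely a fine resolution adapted to the source $\C_\Omega = j_!j^{-1}\C_X$ of \eqref{eq:boundary_base}, computing $H^\bullet_M(V;\C_\Omega)$, while $\scE^{(\bullet)}_V(\V,\V')$ computes $H^\bullet_M(V;\C)$; the canonical map between them is the isomorphism \eqref{eq:iso-support-identity}. Under this identification the generator $\mathds{1}^c \in H^n_D(\V,\V')$ furnished by Lemma~\ref{lem:unit_one_support}, represented by $(\nu_1,\nu_{01})$ with $\op{Supp}(\nu_1),\op{Supp}(\nu_{01}) \subset \Omega$, is exactly the image of the fundamental class determining $i^\star$; by $(\rm{B}_1^{\dag})$ and the normalization of Example~\ref{exa:support_cone_fundamental} this $\nu$ is the positively oriented generator, which pins down the sign. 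Applying $\Rh(\bullet,\scO)$ amounts, via the projection $\rho^\bullet$ of forms onto their $(0,\bullet)$-components, to passing from relative de~Rham to relative Dolbeault cohomology as in diagram \eqref{dRtoD}.

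With this model in hand, I would trace the functorial recipe: $\hat{b}_\Omega(f) = i^{\star\star}(f)$ is computed by pairing $f \in \scO(\Omega) = \Rh(\C_\Omega,\scO)^0$ with the cochain representative of $i^\star$. Because $f$ is holomorphic and $\op{Supp}(\nu) \subset \Omega$, this pairing is realized concretely as multiplication of the $(0,\bullet)$-projection $\rho(\nu)$ by $f$, yielding the class $[f\rho(\nu)] \otimes \mathds{1}$ in $H^{0,n}_{\bar\vartheta}(\V,\V') \otimes or_{M/X}(M)$, which is exactly the definition of $b_\Omega(f)$. The compatibility of $\rho^\bullet$ with the multiplication structure (diagram \eqref{dRtoD}) and the independence already established in Lemma~\ref{lem:boundary_map_ok} and via \eqref{tensgen} then guarantee that the resulting class does not depend on the choice of $\nu$ or of $\mathds{1}$, so the two recipes produce the same element.

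The hard part will be the faithful translation between the abstract Verdier-duality isomorphism $\rh_{\C_X}(\C_M,\C_X) \simeq \C_M \otimes_{\Z_M} H^n_M(X;\Z_X)[-n]$ used to define $i^\star$ and the concrete Thom-class representative $\mathds{1}^c$ underlying $b_\Omega$: one must verify that the duality pairing implicit in $i^\star$ is realized by the same integration/cap-product normalization that identifies $\mathds{1}$ with the fundamental class $[M]$ (cf. Subsection~\ref{subsOT}), and in particular that no spurious sign is introduced. This is precisely where the explicit cone computation of Example~\ref{exa:support_cone_fundamental}, together with the auxiliary identification of $\nu$ with a positively oriented generator, is needed; the conditions $(\rm{B}_1^{\dag})$ and $(\rm{B}_3)$ ensure the local model is convex-conical, so that these identifications, and the cohomology vanishing behind \eqref{eq:iso-support-identity}, indeed hold.
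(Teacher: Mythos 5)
Your overall strategy --- localize via $(\rm{B}_2^{\rm{loc}})$ and $(\rm{B}_1^{\dag})$, represent the dual of $i:\C_{\overline{\Omega}}\to\C_M$ by the support-refined cocycle $\nu$ of Lemma~\ref{lem:unit_one_support}, and conclude that multiplication by $f$ produces $[f\rho(\nu)]\otimes\mathds{1}=b_\Omega(f)$ --- is the right intuition, and it is what the paper ultimately establishes. But there is a genuine gap exactly at the step you yourself flag as ``the hard part'': nothing in your proposal actually computes the functorial morphism $i^\star$ (hence $\hat{b}_\Omega$) at the cochain level. The isomorphism \eqref{eq:iso-support-identity} only says that $\mathds{1}^c$ admits a representative supported in $\Omega$, unique up to $D'$-coboundary; it does not by itself relate that representative to Schapira's construction, which is obtained by applying the contravariant functors $\boldsymbol{R}\mathcal{H}\mathit{om}_{\C_X}(\bullet,\C_X)$ and then $\boldsymbol{R}\mathrm{Hom}_{\C_X}(\bullet,\scO)$ to the sheaf morphism $i:\C_{\overline{\Omega}}\to\C_M$. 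To compute these functors one needs resolutions of the sheaves $\C_M$ and $\C_{\overline{\Omega}}$ themselves by sheaves whose duals are under control; the fine complexes $j_!j^{-1}\scE^{(\bullet)}_V$ and $\scE^{(\bullet)}(\V,\V')$ are adapted to the covariant operation of computing $H^{\bullet}_M(V;\C_\Omega)$ and $H^{\bullet}_M(V;\C)$, and dualizing a fine resolution does not realize $\boldsymbol{R}\mathcal{H}\mathit{om}_{\C_X}(\bullet,\C_X)$. For the same reason, your sentence that applying $\boldsymbol{R}\mathrm{Hom}(\bullet,\scO)$ ``amounts, via the projection $\rho^{\bullet}$, to passing from relative de~Rham to relative Dolbeault cohomology'' is a category error: the diagram \eqref{dRtoD} concerns the covariant change of coefficients $\C\to\scO$, whereas $\boldsymbol{R}\mathrm{Hom}(\bullet,\scO)$ is applied contravariantly to the two sides of \eqref{eq:boundary_base}; in the paper $\rho$ enters only at the very last comparison step, not as a realization of this functor.

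The paper closes this gap with machinery that your proposal neither contains nor replaces: the auxiliary Stein/Leray covering $\mathcal{S}=\{S_1,\dots,S_{n+1},S_{n+2}\}$ adapted to the cone decomposition, the explicit relative \v{C}ech cocycle $\sigma$ of \eqref{eq:orientation_unit}, the resolution $\mathcal{L}$ of $\C_M\otimes H^n(\mathcal{S},\mathcal{S}';\Z_X)$ by the complexes of sheaves $\underset{\alpha}{\wedge}\C_{\overline{S_\alpha}}$ --- whose terms can be dualized one by one thanks to $(\rm{B}_3)$ and Steinness of the $S_\alpha$ --- which converts the abstract recipe into the concrete formula $\hat{b}_\Omega(f)=[f\sigma]\otimes[\sigma]$ in relative \v{C}ech cohomology, and finally the identification of $f\sigma$ with $f\rho(\nu)$ inside the \v{C}ech--Dolbeault complex on $(\mathcal{S},\mathcal{S}')$ via Lemma~\ref{lem:tau_is_generator} and the partition-of-unity homotopy Lemma~\ref{lemma:fundamental_equiv}. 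Deferring all of this to ``the explicit cone computation of Example~\ref{exa:support_cone_fundamental}'' is circular: that example identifies $\nu$ as a generator of $H^{n}_{D}(\W,\W')$, but the statement that $f$ times this representative computes the functorially defined $\hat{b}_\Omega(f)$ is precisely the content of the theorem, and it is the part your proposal asserts rather than proves.
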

Since $\{\scB(U)\}_{U \subset M}$ forms a sheaf and since
the boundary value morphisms and restriction maps of sheaves commute by 
Corollary {\ref{cor:restriction_boundary_map}}, 
by taking the condition $(\rm{B}_2^{\rm{loc}})$~into account, 
it suffices to show the claim locally.  
Hence we may assume that 
$M$ is a convex open subset in $\R^n$,
$X = M \times \sqrt{-1}\R^n$ and $V$ is the open tubular neighborhood
\[
V = \{\,(z = x + \sqrt{-1}y) \in \C^n\mid\, x \in M,\, |y| < \epsilon \,\}
\]
with some $\epsilon > 0$. Further, $\Omega$ is assumed to be $
(M \times \sqrt{-1} \Gamma) \cap V$ where $\Gamma$ is an open proper convex cone in $\R_y^n$.
Then, we take $(n+1)$-vectors $\eta_1$, $\dots$, $\eta_{n+1}$ in $\R^n_y$
as was specified in Example \ref{exa:support_cone_fundamental} and define
the open half spaces $H_k$, $k=1,\dots, n+1$, in $\R^n_y$ and 
$C^{\infty}$-functions $\varphi_k$, $k=1,\dots,n+1$, as in the same example. Since the boundary value morphisms and restriction maps 
of sheaves commute, we may assume 
\[
\Gamma = \underset{1 \le k \le n}{\bigcap} H_{k} 
\]
from the beginning. Now let us define another pair of coverings $(\mathcal{S},\mathcal{S}')$
of $(V,\, V \ssm M)$ by
\[
\mathcal{S} = \{S_1,\dots, S_{n+1}, S_{n+2}\}, \quad
\mathcal{S}' = \{S_1,\dots, S_{n+1}\},
\]
where $S_{n+2} = V$ and $S_k = (M \times \sqrt{-1} H_k) \cap V$,  $k=1,\dots,n+1$.
Note that $(\mathcal{S},\mathcal{S}')$ is a Leray covering with respect to
either of 
$\C_X$ and $\scO$. Note also that $(\mathcal{S},\mathcal{S}')$
is a pair of  coverings finer than $(\W,\W')$. We denote by $C^{\bullet}(\mathcal{S},\mathcal{S}';\scS)$ the complex of relative \v{C}ech cochains on $(\mathcal{S},\mathcal{S}')$ with coefficients in a sheaf $\scS$.

Let $\nu = (0,\,\nu_{01})$ be the element of $\scE^{(n)}(\W,\W')$
defined in Example {\ref{exa:support_cone_fundamental}}, and
set 
$\Lambda = \{1,2,\dots,n+1,n+2\}$.
We also set, for $\alpha = (\alpha_1,\dots,\alpha_k) \in \Lambda^k$,
\[
S_\alpha = S_{\alpha_1} \cap \dots \cap S_{\alpha_k}.
\]

\begin{lemma}{\label{lem:tau_is_generator}}
$\nu$ is a generator of $\hDCcpk{n}{\W}{\W'} \simeq H_M^n(X;\C_X)$.
\end{lemma}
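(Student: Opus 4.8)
The plan is to use the Leray covering $(\mathcal{S},\mathcal{S}')$ just introduced to compute $H^n_D(\W,\W')\simeq H^n_M(X;\C_X)$ explicitly and to locate $[\nu]$ inside it. Since $(\mathcal{S},\mathcal{S}')$ is Leray for $\C_X$, the relative \v{C}ech cohomology $H^n(\mathcal{S},\mathcal{S}';\C_X)$ computes $H^n_M(X;\C_X)$, which is one-dimensional by Theorem~\ref{thdRrel} together with the computation $H^n_{\{0\}}(\R^n;\C_{\R^n})\simeq\C$. First I would identify a generator combinatorially: through the relative long exact sequence one has $H^n(\mathcal{S},\mathcal{S}';\C_X)\simeq H^{n-1}(\mathcal{S}';\C_X)$, and because the half-spaces $H_1,\dots,H_{n+1}$ satisfy $\bigcap_{k=1}^{n+1}H_k=\emptyset$ in $\R^n_y\ssm\{0\}$ while every $n$ of them meet (any $n$ of the $\eta_k$ being linearly independent), the nerve of $\mathcal{S}'$ is the boundary $\partial\Delta^n$ of the standard $n$-simplex, homotopy equivalent to $S^{n-1}$; hence $H^{n-1}(\mathcal{S}';\C_X)\simeq\C$ with generator the fundamental cocycle carried by an $n$-fold intersection $S_{i_1}\cap\dots\cap S_{i_n}$.

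Before anything else I would verify that $\nu_{01}$ is a genuine smooth closed form: the anti-characteristic function $\hat{\chi}_{H_{n+1}}$ is locally constant off $\partial H_{n+1}$, and its role is to confine $\op{Supp}_{X\ssm M}(\nu_{01})$ to $M\times\sqrt{-1}\bigcap_{k\le n}H_k\subset\Omega$, where $\hat{\chi}_{H_{n+1}}\equiv 1$ so that $\nu_{01}=(-1)^n(n-1)!\,d\varphi_1\wedge\dots\wedge d\varphi_{n-1}$ is manifestly $d$-closed. Then I would transfer $\nu=(0,\nu_{01})$ from the relative de~Rham complex on $\W$ to the \v{C}ech picture on $\mathcal{S}$ through the relative \v{C}ech--de~Rham double complex: since $\scE^{(\bullet)}$ is a fine resolution of $\C_X$ and $\mathcal{S}$ refines $\W$, the standard zig-zag realizing the inverse de~Rham isomorphism uses precisely the partition of unity $\{\varphi_k\}$ subordinate to $\mathcal{S}'$, each step peeling off one factor $d\varphi_k$. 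The constant $(-1)^n(n-1)!$ and the cut-off $\hat{\chi}_{H_{n+1}}$ are tuned so that the resulting pure \v{C}ech cocycle takes the value $\pm 1$ on the fundamental $(n-1)$-simplex, i.e.\ equals $\pm$ the generator above.

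Equivalently, and more concretely, I would reduce to a single normal fiber. By excision and the product structure $X=\R^n_x\times\sqrt{-1}\R^n_y$, $M=\R^n_x$, the class $[\nu]$ is detected on one fiber $\{x_0\}\times\sqrt{-1}\R^n_y$, where $M$ becomes $\{0\}$; via $\delta\colon H^{n-1}_d(\R^n_y\ssm\{0\})\to H^n_D(\R^n_y,\R^n_y\ssm\{0\})$ the claim becomes $\int_{S^{n-1}}\nu_{01}=\pm 1$. Reading $(\varphi_1,\dots,\varphi_{n+1})$ as a smooth map $S^{n-1}\to\partial\Delta^n$, the form $(n-1)!\,\hat{\chi}_{H_{n+1}}\,d\varphi_1\wedge\dots\wedge d\varphi_{n-1}$ is the pullback of the unit-mass top form on the face opposite the $(n+1)$-st vertex, so its integral equals the degree of that map, namely $\pm 1$. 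Comparing with the angular form $\psi_n$ of Proposition~\ref{thtrivial} then fixes the sign and identifies $[\nu]$ with the positively oriented generator under the conventions of Example~\ref{exone}.

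The hard part will be the sign and normalization bookkeeping: checking that the zig-zag (equivalently the degree computation) yields exactly $\pm1$ rather than an arbitrary nonzero multiple, that the factorial $(n-1)!$ supplies the correct normalization of the simplex volume, and that $\hat{\chi}_{H_{n+1}}$ both keeps $\nu_{01}$ smooth and pins its support inside $\Gamma$. Matching the orientation of the fiber with that fixed in Example~\ref{exone} is the last point needed to conclude that $\nu$ is the \emph{positively} oriented generator, which is what the subsequent uses of the lemma require.
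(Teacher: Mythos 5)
Your primary route is essentially the paper's own proof: the paper works with the same Leray covering $(\mathcal{S},\mathcal{S}')$, takes as combinatorial generator the explicit relative \v{C}ech cocycle $\sigma$ carried by $S_1\cap\cdots\cap S_n\cap S_{n+2}$ (your fundamental cocycle on the nerve $\partial\Delta^n$), and identifies $h(\sigma)$ with $r(\nu)$ in the \v{C}ech-de~Rham complex $\scE^{(\bullet)}(\mathcal{S},\mathcal{S}')$ by repeated application of the partition-of-unity contraction of Lemma~\ref{lemma:fundamental_equiv} --- precisely your zig-zag that peels off one factor $d\varphi_k$ at each step. Your alternative fiber/degree computation of $\int_{S^{n-1}}\nu_{01}$ is an extra verification the paper does not carry out, but the core argument coincides.
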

\begin{proof}
Let us consider the diagram of complexes:
\[
\ccp{\mathcal{S}}{\mathcal{S'}}{\C_X}
\overset{h}{\lra}
\scE^{(\bullet)}(\cS,\cS')
\overset{r}{\longleftarrow}
\scE^{(\bullet)}(\W,\W'),
\]
where $h$ and $r$ are canonical morphisms of complexes.
Since these complexes are quasi-isomorphic to $\boldsymbol{R}\vG_M(X;\C_X)$,
the morphisms $h$ are $r$ are quasi-isomorphic.

Let us define $\sigma = \{\sigma_{\alpha}\}_{\alpha \in \Lambda^{n+1}} \in
\ccpk{n}{\mathcal{S}}{\mathcal{S'}}{\Z_X}$ by
\begin{equationth}{\label{eq:orientation_unit}}
\sigma_\alpha =
\left\{
	\begin{array}{ll}
(-1)^n\qquad &\text{if $\alpha = (1,2,\dots,n,\,n+2)$}, \\
0\qquad &\text{if $\alpha$ contains $n+1$}.
\end{array}
\right.
\end{equationth}
Then $[\sigma]$ belongs to $\hccpk{n}{\mathcal{S}}{\mathcal{S'}}{\Z_X}$
and it is a generator of 
$\hccpk{n}{\mathcal{S}}{\mathcal{S'}}{\Z_X}$. 
Here we take $(-1)^n$ so that
$[\sigma]$ becomes the positively oriented generator
under the standard orientation on $\R_y^n$.
Hence it suffices to show that $h(\sigma)$ and $r(\nu)$ are the same in
$\hDCcpk{n}{\mathcal{S}}{\mathcal{S}'}$, and it can be shown by repeated applications
of the following easy lemma.
\end{proof}
\begin{lemma}{\label{lemma:fundamental_equiv}}
Assume $q_1 > 0$.
Let $\omega = \{\omega_\alpha\}_{\alpha \in \Lambda^{q_1+1}}$ be in
$\DCcdkk{q_1+1}{q_2}{\mathcal{S}}{\mathcal{S}'}$ with $\delta(\omega) = 0$,
and define 
$\omega' = \{\omega'_\beta\}_{\beta \in \Lambda^{q_1}} \in
\DCcdkk{q_1}{q_2}{\mathcal{S}}{\mathcal{S}'}$ by
\[
\omega'_\beta = \sum_{\lambda \in \Lambda} \varphi_\lambda \omega_{\lambda\,\beta}
\qquad (\beta \in \Lambda^{q_1}).
\]
Then we have $\omega = \delta(\omega')$.
\end{lemma}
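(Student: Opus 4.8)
The plan is to carry out the standard partition‑of‑unity contracting homotopy for the \v{C}ech differential $\delta$, the only genuinely new feature being the bookkeeping forced by the relative structure of the pair $(\mathcal{S},\mathcal{S}')$. Throughout I write the coboundary of a cochain $\psi$ (indexed, as in the statement, by tuples of length equal to its degree) as $(\delta\psi)_{\alpha_0\cdots\alpha_q}=\sum_{i=0}^{q}(-1)^{i}\psi_{\alpha_0\cdots\widehat{\alpha_i}\cdots\alpha_q}$, and I recall the two properties of the functions from Example \ref{exa:support_cone_fundamental}: $\op{Supp}_{V\ssm M}(\varphi_\lambda)\subset S_\lambda$ for $\lambda\le n+1$, and $\sum_{\lambda\in\Lambda}\varphi_\lambda=1$ on $V\ssm M$ (with the convention $\varphi_{n+2}=0$). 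First I would check that $\omega'$ really is an element of $\DCcdkk{q_1}{q_2}{\mathcal{S}}{\mathcal{S}'}$. For $\beta\in\Lambda^{q_1}$ and each $\lambda$, the form $\varphi_\lambda\,\omega_{\lambda\beta}$ is a priori defined only on $S_{\lambda\beta}=S_\lambda\cap S_\beta$, but since $\op{Supp}(\varphi_\lambda)\subset S_\lambda$ it extends by zero to a smooth form in $\scE^{(q_2)}(S_\beta)$; summing over $\lambda$ gives $\omega'_\beta$. The relative condition $\omega'_\beta=0$ for $\beta\in(\Lambda')^{q_1}$ holds because then $\lambda\beta\in(\Lambda')^{q_1+1}$ for every $\lambda\le n+1$, so $\omega_{\lambda\beta}=0$ by relativity of $\omega$, while the term $\lambda=n+2$ is killed by $\varphi_{n+2}=0$.

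The heart of the argument is the identity $\delta\omega'=\omega$, which I would verify componentwise. Fix $\alpha=(\alpha_0,\dots,\alpha_{q_1})\in\Lambda^{q_1+1}$ and apply the cocycle condition $\delta\omega=0$ to the length‑$(q_1+2)$ tuple $(\lambda,\alpha_0,\dots,\alpha_{q_1})$. Isolating the term in which the prepended index $\lambda$ is deleted yields, for every $\lambda\in\Lambda$,
\[
\omega_\alpha=\sum_{k=0}^{q_1}(-1)^{k}\,\omega_{\lambda\,\alpha_0\cdots\widehat{\alpha_k}\cdots\alpha_{q_1}}.
\]
Multiplying by $\varphi_\lambda$, summing over $\lambda\in\Lambda$, and comparing the right‑hand side with the definition $\omega'_\beta=\sum_\lambda\varphi_\lambda\,\omega_{\lambda\beta}$ inserted into the coboundary formula for $\delta\omega'$, I obtain $\big(\sum_{\lambda}\varphi_\lambda\big)\,\omega_\alpha=(\delta\omega')_\alpha$. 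This is exactly the textbook fineness computation; the algebra is entirely routine once the sign bookkeeping in the displayed identity is done.

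The one point that needs real care — and where the hypothesis $q_1>0$ enters — is that the previous step only produces $\omega_\alpha=(\delta\omega')_\alpha$ after replacing $\sum_\lambda\varphi_\lambda$ by $1$, which is legitimate solely on $V\ssm M$. Since the cochains are alternating, a component $\omega_\alpha$ with $\alpha\in\Lambda^{q_1+1}$ can be nonzero only for tuples with pairwise distinct entries; as $n+2$ is the unique index whose set $S_{n+2}=V$ meets $M$, any such nondegenerate $\alpha$ of length $q_1+1\ge 2$ must contain at least one index $\le n+1$, whence $S_\alpha\subset S_{\alpha_i}\subset V\ssm M$ and $\sum_\lambda\varphi_\lambda\equiv 1$ on $S_\alpha$. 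This forces $\delta\omega'=\omega$. The same remark explains why the statement excludes $q_1=0$: there the single‑index component sits over $S_{n+2}=V$, which does meet $M$, and neither $\sum_\lambda\varphi_\lambda=1$ nor the extension‑by‑zero of $\varphi_\lambda\omega_{\lambda\beta}$ across $M$ is available. I therefore expect this support–degeneracy bookkeeping, rather than the computation, to be the only genuine obstacle, and it is precisely dispatched by the distinctness of indices for $q_1>0$.
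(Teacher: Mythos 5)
Your strategy --- the partition-of-unity contracting homotopy --- is certainly the intended argument (the paper states this lemma without proof, calling it easy), the convention $\varphi_{n+2}=0$ is the right reading, and your central computation (prepend $\lambda$ to the cocycle identity, multiply by $\varphi_\lambda$, sum) is correct wherever it makes sense. The genuine gap is in your well-definedness step, and it is not cosmetic. You assert that $\varphi_\lambda\omega_{\lambda\beta}$ ``extends by zero to a smooth form in $\scE^{(q_2)}(S_\beta)$'' for every $\beta$. That extension is smooth only because $\op{Supp}_{V\ssm M}(\varphi_\lambda)$ is closed in $V\ssm M$ and contained in $S_\lambda$; these supports are \emph{not} closed in $V$ (their closures meet $M$; in the model case the $\varphi_\lambda$ are essentially indicator functions of half-spaces). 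So the extension by zero works exactly when $S_\beta\subset V\ssm M$, and it fails for the nondegenerate tuple $\beta=(n+2)$, where $S_{n+2}=V$ meets $M$. Under your literal indexing ($\omega'$ indexed by $\Lambda^{q_1}$) this case lies inside your claimed range, at $q_1=1$, and there the lemma is in fact false: take any $\Theta\in\scE^{(q_2)}(V\ssm M)$ that does not extend smoothly across $M$, set $\omega_{\lambda,\,n+2}=\Theta|_{S_\lambda}$ for $\lambda\le n+1$, $\omega_{\lambda\mu}=0$ for $\lambda,\mu\le n+1$, and extend alternately. This is a relative $\delta$-cocycle on $2$-tuples, but $\delta\omega'=\omega$ with $\omega'$ relative would force $\omega'_{n+2}\in\scE^{(q_2)}(V)$ to restrict to $\Theta$ on $V\ssm M$, which is impossible. (Equivalently, for a fine sheaf the relative complex has $H^1=\op{coker}\bigl(\scE^{(q_2)}(V)\to\scE^{(q_2)}(V\ssm M)\bigr)\ne 0$; this non-vanishing is precisely what lets relative Dolbeault cohomology carry hyperfunctions, so no partition-of-unity homotopy can kill it.)

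The repair is notational but essential. The hypothesis $q_1>0$ must be read in the degree convention the appendix uses for $\sigma$ (where $C^{n}(\mathcal{S},\mathcal{S}';\Z_X)$ is indexed by $(n+1)$-tuples): $\omega\in C^{q_1+1}$ is then indexed by $(q_1+2)$-tuples and $\omega'\in C^{q_1}$ by $(q_1+1)$-tuples with $q_1+1\ge 2$. In that range every nondegenerate index tuple of $\omega'$ contains some index $\le n+1$, hence all components of $\omega'$ live over subsets of $V\ssm M$, and your argument --- extension by zero, relativity of $\omega'$ via $\varphi_{n+2}=0$ and relativity of $\omega$, and the identity $\delta\omega'=\omega$ on nondegenerate tuples --- goes through verbatim; this is also the only range in which the paper ever applies the lemma (the zig-zag in Lemma \ref{lem:tau_is_generator} stops at $2$-tuples, where one compares with $r(\nu)$ directly rather than applying the lemma again). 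Your closing paragraph shows you sensed the phenomenon, but you located it at $q_1=0$, which under your own indexing is vacuous ($\omega'$ would be indexed by $0$-tuples), while the genuinely failing case $q_1=1$ sits inside what your proof claims to cover. As written, the proof therefore argues for a statement that is false; restricting so that $\omega'$ has tuples of length at least $2$ repairs it completely.
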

\begin{remark}
The same result holds for $\dccdp{p}{\mathcal{S}}{\mathcal{S}'}$. That is:
Let $q_1 > 0$ and let $\omega = \{\omega_\alpha\}_{\alpha \in \Lambda^{q_1+1}}$ be in
$\dccdpkk{p}{q_1+1}{q_2}{\mathcal{S}}{\mathcal{S}'}$ with $\delta(\omega) = 0$.
Define 
$\omega' = \{\omega'_\beta\}_{\beta \in \Lambda^{q_1}} \in
\dccdpkk{p}{q_1}{q_2}{\mathcal{S}}{\mathcal{S}'}$ by
the same formula as in the above lemma.
Then we have $\omega = \delta(\omega')$.
\end{remark}

Let us compute functors $\rh_{\C_X}(\bullet,\C_X)$ 
and $\Rh_{\C_X}(\bullet,\scO)$ concretely.
Set $\Lambda'=\{1,2,\dots,n+1\}$.
First define the sheaf 
$\underset{{\alpha \in \Lambda'^k}}{\wedge}\C_{\overline{S_\alpha}}$:
It is a subsheaf of
$\underset{{\alpha \in \Lambda'^k}}{\oplus}\C_{\overline{S_\alpha \cap S_{n+2}}}$
which consists of alternating sections with respect to the index $\alpha \in \Lambda'^k$.
We 
define the differential 
\[
\delta_k:
\underset{{\alpha \in \Lambda'^k}}{\wedge}\C_{\overline{S_\alpha}}
\lra
\underset{{\alpha \in \Lambda'^{k+1}}}{\wedge}\C_{\overline{S_\alpha}}
\]
the same way as for a \v{C}ech complex. We have the complex
\[
\mathcal{L}:
0
\lra 
\underset{{\alpha \in \Lambda'^0}}{\wedge}\C_{\overline{S_\alpha}}
\lra
\underset{{\alpha \in \Lambda'^1}}{\wedge}\C_{\overline{S_\alpha}}
\lra
\cdots
\overset{\delta_{n-1}}{\lra}
\underset{{\alpha \in \Lambda'^n}}{\wedge}\C_{\overline{S_\alpha}}
\lra
0,
\]
where 
$
\underset{{\alpha \in \Lambda'^0}}{\wedge}\C_{\overline{S_\alpha}}
$
is located at degree $-n$ 
and
$
\underset{{\alpha \in \Lambda'^n}}{\wedge}\C_{\overline{S_\alpha}}
$
at degree $0$.  We 
define the morphism 
\[
\iota:
\underset{{\alpha \in \Lambda'^n}}{\wedge}\C_{\overline{S_\alpha}}
\lra
\C_M \otimes_{{}_{\Z_X(X)}} \hccpk{n}{\mathcal{S}}{\mathcal{S'}}{\Z_X}
\]
by assigning an alternating section $c_\alpha$ on ${\overline{S_\alpha \cap S_0}}$
to $c_\alpha|_M \otimes 1_\alpha$. Here $1_\alpha$ denotes an alternating section
with value $1$ on $S_\alpha$.
Then we can extend $\iota$ to a morphism of complexes
from $\mathcal{L}$ to the single complex
$\C_M \otimes_{{}_{\Z_X(X)}} \hccpk{n}{\mathcal{S}}{\mathcal{S'}}{\Z_X}$.
Now we can easily see\,:
\begin{lemma}
The morphism
\[
\mathcal{L} \overset{\iota}{\lra}
\C_M \otimes_{{}_{\Z_X(X)}} \hccpk{n}{\mathcal{S}}{\mathcal{S'}}{\Z_X}
\]
is quasi-isomorphic. That is,
$\mathcal{L}$ is a resolution complex of
$\C_M \otimes_{{}_{\Z_X(X)}} \hccpk{n}{\mathcal{S}}{\mathcal{S'}}{\Z_X}$.
\end{lemma}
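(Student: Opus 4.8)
The plan is to check that $\iota$ is a quasi-isomorphism stalkwise, since a morphism of complexes of sheaves is a quasi-isomorphism exactly when it induces isomorphisms on the cohomology of every stalk. At a point $x = x_0 + \sqrt{-1}\,y_0 \in V$ one has $(\C_{\overline{S_\alpha}})_x = \C$ when $x \in \overline{S_\alpha}$ and $0$ otherwise, and since each $S_k$ ($1\le k\le n+1$) is the wedge $(M\times\sqrt{-1}H_k)\cap V$ one verifies that $x\in\overline{S_\alpha}$ holds precisely when $\bigcap_{k\in\alpha}H_k\neq\emptyset$ and $y_0\in\bigcap_{k\in\alpha}\overline{H_k}$. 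First I would use this to identify the stalk complex $\mathcal{L}_x$ with the augmented alternating \v{C}ech complex of the good open cover $\{H_k\}_{k\in I}$ of its union, where $I=I(x)=\{\,k\in\Lambda'\mid\langle y_0,\eta_k\rangle\ge 0\,\}$ and the augmentation term comes from the stalk $\C$ of $\C_{\overline{V}}$; by the nerve lemma its cohomology is the reduced cohomology $\widetilde H^{\bullet}(U_I)$ of $U_I=\bigcup_{k\in I}H_k=\R^n\ssm C_I$ with $C_I=\{\,y\mid\langle y,\eta_k\rangle\le 0,\ k\in I\,\}$. The target stalk is $(\C_M)_x\otimes H^n(\mathcal{S},\mathcal{S}';\Z_X)$, so I must show $\widetilde H^{\bullet}(U_I)$ is $\C$ in the top ($\mathcal{L}$-degree $0$) and $0$ below exactly when $x\in M$, and is entirely $0$ otherwise.

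The key observation is that $C_I=\{0\}$ iff the vectors $\{\eta_k\}_{k\in I}$ positively span $\R^n$, iff $I=\Lambda'$; and, because $\sum_{k}\eta_k=0$ while any $n$ of the $\eta_k$ are linearly independent, $I(x)=\Lambda'$ happens iff $y_0=0$, i.e. iff $x\in M$. Thus for $x\in M$ one gets $U_I=\R^n\ssm\{0\}\simeq S^{n-1}$, equivalently the nerve is $\partial\Delta^{n}\cong S^{n-1}$, whose reduced cohomology is $\C$ concentrated in simplicial degree $n-1$; this degree corresponds exactly to the top term $\underset{\alpha\in\Lambda'^n}{\wedge}\C_{\overline{S_\alpha}}$, i.e. to $\mathcal{L}$-degree $0$, and all lower degrees vanish. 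Since $(\C_M\otimes H^n(\mathcal{S},\mathcal{S}';\Z_X))_x\cong\C$ and $\iota$ sends the class of the top cochain to its class $c_\alpha|_M\otimes 1_\alpha$ in $H^n(\mathcal{S},\mathcal{S}';\Z_X)$, a generator goes to a generator, so $\iota$ is an isomorphism on $H^0$ at such $x$.

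For $x\notin M$ with $y_0\neq 0$ the set $I$ is a proper subset of $\Lambda'$ (it is nonempty because $\sum_k\eta_k=0$ forces some $\langle y_0,\eta_k\rangle\ge 0$). Here I would argue that $C_I$ is full-dimensional: the only linear relation among all the $\eta_k$ is the equal-coefficient one, so no nontrivial nonnegative relation is supported on the proper set $I$, and Gordan's alternative then yields a $y$ with $\langle y,\eta_k\rangle<0$ for all $k\in I$, i.e. $\op{int} C_I\neq\emptyset$. Decomposing the closed convex cone $C_I$ as the direct sum of its lineality space and a pointed full-dimensional cone shows $U_I=\R^n\ssm C_I$ is contractible, so $\widetilde H^{\bullet}(U_I)=0$ and $\mathcal{L}_x$ is acyclic, matching the vanishing target stalk. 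The points $x\notin\overline{V}$ (all stalks zero) and $x\in\partial V$ are disposed of by the same support and nerve considerations.

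I expect the main obstacle to be the off-$M$ computation: converting the combinatorics of the ``visible'' half-spaces $\{H_k\}_{k\in I}$ into the topological acyclicity of $\mathcal{N}_x$. The delicate ingredients are the correct application of the nerve lemma to this (non-closed) convex cover together with the convex-geometric fact, via Gordan's theorem and the lineality decomposition, that $C_I$ is full-dimensional precisely because $I$ is a proper subset. By contrast the case $x\in M$, the identification of the nerve with $\partial\Delta^n$, and the compatibility of $\iota$ with generators are then routine.
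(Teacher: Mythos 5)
Your proof is correct, but it takes a genuinely different route from the paper's. The paper never looks at a single stalk: it realizes the augmented complex $\mathcal{L}\overset{\iota}{\lra}\C_M$ (using the identification $H^n(\mathcal{S},\mathcal{S}';\Z_X)\simeq\Z$) as the simple complex $\widetilde{\mathscr{L}}$ associated with the tensor product $\mathscr{L}_1\otimes_{{}_{\C_X}}\cdots\otimes_{{}_{\C_X}}\mathscr{L}_{n+1}$ of the two-term complexes $\mathscr{L}_k=(\C_{\overline{V}}\to\C_{\overline{S_k}})$. Each $\mathscr{L}_k$ is quasi-isomorphic to $\C_{\overline{V}\ssm\overline{S_k}}[1]$ by the obvious short exact sequence, and since every sheaf in sight has stalks $0$ or $\C$ (hence is flat), $\widetilde{\mathscr{L}}$ is quasi-isomorphic to $\C_{(\overline{V}\ssm\overline{S_1})\cap\cdots\cap(\overline{V}\ssm\overline{S_{n+1}})}[n+1]$, which vanishes because the closed half-spaces $\overline{H_k}$ cover $\R^n_y$ (a consequence of $\eta_1+\cdots+\eta_{n+1}=0$). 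Thus the paper's only geometric inputs are that the $\overline{S_k}$ cover $\overline{V}$ and that they intersect exactly in $M$; the Koszul-type tensor algebra absorbs all the combinatorics that you work out by hand. What your stalkwise argument buys is the concrete local picture — the nerve is $\partial\Delta^n$ at points of $M$ and a full simplex elsewhere — which makes visible \emph{why} the cohomology is concentrated along $M$ and in degree $0$, and it avoids any flatness or tensor-product-of-complexes reasoning.

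Two comments on your version. First, the off-$M$ and boundary cases can be closed far more cheaply than via the nerve lemma: since any $n$ of the vectors $\eta_k$ are linearly independent, every subset $\alpha\subset I(x)$ with $|\alpha|\le n$ already satisfies $H_\alpha\neq\emptyset$, so when $I(x)\subsetneq\Lambda'$ the stalk complex is the augmented simplicial cochain complex of a \emph{full} simplex, which is exact by the standard cone contraction (insert a fixed vertex); no Gordan alternative and no topology of $U_I$ is needed. Second, this shortcut also repairs the one soft spot in your sketch: contractibility of $U_I=\R^n\ssm C_I$ does not follow from the lineality decomposition alone — you still owe an argument that the complement of a pointed, full-dimensional, proper closed convex cone $P$ is contractible. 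That statement is true (for instance, sweep the complement into an open half-space along $-v$ for a nonzero $v\in P\cap\op{int}P^{*}$, which exists because $\op{int}P$ and $\op{int}P^{*}$ cannot be separated by a hyperplane), but as written it is asserted rather than proved, and naive contractions toward a fixed point fail for such complements.
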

\begin{proof}
For $k=1,\dots,n+1$, we have the exact sequence
\[
0 \lra \C_{\overline{V} \ssm \overline{S_k}}
\lra \C_{\overline{V}} 
\lra \C_{\overline{S_k}} 
\lra 0.
\]
Define the complex $\mathscr{L}_k$ by
\[
0 \lra \C_{\overline{V}} \lra \C_{\overline{S_k}} \lra 0,
\]
where 
$\C_{\overline{S_k}}$ is located in degree $0$ of this complex.
Then the above exact sequence implies that
$\mathscr{L}_k$ is quasi-isomorphic to $\C_{\overline{V} \ssm \overline{S_k}}\,[1]$.
Hence the single complex $\widetilde{\mathscr{L}}$ associated with
\[
\mathscr{L}_1 \otimes_{ {}_{\C_X}}
\mathscr{L}_2 \otimes_{ {}_{\C_X}} \cdots
\otimes_{ {}_{\C_X}} \mathscr{L}_{n+1}
\]
is quasi-isomorphic to the complex
\[
(\C_{\overline{V}\ssm \overline{S_1}} \otimes_{ {}_{\C_X}}
\C_{\overline{V}\ssm \overline{S_2}} \otimes_{ {}_{\C_X}}
\cdots
\otimes_{ {}_{\C_X}} \C_{\overline{V}\ssm \overline{S_{n+1}}})[n+1]
\]
which is isomorphic to $0$ because of
$(\overline{V}\ssm \overline{S_1})
\cap
(\overline{V}\ssm \overline{S_2})
\cap
\cdots
\cap
(\overline{V}\ssm \overline{S_{n+1}})
= \emptyset$.
Therefore the complex $\widetilde{\mathscr{L}}$ becomes exact,
and the result follows from the fact 
that the degree $0$ term of the complex $\widetilde{\mathscr{L}}$ is
\[
\C_{\overline{S_{1}}}
\otimes_{ {}_{\C_X}} 
\C_{\overline{S_{2}}}
\otimes_{ {}_{\C_X}} 
\cdots
\otimes_{ {}_{\C_X}} \C_{\overline{S_{n+1}}} = \C_M
\]
due to 
$\overline{S_{1}} \cap
\overline{S_{2}} \cap
\cdots
\cap
\overline{S_{n+1}} = M
$.
\end{proof} 

Furthermore, since we have $S_{12\cdots n} \cap S_{n+2} = (M \times \sqrt{-1}\Gamma) \cap V = \Omega$, we have
the following commutative diagram\,:
\[
\SelectTips{cm}{}
\xymatrix
@C=.6cm
@R=.5cm
{\underset{\alpha = {12\dots n}}{\wedge}\C_{\overline{S_\alpha}}\ar[r]^-{\iota} \ar[d]
	&\C_{\overline{\Omega}} \otimes_{{}_{\Z_X(X)}} \hccpk{n}{\mathcal{S}}{\mathcal{S'}}{\Z_X}\ar[d]
\\
 \underset{{\alpha \in \Lambda'^n}}{\wedge}\C_{\overline{S_\alpha}}\ar[r]^-{\iota}  & 
 \C_{M} \otimes_{{}_{\Z_X(X)}} \hccpk{n}{\mathcal{S}}{\mathcal{S'}}{\Z_X},}
\]
where $\underset{\alpha = {12\dots n}}{\wedge}\C_{\overline{S_\alpha}}$
is the subsheaf of $\underset{{\alpha \in \Lambda'^n}}{\wedge}\C_{\overline{S_\alpha}}$ which consists of alternating sections only on $S_{12\dots n} \cap S_{n+2}$.

Since $(\mathcal{S},\mathcal{S}')$ is an acyclic covering with respect to $\scO$
and since each open subset $S_\alpha$ $(\alpha \in \Lambda^k)$ is cohomologically trivial,
we can compute $\Rh_{\C_X}(\C_M,\,\scO)$ by
first applying
$\rh_{\C_X}(\bullet,\, \C_X)$ to the above resolution $\mathcal{L}$, and then,
applying $\Rh_{\C_X}(\bullet,\,\scO)$ to the resulting complex.
As a conclusion, we have
\[
\Rh_{\C_X}(\C_M,\, \scO) \simeq
\ccp{\mathcal{S}}{\mathcal{S'}}{\scO}.
\]
Further, by applying the functor
$\rh_{\C_X}(\bullet,\, \C_X)$ to
the above commutative diagram, and then,
applying $\Rh_{\C_X}(\bullet,\,\scO)$ to the resulting diagram,
we see that the morphism
\[
\hat{b}_\Omega: \scO(\Omega) \lra 
H^n_M(X;\,\scO) 
\otimes_{{}_{\Z_M(M)}} H^n_M(X;\,\mathbb{Z}_X) 
\simeq
\hccpk{n}{\mathcal{S}}{\mathcal{S'}}{\scO}
\otimes_{{}_{\Z_X(X)}}
\hccpk{n}{\mathcal{S}}{\mathcal{S'}}{\mathbb{Z}_X}
\]
is given by
\[
\hat{b}_\Omega(f) = [f \sigma] \otimes [\sigma]
\in \hccpk{n}{\mathcal{S}}{\mathcal{S'}}{\scO}
{\otimes}_{ {}_{\Z_X(X)}}
\hccpk{n}{\mathcal{S}}{\mathcal{S'}}{\mathbb{Z}_X},
\]
where $\sigma$ is defined in {\eqref{eq:orientation_unit}}.

Now we consider the diagram
\[
\ccp{\mathcal{S}}{\mathcal{S'}}{\scO}
\overset{h'}{\lra}
\dccp{\mathcal{S}}{\mathcal{S}'}
\overset{r'}{\longleftarrow}
\dccp{\W}{\W'},
\]
where $h'$ and $r'$ are canonical morphisms of complexes and
they are quasi-isomorphic.
Then, by the remark after Lemma \ref{lemma:fundamental_equiv},
we see that $h'(f\sigma)$ and $r'(f \rho(\nu))$ are the same element
in $\hdccpk{n}{\mathcal{S}}{\mathcal{S}'}$.
This implies $\hat{b}_{\Omega}(f) = b_{\Omega}(f)$ and the theorem follows.

Clearly $\hat{b}_\Omega$ is a morphism of $\mathscr{D}$-modules. Therefore,
by the theorem, we have\,:
\begin{corollary}
The boundary value morphism $b_\Omega$ is a morphism of $\mathscr{D}$-modules.
\end{corollary}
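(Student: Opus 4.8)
The plan is to obtain the corollary as an immediate consequence of the Theorem just proved, $b_\Omega=\hat b_\Omega$, once we observe that the functorially constructed map $\hat b_\Omega$ is $\mathscr{D}$-linear by its very construction. Thus the only thing I would actually verify is the $\mathscr{D}$-linearity of $\hat b_\Omega$, and then transport it to $b_\Omega$ along the identification supplied by the Theorem.

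For the $\mathscr{D}$-linearity of $\hat b_\Omega$ I would use that $\scO=\scO_X$ is canonically a left $\mathscr{D}_X$-module and that this structure propagates through the functor $\Rh_{\C_X}(\,\cdot\,,\scO)$. Concretely, for any sheaf $\mathcal{F}$ of $\C_X$-modules the sheaf $\rh_{\C_X}(\mathcal{F},\scO)$ becomes a $\mathscr{D}_X$-module by letting a vector field $\theta$ act through the target, $(\theta\cdot\phi)(s)=\theta(\phi(s))$; this is again $\C_X$-linear precisely because $\theta$ annihilates locally constant functions. Hence $\Rh_{\C_X}(\,\cdot\,,\scO)$ is a functor into $\mathbf{D}^b(\mathscr{D}_X)$ that carries every morphism of $\C_X$-modules to a $\mathscr{D}_X$-linear morphism.

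I would then apply this to the morphism $i^\star$ of \eqref{eq:boundary_base}, viewed in the derived category of $\C_X$-modules (base-changing $\Z_X$ to $\C_X$). Taking $H^0$ of $\Rh_{\C_X}(i^\star,\scO)$ gives exactly $\hat b_\Omega=\mathrm{Hom}(i^\star,\scO)$, which by the previous paragraph is $\mathscr{D}_X$-linear; here the topological/orientation factor $H^n_M(X;\Z_X)$ appearing in \eqref{eq:boundary_base} is simply tensored along and carries the trivial $\mathscr{D}_X$-action, so it does not disturb the structure. Invoking the Theorem, $b_\Omega=\hat b_\Omega$ is then itself a morphism of $\mathscr{D}_X$-modules, i.e. of $\mathscr{D}$-modules, after the identification $\scB(M)=H^n_M(X;\scO)\otimes or_{M/X}(M)$.

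I do not anticipate a real obstacle, since the substantive work is already contained in the Theorem; the whole point is that Schapira's construction never leaves the $\mathscr{D}_X$-module formalism, every arrow being $\Rh_{\C_X}(\,\cdot\,,\scO)$ applied to a $\C_X$-linear map. The only bookkeeping I would be careful about is the compatibility of the canonical isomorphism $\rh_{\C_X}(\C_M,\C_X)\simeq\C_M\otimes_{\Z_M}H^n_M(X;\Z_X)[-n]$ with the $\mathscr{D}_X$-structures, which holds because that duality isomorphism is $\C_X$-linear and the $\mathscr{D}_X$-action is introduced only afterwards, through $\scO$, upon applying $\Rh_{\C_X}(\,\cdot\,,\scO)$. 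As a sanity check one could instead verify $\mathscr{D}$-linearity directly on a representative, using that differentiation acts on the class $[f\rho(\nu)]\otimes\mathds{1}$ by differentiating the coefficient $f$ (cf. the partial-derivative formula of Subsection \ref{ssexp}), but the functorial argument is cleaner and coordinate-free.
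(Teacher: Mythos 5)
Your proposal is correct and follows essentially the same route as the paper: the paper likewise observes that the functorial construction $\hat b_\Omega$ is ``clearly'' a morphism of $\mathscr{D}$-modules and then invokes the compatibility theorem $b_\Omega=\hat b_\Omega$ to transfer this to $b_\Omega$. Your write-up merely makes explicit the reason the paper leaves implicit, namely that the $\mathscr{D}_X$-structure on $\scO$ passes through the contravariant Hom functor applied to the $\C_X$-linear morphism $i^\star$, with the orientation factor carrying the trivial action.
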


\bibliographystyle{plain}

\

N. Honda

Department of Mathematics 

Faculty of Science

Hokkaido University

Sapporo 060-0810, Japan

honda@math.sci.hokudai.ac.jp

\

T. Izawa

Department of Information and Computer Science 

Hokkaido University of Science

Sapporo 006-8585, Japan

t-izawa@hus.ac.jp

\

T. Suwa

Department of Mathematics 

Faculty of Science

Hokkaido University

Sapporo 060-0810, Japan

suwa@math.sci.hokudai.ac.jp

\end{document}